\documentclass[11pt]{amsart}
\usepackage{amssymb, latexsym, graphicx, mathrsfs, enumerate}

\setlength{\textwidth}{460pt} \setlength{\hoffset}{-45pt}

\numberwithin{equation}{section}
\newtheorem{Thm}[equation]{Theorem}
\newtheorem{Prop}[equation]{Proposition}
\newtheorem{Cor}[equation]{Corollary}
\newtheorem{Lem}[equation]{Lemma}

\theoremstyle{definition}
\newtheorem{Def}[equation]{Definition}
\newtheorem{Exa}[equation]{Example}
\newtheorem{Rmk}[equation]{Remark}

\begin{document}

\title [Eisenstein Series on Affine Kac-Moody Groups over Function Fields]
{Eisenstein Series on Affine Kac-Moody Groups \\ over Function Fields}
\author[K.-H. Lee]{Kyu-Hwan Lee}
\address{Department of Mathematics, University of Connecticut, Storrs, CT 06269}
\email{khlee@math.uconn.edu}
\author[P. Lombardo]{Philip Lombardo}
\address{Department of Mathematics and Computer Science, St. Joseph's College, Patchogue, NY 11772}
\email{plombardo@sjcny.edu}
\subjclass[2000]{Primary 22E67; Secondary 11F70}
\begin{abstract}
In his pioneering work \cite{R}, H. Garland constructed Eisenstein series on affine Kac-Moody groups over the field of real numbers.  He established the almost everywhere convergence of these series, obtained a formula for their constant terms, and proved a functional equation for the constant terms.  In his subsequent paper \cite{AC}, the convergence of the Eisenstein series was obtained. In this paper, we define Eisenstein series on affine Kac-Moody groups over global function fields using an adelic approach. In the course of proving the convergence of these Eisenstein series, we also calculate a formula for the constant terms and prove their convergence and functional equations.
\end{abstract}

\maketitle

\section*{Introduction}
Classical Eisenstein series are central objects in the study of automorphic forms.
  The classical Eisenstein series were generalized to the case of reductive groups and  studied by R. Langlands  \cite{La, La1}. As in the classical case, he found these Eisenstein series have certain analytic properties as well as Fourier series expansions where $L$-functions appear in the constant terms. Because of this relationship, these $L$-functions inherit important analytic properties from the Eisenstein series.  This approach to studying automorphic $L$-functions is known as the Langlands-Shahidi method. (See \cite{GeMi} for a survey.) The Eisenstein series over function fields were studied by G. Harder in \cite{Harder}.

  In \cite{R}, H. Garland defines and studies Eisenstein series on affine Kac-Moody group over $\mathbb R$. He proved the almost everywhere convergence of the series while placing specific emphasis on calculating the constant term, finding its region of convergence, and proving functional equations of the constant term. More precisely, let $\hat G_{\mathbb R}$ be an affine Kac-Moody group over $\mathbb R$. For each character $\chi$ of the positive part $\hat A$ of the torus, he defines a function $\Phi_\chi:\hat A\rightarrow\mathbb C^{\times}$ and, as in the classical case, uses the Iwasawa decomposition $\hat G_{\mathbb R}=\hat{K}\,\hat{A}\,\hat{U}$ to extend $\Phi_\chi$ to a function on $\hat G_{\mathbb R}$.  Garland extends this group by the automorphism $e^{-rD}\in\mathrm{Aut}(V^{\lambda}_{\mathbb R})$, where $r>0$ and $D$ is the degree operator of the Kac-Moody Lie algebra associated with $\hat G_{\mathbb R}$.  Setting $\Phi_\chi(ge^{-rD})=\Phi_\chi(g)$, he defines an Eisenstein series $E_\chi$ on the space $\hat{G}_{\mathbb R}\,e^{-rD} \subseteq \mathrm{Aut}(V^{\lambda}_{\mathbb R})$ by $$\sum_{\gamma\in\hat{\Gamma}/(\hat{\Gamma}\cap \hat B)} \Phi_\chi(ge^{-rD}\gamma),$$ where $\hat{\Gamma}$ is a discrete subgroup of $\hat G_{\mathbb R}$.

  With suitable conditions on the character $\chi$, Garland proves the almost everywhere convergence of $E_\chi$ and calculates the constant term of this series, $E_\chi^\#$, representing it as a sum over the affine Weyl group $\hat W$:
$$ \sum_{w\in\hat W} (a e^{-rD})^{w(\chi+\rho)-\rho}\  \tilde{c}(\chi,w).$$
Here the function $\tilde{c}(\chi,w)$ is a ratio of completed Riemann zeta functions, $\xi(s):= \pi^{-s/2} \Gamma(\frac{s}{2})\zeta(s)$.  After establishing when this infinite sum converges, he proves functional equations for the constant term $E_\chi^\#$. In \cite{AC}, Garland proves the Eisenstein series $E_\chi$ converge absolutely.

  For an arbitrary field $F$, we can construct an affine Kac-Moody group $\hat G_F$ (\cite{LG1}). Let $\mathcal V$ be the set of places of $F$. In this paper, we consider the fields $F_\nu$ for $\nu \in \mathcal V$, completions of a global function field $F$. We work adelically and define an Eisenstein series $E_\chi$ on $\hat G_{\mathbb A}$, a restricted direct product of the groups $\hat G_{F_\nu}$.
Then we calculate the constant term of the Eisenstein series $E_\chi$ and, as in Garland's work, find that we can express the constant term as an infinite sum over the affine Weyl group. Moreover, this expression contains $c(\chi,w)$-functions composed of ratios of $\zeta_F$, the zeta function for the function field $F$. This calculation leads to a proof of convergence of the Eisenstein series.

\begin{Thm}
Let $\chi\in\hat{\frak h}^*$ such that $\mathrm{Re} (\chi(h_{\alpha_i}) )<-2$ for $i=1,\dots,l+1$, and let $m=(m_\nu)_{\nu\in\mathcal V}$ be a tuple  such that $m_\nu \in \mathbb Z_{\ge 0}$ and $0<\sum_\nu m_\nu <\infty$.
Then the Eisenstein series $$E_{\chi}(h\eta^{mD}u):=\sum_{\gamma\in\hat{\Gamma}_F/(\hat{\Gamma}_F\cap\hat{B}_F)}\Phi_{\chi}(h\eta^{mD} u \gamma)$$
is convergent for all $(h,u) \in \hat H_{\mathbb A} \times \hat{U}_{\mathbb A}/(\hat{U}_{\mathbb A}\cap\hat{\Gamma}_F)$. (See \eqref{eqn-eta} for the definition of $\eta^{mD}$.)
\end{Thm}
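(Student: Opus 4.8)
\emph{Strategy.} The plan is to run the Langlands--Godement argument in the form developed by Garland over $\mathbb R$, the essential new feature being that the parameter $\eta^{mD}$ supplies enough decay to sum over the infinite affine Weyl group $\hat W$. Fix $(h,u)$. Since $|\Phi_{\chi}(g)|=\Phi_{\mathrm{Re}\,\chi}(g)$ for every $g$ and $\mathrm{Re}\,\chi$ again satisfies $\mathrm{Re}\,\chi(h_{\alpha_i})<-2$, I may replace $\chi$ by $\mathrm{Re}\,\chi$ and assume $\chi\in\hat{\mathfrak h}^{*}$ is real; the summands are then nonnegative, so it suffices to prove $\sum_{\gamma}\Phi_{\chi}(h\eta^{mD}u\,\gamma)<\infty$. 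Using the Iwasawa decomposition $\hat G_{\mathbb A}=\hat K_{\mathbb A}\hat H_{\mathbb A}\hat U_{\mathbb A}$ together with the left $\hat K_{\mathbb A}$-invariance and right $\hat U_{\mathbb A}$-invariance of $\Phi_{\chi}$, each summand is a power of the $\hat H_{\mathbb A}$-part of the Iwasawa decomposition of its argument, which is what the next step exploits.

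\emph{Stratification and the $c$-functions.} By the Birkhoff decomposition, $\hat\Gamma_{F}/(\hat\Gamma_{F}\cap\hat B_{F})=\bigsqcup_{w\in\hat W}C_{w}$, where the cell $C_{w}$ is parametrized by the $F$-points of the unipotent group $\hat U_{F}\cap\dot w^{-1}\hat U^{-}_{F}\dot w$, of dimension $\ell(w)$. I would bound each nonnegative cell sum $\sum_{\gamma\in C_{w}}\Phi_{\chi}(h\eta^{mD}u\,\gamma)$ by a constant multiple of the adelic intertwining integral attached to $w$ --- a Godement-type comparison of a sum over $F$-points of a unipotent group with the corresponding integral over its $\mathbb A$-points, the latter converging precisely because $\mathrm{Re}\,\chi(h_{\alpha_i})<-2$ puts the local data in the range of absolute convergence of the local Gindikin--Karpelevich integrals. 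The explicit local computations (the same ones yielding the constant term formula) evaluate this integral as $(a\,\eta^{mD})^{w(\chi+\rho)-\rho}\,c(\chi,w)$, where $a$ is the $\hat H_{\mathbb A}$-part of the Iwasawa decomposition of $h\eta^{mD}u$ and $c(\chi,w)$ is a product, over the $\ell(w)$ real roots inverted by $w$, of ratios $\zeta_{F}(\,\cdot\,)/\zeta_{F}(\,\cdot+1)$ of values of $\zeta_{F}$ at arguments depending affinely on $\chi$. Since $\zeta_{F}(s)$ is a rational function of $q^{-s}$ with poles only on $\mathrm{Re}\,s\in\{0,1\}$, while the hypothesis keeps every such argument $<-2$ (indeed it tends to $-\infty$ as $w$ becomes long), each ratio is finite, and since $\zeta_{F}(s)/\zeta_{F}(s+1)$ stays bounded as $\mathrm{Re}\,s\to-\infty$, every factor is bounded in absolute value by a single constant $C_{0}$, so $|c(\chi,w)|\le C_{0}^{\,\ell(w)}$. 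Combining this with the bound $a^{w(\chi+\rho)-\rho}\le C(h,u)^{\,O(\ell(w))}$ (the finite part of $w(\chi+\rho)-\rho$ grows only linearly in $w$, and its level part is constant in $w$), the cell sum for $w$ is $\le Q^{-d_w}\,C(h,u)^{\,O(\ell(w))}$, where $\big|(\eta^{mD})^{w(\chi+\rho)-\rho}\big|=Q^{-d_w}$ as in the next step.

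\emph{Decay along $D$ and the sum over $\hat W$.} Put $Q:=\prod_{\nu}q_{\nu}^{m_{\nu}}$, so $1<Q<\infty$ by the hypotheses on $m$; by \eqref{eqn-eta}, $d_{w}$ picks out the $D$-direction of $w(\chi+\rho)-\rho$. The condition $\chi(h_{\alpha_i})<-2$ forces $\chi+\rho$ to have negative level $k:=\langle\chi+\rho,c\rangle<-h^{\vee}$, and Kac's formula for the action on $\hat{\mathfrak h}^{*}$ of a translation $t_{\mu}\in\hat W$ gives, for $w=\bar w\,t_{\mu}$ with $\bar w$ in the finite Weyl group, $d_{w}=\tfrac12|k|\,|\mu|^{2}+O(|\mu|)\ge\tfrac14|k|\,|\mu|^{2}$ once $|\mu|$ is large, whereas $\ell(w)\le\ell(\bar w)+\ell(t_{\mu})=O(|\mu|)$. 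Hence
$$ E_{\chi}(h\eta^{mD}u)=\sum_{w\in\hat W}\sum_{\gamma\in C_{w}}\Phi_{\chi}(h\eta^{mD}u\,\gamma)\ \le\ \sum_{w\in\hat W}Q^{-d_{w}}\,C(h,u)^{\,O(\ell(w))}\ \le\ |W|\sum_{\mu}Q^{-\frac14|k|\,|\mu|^{2}}\,C(h,u)^{\,O(|\mu|)}\ <\ \infty , $$
since the Gaussian decay in $|\mu|$ dominates the exponential growth. This gives the asserted (absolute) convergence.

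\emph{Main obstacle.} The delicate point is the uniformity in the middle step: the Godement-type comparison of the $F$-point sum on $C_{w}$ with its adelic integral must hold with implied constants growing at most exponentially in $\ell(w)$, so as to be absorbed by the Gaussian of the last step. This requires controlling the unramified local data and the unipotent groups $\hat U_{F}\cap\dot w^{-1}\hat U^{-}_{F}\dot w$ uniformly as $w$ ranges over the infinite group $\hat W$ --- in particular, controlling how the Iwasawa decomposition entangles the several root coordinates of $v\in\hat U_{F}\cap\dot w^{-1}\hat U^{-}_{F}\dot w$ --- and it is also where the sharp constant $-2$ (rather than $-1$ or $0$) is used, to keep every local intertwining integral absolutely convergent and every local factor in a uniformly bounded range. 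Once this uniformity is in hand, the summation over $\hat W$ is elementary.
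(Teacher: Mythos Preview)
Your overall architecture matches the paper's: reduce to real $\chi$, stratify by the Bruhat decomposition of $\hat\Gamma_F/(\hat\Gamma_F\cap\hat B_F)$ over $\hat W$, use the Gindikin--Karpelevich formula to produce the factor $(h\eta^{mD})^{w(\chi+\rho)-\rho}c(\chi,w)$, bound $c(\chi,w)$ by $C_0^{\ell(w)}$, and exploit the quadratic decay in the translation part of $w\in\hat W\cong W\ltimes T$ coming from $\eta^{mD}$ to dominate by a theta-series. All of that is exactly the paper's Sections~4--5 (see Proposition~\ref{prop-step-2}, Theorem~\ref{thm-const}, Corollary~\ref{cor-pos-const}, Proposition~\ref{prop-h-bound}, Theorem~\ref{thm-conv}).

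The genuine difference---and the gap in your argument---is in how you pass from ``the sum over $\hat W$ of the cell contributions converges'' to ``$E_\chi(h\eta^{mD}u)$ converges for the given $(h,u)$''. You propose a \emph{pointwise} inequality: for fixed $u$, bound $\sum_{\gamma\in C_w}\Phi_\chi(h\eta^{mD}u\gamma)$ by (a constant times) the adelic intertwining integral $\int_{\hat U_{w,\mathbb A}}\Phi_\chi(\cdots)\,du_1$, via a ``Godement-type comparison''. You correctly flag this as the delicate point, but you do not supply it, and it is not straightforward: you would need, uniformly in $w$, to control how the Iwasawa $\hat H_{\mathbb A}$-part of $h\eta^{mD}u\,bw$ depends on $b\in\hat U_{w,F}$ well enough to compare the lattice sum to the adelic integral with at most $C^{\ell(w)}$ loss. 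The paper never attempts this. Instead it \emph{integrates first}: by unfolding, $\int_{\hat U_{\mathbb A}/(\hat U_{\mathbb A}\cap\hat\Gamma_F)}E_{\chi,w}(h\eta^{mD}u)\,du$ is \emph{equal} to the intertwining integral (Proposition~\ref{prop-step-1} and \S\ref{sub-cal-local}), so no comparison inequality is needed. Summing over $\hat W$ then gives $\int E_\chi(h\eta^{mD}u)\,du<\infty$, hence $E_\chi<\infty$ almost everywhere; the upgrade to \emph{everywhere} convergence is a short separate argument (Section~\ref{subsec-almost}): if $E_\chi(h\eta^{mD}u)=\infty$ at one point, one builds a positive-measure set $U'u$ on which it is still infinite by left-translating $u$ by elements $u'$ with $h\eta^{mD}u'(h\eta^{mD})^{-1}\in\hat U_{\mathbb A}\cap\hat{\mathbb K}$, contradicting a.e.\ finiteness. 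Your route, if completed, would give pointwise convergence directly and avoid this last step; the paper's route avoids the hard pointwise comparison at the cost of a mild density/translation argument.

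Two minor corrections. First, your sign is off in the zeta discussion: for $a\in\hat\Delta_{W,+}$ one has $-(\chi+\rho)(h_a)>1$, so the zeta arguments run to $+\infty$, not $-\infty$; the boundedness of $\zeta_F(s)/\zeta_F(s+1)$ is then for $\mathrm{Re}\,s\ge 1+\varepsilon$ (Lemma~\ref{lem-zeta}), not as $\mathrm{Re}\,s\to-\infty$. Second, your aside that ``the sharp constant $-2$ \dots\ is used to keep every local intertwining integral absolutely convergent'' is slightly misplaced: the local Gindikin--Karpelevich integral for a single root $a$ converges already for $-(\chi+\rho)(h_a)>1$, i.e.\ $\chi(h_{\alpha_i})<-2$ is exactly what makes $-(\chi+\rho)(h_{\alpha_i})>1$, so the threshold $-2$ is sharp for the integrals, but not for the zeta-ratio boundedness (which only needs $>1$ as well). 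This does not affect your strategy.
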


The zeta function $\zeta_F$, which appears in the $c(\chi,w)$-functions of the constant term, satisfies a functional equation. Using this, we prove that the constant term of the Eisenstein series satisfies a family of functional equations indexed by elements in the affine Weyl group.

This paper has seven sections. In Section 1, we will provide a basic construction of an affine Kac-Moody Lie algebra and its corresponding groups, and then proceed to Section 2 where we prove an Iwasawa decomposition for these groups. Section 3 uses the Iwasawa decomposition to define an Eisenstein series. We also describe the characters we use for this definition and our analogue of the automorphism $e^{-rD}$ that appears in Garland's definition above.
  In Section 4 we calculate the constant term of our series.  The content of Section 5 establishes the region of convergence for this infinite sum, which we use in Section 6 to prove the convergence of the Eisenstein series $E_\chi$.   Finally, in Section 7 we make use of the functional equation for $\zeta_F$ to prove functional equations for the constant term of our Eisenstein series.

\subsection*{Acknowledgments} We are greatly indebted to H. Garland for his inspiring works on this subject. We especially thank him for reading an earlier version of this paper, which was the Ph.D. thesis of the second author. We were informed that M. Patnaik also considered loop Eisenstein series over function fields in his thesis \cite{Pa} without writing up details for convergence of the Eisenstein series. Instead, his thesis focused on geometric aspects of the Eisenstein series.  We thank M. Patnaik for helpful comments. We also thank Keith Conrad, who made many useful comments. Finally, we thank the referee whose suggestions improved this paper very much.

\vskip 1cm

\section{Affine Kac-Moody Lie Algebras and Groups}

  In this section, we describe the affine Kac-Moody groups that we use to define our Eisenstein series.  We will first fix notations for an affine Kac-Moody Lie algebra $\hat{\frak g}^{\, e}$. Then, following Chevalley's construction, we will use automorphisms of a representation space $V^{\lambda}$ of $\hat{\frak g}^{\, e}$ to define the associated affine Kac-Moody group $\hat G^{\lambda}$.

\subsection{Affine Kac-Moody Lie Algebras}

  Let $\frak g$ be a simple, real Lie algebra. Then we define
\begin{equation} \hat{\frak g}^{\,e}=(\mathbb R[t, t^{-1}]\otimes \frak g ) \oplus \mathbb R c \oplus \mathbb R D,
\end{equation} and endow $\hat{\frak g}^{\,e}$ with the standard bracket operation. (See \cite{AC} or \cite{Kac}.) In the expression above, $c$ is a central element and $D$ is the degree operator that acts as $t \frac{d}{dt}$ on $\mathbb R[t, t^{-1}]\otimes \frak g $ and annihilates $c$.  We set
\begin{equation} \hat{\frak h}^{\,e} = \frak h + \mathbb R c + \mathbb R D ,  \end{equation} where $\frak h$ is the Cartan subalgebra of $\frak g$.  We also set
\begin{equation}\hat{\frak g}= (\mathbb R[t, t^{-1}]\otimes \frak g ) \oplus \mathbb R c  \qquad \text{ and } \qquad \hat{\frak h}=\frak h + \mathbb R c . \end{equation}

Let $\Delta$ be the classical root system of $\frak g$, and denote the simple roots by  $\{\alpha_1,\dots,\alpha_l\}$ and the highest root by $\alpha_0$.  Then the affine roots $\hat{\Delta}$ of $\hat{\frak g}^{\,e}$ contain $l+1$ simple roots $\{\alpha_1, \dots\, \alpha_{l+1}\}$. By setting $\delta=\alpha_{0}+\alpha_{l+1}$, we can describe the set of affine roots $\hat{\Delta}$ associated to $\hat{\frak g}$ as
\begin{equation} \hat{\Delta} = \{\alpha + n\delta \ | \ \alpha\in\Delta, \ n\in \mathbb Z \} \cup \{ n\delta \ | \ n\in\mathbb Z_{\neq0} \}.
\end{equation}
The set of affine Weyl (or real) roots is denoted by  $\hat{\Delta}_W=\{\alpha + n\delta \ | \ \alpha\in\Delta, \ n\in \mathbb Z \}$.

The set of the affine roots decompose into a disjoint union of positive roots $\hat{\Delta}_+$ and negative roots $\hat{\Delta}_- = - \hat{\Delta}_+$, where
$$\hat{\Delta}_+ = \{\alpha+n\delta \ | \ \alpha\in\Delta_+, n\in\mathbb Z_{\geq0} \} \cup \{\alpha+n\delta \ | \ \alpha\in\Delta_-, n\in\mathbb Z_{>0}\} \cup \{n\delta \ | \ n\in\mathbb Z_{>0} \}.$$
Similarly we can describe the positive and negative Weyl roots by setting
\begin{equation}
\hat{\Delta}_{W,+} = \{\alpha+n\delta \ | \ \alpha\in\Delta_+, n\in\mathbb Z_{\geq0} \} \cup \{\alpha+n\delta \ | \ \alpha\in\Delta_-, n\in\mathbb Z_{>0}\},
\end{equation}
and $\hat{\Delta}_{W,-}= -\hat{\Delta}_{W,+}$.

  Let $\{h_{\alpha_1}, \dots, h_{\alpha_l+1}\}$ denote the set of simple co-roots associated to the affine simple roots $\alpha_1, \dots, \alpha_{l+1}$. In general, for any $a\in\hat{\Delta}$, we let $h_a$ denote the corresponding co-root.
Recall that we have the Killing form $(  \ , \ )$ on $\frak h$, which we normalize so that $(h_{\alpha_0},h_{\alpha_0})=2$. As in \cite{R}, we extend this bilinear form to a non-degenerate, bilinear form on $\hat{\frak h}^{\,e}$.
For each simple root $\alpha_1,\dots\alpha_{l+1}\in\hat{\Delta}$, we define a simple reflection $w_i\in\mathrm{Aut}(\hat{\frak h}^{\,e})$ for $i=1,\dots,l+1$ by
\begin{equation} w_i(h) =h- \alpha_i(h) h_{\alpha_i}.
\end{equation}
Then the affine Weyl group $\hat{W}$ is defined to be  $$\hat{W} = < w_i \ | \ i=1,\dots,l+1 >  \ \subset  \mathrm{Aut}(\hat{\frak h}^{\,e}).$$ We have
\begin{equation}\hat{W} = W \ltimes T,
\end{equation}
where $W$ is the classical Weyl group, and $T$ is a group of translations that we can index by $H\in\frak h_{\mathbb Z}$ (\cite{IM}, \cite{Kac}).

   As with $\frak g$, the algebra $\hat{\frak g}$ has a Chevalley basis which we can construct using the Chevalley basis for $\frak g$ (\cite{LG1}). First, fix a Chevalley basis for $\frak g$,
$$\Psi= \{h_{\alpha_1},\dots, h_{\alpha_l} \} \cup \{E_\alpha\}_{\alpha\in \Delta}.$$
Now we define some important elements of $\hat{\frak g}$.  For each $a=\alpha+n\delta\in\hat{\Delta}_W$, we let \[ \xi_a =  t^n\otimes E_\alpha \quad \text{ and } \quad \xi_i(n)  = t^n\otimes h_{\alpha_i} \quad \text{ for }i=1,\dots,l . \] Also, we set $h_i= h_{\alpha_i}$ for $i=1,\dots,l$, and $h_{l+1} =  -h_{\alpha_0}+\frac{2}{(\alpha_0,\alpha_0)}\, c$.
Using these elements, we fix a Chevalley basis for the algebra $\hat{\frak g}$: $$\hat{\Psi}=\{h_1,\dots,h_{l+1} \}\cup \{\xi_a \}_{a\in \hat{\Delta}_W}\cup \{\xi_i(n) \}_{i=1,...,\ell; \ n\in \mathbb{Z}_{\geq0}}.$$

  Finally, we denote  the $\mathbb Z$-span of $\hat{\Psi}$ by $\hat{\frak g}_{\mathbb Z}$. Then $\hat{\frak g}_{\mathbb Z}$ is closed under the bracket operation $[\ ,\ ]$ for $\hat{\frak g}^{\,e}$. Using $\hat{\frak g}_{\mathbb Z}$, we can make sense of an affine Kac-Moody Lie algebra over an arbitrary field $F$ by setting
\begin{equation} \hat{\frak g}^{\,e}_F = ( F\otimes_{\mathbb Z} \hat{\frak g}_{\mathbb Z} ) \oplus F D.
\end{equation}

  Let $\mathbf{D}$ denote the set of $\lambda\in (\hat{\frak h}^{\,e} )^*$ such that for $i=1,\dots,l+1$ we have $\lambda(h_{\alpha_i})\in \mathbb Z_{\geq0}$ and $\lambda(h_{\alpha_i})\neq0$ for some $i$. This is the set of \textit{dominant}, \textit{integral}, \textit{normal} weights of $\hat{\frak g}^{\,e}$.
  In \cite{LG1} we see that for each $\lambda \in \mathbf{D}$, we have an irreducible $\hat{\frak g}^{\,e}$-module,  $V^{\lambda }$, with a highest weight vector $v_{\lambda}$.  This $V^{\lambda }$ contains a $\mathbb Z$-module $V_{\mathbb Z}^{\lambda }$ satisfying
\begin{equation}
\label{eqn-z}
\frac{(\xi_a)^m}{m!}\cdot V_{\mathbb Z}^{\lambda } \subseteq V_{\mathbb Z}^{\lambda },\end{equation}
for any $a\in \hat{\Delta}_W$ and $m \in \mathbb Z_{\geq0}$. We fix this $\mathbb Z$-module $V_{\mathbb Z}^{\lambda }$ and call it the {\it Chevalley form} of $V^{\lambda }$. The representation space $V^{\lambda}$ and $V^{\lambda}_{\mathbb Z}$ decompose into a direct sum of weight spaces, $V^{\lambda}_{\mu}$ and $V^{\lambda}_{\mu,\mathbb Z}=V^{\lambda}_{\mu}\cap V^{\lambda}_{\mathbb Z} $, respectively. As a highest weight module, we know that any weight $\mu$ of $V^{\lambda}$ is of the form
\begin{equation}
\mu=\lambda-\sum_{i=1}^{l+1} k_i \alpha_i,
\end{equation}
where $k_i\in\mathbb Z_{\geq0}$. For an arbitrary field $F$, we set $V^{\lambda}_F= F\otimes_{\mathbb Z} V^{\lambda}_{\mathbb Z}$. Then $V^{\lambda}_F$ is a highest weight $\hat{\frak g}^{\,e}_F$-module.

  In the next subsection, we will use the elements of $\hat{\frak g}^{\,e}_F$ to describe some special automorphisms of the vector space $V^{\lambda}_F$.  These automorphisms generate the affine Kac-Moody group over the arbitrary field $F$.

\subsection{Construction of Affine Kac-Moody Groups}

  Let $F$ be an arbitrary field. For $a\in \hat{\Delta}_W$ and $s\in F$, we define the automorphism $\chi_{a}(s)$ of $V^{\lambda}_F$ as
\begin{equation} \label{eqn-xi}
\chi_a(s)=  \displaystyle\sum_{n\geq 0}s^n \ \frac{\xi_a^n}{n!} \, .\end{equation}
By (\ref{eqn-z}) we know that this definition works for fields of arbitrary characteristic.  For each $v\in V^{\lambda}_F$ and $a\in\hat{\Delta}_W$, there exists an $n_0$ such that for all $n\geq n_0$ we have $$\frac{\xi_a^n}{n!}\cdot v=0.$$
Hence for each $v\in V^{\lambda}_F$ and $a\in\hat{\Delta}_W$, the sum in (\ref{eqn-xi}) acts as a finite sum (\cite{LG1}).

We let $F((X))$ be the field of Laurent series in the variable $X$ with coefficients from $F$. Then $\sigma\in F((X))$ has an expression as $\sigma= \displaystyle\sum_{i\geq i_0} s_i X^i$ where $i_0\in\mathbb Z$ and $s_i\in F$.  For $\alpha\in \Delta$ (the classical roots), we let \begin{equation} \label{eqn-xi-sigma} \chi_{\alpha}(\sigma)= \displaystyle\prod_{i \geq i_0} \chi_{\alpha + i\delta} (s_i).\end{equation} For each $v\in V^{\lambda}_F$ there exists an $i_k$ such that for all $i\geq i_k$, $\chi_{\alpha+i\delta}(s) \cdot v= v$ for any $s\in F$  and so for each $v$ the product in (\ref{eqn-xi-sigma}) acts as a finite product (\cite{LG1}).

  As a result of these observations, each $\chi_{\alpha}(\sigma)$ is an automorphism of the representation space $V^{\lambda}_F$. Finally, we make the following definition.

\begin{Def}
Let $F$ be an arbitrary field and $\lambda\in\mathbf{D}$. The {\em affine Kac-Moody group} associated to $\hat{\frak g}^{\,e}_F$ and its representation space $V^{\lambda}_F$ is the following subgroup of $\mathrm{Aut}(V^{\lambda}_F)$:
\begin{equation} \hat{G}^{\lambda}_F = \left<\ \chi_{\alpha}(\sigma) \ \big| \ \ \alpha \in \Delta, \ \sigma\in F((X)) \right> \,.\end{equation}
\end{Def}

\begin{Rmk} \hfill
\begin{enumerate} \item Since we are considering the automorphisms of $V^{\lambda}_F$, our group depends on the choice of $\lambda$.  We fix a $\lambda\in\mathbf{D}$ and drop the $\lambda$ from our notation.
\item One may note that in the construction of $\hat G_F$ we only used elements of $\hat{\frak g}$ and ignored the degree operator $D$.  In section 3, we will extend our group $\hat G_F$ by a particular automorphism $\eta^{mD}$ related to $D$, thereby establishing a more complete relationship between $\hat{\frak g}^{\,e}_F=\hat{\frak g}_F \oplus F D$ and our group.  Garland extends his group in a similar way by the automorphism $e^{-rD}$ for $r>0$ (\cite{R}, \cite{AC}).
\end{enumerate}
\end{Rmk}

   In the next section, we will begin working with this group when $F$ is a field with a non-Archimedean absolute value.  Our first objective is to develop an Iwasawa decomposition for $\hat G_F$ in this case, from which we will be able to begin defining our Eisenstein series.

\medskip

\section{Iwasawa Decomposition for Affine Kac-Moody Groups}{}

In this section, we prove an Iwasawa decomposition for $\hat G_F$, where $F$ is a local field with a non-Archimedean absolute value.  In particular, we will apply this result to the groups $\hat G_{F_\nu}=\hat G_{\nu}$, where $F_\nu$ is a completion of a global function field $F$.

\medskip

  We now consider the particular case of $\hat G_{F_\nu}:=\hat G_{\nu}$, where $F_\nu$ is an arbitrary field with a non-Archimedean discrete valuation $\nu$.  For $x\in F_\nu$ let $|x|_{\nu}$ denote the absolute value that corresponds to the valuation $\nu$. We define $\mathcal O_{\nu}= \left\{x\in F_\nu \ \big| \ |x|_{\nu}\leq 1\right\}$ and $\mathcal P_{\nu} =\left\{x\in F_\nu \ \big| \ |x|_{\nu}<1\right\}$, noting that $\mathcal P_{\nu}$ is the unique maximal ideal of the ring $\mathcal O_{\nu}$.

For any $a\in\hat{\Delta}_W$ and $s\in F_\nu^{\times}$, we set
\[
w_a(s)=  \chi_a(s)\chi_{-a}(-s^{-1}) \chi_a(s) \quad \text{ and } \quad
h_a(s) = w_a(s)w_a(1)^{-1}.
\]
Likewise, for $\alpha\in\Delta$ and nonzero $\sigma \in F_\nu((X))$,  we set
\[
w_{\alpha}(\sigma) = \chi_{\alpha}(\sigma)\chi_{-\alpha}(-\sigma^{-1}) \chi_{\alpha}(\sigma)  \quad \text{ and } \quad
h_{\alpha}(\sigma) =  w_{\alpha}(\sigma)w_{\alpha}(1)^{-1}.
\]

  Using these elements, we can define the subgroups of $\hat G_{\nu}$ that appear in the Iwasawa decomposition of the affine Kac-Moody group. We let $F_\nu[[X]]\subset F_\nu((X))$  denote the ring of power series over $F_\nu$.  We then set
\begin{eqnarray*}
\hat{U}_{\nu} & \ = \ &\bigg< \chi_{\alpha}(\sigma) \ \bigg| \ \alpha\in\Delta_+, \ \sigma \in F_\nu[[X]]\ \ \text{   or   } \ \ \alpha\in\Delta_-, \ \sigma \in XF_\nu[[X]] \bigg> ,\\
\hat{H}_{\nu} & \ = \ &\bigg< h_{\alpha_i}(s) \ \bigg| \ i=1,\dots, l+1, \ s\in F_\nu^{\times} \bigg>
\end{eqnarray*}
and define $\hat{B}_\nu$ to be the group generated by $\hat{U}_\nu$ and $\hat{H}_\nu$, which can be realized as a semi-direct product $\hat{H}_{\nu} \ltimes\hat{U}_\nu$. Finally, set $$\hat{K}_\nu=\bigg<\chi_{\alpha}(\sigma) \ \bigg| \ \alpha\in\Delta, \ \sigma \in\mathcal O_{\nu}((X)) \bigg>,$$ where $\mathcal O_{\nu}$ is defined above.

\begin{Lem}[\cite{LG1}, \S14 ] Let
$\hat{N}_\nu = \bigg<w_a(s) \ \bigg| \ a\in\hat{\Delta}_W, \ s\in F_\nu^{\times} \bigg> $, $\hat{W}= \hat{N}_\nu/(\hat{N}_\nu\cap\hat{B}_\nu)=\hat{N}_\nu/\hat{H}_{\nu}$,  and $S= \{w_{\alpha_i}(1) \ | \ i=1,\dots,l+1\} $.
Then $(\hat G_{\nu},  \hat{B}_\nu, \hat{N}_\nu, S)$ is a $BN$-pair for $\hat G_{\nu}$.
\end{Lem}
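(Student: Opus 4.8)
The plan is to verify the Tits system (BN-pair) axioms directly for $(\hat G_\nu,\hat B_\nu,\hat N_\nu,S)$: that $\hat G_\nu=\langle\hat B_\nu,\hat N_\nu\rangle$ with $\hat H_\nu:=\hat B_\nu\cap\hat N_\nu$ normal in $\hat N_\nu$; that $S$ consists of involutions generating $\hat W=\hat N_\nu/\hat H_\nu$; that $s\,\hat B_\nu\,w\subseteq\hat B_\nu\,w\,\hat B_\nu\cup\hat B_\nu\,sw\,\hat B_\nu$ for all $s\in S$ and $w\in\hat W$; and that $s\,\hat B_\nu\,s^{-1}\neq\hat B_\nu$ for all $s\in S$. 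This is carried out in full in \cite{LG1}, \S14; below is the shape of the argument.

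First I would pin down the structure of $\hat H_\nu$ and $\hat N_\nu$ from the Chevalley-type commutation relations among the $\chi_a(s)$, which are available over $F_\nu$ in arbitrary characteristic because of \eqref{eqn-z}. One records $h_a(s)h_a(t)=h_a(st)$, the fact that each $w_a(s)$ normalizes $\hat H_\nu$, and the conjugation formula $w_a(s)\,\chi_b(u)\,w_a(s)^{-1}=\chi_{w_a(b)}(c_{a,b}(s)\,u)$ with $c_{a,b}(s)\in F_\nu^\times$. From these it follows that $\hat H_\nu\triangleleft\hat N_\nu$, that $\hat N_\nu\cap\hat B_\nu=\hat H_\nu$, and that $w_a(s)\mapsto w_a$ descends to an isomorphism $\hat N_\nu/\hat H_\nu\cong\hat W$ carrying $S$ onto the simple reflections $\{w_1,\dots,w_{l+1}\}$; since these generate $\hat W$ and square into $\hat H_\nu$, the axiom on $S$ and the normality part of the first axiom hold. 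Generation is also quick: every defining generator $\chi_\alpha(\sigma)$ of $\hat G_\nu$ is a locally finite product of real root elements $\chi_{\alpha+i\delta}(s_i)$, those attached to positive affine roots lying in $\hat U_\nu\subseteq\hat B_\nu$ and those attached to negative ones being $\hat N_\nu$-conjugates of the former, so $\hat G_\nu=\langle\hat B_\nu,\hat N_\nu\rangle$.

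The heart of the matter is the exchange inclusion, which I would obtain from the Bruhat decomposition $\hat G_\nu=\bigsqcup_{w\in\hat W}\hat B_\nu\,n_w\,\hat B_\nu$, with $n_w\in\hat N_\nu$ a lift of $w$. The crucial preliminary is that $\hat U_\nu$ is a locally finite product of the real root subgroups $\hat U_a$ for $a\in\hat\Delta_{W,+}$, and in particular that for each simple reflection $w_i$ there is a factorization $\hat U_\nu=\hat U_\nu^{(i)}\cdot\hat U_{\alpha_i}$ in which $w_i$ normalizes $\hat U_\nu^{(i)}$ — because $w_i$ sends $\alpha_i$ to $-\alpha_i$ and permutes $\hat\Delta_{W,+}\setminus\{\alpha_i\}$ — while $\hat U_{\alpha_i}$ is the single root group of $\alpha_i$. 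Granting this, the inclusion $n_{w_i}\,\hat B_\nu\,n_w\subseteq\hat B_\nu\,n_w\,\hat B_\nu\cup\hat B_\nu\,n_{w_i}n_w\,\hat B_\nu$ reduces, after absorbing $\hat U_\nu^{(i)}$ and $\hat H_\nu$ on the left, to the rank-one statement $n_{w_i}\,\hat U_{\alpha_i}\,n_{w_i}^{-1}\subseteq\hat B_\nu\cup\hat B_\nu\,n_{w_i}\,\hat B_\nu$, which is the familiar $\mathrm{SL}_2$-computation carried out by using the defining identity $w_a(s)=\chi_a(s)\chi_{-a}(-s^{-1})\chi_a(s)$ to rewrite $\chi_{-\alpha_i}$. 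Iterating over products of the generators $\chi_\alpha(\sigma)$ then drives any element of $\hat G_\nu$ into some $\hat B_\nu\,n_w\,\hat B_\nu$, which simultaneously reconfirms generation and yields the exchange inclusion; disjointness of the double cosets (hence uniqueness of $w$) follows from a standard analysis of the action on the weight spaces of $V^\lambda_{F_\nu}$. Finally, $n_{w_i}$ carries the root subgroup $\hat U_{\alpha_i}\subseteq\hat U_\nu$ onto $\hat U_{-\alpha_i}$, which is not contained in $\hat B_\nu$, so $n_{w_i}\,\hat B_\nu\,n_{w_i}^{-1}\neq\hat B_\nu$ and the last axiom holds.

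The step I expect to be the real obstacle is the rank-one reduction, because of two features absent from the classical finite-dimensional picture. First, $\hat U_\nu$ is an infinite product of root subgroups, so the factorization $\hat U_\nu=\hat U_\nu^{(i)}\cdot\hat U_{\alpha_i}$ and its $n_{w_i}$-stability must be justified as genuine identities of automorphisms of $V^\lambda_{F_\nu}$, using that each such automorphism acts locally finitely. Second, the ``rank-one'' subgroups are copies of $\mathrm{SL}_2$ over the Laurent series field $F_\nu((X))$ rather than over $F_\nu$; in particular $\alpha_{l+1}=\delta-\alpha_0$ lies in the loop direction, so $\hat U_{\alpha_{l+1}}$ and its $\mathrm{SL}_2$ have to be handled with the affine root bookkeeping of $\hat\Delta_W$ rather than with the classical system $\Delta$. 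Once these points are settled, what remains is routine manipulation of the Chevalley relations over an arbitrary field, as done in \cite{LG1}.
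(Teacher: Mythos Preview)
The paper does not prove this lemma at all; it is stated with a citation to \cite{LG1}, \S14 and used as a black box. Your proposal correctly acknowledges this and then supplies a sketch of the standard Tits-system verification that Garland carries out there, so in that sense you have gone beyond what the paper itself does.

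Your outline is faithful to the actual argument in \cite{LG1}: the Chevalley commutation relations, the identification $\hat N_\nu/\hat H_\nu\cong\hat W$, the rank-one reduction via the factorization $\hat U_\nu=\hat U_\nu^{(i)}\cdot\hat U_{\alpha_i}$, and the $\mathrm{SL}_2$ computation for the exchange axiom are exactly the ingredients Garland uses. Your identification of the two genuine subtleties---local finiteness of the infinite root-group products acting on $V^\lambda_{F_\nu}$, and the need to treat $\alpha_{l+1}$ via the affine root system---is also on target; these are precisely the points where \cite{LG1} has to work harder than the classical finite-dimensional case. One small correction: the rank-one subgroups attached to real affine roots $a\in\hat\Delta_W$ are copies of $\mathrm{SL}_2$ over $F_\nu$, not over $F_\nu((X))$---the Laurent-series parameter is absorbed into the root grading via $\chi_a(s)=\chi_\alpha(sX^n)$ for $a=\alpha+n\delta$, so the homomorphism $\varphi_a$ of Lemma~\ref{lem-hom} has domain $\mathrm{SL}_2(F_\nu)$.
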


\begin{Rmk} There is a natural isomorphism between $\hat{W}=\hat{N}_\nu/(\hat{N}_\nu\cap\hat{B}_\nu)$ and the affine Weyl group $\hat{W}=\big< w_i \ | \ i=1,\dots,l+1 \big>$, which identifies the element $w_{\alpha_i}(1)$ for $\alpha_i$  with the simple reflection $w_i$. See \cite{LG1}.
\end{Rmk}

  In light of the theory of $BN$-pairs (\cite{Kumar} \S 5, \cite{Hum} \S 29), we have the following facts regarding $\hat G_{\nu}$:

\begin{Cor} \label{cor-disjoint}
For the affine Kac-Moody group $\hat G_{\nu}$ we have:
\begin{enumerate}
\item $\hat G_{\nu}=  \displaystyle\bigcup_{w\in \hat{W}} \ \hat{B}_\nu w \hat{B}_\nu$       (disjoint union).
\item For $w\in \hat{W}$, let $w=w_{i_1}\dots w_{i_k}$ be a reduced expression. Then we have $\hat{B}_\nu w\hat{B}_\nu=Y_{i_1}\dots Y_{i_k} \hat{B}_\nu$, where $Y_i$ is a set of representatives for the cosets $(\hat{B}_\nu w_i\hat{B}_\nu)/\hat{B}_\nu$, for each $i=1,\dots,l+1$.
\end{enumerate}
\end{Cor}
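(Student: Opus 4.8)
The plan is to deduce both statements purely formally from the $BN$-pair axioms established in the preceding Lemma, together with the identification $\hat W \cong \hat N_\nu/\hat H_\nu$ recorded in the Remark. Nothing special to the affine Kac-Moody setting is needed here: the genuine work — checking that $(\hat G_\nu,\hat B_\nu,\hat N_\nu,S)$ really does satisfy the four $BN$-pair axioms — was already done in \cite{LG1} and is what the Lemma delivers. Once one has a $BN$-pair, statements (1) and (2) are the standard Bruhat decomposition and its reduced-word refinement.

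For part (1) I would invoke the Bruhat decomposition for groups with a $BN$-pair. Concretely, picking a representative $n\in\hat N_\nu$ of $w\in\hat W$, the axioms $w_i\hat B_\nu w\subseteq \hat B_\nu w\hat B_\nu\cup \hat B_\nu w_iw\hat B_\nu$ and $w_i\hat B_\nu w_i\ne\hat B_\nu$ together yield, by induction on the length of a reduced word for $w$, first that $\hat G_\nu=\bigcup_{w\in\hat W}\hat B_\nu w\hat B_\nu$ and then that the double cosets $\hat B_\nu w\hat B_\nu$ attached to distinct $w$ are disjoint. This is precisely the argument of \cite{Kumar}, \S 5 and \cite{Hum}, \S 29, which are written so as to cover the (possibly infinite-rank) Kac-Moody situation; in particular the infinitude of $\hat W$ poses no difficulty, and the indexing set is taken to be $\hat N_\nu/(\hat N_\nu\cap\hat B_\nu)=\hat N_\nu/\hat H_\nu$ via the Remark. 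So here I would simply cite the general theory.

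For part (2) I would use the multiplicativity of Bruhat cells along reduced words: if $w=w_{i_1}\cdots w_{i_k}$ is reduced with respect to $S$, then $\hat B_\nu w\hat B_\nu=(\hat B_\nu w_{i_1}\hat B_\nu)(\hat B_\nu w_{i_2}\hat B_\nu)\cdots(\hat B_\nu w_{i_k}\hat B_\nu)$, again a standard consequence of the axioms proved by induction on $k$ using $\ell(w_{i_1}w')=\ell(w')+1$ whenever the word stays reduced. Writing $Y_i$ for a fixed set of coset representatives of $(\hat B_\nu w_i\hat B_\nu)/\hat B_\nu$, one has $\hat B_\nu w_i\hat B_\nu=Y_i\hat B_\nu$; substituting this into the last factor and then successively absorbing each $\hat B_\nu$ into the factor immediately to its left collapses the product to $Y_{i_1}\cdots Y_{i_k}\hat B_\nu$. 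That this value is independent of the chosen reduced word is immediate from the disjointness in part (1). I do not anticipate any real obstacle; the only thing requiring care is bookkeeping — using the identification $\hat W\cong\hat N_\nu/\hat H_\nu$ consistently and taking ``reduced'' with respect to the generating set $S=\{w_{\alpha_i}(1)\}$ — so that the proof amounts to a precise pointer to the $BN$-pair literature.
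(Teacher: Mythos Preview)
Your proposal is correct and matches the paper's approach exactly: the paper also gives no detailed argument but simply cites the general theory of $BN$-pairs (\cite{Kumar}, \S 5 and \cite{Hum}, \S 29) as yielding both the Bruhat decomposition and its reduced-word refinement. Your additional unpacking of how the $BN$-pair axioms lead to the two statements is accurate and more explicit than what the paper provides.
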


  We continue by choosing a specific set of coset representatives for each $Y_i$. To this end, we recall the following lemma from \cite{LG1}, \S 16:

\begin{Lem} \label{lem-expression}
 Every element $x\in\hat{B}_\nu w\hat{B}_\nu$ has an expression
$$x=\left(\prod_{a\in\hat{\Delta}_w}\chi_{a}(s_{a}) \right) w y$$
where $s_{a}\in F_\nu$, $y\in\hat{B}_\nu$, and $\hat{\Delta}_w=\hat{\Delta}_{W,+}\cap w\hat{\Delta}_{W,-}$.
\end{Lem}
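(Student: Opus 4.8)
The natural approach is induction on the length $\ell(w)$, built on the Bruhat-cell description in Corollary~\ref{cor-disjoint}. The base case $w=1$ is immediate: $\hat\Delta_1=\hat\Delta_{W,+}\cap\hat\Delta_{W,-}=\emptyset$, so the product over $\hat\Delta_1$ is empty, and since $\hat B_\nu\,1\,\hat B_\nu=\hat B_\nu$ one takes $y=x\in\hat B_\nu$.

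Before the induction proper I would pin down the coset representatives $Y_i$ for $(\hat B_\nu w_i\hat B_\nu)/\hat B_\nu$ appearing in Corollary~\ref{cor-disjoint}(2). Writing $\hat U_{\alpha_i}=\{\chi_{\alpha_i}(s):s\in F_\nu\}$, one uses $\hat B_\nu=\hat H_\nu\ltimes\hat U_\nu$ together with the facts that $w_i$ permutes $\hat\Delta_{W,+}\setminus\{\alpha_i\}$ (hence normalizes the subgroup they generate) and normalizes $\hat H_\nu$ to reduce $\hat B_\nu w_i\hat B_\nu$ to $\hat U_{\alpha_i}\,w_i\,\hat B_\nu$; a rank-one computation inside $\langle\chi_{\alpha_i}(s),\chi_{-\alpha_i}(s):s\in F_\nu\rangle$ then shows the cosets $\chi_{\alpha_i}(s)\,w_i\,\hat B_\nu$, $s\in F_\nu$, are pairwise distinct. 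Thus one may take $Y_i=\{\chi_{\alpha_i}(s)\,w_i:s\in F_\nu\}$, which already settles $\ell(w)=1$, where $\hat\Delta_{w_i}=\{\alpha_i\}$.

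For the inductive step, fix a reduced expression $w=w_{i_1}\cdots w_{i_k}$. By Corollary~\ref{cor-disjoint}(2) and the previous step, every $x\in\hat B_\nu w\hat B_\nu$ can be written
\[
x=\chi_{\alpha_{i_1}}(s_1)\,w_{i_1}\cdot\chi_{\alpha_{i_2}}(s_2)\,w_{i_2}\cdots\chi_{\alpha_{i_k}}(s_k)\,w_{i_k}\cdot y,\qquad s_j\in F_\nu,\ \ y\in\hat B_\nu.
\]
Pushing each $w_{i_j}$ to the right (insert $w_{i_j}^{-1}w_{i_j}$ and telescope), using the conjugation rule $w'\,\chi_a(s)\,(w')^{-1}=\chi_{w'(a)}(\pm s)$ for $w'\in\hat W$ and a real root $a\in\hat\Delta_W$ — valid since $\hat W$ preserves $\hat\Delta_W$ and the sign is a unit — one obtains
\[
x=\chi_{\beta_1}(t_1)\,\chi_{\beta_2}(t_2)\cdots\chi_{\beta_k}(t_k)\;w\;y,\qquad \beta_1=\alpha_{i_1},\ \ \beta_j=w_{i_1}\cdots w_{i_{j-1}}(\alpha_{i_j}),\ \ t_j\in F_\nu.
\]
Finally I would invoke the classical lemma on inversion sets: since the word is reduced, $\beta_1,\dots,\beta_k$ are pairwise distinct elements of $\hat\Delta_{W,+}$ and $\{\beta_1,\dots,\beta_k\}=\hat\Delta_{W,+}\cap w\hat\Delta_{W,-}=\hat\Delta_w$ (no imaginary root $n\delta$ can occur, these being $\hat W$-fixed). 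Ordering $\hat\Delta_w$ as $\beta_1,\dots,\beta_k$ gives exactly $x=\big(\prod_{a\in\hat\Delta_w}\chi_a(s_a)\big)wy$, completing the induction.

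The only genuinely substantial ingredient is the structure theory underlying Corollary~\ref{cor-disjoint} and the rank-one step above — that $(\hat G_\nu,\hat B_\nu,\hat N_\nu,S)$ is a $BN$-pair whose root subgroups $\chi_a$ ($a\in\hat\Delta_W$) satisfy the expected commutator and Weyl-element relations in the affine Kac-Moody setting; this is established in \cite{LG1}, \S\S14--16, which is why the lemma is quoted there rather than reproven. Everything else is formal: the telescoping is bookkeeping, and the identification of $\{\beta_1,\dots,\beta_k\}$ with the inversion set $\hat\Delta_w$ is a fact about Coxeter groups that applies to $\hat W$ verbatim. The one place to take a little care is that, for $i=l+1$, the affine simple root is $\alpha_{l+1}=\delta-\alpha_0$, so $\chi_{\alpha_{l+1}}(s)=\chi_{-\alpha_0}(sX)$ in terms of the generators $\chi_\alpha(\sigma)$, $\sigma\in F_\nu((X))$, of $\hat G_\nu$.
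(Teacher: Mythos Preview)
The paper does not prove this lemma; it is quoted from \cite{LG1}, \S16. In fact the paper's logic runs in the opposite direction from yours: Lemma~\ref{lem-expression} is invoked precisely to justify that the coset representatives $Y_i$ may be taken of the form $\chi_{\alpha_i}(s)w_i$, rather than the other way around.

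Your argument is nonetheless a correct reconstruction of the standard proof. You first establish the rank-one case directly (the $\ell(w)=1$ instance of the lemma), thereby pinning down $Y_i$ without appeal to Lemma~\ref{lem-expression}, and then bootstrap via Corollary~\ref{cor-disjoint}(2) and the conjugation rule $w'\chi_a(s)(w')^{-1}\in U_{w'(a)}$ to the general case. This is not circular, since Corollary~\ref{cor-disjoint}(2) is a general $BN$-pair fact independent of any particular choice of representatives. The identification of $\{\beta_1,\dots,\beta_k\}$ with the inversion set $\hat\Delta_w$ is the standard Coxeter-group fact you cite, and the conjugation formula is part of the structure theory in \cite{LG1}, \S13, that you correctly flag as the substantive input. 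So your proof and the paper's treatment differ only in that you supply an argument where the paper defers to the reference.
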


\noindent In particular, if we take  $w=w_i$ for some $i$, each element $x\in\hat{B}_\nu w_i \hat{B}_\nu$ has an expression as $x=\chi_{\alpha_i}(s) w_i y$ with $y\in\hat{B}_\nu$.

\medskip

  Now since we choose our $Y_i$'s to be representatives of the coset space $\hat{B}_\nu w_i \hat{B}/\hat{B}_\nu$, we can choose our $Y_i$'s to consist of elements of the form $\chi_{\alpha_i}(s)w_i$ where $s\in F_\nu$.

\begin{Lem}[\cite{LG1}, \S16] \label{lem-hom}
For an arbitrary field $F$, and any $a\in\hat{\Delta}_W$, we have a homomorphism $\varphi_a:SL_2(F)\rightarrow\hat G_F$ that is defined by the conditions:
 \[\varphi_a( (\begin{smallmatrix}1&s\\0&1\end{smallmatrix})) \ = \ \chi_a(s), \quad
 \varphi_a((\begin{smallmatrix}1&0\\s&1\end{smallmatrix})) \ = \ \chi_{-a}(s), \quad
 \varphi_a ((\begin{smallmatrix}0&1\\-1&0\end{smallmatrix} ) ) \ = w_a(1), \quad
 \varphi_a ( (\begin{smallmatrix}r&0\\0&r^{-1}\end{smallmatrix} ) ) \ = \ h_a(r)  \]
  for $s\in F_\nu$ and $r\in F_\nu^{\times}$.
\end{Lem}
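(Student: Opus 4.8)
The plan is to construct $\varphi_a$ from the classical rank-one Chevalley construction applied to the finite-dimensional $\mathfrak{sl}_2$-submodules of $V^{\lambda}_F$, and then patch. Fix $a=\alpha+n\delta\in\hat{\Delta}_W$ with $\alpha\in\Delta$. The elements $\xi_a,\xi_{-a},h_a$ span a subalgebra $\mathfrak s_a\subseteq\hat{\mathfrak g}^{\,e}_F$; from the affine Chevalley relations one checks $[\xi_a,\xi_{-a}]=h_a$ and $[h_a,\xi_{\pm a}]=\pm 2\xi_{\pm a}$, so $\mathfrak s_a\cong\mathfrak{sl}_2(F)$ via $\xi_a\mapsto(\begin{smallmatrix}0&1\\0&0\end{smallmatrix})$, $\xi_{-a}\mapsto(\begin{smallmatrix}0&0\\1&0\end{smallmatrix})$, $h_a\mapsto(\begin{smallmatrix}1&0\\0&-1\end{smallmatrix})$. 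By \eqref{eqn-z} the divided powers $\xi_{\pm a}^{\,m}/m!$ preserve $V^{\lambda}_{\mathbb Z}$, so $\xi_{\pm a}$ act locally nilpotently on $V^{\lambda}_F$, while $h_a$ acts as the scalar $\mu(h_a)$ on each weight space $V^{\lambda}_{\mu,F}$.

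First I would check that $V^{\lambda}_F$ is the union of its finite-dimensional $\mathfrak s_a$-submodules. For $v$ lying in a single weight space, the PBW theorem for $\mathfrak s_a$ gives $U(\mathfrak s_a)v=\mathrm{span}\{\xi_{-a}^{\,i}\xi_a^{\,k}v\}$, since any $h_a$-factor acts as a scalar on a weight vector; local nilpotence of $\xi_a$ bounds $k$, and then local nilpotence of $\xi_{-a}$ bounds $i$ for each such $k$, so $U(\mathfrak s_a)v$ is finite-dimensional. On any such submodule $W$ the lattice $W\cap V^{\lambda}_{\mathbb Z}$ is stable under all $\xi_{\pm a}^{\,m}/m!$, i.e.\ is admissible in the rank-one sense. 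The classical Chevalley--Steinberg theory of $SL_2$ over an arbitrary field then supplies, for each $W$, a homomorphism $\rho_W:SL_2(F)\to GL(W)$ with $\rho_W(\begin{smallmatrix}1&s\\0&1\end{smallmatrix})=\sum_{m\ge 0}s^m\xi_a^{\,m}/m!\,\big|_W$ and $\rho_W(\begin{smallmatrix}1&0\\s&1\end{smallmatrix})=\sum_{m\ge 0}s^m\xi_{-a}^{\,m}/m!\,\big|_W$, and these are compatible with inclusions $W\subseteq W'$.

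Then I would define $\varphi_a(g)\in\mathrm{Aut}(V^{\lambda}_F)$ by $\varphi_a(g)\big|_W=\rho_W(g)$ for every finite-dimensional $\mathfrak s_a$-submodule $W$; this is well defined because $V^{\lambda}_F=\bigcup_W W$ and the $\rho_W$ agree on overlaps, and it is a group homomorphism because each $\rho_W$ is. The four defining identities are then immediate: by \eqref{eqn-xi}, $\chi_a(s)\big|_W=\rho_W(\begin{smallmatrix}1&s\\0&1\end{smallmatrix})$ and $\chi_{-a}(s)\big|_W=\rho_W(\begin{smallmatrix}1&0\\s&1\end{smallmatrix})$ for every $W$; the matrix identity $(\begin{smallmatrix}0&1\\-1&0\end{smallmatrix})=(\begin{smallmatrix}1&1\\0&1\end{smallmatrix})(\begin{smallmatrix}1&0\\-1&1\end{smallmatrix})(\begin{smallmatrix}1&1\\0&1\end{smallmatrix})$ together with the definition $w_a(1)=\chi_a(1)\chi_{-a}(-1)\chi_a(1)$ gives $\varphi_a(\begin{smallmatrix}0&1\\-1&0\end{smallmatrix})=w_a(1)$, and $\varphi_a(\begin{smallmatrix}r&0\\0&r^{-1}\end{smallmatrix})=h_a(r)$ follows from $h_a(r)=w_a(r)w_a(1)^{-1}$ and the corresponding identity in $SL_2(F)$. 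Finally, $SL_2(F)$ is generated by the elementary matrices, so $\varphi_a(SL_2(F))$ is generated by the $\chi_{\pm a}(s)$; writing $-a=(-\alpha)+(-n)\delta$ with $\pm\alpha\in\Delta$, we have $\chi_a(s)=\chi_\alpha(sX^n)$ and $\chi_{-a}(s)=\chi_{-\alpha}(sX^{-n})$ in the notation of \eqref{eqn-xi-sigma}, both of which lie in $\hat G_F$ by definition, so $\varphi_a(SL_2(F))\subseteq\hat G_F$.

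The main obstacle is the rank-one input invoked in the second paragraph: that a finite-dimensional $\mathfrak{sl}_2(F)$-module carrying an admissible $\mathbb Z$-lattice admits a (necessarily unique) compatible action of $SL_2(F)$ --- equivalently, that the Steinberg relations among the operators $\sum_m s^m\xi_{\pm a}^{\,m}/m!$ hold on all of $V^{\lambda}_F$. This is classical --- it is precisely where the integrality built into the Chevalley $\mathbb Z$-form and the simple-connectedness of $SL_2$ are used --- and the details are carried out in \cite{LG1}, \S16; passing to the union of the finite-dimensional submodules and verifying that the resulting operators are genuine automorphisms are then routine.
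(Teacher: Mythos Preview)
The paper does not give its own proof of this lemma; it is simply quoted from \cite{LG1}, \S16. Your proposal correctly reconstructs the standard argument one finds there: identify the $\mathfrak{sl}_2$-triple $\{\xi_a,\xi_{-a},h_a\}$, use local nilpotence (stated in the paper just after \eqref{eqn-xi}, for every real root, so in particular for $-a$) to write $V^{\lambda}_F$ as a directed union of finite-dimensional $\mathfrak s_a$-submodules carrying admissible lattices, apply the rank-one Chevalley--Steinberg construction on each, and patch. Your verification of the four displayed identities and of the containment $\varphi_a(SL_2(F))\subseteq\hat G_F$ via $\chi_{\pm a}(s)=\chi_{\pm\alpha}(sX^{\pm n})$ is correct, and your own acknowledgment that the substantive input --- the Steinberg relations for the divided-power exponentials --- is exactly what \cite{LG1}, \S16 supplies is accurate.
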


  For the field $F_\nu$, we know that the group $SL_2(F_\nu)$ contains the subgroup $B$ consisting of upper triangular matrices, and the subgroup $K$ = $SL_2(\mathcal O_{\nu})$. Moreoever,  we have the Iwasawa decomposition $SL_2(F_\nu)=K\,B$. For more information see \cite{IM}, \S2.

\begin{Lem} \label{lem-parts}
We have the following:
\begin{enumerate}
\item $\varphi_a(B)\subset\hat{B}_\nu$, if $a\in\hat{\Delta}_{W,+}$.
\item $\varphi_a(K)\subset\hat{K}_\nu$, for any $a\in\hat{\Delta}_W$.
\item We can choose the elements of $Y_i$ for $i=1,\dots,l+1$, so that they are the images through $\varphi_{\alpha_i}$ of elements in $K$.  In particular, we may assume that $Y_i\subset \hat{K}_\nu$ for $i=1,\dots,l+1$.
\end{enumerate}
\end{Lem}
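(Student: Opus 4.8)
The plan is to push everything down to $SL_2(F_\nu)$ via the homomorphisms $\varphi_a$ of Lemma \ref{lem-hom}. For (1) and (2) I would exhibit convenient generating sets of the matrix subgroups $B$ and $K=SL_2(\mathcal O_\nu)$ of $SL_2(F_\nu)$ and check that $\varphi_a$ carries each generator into $\hat B_\nu$, respectively $\hat K_\nu$; part (3) then follows by combining (1), (2), Lemma \ref{lem-expression}, and the Iwasawa decomposition $SL_2(F_\nu)=KB$. The one genuinely non-formal point, which I expect to be the main obstacle, is showing that $h_a(r)\in\hat H_\nu$ for an arbitrary (not necessarily simple) real root $a$ in part (1): $\hat H_\nu$ was defined using only the simple coroots $h_{\alpha_i}(s)$, so this rests on the $BN$-pair fact that the projection $\hat N_\nu\to\hat N_\nu/\hat H_\nu=\hat W$ sends $w_a(s)$ to the reflection $w_a$ regardless of $s\in F_\nu^\times$; I would either cite this from \cite{LG1} or re-derive it from the $BN$-pair axioms.

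For (1): the group $B\subset SL_2(F_\nu)$ of upper-triangular matrices is generated by the $\left(\begin{smallmatrix}1&s\\0&1\end{smallmatrix}\right)$ with $s\in F_\nu$ and the $\left(\begin{smallmatrix}r&0\\0&r^{-1}\end{smallmatrix}\right)$ with $r\in F_\nu^\times$, whose $\varphi_a$-images are $\chi_a(s)$ and $h_a(r)$. Writing $a=\alpha+n\delta$, the hypothesis $a\in\hat{\Delta}_{W,+}$ means either $\alpha\in\Delta_+,\ n\ge0$, so that $sX^n\in F_\nu[[X]]$, or $\alpha\in\Delta_-,\ n>0$, so that $sX^n\in XF_\nu[[X]]$; in either case $\chi_a(s)=\chi_\alpha(sX^n)\in\hat U_\nu$ by \eqref{eqn-xi-sigma} and the definition of $\hat U_\nu$. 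For $h_a(r)=w_a(r)w_a(1)^{-1}$, both factors lie in $\hat N_\nu$ and project to the same element $w_a$ in $\hat W$, so $h_a(r)$ lies in the kernel $\hat H_\nu$. Hence $\varphi_a(B)\subset\hat H_\nu\ltimes\hat U_\nu=\hat B_\nu$.

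For (2): since $\mathcal O_\nu$ is a local ring, $SL_2(\mathcal O_\nu)$ is generated by the elementary matrices $\left(\begin{smallmatrix}1&s\\0&1\end{smallmatrix}\right)$ and $\left(\begin{smallmatrix}1&0\\s&1\end{smallmatrix}\right)$ with $s\in\mathcal O_\nu$ (a standard fact for local rings, proved by row reduction using that in $\left(\begin{smallmatrix}a&b\\c&d\end{smallmatrix}\right)\in SL_2(\mathcal O_\nu)$ at least one of $a,c$ is a unit, as $aR+cR=R$). Their $\varphi_a$-images are $\chi_a(s)$ and $\chi_{-a}(s)$; with $a=\alpha+n\delta$ these equal $\chi_\alpha(sX^n)$ and $\chi_{-\alpha}(sX^{-n})$, and since $s\in\mathcal O_\nu$ both $sX^{n}$ and $sX^{-n}$ lie in $\mathcal O_\nu((X))$, so both images lie in $\hat K_\nu$. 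Therefore $\varphi_a(K)\subset\hat K_\nu$.

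For (3): by Lemma \ref{lem-expression} with $w=w_i$ (and the remark following it), every coset in $(\hat B_\nu w_i\hat B_\nu)/\hat B_\nu$ has a representative $\chi_{\alpha_i}(s)w_i$ with $s\in F_\nu$, and by Lemma \ref{lem-hom} this equals $\varphi_{\alpha_i}\!\left(\left(\begin{smallmatrix}1&s\\0&1\end{smallmatrix}\right)\left(\begin{smallmatrix}0&1\\-1&0\end{smallmatrix}\right)\right)=\varphi_{\alpha_i}\!\left(\left(\begin{smallmatrix}-s&1\\-1&0\end{smallmatrix}\right)\right)$. Applying $SL_2(F_\nu)=KB$, write $\left(\begin{smallmatrix}-s&1\\-1&0\end{smallmatrix}\right)=kb$ with $k\in K$ and $b\in B$. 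Since $\alpha_i\in\hat{\Delta}_{W,+}$ for all $i=1,\dots,l+1$ (note $\alpha_{l+1}=-\alpha_0+\delta$), parts (1) and (2) give $\chi_{\alpha_i}(s)w_i=\varphi_{\alpha_i}(k)\varphi_{\alpha_i}(b)\in\hat K_\nu\hat B_\nu$, so $\chi_{\alpha_i}(s)w_i\hat B_\nu=\varphi_{\alpha_i}(k)\hat B_\nu$ with $\varphi_{\alpha_i}(k)\in\hat K_\nu$. Replacing each coset representative in this way yields a choice of $Y_i$ contained in $\hat K_\nu$.
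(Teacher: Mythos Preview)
Your proof is correct and follows essentially the same approach as the paper's: both reduce (1) and (2) to checking the images of generators of $B$ and $K$ under $\varphi_a$, and derive (3) from Lemma \ref{lem-expression} together with the Iwasawa decomposition of $SL_2(F_\nu)$. Your treatment is in fact slightly more careful than the paper's in part (1), since you explicitly justify $h_a(r)\in\hat H_\nu$ for a non-simple real root $a$ via the $BN$-pair projection $\hat N_\nu\to\hat W$, whereas the paper simply asserts it.
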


\begin{proof}

(1)  Consider an arbitrary element $(\begin{smallmatrix}r&s\\0&r^{-1}\end{smallmatrix})\in B$. We can express it in the following way:
$$(\begin{smallmatrix}r&s\\0&r^{-1}\end{smallmatrix})=(\begin{smallmatrix}r&0\\0&r^{-1}\end{smallmatrix})(\begin{smallmatrix}1&r^{-1}s\\0&1\end{smallmatrix}).$$
Thus $\varphi_a((\begin{smallmatrix}r&s\\0&r^{-1}\end{smallmatrix}))=h_a(r)\chi_a(r^{-1}s).$  Since $\hat{B}_\nu$ is generated by $\hat{H}_{\nu} $ and $\hat{U}_\nu$, it suffices to show that $\chi_a(r^{-1}s)\in\hat{U}_\nu$. Since we assumed that $a\in\Delta_{W,+}$, we know $a=\alpha +n\delta$ where either $n=0$ and $\alpha\in\Delta_+$, or $n> 0$ and $\alpha\in\Delta$.  We can write $\chi_a(r^{-1}s)=\chi_{\alpha}(\sigma)$ by setting $\sigma=(r^{-1}s)X^n$ (see the definition of $\chi_{\alpha}(\sigma), \ \S1)$. The conditions on $\alpha$ and $n$ guarantee that $\chi_{\alpha}(\sigma)\in\hat{U}_\nu$.  Therefore, when $a\in\hat{\Delta}_{W,+}$ we have the desired result.

(2)  We know that $$K=\bigg<(\begin{smallmatrix}1&s\\0&1\end{smallmatrix}), (\begin{smallmatrix}1&0\\s'&1\end{smallmatrix}) \ \bigg| \ s,s'\in \mathcal O_{\nu}\bigg>$$ (see \cite{IM}, \S2 ), and so an arbitrary element of $K$ will be a finite product of these matrices. As a result, the image of an element in $K$ through the homomorphism $\varphi_a$ will be a finite product of $\chi_a(s)$ and $\chi_{-a}(s')$ with $s, s'\in \mathcal O_{\nu}$. It suffices to show that $\chi_a(s)$ and $\chi_{-a}(s')$ are elements of $\hat{K}_\nu$. If $a=\alpha + n\delta$, then $-a=-\alpha-n\delta$ and we can express $\chi_a(s)=\chi_\alpha(sX^n)$ and $\chi_{-a}(s')=\chi_{-\alpha}(s'X^{-n})$.  Since the coefficients $s$ and $s'$ are each elements of $\mathcal O_{\nu}$, we have that $\chi_a(s)$ and $\chi_{-a}(s')$ are elements of $\hat{K}_\nu$.

 (3)  Recall that $Y_i$ is a set of coset representatives for $(\hat{B}_\nu w_i\hat{B}_\nu)/\hat{B}_\nu$, for $w_i\in S$. By Lemma  \ref{lem-expression} we can choose these representatives to be of the form $\chi_{\alpha_i}(s) w_i$ for $s\in F_\nu$, but then $$\chi_{\alpha_i}(s) w_i=\varphi_{\alpha_i}((\begin{smallmatrix}1&s\\0&1\end{smallmatrix}))\varphi_{\alpha_i}((\begin{smallmatrix}0&1\\-1&0\end{smallmatrix}))=\varphi_{\alpha_i}((\begin{smallmatrix}-s&1\\-1&0\end{smallmatrix})).$$  However, by the Iwasawa decomposition for $SL_2(F)$, we know that $(\begin{smallmatrix}-s&1\\-1&0\end{smallmatrix})=kb$ for $k\in K$ and $b\in B$. Thus, we have $\chi_{\alpha_i}(s) w_i=\varphi_{\alpha_i}(k)\varphi_{\alpha_i}(b)$ and by part (1) of this lemma we know that the coset $\chi_{\alpha_i}(s) w_i\hat{B}=\varphi_{\alpha_i}(k)\varphi_{\alpha_i}(b)\hat{B}=\varphi_{\alpha_i}(k)\hat{B}$. So we can take our representatives for $(\hat{B}_\nu w_i \hat{B}_\nu)/\hat{B}_\nu$ to be of the form $\varphi_{\alpha_i}(k)$ for some $k\in K$.   Finally, part (2) of this lemma implies that we can choose our $Y_i$ to be a subset of $\hat{K}_\nu$.

\end{proof}

Now we can prove that for any $\nu\in\mathcal V$, the affine Kac-Moody group $\hat G_{\nu}$ has an Iwasawa decomposition.

\begin{Thm}[Iwasawa decomposition, \cite{LG1}] Let $F_\nu$ be a field with a non-Archimedean discrete valuation $\nu$, and let $\hat G_{\nu}$ be an affine Kac-Moody group over $F_\nu$. Then
$$\hat G_{\nu} \,=\, \hat{K}_\nu \, \hat{H}_{\nu}  \, \hat{U}_\nu,$$
where $\hat{K}_\nu$, $\hat{H}_{\nu} $, and $\hat{U}_\nu$ are defined as above.
\end{Thm}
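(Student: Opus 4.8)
The plan is to establish the non-trivial inclusion $\hat{G}_\nu \subseteq \hat{K}_\nu\hat{H}_\nu\hat{U}_\nu$; the reverse inclusion is immediate because $\hat{K}_\nu$, $\hat{H}_\nu$, and $\hat{U}_\nu$ are all subgroups of $\hat{G}_\nu$. Since $\hat{B}_\nu = \hat{H}_\nu\ltimes\hat{U}_\nu$, we have $\hat{K}_\nu\hat{B}_\nu = \hat{K}_\nu\hat{H}_\nu\hat{U}_\nu$ as subsets of $\hat{G}_\nu$, so it is enough to prove $\hat{G}_\nu = \hat{K}_\nu\hat{B}_\nu$.

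First I would take an arbitrary $g\in\hat{G}_\nu$ and apply the Bruhat-type decomposition of Corollary \ref{cor-disjoint}(1): there is a $w\in\hat{W}$ with $g\in\hat{B}_\nu w\hat{B}_\nu$. If $w$ is the identity, then $g\in\hat{B}_\nu\subseteq\hat{K}_\nu\hat{B}_\nu$ and we are done; otherwise I would fix a reduced expression $w = w_{i_1}\cdots w_{i_k}$ and invoke Corollary \ref{cor-disjoint}(2) to write $\hat{B}_\nu w\hat{B}_\nu = Y_{i_1}\cdots Y_{i_k}\hat{B}_\nu$, where $Y_i$ is the chosen set of representatives for the cosets $(\hat{B}_\nu w_i\hat{B}_\nu)/\hat{B}_\nu$. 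Note that the left-hand factor $\hat{B}_\nu$ of the double coset is already absorbed into this identity, so nothing extra has to be checked there.

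The key input is then Lemma \ref{lem-parts}(3), which says precisely that the sets $Y_i$ may be chosen inside $\hat{K}_\nu$. Because $\hat{K}_\nu$ is a subgroup of $\hat{G}_\nu$, the product $Y_{i_1}\cdots Y_{i_k}$ lies in $\hat{K}_\nu$, and hence $g\in Y_{i_1}\cdots Y_{i_k}\hat{B}_\nu\subseteq\hat{K}_\nu\hat{B}_\nu$. Since $g$ was arbitrary this yields $\hat{G}_\nu = \hat{K}_\nu\hat{B}_\nu = \hat{K}_\nu\hat{H}_\nu\hat{U}_\nu$, as desired.

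In this argument essentially all of the substance has already been pushed into the preparatory lemmas, so I do not expect a genuine obstacle at this stage; the decomposition falls out once the $BN$-pair structure of $(\hat{G}_\nu,\hat{B}_\nu,\hat{N}_\nu,S)$ and Lemma \ref{lem-parts} are in hand. The point worth \emph{stressing} is why this machinery is needed at all: the naive route --- expressing each generator $\chi_\alpha(\sigma)$ in the form $\hat{K}_\nu\hat{H}_\nu\hat{U}_\nu$ and multiplying --- fails because $\hat{K}_\nu\hat{B}_\nu$ is not visibly closed under multiplication, and it is precisely the Bruhat decomposition, together with the fact (coming from the $SL_2(F_\nu)$-Iwasawa decomposition carried through $\varphi_{\alpha_i}$) that the minimal coset representatives $\chi_{\alpha_i}(s)w_i$ can be rewritten inside $\hat{K}_\nu$, that resolves this difficulty.
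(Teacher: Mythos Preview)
Your proposal is correct and follows essentially the same route as the paper: use the Bruhat decomposition from Corollary~\ref{cor-disjoint}(1), write each double coset as $Y_{i_1}\cdots Y_{i_k}\hat{B}_\nu$ via Corollary~\ref{cor-disjoint}(2), and then apply Lemma~\ref{lem-parts}(3) to place the $Y_i$ inside $\hat{K}_\nu$, concluding $\hat{G}_\nu=\hat{K}_\nu\hat{B}_\nu=\hat{K}_\nu\hat{H}_\nu\hat{U}_\nu$. The paper's proof is terser but structurally identical; your added remarks on why the naive generator-by-generator approach fails are a useful gloss but not part of the formal argument.
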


\begin{proof}
We have already established a Bruhat decomposition for $\hat G_{\nu}$; in other words, $$\hat G_{\nu}=  \displaystyle\bigcup_{w\in \hat{W}} \ \hat{B}_\nu w \hat{B}_\nu.$$
Since this is a disjoint union, it suffices to show that each $\hat{B}_\nu w \hat{B}_\nu$ decomposes in the desired way. It follows from Corollary \ref{cor-disjoint} and part (iii) of Lemma \ref{lem-parts} that for each $w\in\hat{W}$ we have $\hat{B}_\nu w \hat{B}_\nu\subset\hat{K}_\nu\hat{B}_\nu$, and thus $\hat G_{\nu}=\hat{K}_\nu\hat{B}_\nu$. We have already observed that $\hat{B}_\nu=\hat{H}_{\nu} \ltimes\hat{U}_\nu$, and so we obtain the Iwasawa decomposition:
$$\hat G_{\nu}\,=\, \hat{K}_\nu \, \hat{H}_{\nu}  \, \hat{U}_\nu.$$
\end{proof}

The Iwasawa decomposition of an element is not uniquely determined. See Remark \ref{rmk-I} and Corollary \ref{cor-I}.

\medskip

\section{Defining the Eisenstein Series}{}

  For the remainder of this paper, we set $F$ to be a global function field of genus $g$. Let $\mathcal V$ denote the set of all places of $F$, and for $\nu\in\mathcal V$ let $F_\nu$ denote the completion of $F$ with respect to $\nu$. As in the previous section, we let $|\cdot|_{\nu}$ denote the corresponding non-Archimedean absolute value on $F_\nu$, and we define the local ring $\mathcal O_{\nu}$ and its maximal ideal $\mathcal P_{\nu}$ as before. We fix a uniformizer $\pi_\nu\in\mathcal O_{\nu}$, so $\pi_\nu$ generates the ideal $\mathcal P_{\nu}$. Finally let the integer $q_\nu$ denote the cardinality of the residue field $\mathcal O_{\nu}/\mathcal P_{\nu}$.

  In this section, we will use an ad\`{e}lic approach and define our Eisenstein series on the group $\hat G_{\mathbb A}$, a restricted direct product of affine Kac-Moody groups over the completions of the field $F$.

\subsection{The Ad\`{e}lic Approach}

  Using the completions $F_\nu$, we define the ad\`{e}le ring $\mathbb A$ as the restricted direct product: \[ \mathbb A ={\prod_{\nu \in \mathcal{V}}}'\ F_\nu, \text{ with respect to the subrings }\mathcal O_{\nu}. \]
The units of this ring, the group of id\`{e}les $\mathbb A^{\times}$, can also be realized as a restricted direct product: \[ \mathbb A^\times={\prod_{\nu \in \mathcal{V}}}'\ F_\nu^\times, \text{ with respect to }\mathcal O_{\nu}^\times, \]
where $\mathcal O_{\nu}^\times= \{x\in\mathcal O_{\nu} \ | \ |x|_{\nu} = 1 \} \subset \mathcal O_{\nu}$. For $s=(s_{\nu})\in\mathbb A^{\times}$, we define the id\`{e}lic norm $$|s|=\prod_{\nu\in\mathcal V} |s_{\nu}|_{\nu}.$$
We set $\hat G_{\nu}=\hat G_{F_\nu}$ and define the group $\hat G_{\mathbb A}$ as the following restricted direct product: \[ \hat{G}_{\mathbb{A}}={\prod_{\nu \in \mathcal{V}}}'\ \hat G_{\nu}, \text{ with respect to the subgroups }\hat{K}_\nu. \]

   In order to define our Eisenstein series on $\hat G_{\mathbb A}$, we must first establish an Iwasawa decomposition for this group.  With the appropriately defined subgroups of $\hat G_{\mathbb A}$, this will be a direct result of Theorem 2.10. To this end, we distinguish the following subgroups of $\hat G_{\mathbb A}$:
\[ \hat{\mathbb K}=\prod_{\nu \in \mathcal V}\hat{K}_\nu , \quad
 \hat{H}_{\mathbb{A}} = {\prod_{\nu \in \mathcal{V}}}'\ \hat{H}_{\nu} \quad \text{ and } \quad \hat{U}_{\mathbb A} = {\prod_{\nu \in \mathcal{V}}}' \, \hat{U}_\nu, \] where the restricted direct products are with respect to $\hat{H}_\nu\cap\hat{K}_\nu$ and
$\hat{U}_{\nu} \cap\hat{K}_\nu$, respectively.

\begin{Thm} We have the following Iwasawa decomposition for the group $\hat G_{\mathbb A}$: $$\hat G_{\mathbb A} \, = \, \hat{\mathbb K} \, \hat H_{\mathbb A} \, \hat{U}_{\mathbb A}.$$
\end{Thm}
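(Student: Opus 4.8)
The plan is to reduce the adelic statement to the local Iwasawa decomposition (Theorem 2.10) applied place by place, with the restricted direct product structure handled via the fact that almost all local components already lie in $\hat K_\nu$. Concretely, let $g = (g_\nu)_{\nu \in \mathcal V} \in \hat G_{\mathbb A}$. By definition of the restricted direct product of the $\hat G_\nu$ with respect to the $\hat K_\nu$, there is a finite set $S \subset \mathcal V$ such that $g_\nu \in \hat K_\nu$ for all $\nu \notin S$. For $\nu \in S$, apply Theorem 2.10 to write $g_\nu = k_\nu h_\nu u_\nu$ with $k_\nu \in \hat K_\nu$, $h_\nu \in \hat H_\nu$, $u_\nu \in \hat U_\nu$. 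For $\nu \notin S$, simply take $k_\nu = g_\nu$ and $h_\nu = u_\nu = 1$ (the identity), which trivially lies in $\hat H_\nu \cap \hat K_\nu$ and $\hat U_\nu \cap \hat K_\nu$.

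Next I would check that the resulting tuples assemble into genuine elements of the claimed subgroups. The tuple $k = (k_\nu)$ has every component in $\hat K_\nu$, so $k \in \hat{\mathbb K} = \prod_\nu \hat K_\nu$. The tuple $h = (h_\nu)$ has $h_\nu \in \hat H_\nu$ for all $\nu$ and $h_\nu = 1 \in \hat H_\nu \cap \hat K_\nu$ for $\nu \notin S$, so $h \in \hat H_{\mathbb A}$, the restricted product with respect to $\hat H_\nu \cap \hat K_\nu$; similarly $u = (u_\nu) \in \hat U_{\mathbb A}$. Then $g = khu$ componentwise, hence $g \in \hat{\mathbb K}\,\hat H_{\mathbb A}\,\hat U_{\mathbb A}$. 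The reverse inclusion $\hat{\mathbb K}\,\hat H_{\mathbb A}\,\hat U_{\mathbb A} \subseteq \hat G_{\mathbb A}$ is immediate since each of the three factors is by construction a subgroup of $\hat G_{\mathbb A}$, giving equality.

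The one genuine point requiring care — and the main obstacle, though a mild one — is the compatibility of the three restricted-product structures: one must be sure that the local Iwasawa pieces $k_\nu, h_\nu, u_\nu$ can be chosen so that for almost all $\nu$ they land in the distinguished compact-type subgroups $\hat K_\nu$, $\hat H_\nu \cap \hat K_\nu$, $\hat U_\nu \cap \hat K_\nu$ used to form $\hat{\mathbb K}$, $\hat H_{\mathbb A}$, $\hat U_{\mathbb A}$. This is exactly why one chooses the trivial decomposition $g_\nu = g_\nu \cdot 1 \cdot 1$ at the places $\nu \notin S$ where $g_\nu$ is already in $\hat K_\nu$: the identity element lies in every subgroup, so no constraint is violated. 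One should also remark (cf.\ Remark~\ref{rmk-I} and Corollary~\ref{cor-I}, referenced just after Theorem 2.10) that this adelic decomposition, like its local counterpart, is not unique. With these observations the proof is complete.

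\begin{proof}
Let $g = (g_\nu)_{\nu \in \mathcal V} \in \hat G_{\mathbb A}$. Since $\hat G_{\mathbb A}$ is the restricted direct product of the $\hat G_\nu$ with respect to the $\hat K_\nu$, there is a finite set $S \subset \mathcal V$ with $g_\nu \in \hat K_\nu$ for all $\nu \notin S$. For each $\nu \in S$, Theorem 2.10 gives $g_\nu = k_\nu h_\nu u_\nu$ with $k_\nu \in \hat K_\nu$, $h_\nu \in \hat H_\nu$, and $u_\nu \in \hat U_\nu$. For $\nu \notin S$, set $k_\nu = g_\nu \in \hat K_\nu$ and $h_\nu = u_\nu = 1$; note $1 \in \hat H_\nu \cap \hat K_\nu$ and $1 \in \hat U_\nu \cap \hat K_\nu$.

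Then $k = (k_\nu)_{\nu} \in \prod_{\nu} \hat K_\nu = \hat{\mathbb K}$. Since $h_\nu \in \hat H_\nu$ for all $\nu$ and $h_\nu \in \hat H_\nu \cap \hat K_\nu$ for all $\nu \notin S$, we have $h = (h_\nu)_\nu \in \hat H_{\mathbb A}$; likewise $u = (u_\nu)_\nu \in \hat U_{\mathbb A}$. As $g_\nu = k_\nu h_\nu u_\nu$ for every $\nu$, we get $g = khu \in \hat{\mathbb K}\,\hat H_{\mathbb A}\,\hat U_{\mathbb A}$. Conversely, $\hat{\mathbb K}$, $\hat H_{\mathbb A}$, and $\hat U_{\mathbb A}$ are all subgroups of $\hat G_{\mathbb A}$, so $\hat{\mathbb K}\,\hat H_{\mathbb A}\,\hat U_{\mathbb A} \subseteq \hat G_{\mathbb A}$. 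Hence $\hat G_{\mathbb A} = \hat{\mathbb K}\,\hat H_{\mathbb A}\,\hat U_{\mathbb A}$.
\end{proof}
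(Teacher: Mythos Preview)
Your proposal is correct and takes essentially the same approach as the paper: the paper states this theorem without proof, merely remarking beforehand that ``with the appropriately defined subgroups of $\hat G_{\mathbb A}$, this will be a direct result of Theorem 2.10,'' and you have supplied precisely the natural place-by-place argument that justifies this. Your handling of the restricted-product condition by taking the trivial decomposition $g_\nu = g_\nu \cdot 1 \cdot 1$ at places $\nu \notin S$ is exactly the point one needs to check, and is done correctly.
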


\begin{Rmk} \label{rmk-I}
In \cite{LG1}, Garland develops an Iwasawa decomposition for the group $\hat G_{\mathbb R}$ and establishes that the decomposition of an element is unique. In our setting, the Iwasawa decomposition of an element is not unique.  This is potentially problematic because we will use this decomposition to define the Eisenstein series.  However, we will see in Proposition \ref{prop-well} that due to the structure of $\hat H_{\mathbb A}$ this is not an issue.
\end{Rmk}

\medskip


\subsection{The Structure and Topology of the Torus}

  We fixed a normal weight $\lambda\in\mathbf{D}$, and hence a $\hat{\frak g}^{\,e}$-module $V^{\lambda}_{F_\nu}$.  It is a highest weight module, so we let $v_{\lambda}$ denote the highest weight vector. In \cite{LG1} we see that the representation space $V^{\lambda}_{F_\nu}$ decomposes into a direct sum of its weight spaces
$$V_{F_\nu}^{\lambda}= \displaystyle\bigoplus_{\mu\in(\hat{\frak h}^{\,e})^*} V^{\lambda}_{\mu, F_\nu},\ \text{  where }V^{\lambda}_{\mu, F_\nu}= \{v\in V_{F_\nu}^{\lambda} \ | \ h\cdot v= \mu(h) v, \ h\in \hat{\frak h}^{\,e} \}.$$
Moreover, every weight of $V^{\lambda}_{F_\nu}$ is of the form $\mu=\lambda-\sum_{i=1}^{l+1} k_i \alpha_i$, for $k_i\in\mathbb Z_{\geq 0}$. Using this unique expression, we define the {\em depth} of $\mu$ as $$\text{dp}(\mu)=\sum_{i=1}^{l+1} k_i \,.$$

We fix a basis $\mathcal B$ of $V^{\lambda}_{\mathbb Z}$ by choosing basis vectors $\{v_{\lambda }, v_1, \dots, v_n, \dots \}$  in $V^{\lambda}_{\mathbb Z}$ and ordering them so that
\begin{enumerate}
\item if $v_i\in V^{\lambda}_{\mu, \mathbb Z}$, $v_j\in V^{\lambda}_{\mu', \mathbb Z}$, and $i<j$, then we have $\text{dp}(\mu)\leq \text{dp}(\mu')$, and
\item for each weight $\mu$ of $V^{\lambda}_{\mathbb Z}$, the basis vectors of $V^{\lambda}_{\mu, \mathbb Z}$ appear consecutively.
\end{enumerate}
A basis of $V^{\lambda}_{\mathbb Z}$ that satisfies these conditions is called \textit{coherently ordered}. It is important to note that since we chose our basis vectors from $V^{\lambda}_{\mathbb Z}$, the basis $\mathcal B$ serves as a basis for $V^{\lambda}_{F}$ as well as $V^{\lambda}_{F_\nu}$ for every $\nu\in\mathcal V$. The advantage to fixing such a basis is that with respect to $\mathcal B$ we can view the elements of $\hat{H}_{\nu} $ as (infinite) diagonal matrices which are scalar matrices when we restrict to a weight space. In addition, the elements of $\hat{U}_\nu$ are strictly upper triangular block (infinite) matrices where the blocks are determined by the weight spaces of $V^{\lambda}_{\mathbb Z}$. For more information, see \cite{LG1}.

  By this observation, we can clearly see that elements of $\hat{H}_{\nu} $ commute with each other, and $\hat{H}_{\nu} $ normalizes the subgroup $\hat{U}_\nu$.  Since this holds for all $\nu\in\mathcal V$, we obtain the same results for $\hat H_{\mathbb A}$ and $\hat{U}_{\mathbb A}$.

\medskip

Because the definition of our Eisenstein series depends on it, we are interested in studying the structure of $\hat H_{\mathbb A}$. Let $h\in\hat H_{\mathbb A}$.  Considering the local components, we let
$$h_{\nu}=\prod_{i=1}^{l+1} h_{\alpha_i}(s_{i,\nu}).$$
 As a result, we may write $h\in\hat H_{\mathbb A}$ as the following product:
$$h=\prod_{i=1}^{l+1} h_{\alpha_i}\big((s_{i,\nu})_{\nu\in\mathcal V}\big).$$

 We will prove the following:
\begin{Prop} \label{prop-zero}  Assume that $\lambda\in\mathbf{D}$. Suppose that
$$h^{\lambda}=\prod_{i=1}^{l+1} h^{\lambda}_{\alpha_i}(s_{i,\nu})\in\hat{H}_{\nu} ^{\lambda}\cap\hat{K}_\nu^{\lambda}. $$ Then we have $\mathrm{ord}_{\nu}(s_{i,\nu})=0$ for $i=1,\dots,l+1$.
\end{Prop}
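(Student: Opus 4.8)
The plan is to play off two facts against each other: the group $\hat K_\nu^\lambda$ consists of $\mathcal O_\nu$-linear automorphisms of the lattice $V^\lambda_{\mathcal O_\nu}:=\mathcal O_\nu\otimes_{\mathbb Z}V^\lambda_{\mathbb Z}\subset V^\lambda_{F_\nu}$, while an element of $\hat H_\nu^\lambda$ acts on each weight space of $V^\lambda_{F_\nu}$ by an explicit scalar. Matching these on every weight space turns the hypothesis $h^\lambda\in\hat K_\nu^\lambda$ into a family of linear relations among the integers $\mathrm{ord}_\nu(s_{i,\nu})$, and the remaining work is a representation-theoretic argument showing that, when $\lambda\in\mathbf D$, those relations force $\mathrm{ord}_\nu(s_{i,\nu})=0$ for all $i$.

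First I would record that $\hat K_\nu^\lambda$ stabilizes $V^\lambda_{\mathcal O_\nu}$: each generator $\chi_\alpha(\sigma)$ with $\alpha\in\Delta$, $\sigma\in\mathcal O_\nu((X))$ is a finite product of operators $s^n\,\xi_a^n/n!$ with $s\in\mathcal O_\nu$, which preserve $V^\lambda_{\mathbb Z}$ by \eqref{eqn-z} and hence preserve $V^\lambda_{\mathcal O_\nu}$, and the inverse of such a generator is again of this form, so every element of $\hat K_\nu^\lambda$ restricts to an $\mathcal O_\nu$-module automorphism of $V^\lambda_{\mathcal O_\nu}$. Next, via the homomorphism $\varphi_{\alpha_i}$ of Lemma \ref{lem-hom} we have $h^\lambda_{\alpha_i}(s)=\varphi_{\alpha_i}\big((\begin{smallmatrix}s&0\\0&s^{-1}\end{smallmatrix})\big)$, and the standard computation (see \S3 and \cite{LG1}) shows that it acts on the weight space $V^\lambda_{\mu,F_\nu}$ by the scalar $s^{\mu(h_{\alpha_i})}$; since the factors commute, $h^\lambda$ acts on $V^\lambda_{\mu,F_\nu}$ by $c_\mu:=\prod_{i=1}^{l+1}s_{i,\nu}^{\mu(h_{\alpha_i})}$. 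Because $V^\lambda_\mu\cap V^\lambda_{\mathcal O_\nu}=\mathcal O_\nu\otimes_{\mathbb Z}V^\lambda_{\mu,\mathbb Z}$ is a nonzero free $\mathcal O_\nu$-module spanning $V^\lambda_{\mu,F_\nu}$, and $h^\lambda$ together with its inverse preserves it, we get $c_\mu\in\mathcal O_\nu^\times$, i.e.
$$\sum_{i=1}^{l+1}\mu(h_{\alpha_i})\,\mathrm{ord}_\nu(s_{i,\nu})=0$$
for every weight $\mu$ of $V^\lambda$.

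To finish I would show these equations force all $\mathrm{ord}_\nu(s_{i,\nu})=0$; equivalently, setting $H:=\sum_{i=1}^{l+1}\mathrm{ord}_\nu(s_{i,\nu})\,h_{\alpha_i}\in\hat{\frak h}$, that $\mu(H)=0$ for all weights $\mu$ implies $H=0$ (the simple co-roots $h_{\alpha_1},\dots,h_{\alpha_{l+1}}$ being linearly independent in $\hat{\frak h}$). Taking $\mu=\lambda$ gives $\lambda(H)=0$. For each $i$ I would exhibit a weight $\mu$ of $V^\lambda$ with $\mu-\alpha_i$ also a weight: pick $i_0$ with $\lambda(h_{\alpha_{i_0}})>0$ (possible since $\lambda\in\mathbf D$) and a path $i_0,i_1,\dots,i_k=i$ of distinct nodes in the connected affine Dynkin diagram; an induction on $m$ shows $f_{i_m}\cdots f_{i_0}v_\lambda\neq0$, because $e_{i_m}$ commutes past $f_{i_{m-1}},\dots,f_{i_0}$ and annihilates $v_\lambda$, so $e_{i_m}\big(f_{i_m}\cdots f_{i_0}v_\lambda\big)$ equals $f_{i_{m-1}}\cdots f_{i_0}v_\lambda$ times the scalar $\lambda(h_{\alpha_{i_m}})-\sum_{p<m}\alpha_{i_p}(h_{\alpha_{i_m}})\ge 1$. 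Then $\mu:=\lambda-\alpha_{i_0}-\cdots-\alpha_{i_{k-1}}$ and $\mu-\alpha_i=\lambda-\alpha_{i_0}-\cdots-\alpha_{i_k}$ are both weights, and subtracting the two corresponding equations gives $\alpha_i(H)=0$. Hence $\alpha_i(H)=0$ for every $i$, so the vector $(\mathrm{ord}_\nu(s_{i,\nu}))_i$ lies in the kernel of the affine Cartan matrix, which is one-dimensional and spanned by a vector with all entries positive; writing $H=t\sum_i a_i h_{\alpha_i}$ with all $a_i>0$ and using $\lambda(H)=0$ yields $t\sum_i a_i\lambda(h_{\alpha_i})=0$, and since every $\lambda(h_{\alpha_i})\ge0$ with $\lambda(h_{\alpha_{i_0}})>0$, we conclude $t=0$, hence $H=0$ and $\mathrm{ord}_\nu(s_{i,\nu})=0$ for all $i$.

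The main obstacle is the representation-theoretic step of the last paragraph: the single weight $\lambda$ gives only one relation, so one must genuinely descend into $V^\lambda$ far enough to produce, for each simple root, a weight and a shift of it by that root, and it is precisely here that $\lambda\in\mathbf D$ and the connectedness of the affine diagram enter. The non-vanishing $f_{i_m}\cdots f_{i_0}v_\lambda\neq0$ is routine provided one is careful never to apply a lowering operator to a vector it kills; the rest is linear algebra resting on the corank-one structure of the affine Cartan matrix.
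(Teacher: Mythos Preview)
Your argument is correct, but it takes a genuinely different route from the paper. The paper never analyzes the weight lattice of $V^\lambda$; instead it invokes, for each fundamental weight $\Lambda_i$, the surjection $\pi(\lambda,m_i\Lambda_i):\hat G^\lambda_\nu\to\hat G^{m_i\Lambda_i}_\nu$ from \cite{LG1}, which carries $h^\lambda$ to the corresponding product $h^{m_i\Lambda_i}\in\hat H^{m_i\Lambda_i}_\nu\cap\hat K^{m_i\Lambda_i}_\nu$. Applying $h^{m_i\Lambda_i}$ to the highest weight vector of $V^{m_i\Lambda_i}$ yields the single scalar $s_{i,\nu}^{m_i}$, and lattice preservation by $\hat K_\nu$ gives $\mathrm{ord}_\nu(s_{i,\nu})\ge 0$; repeating for $(h^\lambda)^{-1}$ gives the reverse inequality. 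Thus the paper isolates each $s_{i,\nu}$ directly by changing the module, whereas you stay inside the fixed $V^\lambda$ and must manufacture, for every simple root, a pair of adjacent weights to extract $\alpha_i(H)=0$, and then invoke the corank-one structure of the affine Cartan matrix together with $\lambda(H)=0$. Your approach is more self-contained (it does not rely on the existence of the intertwining maps $\pi(\lambda,m_i\Lambda_i)$), but the paper's is considerably shorter once that machinery is available; conceptually, the paper trades the representation-theoretic descent you perform for a single black-box structural result about the family $\hat G^\lambda_\nu$ as $\lambda$ varies.
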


\begin{Rmk} We have included the superscript $\lambda$ above since we must consider different $\lambda$'s in the proof below.
\end{Rmk}

\begin{proof}
As a consequence of Lemma 15.7 and Theorem 15.9 in \cite{LG1}, we know that for each fundamental weight $\Lambda_i$ there exist a positive integer $m_i$ and a surjective group homomorphism
$$\pi(\lambda, m_i\Lambda_i):\hat  G^{\lambda}_\nu\rightarrow\hat  G^{m_i \Lambda_i}_\nu, $$
and this homomorphism is characterized by the fact that it maps $\chi^{\lambda}_{\alpha}(\sigma)$ to $\chi^{m_i\Lambda_i}_{\alpha}(\sigma)$. As a result,
$$\prod_{i=1}^{l+1} h^{\lambda}_{\alpha_i}(s_{i,\nu}) \longmapsto \prod_{i=1}^{l+1} h^{m_i\Lambda_i}_{\alpha_i}(s_{i,\nu}),$$ as well.
Set $h^{\lambda}=\prod_{i=1}^{l+1} h^{\lambda}_{\alpha_i}(s_{i,\nu})$ and $h^{m_i \Lambda_i}=\prod_{i=1}^{l+1} h^{m_i \Lambda_i}_{\alpha_i}(s_{i,\nu})$. Since $h^{\lambda} \in\hat{H}_{\nu}^{\lambda}\cap\hat{K}_\nu^{\lambda}$ by assumption, we have  $h^{m_i \Lambda_i} \in\hat{H}_{\nu} ^{m_i \Lambda_i}\cap\hat{K}_\nu^{m_i \Lambda_i}$.
Choose a highest weight vector $1\otimes v\in V_{F_\nu}^{m_i \Lambda_i}= F_{\nu} \otimes V^{m_i \Lambda _i}_{\mathbb Z}$. Then by \cite{LG1} we know that \begin{equation}h^{m_i \Lambda_i} \cdot (1 \otimes v)= \prod_{i=1}^{l+1} h ^{m_i \Lambda_i} _{\alpha_i}(s_{i,\nu})\cdot (1\otimes v)=  \left(\ \prod_{i=1}^{l+1}  s_{i,\nu}^{{m_i \Lambda_i} (h_{\alpha_i})} \right)\otimes v  = s_{i,\nu}^{m_i} \otimes v.\end{equation} Since elements of $\hat{K}_\nu$ preserve the subspace $V^{\lambda}_{\mathcal O_{\nu}}$, we have $\mathrm{ord}_{\nu}(s_{i,\nu}) \geq 0$.

  Moreover, since $\hat{H}_{\nu}^\lambda \cap\hat{K}_\nu^\lambda$ is a group, we know $(h_{\nu}^\lambda)^{-1}=\prod_{i=1}^{l+1} h^\lambda_{\alpha_i}(s_{i,\nu}^{-1})$ is also in the intersection.  Applying the argument above to $(h^\lambda_{\nu})^{-1}$, we find $\mathrm{ord}_{\nu}(s_{i,\nu}) \leq 0$. Thus we have $\mathrm{ord}_{\nu}(s_{i,\nu}) = 0$ for each $i =1, \dots, l+1$.

\end{proof}

\begin{Cor}
The subgroup $\hat H_{\mathbb A}\leq\hat G_{\mathbb A}$ may be realized in the following way:
$$\hat H_{\mathbb A}=\left\{\ \prod_{i=1}^{l+1} h_{\alpha_i} (s_i) \ \Bigg| \ s_i\in\mathbb A^{\times} \ \right\}.$$
\end{Cor}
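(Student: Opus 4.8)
The plan is to identify the abstract group $\hat H_{\mathbb A}$, defined as the restricted direct product of the local tori $\hat H_\nu$ with respect to $\hat H_\nu \cap \hat K_\nu$, with the concrete set of products $\prod_{i=1}^{l+1} h_{\alpha_i}(s_i)$ for $s_i \in \mathbb A^\times$. The containment $\supseteq$ is essentially automatic once one checks that an element of this form lies in the restricted product; the substance of the corollary is the reverse containment, which is where Proposition \ref{prop-zero} does the work.

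First I would recall, as established in the discussion preceding Proposition \ref{prop-zero}, that every $h \in \hat H_{\mathbb A}$ can be written as $h = \prod_{i=1}^{l+1} h_{\alpha_i}\big((s_{i,\nu})_{\nu\in\mathcal V}\big)$, with each local component $h_\nu = \prod_{i=1}^{l+1} h_{\alpha_i}(s_{i,\nu})$ lying in $\hat H_\nu$. To package this as a single product $\prod_{i=1}^{l+1} h_{\alpha_i}(s_i)$ with $s_i = (s_{i,\nu})_\nu$, I must verify that each such tuple $s_i$ is a genuine id\`ele, i.e.\ that $s_{i,\nu} \in \mathcal O_\nu^\times$ for all but finitely many $\nu$. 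This is exactly where Proposition \ref{prop-zero} enters: since $h$ lies in the restricted direct product $\hat H_{\mathbb A} = {\prod}'\, \hat H_\nu$ taken with respect to the subgroups $\hat H_\nu \cap \hat K_\nu$, we have $h_\nu \in \hat H_\nu \cap \hat K_\nu$ for all but finitely many $\nu$; and for each such $\nu$, Proposition \ref{prop-zero} gives $\mathrm{ord}_\nu(s_{i,\nu}) = 0$ for $i = 1,\dots,l+1$, which says precisely $s_{i,\nu} \in \mathcal O_\nu^\times$. Hence each $s_i \in \mathbb A^\times$, and $h$ lies in the claimed set.

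Conversely, given $s_i = (s_{i,\nu})_\nu \in \mathbb A^\times$ for $i = 1,\dots,l+1$, I would observe that for all but finitely many $\nu$ we have $s_{i,\nu} \in \mathcal O_\nu^\times$ simultaneously for all $i$ (a finite intersection of cofinite sets is cofinite); for such $\nu$, the element $\prod_{i=1}^{l+1} h_{\alpha_i}(s_{i,\nu})$ is built from the maps $\varphi_{\alpha_i}$ applied to diagonal matrices $(\begin{smallmatrix} s_{i,\nu} & 0 \\ 0 & s_{i,\nu}^{-1}\end{smallmatrix}) \in SL_2(\mathcal O_\nu) = K$, so by Lemma \ref{lem-parts}(2) it lies in $\hat K_\nu$, hence in $\hat H_\nu \cap \hat K_\nu$. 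Thus the tuple $\big(\prod_i h_{\alpha_i}(s_{i,\nu})\big)_\nu$ is an element of the restricted direct product $\hat H_{\mathbb A}$, and it equals $\prod_{i=1}^{l+1} h_{\alpha_i}(s_i)$ by definition of the latter. This gives the containment $\supseteq$ and completes the identification.

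The only genuine subtlety — the main obstacle, such as it is — is making sure the two descriptions of the restricted-direct-product structure match: the definition of $\hat H_{\mathbb A}$ restricts with respect to $\hat H_\nu \cap \hat K_\nu$, whereas the id\`elic condition $s_{i,\nu} \in \mathcal O_\nu^\times$ is a condition about $\mathbb A^\times$. Proposition \ref{prop-zero} is precisely the bridge: it shows that membership in $\hat H_\nu \cap \hat K_\nu$ forces every coordinate $s_{i,\nu}$ to be a unit, and the converse (units give elements of $\hat H_\nu \cap \hat K_\nu$) follows from Lemma \ref{lem-parts}. Everything else is bookkeeping with finite exceptional sets. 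One should also note in passing that the expression $\prod_i h_{\alpha_i}(s_i)$ is well-defined because the $h_{\alpha_i}(\cdot)$ commute (the torus is abelian), as remarked earlier, so no ordering issues arise.
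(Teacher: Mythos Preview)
Your proposal is correct and follows the same approach as the paper: the key input is Proposition \ref{prop-zero}, which converts the restricted-product condition $h_\nu \in \hat H_\nu \cap \hat K_\nu$ into the id\`elic condition $s_{i,\nu} \in \mathcal O_\nu^\times$. The paper's own proof is a two-line sketch that only makes the $\subseteq$ direction explicit, whereas you have spelled out both containments and invoked Lemma \ref{lem-parts}(2) for the converse; this extra care is fine and does not constitute a different method.
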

\begin{proof}
Proposition \ref{prop-zero} shows that for almost all $\nu\in\mathcal V$, we have $s_{i,\nu}\in\mathcal O_{\nu}^\times$.  As a result, these infinite tuples $s_i$ are actually elements of $\mathbb A^{\times}$.
\end{proof}

\begin{Rmk} \label{rmk-iso} We have a surjective group homomorphism $\vartheta:  (\mathbb A^{\times} )^{l+1}  \longrightarrow \hat H_{\mathbb A}$ defined by
$$ (s_1, s_2,  \dots, s_{l+1} )  \mapsto h=\prod_{i=1}^{l+1} h_{\alpha_i} (s_i).$$
The group $(\mathbb A^{\times})^{l+1}$ inherits the product topology induced by the usual topological structure of $\mathbb A^{\times}$, and we give the space $\hat H_{\mathbb A}$ the quotient topology induced by the map $\vartheta$. We will use the map $\vartheta$ again in Section 5.
\end{Rmk}

\subsection{Characters}

  Defining our Eisenstein series on $\hat G_{\mathbb A}$ requires that we specify a character of the subgroup $\hat H_{\mathbb A}$.  Let $|\cdot|$ be the id\`{e}lic norm. We define a character by fixing a linear functional $\chi\in\hat{\frak h}^*$ and setting
$$h^{\, \chi} = \prod_{i=1}^{l+1} \, |s_i|^{\ \chi(h_{\alpha_i})},$$
when $h=\prod_{i=1}^{l+1} h_{\alpha_i}(s_i)\in\hat H_{\mathbb A}$.

\begin{Rmk} \label{rmk-K} If $h\in\hat H_{\mathbb A}\cap\hat{\mathbb K}$, then $h=\prod_{i=1}^{l+1} h_{\alpha_i}(s_i)$ with $s_i\in \prod_{\nu\in\mathcal V} \mathcal O_{\nu}^\times$.  By our definition of $\chi$, we see that $h^\chi=1$ for any $h\in\hat H_{\mathbb A}\cap\hat{\mathbb K}$, since $|s_i|=1$ for each $i=1,\dots,l+1$.
\end{Rmk}

  Fix $\chi\in\hat{\frak h}^*$, and define $\Phi_{\chi}:\hat G_{\mathbb A}\longrightarrow \mathbb C^{\times}$ to be the function induced by the character $\chi$ on $\hat H_{\mathbb A}$ and the Iwasawa decomposition for $\hat G_{\mathbb A}$.  In other words, if $g=k \, h \, u$ is an element of $\hat G_{\mathbb A}$, then we set \[ \Phi_{\chi}(g)=\Phi_{\chi}(k \, h \, u)=h^{\, \chi}.\] We noted earlier that the Iwasawa decomposition for $\hat G_{\mathbb A}$ is not unique, so we need to prove that this function is well-defined.

\begin{Lem} \label{lem-semi}
For any place $\nu\in\mathcal V$, the subgroup $\hat{B}_\nu\cap\hat{K}_\nu$ is the semidirect product $(\hat{H}_{\nu} \cap\hat{K}_\nu)\ltimes(\hat{U}_\nu\cap\hat{K}_\nu)$.
\end{Lem}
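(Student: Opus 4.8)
The plan is to show that $\hat{B}_\nu\cap\hat{K}_\nu$ coincides with the internal product $(\hat{H}_{\nu}\cap\hat{K}_\nu)(\hat{U}_\nu\cap\hat{K}_\nu)$, the semidirect structure then being inherited from $\hat{B}_\nu=\hat{H}_{\nu}\ltimes\hat{U}_\nu$; only one inclusion has any content. The formal half goes as follows. Since $\hat{H}_{\nu},\hat{U}_\nu\subseteq\hat{B}_\nu$ and $\hat{K}_\nu$ is a group, both $\hat{H}_{\nu}\cap\hat{K}_\nu$ and $\hat{U}_\nu\cap\hat{K}_\nu$ lie in $\hat{B}_\nu\cap\hat{K}_\nu$; because $\hat{H}_{\nu}$ normalizes $\hat{U}_\nu$ (and $\hat{K}_\nu$ is a group), $\hat{H}_{\nu}\cap\hat{K}_\nu$ normalizes $\hat{U}_\nu\cap\hat{K}_\nu$; and $(\hat{H}_{\nu}\cap\hat{K}_\nu)\cap(\hat{U}_\nu\cap\hat{K}_\nu)\subseteq\hat{H}_{\nu}\cap\hat{U}_\nu=\{1\}$. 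So $(\hat{H}_{\nu}\cap\hat{K}_\nu)(\hat{U}_\nu\cap\hat{K}_\nu)$ is an internal semidirect product contained in $\hat{B}_\nu\cap\hat{K}_\nu$, and it remains to prove the reverse inclusion.

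For that, take $g\in\hat{B}_\nu\cap\hat{K}_\nu$ and use the unique decomposition $g=hu$ with $h\in\hat{H}_{\nu}$, $u\in\hat{U}_\nu$ coming from $\hat{B}_\nu=\hat{H}_{\nu}\ltimes\hat{U}_\nu$. Write $h=\prod_{i=1}^{l+1}h_{\alpha_i}(s_{i,\nu})$. The whole point is to show $\mathrm{ord}_\nu(s_{i,\nu})=0$ for every $i$: granting this, each $s_{i,\nu}\in\mathcal O_{\nu}^\times$, and then every root-group factor $\chi_{\pm\alpha_i}(\cdot)$ occurring in $h_{\alpha_i}(s_{i,\nu})=w_{\alpha_i}(s_{i,\nu})w_{\alpha_i}(1)^{-1}$ has coefficients in $\mathcal O_{\nu}$, so $h_{\alpha_i}(s_{i,\nu})\in\hat{K}_\nu$; hence $h\in\hat{H}_{\nu}\cap\hat{K}_\nu$, and then $u=h^{-1}g\in\hat{U}_\nu\cap\hat{K}_\nu$, which puts $g=hu$ in the desired product.

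To establish $\mathrm{ord}_\nu(s_{i,\nu})=0$ I would rerun the argument of Proposition \ref{prop-zero}. For each fundamental weight $\Lambda_i$ there is a surjection $\pi(\lambda,m_i\Lambda_i):\hat{G}^{\lambda}_\nu\to\hat{G}^{m_i\Lambda_i}_\nu$ characterized by $\chi^{\lambda}_\alpha(\sigma)\mapsto\chi^{m_i\Lambda_i}_\alpha(\sigma)$; reading off this property, it carries $\hat{K}_\nu^{\lambda}$, $\hat{U}_\nu^{\lambda}$ and $\hat{H}_{\nu}^{\lambda}$ into $\hat{K}_\nu^{m_i\Lambda_i}$, $\hat{U}_\nu^{m_i\Lambda_i}$ and $\hat{H}_{\nu}^{m_i\Lambda_i}$ respectively. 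Apply $\pi(\lambda,m_i\Lambda_i)$ to $g=hu$ and evaluate on a highest weight vector $1\otimes v$ of $V^{m_i\Lambda_i}_{F_\nu}$: the image of $u$ fixes $1\otimes v$ (elements of $\hat{U}_\nu$ fix the highest weight vector, as they are strictly upper triangular for a coherently ordered basis), and the image of $h$ scales it by $\prod_j s_{j,\nu}^{m_i\Lambda_i(h_{\alpha_j})}=s_{i,\nu}^{m_i}$ since $\Lambda_i(h_{\alpha_j})=\delta_{ij}$, so $\pi(\lambda,m_i\Lambda_i)(g)\cdot(1\otimes v)=s_{i,\nu}^{m_i}\otimes v$. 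Because $\pi(\lambda,m_i\Lambda_i)(g)\in\hat{K}_\nu^{m_i\Lambda_i}$ preserves the lattice $V^{m_i\Lambda_i}_{\mathcal O_{\nu}}$, this gives $\mathrm{ord}_\nu(s_{i,\nu})\ge0$. Running the same computation on $g^{-1}\in\hat{B}_\nu\cap\hat{K}_\nu$ — whose $\hat{H}_{\nu}$-part is $h^{-1}=\prod_i h_{\alpha_i}(s_{i,\nu}^{-1})$, obtained from $g^{-1}=u^{-1}h^{-1}=h^{-1}(hu^{-1}h^{-1})$ using that $\hat{H}_{\nu}$ normalizes $\hat{U}_\nu$ — yields $\mathrm{ord}_\nu(s_{i,\nu})\le0$, hence equality.

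I expect the main obstacle to be exactly this last point. Evaluating on a highest weight vector of $V^{\lambda}_{F_\nu}$ alone only controls the single product $\prod_i s_{i,\nu}^{\lambda(h_{\alpha_i})}$, so one genuinely has to pass to all the fundamental-weight quotients $\hat{G}^{m_i\Lambda_i}_\nu$ to separate the individual valuations; the real inputs are that $\pi(\lambda,m_i\Lambda_i)$ respects the three Iwasawa subgroups and that the unipotent part acts trivially on highest weight vectors. The rest is bookkeeping.
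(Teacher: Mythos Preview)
Your proof is correct but follows a different path from the paper's. The paper argues directly in the matrix realization on $V^\lambda_{F_\nu}$ with respect to the coherently ordered basis $\mathcal B$: elements of $\hat K_\nu$ have all entries in $\mathcal O_\nu$, elements of $\hat B_\nu$ are upper-triangular block matrices, so for $b=hu\in\hat B_\nu\cap\hat K_\nu$ the diagonal part $h$ is read off as the diagonal of $b$ and thus has entries in $\mathcal O_\nu$; from this (and the same for $b^{-1}$) the paper concludes $h\in\hat H_\nu\cap\hat K_\nu$, hence $u\in\hat U_\nu\cap\hat K_\nu$. This is short, but the step ``diagonal with $\mathcal O_\nu$-entries $\Rightarrow$ $h\in\hat K_\nu$'' tacitly relies on the lattice-stabilizer description of $\hat K_\nu$ from \cite{LG1}.

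Your argument instead adapts the proof of Proposition~\ref{prop-zero}: you push $g=hu$ through each surjection $\pi(\lambda,m_i\Lambda_i)$, use that the image of $u$ fixes the highest weight vector, and read off $\mathrm{ord}_\nu(s_{i,\nu})=0$ one coordinate at a time; then you reconstruct $h$ from generators lying in $\hat K_\nu$. This is longer but more self-contained---it does not appeal to a lattice characterization of $\hat K_\nu$---and it yields the finer statement $s_{i,\nu}\in\mathcal O_\nu^\times$ along the way. The key extra observation you need beyond Proposition~\ref{prop-zero} itself, namely that $\pi(\lambda,m_i\Lambda_i)$ carries $\hat K_\nu^\lambda$ into $\hat K_\nu^{m_i\Lambda_i}$ and that the $\hat U_\nu$-image fixes the highest weight line, you have identified correctly.
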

\begin{proof}
With respect to the coherently ordered basis $\mathcal B$, the elements of $\hat{K}_\nu$ are matrices with elements from the ring $\mathcal O_{\nu}$ and the elements of $\hat{B}_\nu$ are upper triangular block matrices.  Thus, we can view $b\in\hat{B}_\nu\cap\hat{K}_\nu$ as an upper triangular infinite block matrix with entries from $\mathcal O_{\nu}$. By the definition of $\hat{B}_\nu$, we know that $b=h\,u$ for $h\in\hat{H}_{\nu} $ and $u\in\hat{U}_\nu$. In fact, with respect to $\mathcal B$, the matrix $h$ will be diagonal with the same diagonal entries that appear in the matrix $b$. In particular, $h\in\hat{H}_{\nu} \cap\hat{K}_\nu$ which also implies that $u\in\hat{U}_\nu\cap\hat{K}_\nu$.
\end{proof}

\begin{Prop} \label{prop-well}
Let $g=k\, h\, u = k'\, h'\, u'$ be two Iwasawa decompositions for $g\in\hat G_{\mathbb A}$. Then $$\Phi_{\chi}(k'\,h'\, u')=\Phi_{\chi}(k\, h\, u).$$ In particular, $\Phi_{\chi}$ is a well-defined function from $\hat G_{\mathbb A}$ into $\mathbb C^{\times}$.
\end{Prop}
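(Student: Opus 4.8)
The plan is to reduce the global statement to a purely local one about the subgroup $\hat H_{\mathbb A}$. Suppose $g = k\,h\,u = k'\,h'\,u'$ are two Iwasawa decompositions with $k,k'\in\hat{\mathbb K}$, $h,h'\in\hat H_{\mathbb A}$, $u,u'\in\hat{U}_{\mathbb A}$. Rearranging, I would write $k^{-1}k' = (h\,u)(h'\,u')^{-1} = h\,u\,u'^{-1}h'^{-1}$. Since $\hat H_{\mathbb A}$ normalizes $\hat{U}_{\mathbb A}$, the right-hand side lies in $\hat H_{\mathbb A}\hat{U}_{\mathbb A} = \hat B_{\mathbb A}$, and it also lies in $\hat{\mathbb K}$; so $k^{-1}k'\in\hat B_{\mathbb A}\cap\hat{\mathbb K}$. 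Because everything factors over places and $\hat{\mathbb K}=\prod_\nu\hat K_\nu$, Lemma \ref{lem-semi} applied at each place gives $\hat B_{\mathbb A}\cap\hat{\mathbb K} = (\hat H_{\mathbb A}\cap\hat{\mathbb K})\ltimes(\hat U_{\mathbb A}\cap\hat{\mathbb K})$. Hence $k^{-1}k' = h_0\,u_0$ with $h_0\in\hat H_{\mathbb A}\cap\hat{\mathbb K}$ and $u_0\in\hat U_{\mathbb A}\cap\hat{\mathbb K}$.

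Next I would compare the two expressions for $k^{-1}k'$: from $h\,u\,u'^{-1}h'^{-1} = h_0\,u_0$ I get $h_0^{-1}h\,(\text{something in }\hat U_{\mathbb A}) = h'\,(\text{something in }\hat U_{\mathbb A})$, using again that $\hat H_{\mathbb A}$ normalizes $\hat U_{\mathbb A}$ and that $\hat B_{\mathbb A} = \hat H_{\mathbb A}\ltimes\hat U_{\mathbb A}$ is a semidirect product (so the projection $\hat B_{\mathbb A}\to\hat H_{\mathbb A}$ is a well-defined homomorphism, as established in the discussion preceding the proposition and via the matrix picture: elements of $\hat H_{\mathbb A}$ are diagonal and elements of $\hat U_{\mathbb A}$ are strictly upper-triangular block matrices, so the diagonal part is unambiguous). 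Taking $\hat H_{\mathbb A}$-parts of the identity $k^{-1}k' = h_0 u_0$ rewritten appropriately yields $h' = h_0^{-1} h$, i.e. $h' = h_0^{-1}h$ with $h_0\in\hat H_{\mathbb A}\cap\hat{\mathbb K}$.

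Finally I would apply the character $\chi$. By Remark \ref{rmk-K}, $h_0^{\,\chi} = 1$ because $h_0\in\hat H_{\mathbb A}\cap\hat{\mathbb K}$. Since $h\mapsto h^{\,\chi}$ is a homomorphism on the abelian group $\hat H_{\mathbb A}$ (immediate from the definition $h^{\,\chi}=\prod_i|s_i|^{\chi(h_{\alpha_i})}$ together with Remark \ref{rmk-iso}, which identifies $\hat H_{\mathbb A}$ as a quotient of $(\mathbb A^\times)^{l+1}$ with the norm multiplicative), we get $\Phi_\chi(k'h'u') = (h')^{\,\chi} = (h_0^{-1}h)^{\,\chi} = (h_0^{\,\chi})^{-1} h^{\,\chi} = h^{\,\chi} = \Phi_\chi(k\,h\,u)$. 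This proves well-definedness, and the image lies in $\mathbb C^\times$ since each $|s_i|$ is a positive real and the exponents are complex numbers.

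The main obstacle I anticipate is the bookkeeping in the second paragraph: carefully extracting the $\hat H_{\mathbb A}$-component from the relation $k^{-1}k' = h_0u_0$ while tracking how $\hat H_{\mathbb A}$ conjugates the various $\hat U_{\mathbb A}$-factors, and justifying that the "diagonal part" map is genuinely well-defined and multiplicative on $\hat B_{\mathbb A}$. The cleanest route is to pass to the matrix realization over each $\mathcal B$: an element of $\hat B_\nu$ is an upper-triangular block matrix, its block-diagonal part (which is scalar on each weight space) recovers the $\hat H_\nu$-component uniquely, and this is manifestly multiplicative; then everything assembles over $\nu\in\mathcal V$. Once that map is in hand, the argument is a short chase. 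One should also note at the outset that it suffices to check the identity $\Phi_\chi(k'h'u')=\Phi_\chi(khu)$, i.e. that the $\chi$-value of the $\hat H_{\mathbb A}$-part is independent of the chosen decomposition — the uniqueness of $k$, $u$ themselves is neither claimed nor needed.
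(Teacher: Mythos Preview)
Your proposal is correct and follows essentially the same approach as the paper's proof: both arguments show that the element $k^{-1}k'$ (or $k'^{-1}k$) lies in $\hat B_{\mathbb A}\cap\hat{\mathbb K}$, invoke Lemma~\ref{lem-semi} to split it as $h_0u_0$ with $h_0\in\hat H_{\mathbb A}\cap\hat{\mathbb K}$, and then use Remark~\ref{rmk-K} to conclude $(h')^{\chi}=h^{\chi}$. Your extraction of the relation $h'=h_0^{-1}h$ via the semidirect product projection is exactly what the paper records afterward as Corollary~\ref{cor-I}; the paper instead substitutes $k=k'\bar h\bar u$ back into $g$ and reads off $(\bar h h)^{\chi}$ directly, but the content is identical.
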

\begin{proof}
We begin by noting that if $k\, h\, u = k'\, h'\, u'$, then $$k'^{-1}k=h' u' (hu)^{-1}\in\hat{B}_{\mathbb A} \cap\hat{\mathbb K}.$$ By Lemma \ref{lem-semi}, $k'^{-1}k=\bar{h}\bar{u}$ with $\bar{h}\in\hat H_{\mathbb A}\cap\hat{\mathbb K}$ and $\bar{u}\in\hat{U}_{\mathbb A}\cap\hat{\mathbb K}$. Using this information, we see
$$k'\, h'\, u' =k\, h\, u \ =k'\,\bar{h}\bar{u} h u.$$ Since $\hat H_{\mathbb A}$ normalizes $\hat{U}_{\mathbb A}$, we can express $k'\, h'\, u'=k'\,\bar{h} h u_1$, for some $u_1\in\hat{U}_{\mathbb A}$.  Finally, we observe that by Remark \ref{rmk-K} we have
\begin{eqnarray*}
\Phi_{\chi}(k'\, h'\, u')\ &=& \ \Phi_{\chi}(k'\,\bar{h} h u_1)= \ (\bar{h}h)^{\chi}\\
&=&\  \bar{h}^{\chi} \, h^{\chi}=\ h^{\chi} = \ \Phi_{\chi}(k\,h\,u).
\end{eqnarray*}
\end{proof}

\begin{Cor} \label{cor-I}
The $\hat H_{\mathbb A}$-component of an Iwasawa decomposition is uniquely determined up to $\hat H_{\mathbb A}\cap\hat{\mathbb K}$.
\end{Cor}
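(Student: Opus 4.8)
The corollary is an immediate byproduct of the computation already carried out in the proof of Proposition~\ref{prop-well}, so the plan is simply to extract the relevant step and push it one line further. Suppose $g=k\,h\,u=k'\,h'\,u'$ are two Iwasawa decompositions of $g\in\hat G_{\mathbb A}$. As in that proof, $k'^{-1}k=h'u'(hu)^{-1}\in\hat B_{\mathbb A}\cap\hat{\mathbb K}$, and Lemma~\ref{lem-semi}, applied place by place, gives $k'^{-1}k=\bar h\bar u$ with $\bar h\in\hat H_{\mathbb A}\cap\hat{\mathbb K}$ and $\bar u\in\hat U_{\mathbb A}\cap\hat{\mathbb K}$.

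Next I would rearrange this identity inside $\hat B_{\mathbb A}$. From $k\,h\,u=k'\,\bar h\,\bar u\,h\,u$ we obtain $h'u'=\bar h\,\bar u\,h\,u$; using that $\hat H_{\mathbb A}$ normalizes $\hat U_{\mathbb A}$, we may move $\bar u$ past $h$ and collect the unipotent factors into a single $u_1\in\hat U_{\mathbb A}$, so that $h'u'=(\bar h h)\,u_1$ in $\hat B_{\mathbb A}$. Since $\hat B_{\mathbb A}$ is the semidirect product $\hat H_{\mathbb A}\ltimes\hat U_{\mathbb A}$, the factorization of an element of $\hat B_{\mathbb A}$ into a torus part times a unipotent part is unique, hence $h'=\bar h\,h$ and therefore $h'\,h^{-1}=\bar h\in\hat H_{\mathbb A}\cap\hat{\mathbb K}$, which is exactly the claim.

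The one point worth a word of justification is the uniqueness of the $\hat H_{\mathbb A}$-component in the semidirect product $\hat H_{\mathbb A}\ltimes\hat U_{\mathbb A}$, i.e.\ that $\hat H_{\mathbb A}\cap\hat U_{\mathbb A}=\{1\}$. This follows from the matrix realization described in Section~3: with respect to the coherently ordered basis $\mathcal B$, elements of $\hat H_{\mathbb A}$ act as diagonal matrices that are scalar on each weight space, while elements of $\hat U_{\mathbb A}$ act as strictly upper triangular block matrices, so an element lying in both must be the identity. No real obstacle arises here; the corollary is genuinely a one-line consequence of Proposition~\ref{prop-well}, and I would present it as such.
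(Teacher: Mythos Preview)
Your proof is correct and follows essentially the same route as the paper: both extract from the proof of Proposition~\ref{prop-well} the identity $h'u'=\bar h h u_1$ and then invoke the semidirect product decomposition $\hat B_{\mathbb A}=\hat H_{\mathbb A}\ltimes\hat U_{\mathbb A}$ to conclude $h'=\bar h h$. Your added remark justifying $\hat H_{\mathbb A}\cap\hat U_{\mathbb A}=\{1\}$ via the matrix realization is a nice touch that the paper leaves implicit.
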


\begin{proof}
From the proof of the above proposition, we obtain $h'\, u'=\bar{h} h u_1$. Since $\hat{B}_{\mathbb A}=\hat{H}_{\mathbb A} \ltimes\hat{U}_{\mathbb A}$, we actually have $h'=\bar h h$.
\end{proof}

\subsection{An Important Automorphism}

  As in \cite{R}, we need to extend the group $\hat G_{\mathbb A}$ by an automorphism related to the degree operator $D$ that appears in the associated affine Kac-Moody Lie algebra $$\hat{\frak g}^{\,e}=(\mathbb C[t,t^{-1}]\otimes \frak g )\oplus \mathbb C c \oplus \mathbb C D.$$  In \cite{R}, Garland uses $e^{-rD}\in\mathrm{Aut}(V^{\lambda}_{\mathbb R})$ for $r>0$ to extend the group $\hat G_{\mathbb R}$; however, in our case we are considering the restricted direct product $\hat G_{\mathbb A}$.  For this reason, we first define local automorphisms $\eta_{\nu}^{m_{\nu}D}\in\mathrm{Aut}(V^{\lambda}_{F_\nu})$ for each $\nu\in\mathcal V$, and then work with a product of the local automorphisms.

  For each $\nu\in\mathcal V$ and integer $m_{\nu} \in \mathbb Z$, we define $\eta_{\nu}^{m_{\nu}D}$ to be the automorphism of $V^{\lambda}_{F_\nu}$ defined by the conditions
\begin{enumerate}
\item the automorphism $\eta_{\nu}^{m_{\nu}D}$ fixes each weight space $V^{\lambda}_{\mu, F_\nu}$, and

\item we have $\eta_{\nu}^{m_{\nu}D} \cdot v= \pi_{\nu}^{m_{\nu}\,\mu(D)} v$ for $v \in V^{\lambda}_{\mu, F_\nu}$.
\end{enumerate}

Since $\eta_{\nu}^{m_{\nu}D}$ acts as scalar multiplication on the weight spaces, we can consider this automorphism as being a diagonal block matrix with respect to the coherently ordered basis $\mathcal B$, and as such the automorphism will commute with $\hat{H}_{\nu} $ and normalize $\hat{U}_\nu$. Moreover, note that if we chose $m_{\nu}=0$, then $\eta_{\nu}^{m_{\nu}D}$ is the identity map.

  We fix a  tuple $m= (m_{\nu})_{\nu\in\mathcal V}$ such that $m_{\nu}\in\mathbb Z$ and $m_{\nu}=0$ for all but a finite number of $\nu$. By doing so, we fix the associated automorphism $\eta^{mD}$ defined as the product \begin{equation} \label{eqn-eta} \eta^{mD}=\prod_{\nu\in\mathcal V} \, \eta_{\nu}^{m_{\nu}D} \ \in \ \prod_{\nu\in\mathcal V} \mathrm{Aut} (V^{\lambda}_{F_\nu} ).\end{equation}

  We will define the Eisenstein series on $\hat G_{\mathbb A}\eta^{mD}$ for our fixed automorphism $\eta^{mD}$. In particular, we will consider $\Phi_{\chi}$ as a function on  $\hat G_{\mathbb A}\eta^{mD}$ by setting $\Phi_{\chi}(g \eta^{mD})=\Phi_{\chi}(g)$.

\subsection{Defining the Eisenstein Series}

  For each completion $F_\nu$, there is the natural injection $i_{\nu}: F\hookrightarrow F_\nu$ which induces the injection $F((X))\hookrightarrow F_\nu((X))$ by sending
$$\sum_{i\geq i_0} s_i X^i \mapsto \sum_{i\geq i_0} \,i_{\nu}(s_i) X^i .$$
From this map, we see that there is a natural injection $i_{\nu}:\hat G_F\hookrightarrow\hat G_{\nu}$ for each $\nu\in\mathcal V$, and we may define the diagonal embedding $i:\hat G_F\hookrightarrow\prod_{\nu\in\mathcal V}\hat G_{\nu}$ by
$$\chi_{\alpha}(\sigma)\mapsto \big(i_{\nu} (\chi_{\alpha}(\sigma) )\big)_{\nu\in\mathcal V} \ .$$
The image of the map $i$ is not entirely contained in the group $\hat G_{\mathbb A}$. To see this clearly, we construct an example of an element from $\hat G_F$ that does not diagonally embed into $\hat G_{\mathbb A}$.

\begin{Exa} Let $F=\mathbb{F}_q(T)$. It is well known that all but one of the places (the ``infinite'' place corresponding to $\frac{1}{T})$ are indexed by monic, irreducible polynomials in $\mathbb{F}_q[T]$. We let $f_{\nu}(T)$ denote the polynomial associated with the place $\nu$. Set
$$\sigma= T+ \frac{1}{f_{\nu_1}}X + \frac{1}{f_{\nu_2}}X^2 + \frac{1}{f_{\nu_3}}X^3 + \dots,$$
where we set the coefficient of $X^i$ to be $\frac{1}{f_{\nu_i}}$ for some $\nu_i$ that has not previously appeared in the expansion.  Clearly, we have $\frac{1}{f_{\nu}}\in\mathbb{F}_q(T)$ for all $\nu$, so $\sigma \in F((X))$. However, by design $\sigma \notin\mathcal O_{\nu}((X))$ for an infinite number of $\nu\in\mathcal V$, and as a result $i_{\nu}(\chi_{\alpha}(\sigma))$ is not an element of $\hat{K}_\nu$ for an infinite number of $\nu$. Therefore, $i(\chi_{\alpha}(\sigma))\notin\hat G_{\mathbb A}$.
\end{Exa}

Keeping this example in mind, we consider the subgroup $\hat{\Gamma}_F$ defined by \[ \hat{\Gamma}_F =  \{g\in\hat G_F \ \big| \ i(g)\in\hat G_{\mathbb A} \}.\] By an abuse of notation, $\hat{\Gamma}_F$ will be considered as a subgroup of $\hat G_F$ as well as $\hat G_{\mathbb A}$, where in the latter case we consider the elements as being diagonally embedded.  Note that  that $h^{\chi}=1$ for any $h\in\hat H_{\mathbb A}\cap\hat{\Gamma}_F$ and $\chi\in\hat{\frak h}^*$. We also have the subgroups $\hat{H}_F$, $\hat{U}_F$ and $\hat{B}_F$ of the group $\hat G_F$.

\medskip

  In the definition of the Eisenstein series, $\hat{\Gamma}_F/(\hat{\Gamma}_F\cap\hat{B}_F)$ will be the coset space over which we index our sum.  Before continuing, we first establish the following facts about $\Phi_{\chi}$.

\begin{Lem} \label{lem-123}
Let $g,\beta \in\hat G_{\mathbb A}$, and $\gamma \in\hat \Gamma_F\cap \hat{B}_F$; then
\begin{enumerate}
\item $\Phi_{\chi}(gh')= (h')^{\chi}\,\Phi_{\chi}(g)$ for any $h'\in\hat H_{\mathbb A}$,
\item $\Phi_{\chi}(g\gamma)=\Phi_{\chi}(g)$, and
\item $\Phi_{\chi}(g\eta^{mD}\beta  \gamma)=\Phi_{\chi}(g\eta^{mD}\beta)$.
\end{enumerate}
\end{Lem}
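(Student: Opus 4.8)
The three assertions of Lemma~\ref{lem-123} all come down to understanding how $\Phi_\chi$ interacts with multiplication on the right by elements of $\hat H_{\mathbb A}$, by elements of $\hat B_F \cap \hat\Gamma_F$, and by the automorphism $\eta^{mD}$. The plan is to handle (1) first, since (2) and (3) will be deduced from it. For (1), I would start from an Iwasawa decomposition $g = k\,h\,u$ with $k \in \hat{\mathbb K}$, $h \in \hat H_{\mathbb A}$, $u \in \hat U_{\mathbb A}$. Then $gh' = k\,h\,u\,h'$; since $\hat H_{\mathbb A}$ normalizes $\hat U_{\mathbb A}$ (established in the subsection on the structure of the torus), we may rewrite $u\,h' = h'\,u_1$ for some $u_1 \in \hat U_{\mathbb A}$, and since the elements of $\hat H_{\mathbb A}$ commute this gives $gh' = k\,(hh')\,u_1$, an Iwasawa decomposition whose torus part is $hh'$. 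Applying Proposition~\ref{prop-well} (well-definedness of $\Phi_\chi$) and the fact that $h \mapsto h^\chi$ is a group homomorphism $\hat H_{\mathbb A} \to \mathbb C^\times$ (immediate from the definition via the idèlic norm, which is multiplicative), we get $\Phi_\chi(gh') = (hh')^\chi = h^\chi (h')^\chi = (h')^\chi \Phi_\chi(g)$.

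For (2), write $\gamma \in \hat\Gamma_F \cap \hat B_F$. Since $\hat B_F = \hat H_F \ltimes \hat U_F$, we may factor $\gamma = h_\gamma u_\gamma$ with $h_\gamma \in \hat H_F$ and $u_\gamma \in \hat U_F$; under the diagonal embedding both factors lie in $\hat\Gamma_F$, hence in $\hat H_{\mathbb A}$ and $\hat U_{\mathbb A}$ respectively. Then $g\gamma = g\,h_\gamma\,u_\gamma$. First apply the argument of (1) to absorb $h_\gamma$: $\Phi_\chi(g h_\gamma u_\gamma) = \Phi_\chi(g h_\gamma)$ once we check that right multiplication by an element of $\hat U_{\mathbb A}$ leaves $\Phi_\chi$ unchanged — this is because if $g = k h u$ is an Iwasawa decomposition then so is $k h (u u') = k h (u u')$ with the same torus part. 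Thus $\Phi_\chi(g\gamma) = \Phi_\chi(g h_\gamma) = h_\gamma^\chi\, \Phi_\chi(g)$ by part (1), and finally $h_\gamma^\chi = 1$: this is precisely the remark (made just after the definition of $\hat\Gamma_F$) that $h^\chi = 1$ for any $h \in \hat H_{\mathbb A} \cap \hat\Gamma_F$, which holds because such $h$ has idèlic components that are units at every place, forcing $|s_i| = 1$.

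For (3), the key point is that $\eta^{mD}$ commutes with $\hat H_{\mathbb A}$ and normalizes $\hat U_{\mathbb A}$ — both stated in the subsection defining $\eta^{mD}$, since $\eta^{mD}$ is block-diagonal with respect to the coherently ordered basis. Given $\beta \in \hat G_{\mathbb A}$ and $\gamma \in \hat\Gamma_F \cap \hat B_F$, write $\beta = k_\beta h_\beta u_\beta$ in Iwasawa form. Then $g\eta^{mD}\beta\gamma = g\eta^{mD} k_\beta h_\beta u_\beta \gamma$; factoring $\gamma = h_\gamma u_\gamma$ as above and pushing $u_\beta, u_\gamma$ to the right past each other (using that $\hat U_{\mathbb A}$ is a group) and $h_\gamma$ past $u_\beta$ (normalization), one reduces $\beta\gamma$ to the form $k_\beta h_\beta h_\gamma u_1$ with $u_1 \in \hat U_{\mathbb A}$. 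So $\Phi_\chi(g\eta^{mD}\beta\gamma)$ and $\Phi_\chi(g\eta^{mD}\beta)$ differ, by the reasoning in (1)--(2) applied to the element $g\eta^{mD}$ of $\hat G_{\mathbb A}\eta^{mD}$, by the factor $h_\gamma^\chi = 1$. Here one uses the convention $\Phi_\chi(g'\eta^{mD}) = \Phi_\chi(g')$ together with the fact that $\eta^{mD}$ and $\hat H_{\mathbb A}$ commute, so that the torus part is genuinely unaffected by the insertion of $\eta^{mD}$.

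**Main obstacle.** None of the three parts is deep; the one point requiring a little care is the bookkeeping in (3), namely making sure that when we commute $\eta^{mD}$ and the various $\hat B_{\mathbb A}$-factors past each other we genuinely land back in an Iwasawa decomposition of an element of $\hat G_{\mathbb A}\eta^{mD}$ and that the $\hat K$-ambiguity (the non-uniqueness noted in Remark~\ref{rmk-I}) is harmless — but this is exactly what Proposition~\ref{prop-well} and Corollary~\ref{cor-I} were set up to handle, so the argument goes through cleanly once those are invoked. The only genuinely substantive input is the vanishing $h^\chi = 1$ on $\hat H_{\mathbb A} \cap \hat\Gamma_F$, which was already recorded, so the proof is essentially a matter of assembling the pieces in the right order.
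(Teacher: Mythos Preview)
Your proposal is correct and follows essentially the same approach as the paper. Parts (1) and (2) match the paper's proof almost line for line: Iwasawa decompose $g$, use that $\hat H_{\mathbb A}$ normalizes $\hat U_{\mathbb A}$, and invoke the vanishing $h_\gamma^\chi = 1$ for $h_\gamma \in \hat H_{\mathbb A}\cap\hat\Gamma_F$. For part (3) the paper's argument is organized slightly more cleanly than yours: rather than Iwasawa-decomposing $\beta$ (which is not actually needed), the paper simply writes $\eta^{mD}\beta = \beta'\eta^{mD}$ for some $\beta'\in\hat G_{\mathbb A}$ and $\eta^{mD}\gamma(\eta^{mD})^{-1} = h_1 u_2$, then applies (1) and right $\hat U_{\mathbb A}$-invariance directly to $g\beta' \in \hat G_{\mathbb A}$. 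Your version reaches the same conclusion via the same commutation properties of $\eta^{mD}$, just with a bit of extra bookkeeping.
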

\begin{proof}
(1) We write $g=khu$ according to the Iwasawa decomposition.  Since $\hat H_{\mathbb A}$ normalizes $\hat{U}_{\mathbb A}$, we have
\[
\Phi_{\chi}(gh') = \Phi_{\chi}(khuh')= \Phi_{\chi}(khh'u').
\]
Now by definition of $\Phi_{\chi}$, we know
\[
\Phi_{\chi}(khh'u')=  (h h')^{\chi}=(h)^{\chi}(h')^{\chi}.
\]
Since $h$ is the $\hat H_{\mathbb A}$-component of $g$, we know this last expression equals $(h')^{\chi} \, \Phi_{\chi}(g).$

(2)   Using an argument similar to that of Lemma \ref{lem-semi}, we can show that for any $\gamma\in\hat{\Gamma}_F\cap \hat{B}_{\mathbb A}$, we have a decomposition $\gamma=h_1u_1$ with $h_1\in\hat H_{\mathbb A}\cap\hat{\Gamma}_F$ and $u_1\in\hat{U}_{\mathbb A}$.  Using our Iwasawa decomposition, express $g=khu$. Then
\begin{eqnarray*}
\Phi_{\chi}(g\gamma) \ &=&\ \Phi_{\chi}(khuh_1u_1)\ \  =\ \Phi_{\chi}(kh h_1 u' u_1)\\
&=&\  (h h_1)^{\chi} = \  (h)^{\chi} = \  \Phi_{\chi}(g),
\end{eqnarray*}
where the second to last equality holds because $h_1\in\hat H_{\mathbb A}\cap\hat{\Gamma}_F$ .

(3)  As before we let $\gamma=h_1u_1$ with $h_1\in\hat{\Gamma}_F\cap\hat H_{\mathbb A}$ and $u_1\in\hat{U}_{\mathbb A}$.  Since $\eta^{mD}$ commutes with $\hat H_{\mathbb A}$ and normalizes $\hat{U}_{\mathbb A}$, we see $\eta^{mD}\gamma(\eta^{mD})^{-1}=h_1u_2$ for some $u_2\in\hat{U}_{\mathbb A}$. Let $\eta^{mD}\beta=\beta'\eta^{mD}$ for some $\beta'\in\hat G_{\mathbb A}$. Then
$$\Phi_{\chi}(g\eta^{mD}\beta\gamma) = \Phi_{\chi}(g\beta'\eta^{mD}\gamma) = \Phi_{\chi}(g\beta'h_1u_2\eta^{mD})\ \Phi_{\chi}(g\beta'h_1u_2).$$
We know $\Phi_{\chi}$ is right invariant by $\hat{U}_{\mathbb A}$, and in light of part (1) of this lemma, the following equalities hold:
\begin{eqnarray*}
\Phi_{\chi}(g\beta'h_1u_2) &=&  \Phi_{\chi}(g\beta' h_1) \ \ \, = \   (h_1)^{\chi} \, \Phi_{\chi}(g\beta') \ = \ \Phi_{\chi}(g\beta')\\
&=& \Phi_{\chi}(g\beta'\eta^{mD})  = \ \Phi_{\chi}(g\eta^{mD}\beta).
\end{eqnarray*}
\end{proof}

  Due to part (3) of the previous lemma, the following definition of the Eisenstein series $E_\chi$ on the space $\hat G_{\mathbb A}\eta^{mD}$ is well-defined.

\begin{Def} \label{def-Eis}
For $g \in \hat G_{\mathbb A}$ and $\chi\in\hat{\frak h}^*$, we define \begin{equation}E_{\chi}(g\eta^{mD})=\sum_{\gamma\in\hat{\Gamma}_F/(\hat{\Gamma}_F\cap\hat{B}_F)}\Phi_{\chi}(g\eta^{mD} \gamma),\end{equation} whenever the series converges to a complex number; otherwise we define $E_{\chi}(g\eta^{mD})= \infty$.
\end{Def}

  The goal of this paper is to prove the convergence of the series $E_\chi$. Later, we will see that after some reductions we can consider $E_\chi(g\eta^ mD)$ as a function on the space $\hat H_{\mathbb A}\times\hat{U}_{\mathbb A}/(\hat{U}_{\mathbb A}\cap\hat{\Gamma}_F)$. In the next three sections, we will prove the following theorem:

\begin{Thm} \label{thm-main}
Let $\chi\in\hat{\frak h}^*$ such that $\mathrm{Re} (\chi(h_{\alpha_i}) )<-2$ for $i=1,\dots,l+1$, and let $m=(m_\nu)_{\nu\in\mathcal V}$ be a tuple  such that $m_\nu \in \mathbb Z_{\ge 0}$ and $0<\sum_\nu m_\nu <\infty$. Then the Eisenstein series $$E_{\chi}(h\eta^{mD}u) =\sum_{\gamma\in\hat{\Gamma}_F/(\hat{\Gamma}_F\cap\hat{B}_F)}\Phi_{\chi}(h\eta^{mD} u \gamma)$$
is convergent for all $(h,u)\in\hat H_{\mathbb A}\times\hat{U}_{\mathbb A}/(\hat{U}_{\mathbb A}\cap\hat{\Gamma}_F)$.
\end{Thm}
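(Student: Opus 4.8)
The plan is to follow Garland's strategy over $\mathbb{R}$ as closely as possible, but to replace all of the analytic estimates coming from archimedean places with the combinatorics of the valuations $\nu$ and a comparison with $\zeta_F$. The first reduction is to observe that, by Lemma \ref{lem-123}(3), the summand $\Phi_\chi(h\eta^{mD}u\gamma)$ depends only on the double coset class of $\gamma$ in $\hat\Gamma_F/(\hat\Gamma_F\cap\hat B_F)$, and that by Corollary \ref{cor-disjoint} together with Lemma \ref{lem-expression} every such class has a canonical representative built from the affine Weyl group: write $\gamma$ according to the Bruhat decomposition $\hat G_\nu = \bigsqcup_{w\in\hat W}\hat B_\nu w\hat B_\nu$ and stratify the sum by $w\in\hat W$. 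Thus $E_\chi(h\eta^{mD}u)=\sum_{w\in\hat W} E_\chi^{(w)}(h\eta^{mD}u)$, where $E_\chi^{(w)}$ is the partial sum over those $\gamma$ whose Bruhat cell is $w$. It suffices to bound each $E_\chi^{(w)}$ and then show the resulting bounds are summable over $\hat W$.

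Second, I would estimate a single partial sum $E_\chi^{(w)}$. Using Lemma \ref{lem-expression}, a coset representative in the $w$-cell is a product $\left(\prod_{a\in\hat\Delta_w}\chi_a(s_a)\right)w$ with $s_a\in F$ and $\hat\Delta_w=\hat\Delta_{W,+}\cap w\hat\Delta_{W,-}$ finite of size $\ell(w)$; the $F$-rationality condition defining $\hat\Gamma_F$, together with the idelic product formula, forces the $s_a$ to range over a lattice-like set. Applying the Iwasawa decomposition of Theorem 2.10 locally at each $\nu$ and multiplying over $\nu$, one computes $\Phi_\chi$ of such an element explicitly: the $\hat H_{\mathbb A}$-component is governed by the $SL_2$-computation in Lemma \ref{lem-hom}, so that $|\Phi_\chi|$ on the representative indexed by $(s_a)$ is a product over $a\in\hat\Delta_w$ of factors of the form $\prod_\nu \max(1,|s_a|_\nu)^{\operatorname{Re}\chi_w(h_a)}$ for a suitable $w$-twist $\chi_w$ of $\chi$, times the contribution of $h$, $u$, and $\eta^{mD}$, which are fixed and finite. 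Summing a geometric-type series over the rational points $s_a$ produces, for each $a$, a factor $\zeta_F\big(-\operatorname{Re}\chi_w(h_a)-1\big)$ or a shifted variant; these converge precisely because the hypothesis $\operatorname{Re}\chi(h_{\alpha_i})<-2$ for all $i$ guarantees $-\operatorname{Re}\chi_w(h_a)-1>1$ for every $a\in\hat\Delta_{W,+}$ (here one uses that $w(\chi+\rho)-\rho$ still pairs suitably negatively with positive coroots, as in Garland's region-of-convergence argument). So each $E_\chi^{(w)}$ is finite, bounded by $(a e^{-rD})$-type factor times a product of $\ell(w)$ values of $\zeta_F$.

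Third, I would sum over $w\in\hat W$. This is where the real work lies and where I expect the main obstacle: the affine Weyl group is infinite, so finiteness of each $E_\chi^{(w)}$ is not enough. One writes the total bound as $\sum_{w\in\hat W}(a\eta^{mD})^{\operatorname{Re}(w(\chi+\rho)-\rho)}\,\tilde c(\chi,w)$, i.e. as the constant-term series with absolute values, and must show this converges. Using $\hat W=W\ltimes T$, one separates the finite classical Weyl group from the translation lattice $T\cong\frak h_{\mathbb Z}$; for $w=t_\beta w_0$ with $\beta$ in the coweight lattice, the exponent $w(\chi+\rho)-\rho$ picks up a term linear in $\beta$ paired against $\log_{q_\nu}$ of the $\eta^{mD}$-data, and the dominance condition $0<\sum_\nu m_\nu$ together with $\operatorname{Re}\chi(h_{\alpha_i})<-2$ makes this a negative-definite quadratic-exponential in $\beta$ after accounting for the ratio-of-$\zeta_F$ factors $\tilde c(\chi,w)$, whose growth in $\ell(w)$ is only polynomial in $q_\nu^{\ell(w)}$ and is dominated. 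Concretely, I would invoke the estimate proved in Section 5 (the region of convergence for the constant-term sum) as a black box: it says exactly that $\sum_{w\in\hat W}\big|(a\eta^{mD})^{w(\chi+\rho)-\rho}\tilde c(\chi,w)\big|<\infty$ under the stated hypotheses on $\chi$ and $m$. Combining this with the per-cell bound of Step 2 gives absolute convergence of $E_\chi(h\eta^{mD}u)$, uniformly for $(h,u)$ in compacta, completing the proof; the dependence only on $(h,u)\in\hat H_{\mathbb A}\times\hat U_{\mathbb A}/(\hat U_{\mathbb A}\cap\hat\Gamma_F)$ follows from Lemma \ref{lem-123} and Corollary \ref{cor-I}, since $\Phi_\chi$ is left-$\hat{\mathbb K}$-invariant and the $\hat H_{\mathbb A}$-component is well-defined modulo $\hat{\mathbb K}$.
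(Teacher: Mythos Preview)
Your outline follows the broad shape of the paper's argument (Bruhat stratification, then summation over $\hat W$), but Step~2 contains a real gap that the paper's proof avoids by a genuinely different mechanism.

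You claim that for a representative $\big(\prod_{a\in\hat\Delta_w}\chi_a(s_a)\big)w$, the value $|\Phi_\chi|$ factors as a product over $a\in\hat\Delta_w$ of terms $\prod_\nu\max(1,|s_a|_\nu)^{\mathrm{Re}\,\chi_w(h_a)}$. This is not justified and is essentially false as stated: the Iwasawa decomposition of a product of unipotents does not split root-by-root, because the $\chi_a(s_a)$ interact through commutators. In the paper, the product structure (the Gindikin--Karpelevich formula, Proposition~\ref{prop-local}) only emerges \emph{after integrating} over $u\in\hat U_{\mathbb A}/(\hat U_{\mathbb A}\cap\hat\Gamma_F)$: the sum over $b\in\hat U_{w,F}$ unfolds against the integral to give $\int_{\hat U_{w,\mathbb A}}\Phi_\chi(h\eta^{mD}u_1w)\,du_1$ (Proposition~\ref{prop-step-1}), and it is this \emph{integral}, not the sum, that factors over places and over $\hat\Delta_w$ via an induction on $\ell(w)$ with unimodular changes of variables. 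Your direct pointwise estimate would need a substitute for this unfolding, and ``summing a geometric-type series over rational points $s_a$ to get $\zeta_F$'' is not that substitute: even in rank one the sum $\sum_{s\in F}\prod_\nu\max(1,|s|_\nu)^{-t}$ is a height zeta function for $\mathbb P^1$, not the ratio $\zeta_F(t)/\zeta_F(t+1)$ that actually appears in $c(\chi,w)$.

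The paper's route is therefore: integrate $E_{\chi,w}$ over $u$ to obtain the constant term exactly (Sections~\ref{section-Cal}); prove the constant-term series $\sum_{w}(h\eta^{mD})^{w(\chi+\rho)-\rho}c(\chi,w)$ converges (Theorem~\ref{thm-conv}, which is what you cite as a black box in Step~3); conclude by positivity that $E_\chi$ is integrable in $u$, hence finite almost everywhere; and finally upgrade almost-everywhere to everywhere convergence by a separate argument (the translation trick in the proof of the Proposition in \S\ref{subsec-almost}, showing that if $E_\chi=\infty$ at one $(h,u)$ then it is infinite on a set of positive measure). You do not address this last step at all, and your claim of ``uniformly for $(h,u)$ in compacta'' is not supported by the preceding estimates, since your bound in Step~2 still depends on $u$ in an uncontrolled way.
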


  To prove this theorem, we first assume that $\chi$ is a real character, so  $\chi:\hat H_{\mathbb A}\rightarrow\mathbb R_{>0}$. As a result, the Eisenstein series $E_\chi$ takes values in $\mathbb R_{>0}\cup \{\infty\}$.
This assumption is not very restrictive because for any complex character $\chi$, the series $E_\chi$ is dominated by $E_{\mathrm{Re}(\chi)}$. Hence, we can apply the dominated convergence theorem for the complex case after we consider the real character $\chi$.

As in Corollary \ref{cor-disjoint}, we see that $\hat G_F$ has the Bruhat decomposition into the following disjoint union
$$\hat G_F= \bigcup_{w\in\hat{W}} \hat{B}_F \, w \, \hat{B}_F.$$
If we let $\hat{\Gamma}_F(w) = \ \hat{\Gamma}_F\cap(\hat{B}_F \, w \, \hat{B}_F)$ and define
\begin{equation} E_{\chi, w}(g\eta^{mD}) = \ \sum_{\gamma\in\hat{\Gamma}_F(w)/(\hat{\Gamma}_F(w)\cap\hat{B}_F)}\Phi_{\chi}(g\eta^{mD} \gamma),\end{equation}
 then the Bruhat decomposition above allows us to express our Eisenstein series as
\begin{equation}E_{\chi}(g\eta^{mD})=\sum_{w\in\hat{W}} E_{\chi, w}(g\eta^{mD}).\end{equation} This is simply a regrouping of the sum in Definition \ref{def-Eis}.   As with $E_{\chi}$, we can consider each $E_{\chi, w}$ as a function on the space $\hat H_{\mathbb A}\times\hat{U}_{\mathbb A}/(\hat{U}_{\mathbb A}\cap\hat{\Gamma}_F)$.  In order to prove Theorem \ref{thm-main}, it suffices to show that
\begin{equation} \label{eqn-conv} \sum_{w\in\hat{W}} \ \ \int_{\hat{U}_{\mathbb A}/(\hat{U}_{\mathbb A}\cap\hat{\Gamma}_F)} E_{\chi, w}(h\eta^{mD} u)  \ du \ < \infty \end{equation}
for $h$ varying in an arbitrary compact set of $\hat H_{\mathbb A}$ and for real $\chi$.

If we establish the convergence (\ref{eqn-conv}), then
\begin{eqnarray} \sum_{w\in\hat{W}} \ \ \int_{\hat{U}_{\mathbb A}/(\hat{U}_{\mathbb A}\cap\hat{\Gamma}_F)} E_{\chi, w}(h\eta^{mD} u)  \ du  &=& \int_{\hat{U}_{\mathbb A}/(\hat{U}_{\mathbb A}\cap\hat{\Gamma}_F)}  \sum_{w\in\hat{W}} E_{\chi, w}(h\eta^{mD} u) \ du \nonumber \\ &=& \int_{\hat{U}_{\mathbb A}/(\hat{U}_{\mathbb A}\cap\hat{\Gamma}_F)}  E_{\chi}(h\eta^{mD} u) \ du . \label{eqn-const} \end{eqnarray} Note that the last expression is nothing but the constant term of the Eisenstein series $E_\chi$.

\begin{Def} \label{def-constant-term}
We set \[ E^{\#}_\chi(g \eta^{mD}) =  \sum_{w\in\hat{W}} \ \int_{\hat{U}_{\mathbb A}/(\hat{U}_{\mathbb A}\cap\hat{\Gamma}_F)} E_{\chi, w}(g\eta^{mD} u) \ du,  \] and call $E^{\#}_\chi$ the {\em constant term} of the Eisenstein series $E_\chi$.
\end{Def}

In the next section, we will calculate the integrals $$ \int_{\hat{U}_{\mathbb A}/(\hat{U}_{\mathbb A}\cap\hat{\Gamma}_F)} E_{\chi, w}(h\eta^{mD} u)  \ du$$ for $w\in\hat{W}$.  In Section 5, we establish the convergence (\ref{eqn-conv}) when $\chi$ is a real character and $h$ varies in a compact set of $\hat{H}_\mathbb A$. As a result, we will obtain the almost everywhere convergence of the Eisenstein series and a concrete description of its constant term.

\vskip 1 cm

\section{Calculating the Constant Term of the Eisenstein Series} \label{section-Cal}

  In this section, we simply state the existence and properties of the measures necessary for our calculation, leaving the details to Appendix A. Constructing these measures involves taking the projective limit of a family of measures. For now we will also refrain from showing that $E_{\chi}$ is a measurable function, a topic that we will address in Section 6.

\subsection{Definition and Preliminary Calculation}

  From Appendix A, we have an invariant probability measure $du$ on the space $\hat{U}_{\mathbb A}/(\hat{U}_{\mathbb A}\cap\hat{\Gamma}_F)$. As was discussed at the end of the previous section, we now  turn our attention to calculating the expression \begin{equation}\sum_{w\in\hat{W}} \ \int_{\hat{U}_{\mathbb A}/(\hat{U}_{\mathbb A}\cap\hat{\Gamma}_F)} E_{\chi, w}(g\eta^{mD} u) \ du.\end{equation}
We first calculate the integrals $$\int_{\hat{U}_{\mathbb A}/(\hat{U}_{\mathbb A}\cap\hat{\Gamma}_F)} E_{\chi, w}(g\eta^{mD} u) \ du,$$ for $w\in\hat{W}$.

  \medskip

Let $\hat{U}_{-,F}$ be the subgroup of $\hat G_F$ consisting of the elements that are strictly lower triangular block matrices with respect to our coherently ordered basis $\mathcal B$ of $V^{\lambda}_F$.
We define $$\hat{U}_{w,F}=\hat{U}_F\cap w \hat{U}_{-,F}w^{-1}.$$ Note that this definition works over $F_\nu$ as well, so the notations $\hat{U}_{-,\nu}$ and $\hat{U}_{w,\nu}$ are clear. Finally, we set $\hat{U}_{-,\mathbb A}$ and $\hat{U}_{w,\mathbb A}$ to be the expected restricted direct products.  The Bruhat decomposition has this refinement:
$$\hat G_F=  \bigcup_{w\in\hat{W}} \hat{U}_{w,F} \, w \, \hat{B}_F \ \ \ \text{(disjoint union)}.$$
Moreover, every element of $u\in\hat{U}_{w,F}$ is of the form
\begin{equation}
u=\prod_{a\in\hat{\Delta}_{W,+}\cap w\,\hat{\Delta}_{W,-}}\chi_a(s_a) \quad  \text{ for } s_a\in F \, .\end{equation} (See \cite{R}  \S6,  \cite{LG2}  \S6.)
It is straightforward to check that $\hat{U}_{w,F}\subset\hat{\Gamma}_F$, which implies
$$\hat{\Gamma}_F\cap (\hat{U}_{w,F}\, w\,  \hat{B}_F )=\hat{U}_{w,F}\, w\, (\hat{\Gamma}_F\cap\hat{B}_F ). $$
As a result, we can choose the coset representatives of $\hat{\Gamma}_F(w)/\hat{\Gamma}_F(w)\cap\hat{B}_F$ to be $\{bw\}$ for $b\in\hat{U}_{w,F}$. Thus,
\begin{eqnarray*}
\int_{\hat{U}_{\mathbb A}/(\hat{U}_{\mathbb A}\cap\hat{\Gamma}_F)} E_{\chi, w}(g\eta^{mD} u) \ du&=&\int_{\hat{U}_{\mathbb A}/(\hat{U}_{\mathbb A}\cap\hat{\Gamma}_F)} \sum_{\gamma\in\hat{\Gamma}_F(w)/\hat{\Gamma}_F(w)\cap\hat{B}_F} \Phi_{\chi}(g\eta^{mD} u \gamma) \ du\\
&=& \int_{\hat{U}_{\mathbb A}/(\hat{U}_{\mathbb A}\cap\hat{\Gamma}_F)} \sum_{b\in\hat{U}_{w,F}} \Phi_{\chi}(g\eta^{mD} u b w) \ du.
\end{eqnarray*}

We have the following decompositions:
\begin{equation} \label{eqn-decom}
\hat{U}_F =\ \hat{U}_{w,F}\, (\hat{U}_F\cap w\hat{U}_F w^{-1}) \quad \text{ and } \quad
\hat{U}_{\mathbb A} = \hat{U}_{w,\mathbb A} \, (\hat{U}_{\mathbb A}\cap w\hat{U}_{\mathbb A} w^{-1}).
\end{equation}
  This decomposition, along with the fact that $\hat{U}_{w,F}\subset\hat{\Gamma}_F$, implies that
\begin{equation} \hat{U}_F\cap\hat{\Gamma}_F= \hat{U}_{w,F} \, (\hat{\Gamma}_F\cap\hat{U}_F\cap \ w\,\hat{U}_F\,w^{-1}).
\end{equation}
So we can consider the set of $b\in\hat{U}_{w,F}$ as a set of coset representatives for $$(\hat{\Gamma}_F\cap\hat{U}_F)/(\hat{\Gamma}_F\cap\hat{U}_F\cap w\hat{U}_F w^{-1}).$$ Since $\hat{\Gamma}_F\cap\hat{U}_F=\hat{\Gamma}_F\cap\hat{U}_{\mathbb A}$, our integral $$ \int_{\hat{U}_{\mathbb A}/(\hat{U}_{\mathbb A}\cap\hat{\Gamma}_F)} \sum_{b\in(\hat{\Gamma}_F\cap\hat{U}_F)/(\hat{\Gamma}_F\cap\hat{U}_F\cap w\hat{U}_F w^{-1})} \Phi_{\chi}(g\eta^{mD} u b w) \ du$$ becomes
\begin{equation} \label{eqn-integral-1}  \int_{\hat{U}_{\mathbb A}/(\hat{\Gamma}_F\cap\hat{U}_F\cap w\hat{U}_F w^{-1})} \Phi_{\chi}(g\eta^{mD} u'  w) \ du'.
\end{equation}
   Here we consider the measure $du'$ as the measure induced from $du$ and the projection $$\pi':\hat{U}_{\mathbb A}/(\hat{\Gamma}_F\cap\hat{U}_F\cap w\hat{U}_F w^{-1}) \twoheadrightarrow \hat{U}_{\mathbb A}/(\hat{U}_{\mathbb A}\cap\hat{\Gamma}_F).$$

     Using the decomposition (\ref{eqn-decom}) for $\hat{U}_{\mathbb A}$, we observe that integrating over this coset is the same as first integrating over $$\hat{U}_{\mathbb A}/(\hat{U}_{\mathbb A}\cap w\hat{U}_{\mathbb A} w^{-1})$$ and then over $$(\hat{U}_{\mathbb A}\cap w\hat{U}_{\mathbb A} w^{-1})\big/(\hat{\Gamma}_F\cap\hat{U}_F\cap w\hat{U}_F w^{-1}).$$
    In Appendix A we see that the measure $du'$ decomposes into measures $du_1$ and $du_2$ on these spaces, respectively. Using these measures and decompositions, we can manipulate our integral (\ref{eqn-integral-1}) to be
\begin{equation} \label{eqn-integral-2} \int_{\hat{U}_{w,\mathbb A}} \left  ( \ \ \int_{(\hat{U}_{\mathbb A}\cap w\hat{U}_{\mathbb A} w^{-1})/(\hat{\Gamma}_F\cap\hat{U}_F\cap w\hat{U}_F w^{-1})}  \Phi_{\chi}(g\eta^{mD} u_1 u_2  w) \ du_2\  \right ) \ du_1, \end{equation} where we set  $\hat{U}_{w,\mathbb A} = \hat{U}_{\mathbb A}/(\hat{U}_{\mathbb A}\cap w\hat{U}_{\mathbb A} w^{-1})$.

  Since $\Phi_{\chi}$ is $\hat{U}_{\mathbb A}$-right invariant, we let $u_2'=w^{-1}u_2 w\in\hat{U}_{\mathbb A}$ and  rewrite $$\Phi_{\chi}(g\eta^{mD} u_1 u_2  w)=\Phi_{\chi}(g\eta^{mD} u_1 w u_2')=\Phi_{\chi}(g\eta^{mD} u_1 w).$$  As a result, the integral (\ref{eqn-integral-2}) becomes
\[
 \int_{\hat{U}_{w,\mathbb A}}  \left ( \ \  \int_{(\hat{U}_{\mathbb A}\cap w\hat{U}_{\mathbb A} w^{-1})/(\hat{\Gamma}_F\cap\hat{U}_F\cap w\hat{U}_F w^{-1})}  \Phi_{\chi}(g\eta^{mD} u_1 w) \ du_2\ \right ) \ du_1.
\]
Since the values $ \Phi_{\chi}(g\eta^{mD} u_1 w)$ no longer depend on $u_2$ and the measure $du_2$ has a total measure of 1, this equals
\begin{eqnarray*}
& &\ \ \ \int_{\hat{U}_{w,\mathbb A}} \Phi_{\chi}(g\eta^{mD} u_1 w)\, du_1 \ \ \int_{(\hat{U}_{\mathbb A}\cap w\hat{U}_{\mathbb A} w^{-1})/(\hat{\Gamma}_F\cap\hat{U}_F\cap w\hat{U}_F w^{-1})} du_2  \\
&=&\ \ \int_{\hat{U}_{w,\mathbb A}} \Phi_{\chi}(g\eta^{mD} u_1 w) du_1.
\end{eqnarray*}
 The following proposition summarizes our results from this subsection:
\begin{Prop} \label{prop-step-1} For  $g\in \hat G_{\mathbb A}$ and $w \in \hat W$,  we have $$\int_{\hat{U}_{\mathbb A}/(\hat{U}_{\mathbb A}\cap\hat{\Gamma}_F)} E_{\chi, w}(g\eta^{mD} u) \ du= \int_{\hat{U}_{w,\mathbb A}} \Phi_{\chi}(g\eta^{mD} u_1 w) \ du_1.$$
\end{Prop}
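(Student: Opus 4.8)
The plan is to unwind the definitions carefully, reducing the integral over the quotient $\hat{U}_{\mathbb A}/(\hat{U}_{\mathbb A}\cap\hat{\Gamma}_F)$ to an integral over $\hat{U}_{w,\mathbb A}$ by an explicit change of variables and an "unfolding" of the sum defining $E_{\chi,w}$. First I would recall from the Bruhat theory that $\hat{\Gamma}_F(w) = \hat{\Gamma}_F\cap(\hat{B}_F\,w\,\hat{B}_F)$, that $\hat{U}_{w,F}\subseteq\hat{\Gamma}_F$, and hence that a full set of coset representatives for $\hat{\Gamma}_F(w)/(\hat{\Gamma}_F(w)\cap\hat{B}_F)$ is given by $\{bw : b\in\hat{U}_{w,F}\}$. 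Substituting this into the definition of $E_{\chi,w}$, the integral becomes
\[
\int_{\hat{U}_{\mathbb A}/(\hat{U}_{\mathbb A}\cap\hat{\Gamma}_F)} \ \sum_{b\in\hat{U}_{w,F}} \Phi_{\chi}(g\eta^{mD} u\, b\, w)\ du .
\]

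Next I would perform the unfolding step: using the decomposition $\hat{U}_F\cap\hat{\Gamma}_F = \hat{U}_{w,F}\,(\hat{\Gamma}_F\cap\hat{U}_F\cap w\hat{U}_F w^{-1})$ (which follows from \eqref{eqn-decom} together with $\hat{U}_{w,F}\subseteq\hat{\Gamma}_F$), the set $\{b\}_{b\in\hat{U}_{w,F}}$ is a set of coset representatives for $(\hat{\Gamma}_F\cap\hat{U}_F)/(\hat{\Gamma}_F\cap\hat{U}_F\cap w\hat{U}_F w^{-1})$, and since $\hat{\Gamma}_F\cap\hat{U}_F=\hat{\Gamma}_F\cap\hat{U}_{\mathbb A}$, the standard unfolding lemma (integral over a quotient of a sum over cosets equals the integral over the larger quotient) collapses the integral-and-sum into the single integral \eqref{eqn-integral-1} over $\hat{U}_{\mathbb A}/(\hat{\Gamma}_F\cap\hat{U}_F\cap w\hat{U}_F w^{-1})$, with the induced measure $du'$ coming from $du$ via $\pi'$. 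Then I would invoke the decomposition $\hat{U}_{\mathbb A} = \hat{U}_{w,\mathbb A}\,(\hat{U}_{\mathbb A}\cap w\hat{U}_{\mathbb A} w^{-1})$ and the corresponding factorization of $du'$ into $du_1$ on $\hat{U}_{\mathbb A}/(\hat{U}_{\mathbb A}\cap w\hat{U}_{\mathbb A} w^{-1})$ and $du_2$ on $(\hat{U}_{\mathbb A}\cap w\hat{U}_{\mathbb A} w^{-1})/(\hat{\Gamma}_F\cap\hat{U}_F\cap w\hat{U}_F w^{-1})$ (both supplied by Appendix A, the latter of total mass $1$) to write the integral as the iterated integral \eqref{eqn-integral-2}.

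The key simplification is then that $\Phi_{\chi}$ is right-invariant under $\hat{U}_{\mathbb A}$: writing $u_2' = w^{-1}u_2 w \in \hat{U}_{\mathbb A}$ (legitimate precisely because $u_2 \in \hat{U}_{\mathbb A}\cap w\hat{U}_{\mathbb A}w^{-1}$), one has $\Phi_{\chi}(g\eta^{mD}u_1 u_2 w) = \Phi_{\chi}(g\eta^{mD}u_1 w u_2') = \Phi_{\chi}(g\eta^{mD}u_1 w)$, so the integrand no longer depends on $u_2$; since $du_2$ has total mass $1$, the inner integral is trivial and one is left with $\int_{\hat{U}_{w,\mathbb A}}\Phi_{\chi}(g\eta^{mD}u_1 w)\,du_1$, which is exactly the claimed formula.

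I expect the main obstacle to be the bookkeeping of measures and quotients — in particular justifying that the measures $du'$, $du_1$, $du_2$ exist, are invariant, factor compatibly, and have the stated total masses, and that the various coset decompositions over $F$ and over $\mathbb A$ are genuinely compatible so that the unfolding step is valid. These are precisely the points deferred to Appendix A; granting those, the argument is a formal manipulation, and the only genuinely "active" input is the right $\hat{U}_{\mathbb A}$-invariance of $\Phi_{\chi}$ (from Lemma \ref{lem-123}) combined with the fact that $\hat{U}_{w,F}\subseteq\hat{\Gamma}_F$, which is what makes the coset representatives $bw$ available in the first place.
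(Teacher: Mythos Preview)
Your proposal is correct and follows essentially the same approach as the paper: choose the coset representatives $\{bw : b\in\hat{U}_{w,F}\}$, unfold the sum using the decomposition $\hat{U}_F\cap\hat{\Gamma}_F = \hat{U}_{w,F}\,(\hat{\Gamma}_F\cap\hat{U}_F\cap w\hat{U}_F w^{-1})$, split the resulting integral via \eqref{eqn-decom} into an iterated integral with measures $du_1$, $du_2$, and then use the right $\hat{U}_{\mathbb A}$-invariance of $\Phi_{\chi}$ together with $du_2$ having total mass~$1$ to kill the inner integral. The paper's argument is exactly this, with the measure-theoretic justifications likewise deferred to Appendix~A.
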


\medskip

\subsection{Further Calculation}

  We continue our computation by further manipulating the integral in Proposition \ref{prop-step-1}.  Fix an Iwasawa decomposition $g=k h u$. Since $\eta^{mD}$ normalizes $\hat{U}_{\mathbb A}$, we have
\[
\Phi_{\chi}(g\eta^{mD} u_1 w) = \Phi_{\chi}(khu\eta^{mD} u_1 w)=  \ \Phi_{\chi}(h\eta^{mD} u' u_1 w).
\]
The decomposition (\ref{eqn-decom}) allows us to write $u'=u'_- \, u'_+$ for $u'_-\in\hat{U}_{w,\mathbb A}$ and $u'_+\in\hat{U}_{\mathbb A}\cap w\hat{U}_{\mathbb A} w^{-1}$.  Clearly, $w^{-1}u'_+w\in\hat{U}_{\mathbb A}$, so by the right invariance of $\Phi_{\chi}$ we have \begin{eqnarray*}
\Phi_{\chi}(h\eta^{mD} u' u_1 w) \ &=& \ \Phi_{\chi}(h \eta^{mD} u'_- u'_+ u_1 w)\\
&=& \  \Phi_{\chi}(h \eta^{mD} u'_- u'_+ u_1 (u'_+)^{-1}w w^{-1}u'_+ w)\\
&=& \  \Phi_{\chi}(h\eta^{mD} u'_- u'_+ u_1 (u'_+)^{-1} w).
\end{eqnarray*}
The decomposition (\ref{eqn-decom}) also induces the natural projection $$\tilde{\pi}: \hat{U}_{\mathbb A}\twoheadrightarrow\hat{U}_{w,\mathbb A}, $$ and we obtain that $$\Phi_{\chi}(h\eta^{mD} u'_- u'_+ u_1 (u'_+)^{-1} w)=\Phi_{\chi} (h\eta^{mD} u'_-\ \tilde{\pi} ( u'_+ u_1 (u'_+)^{-1} )\, w ).$$ For any $u_1\in\hat{U}_{w,\mathbb A}$ and a fixed $u_+\in\hat{U}_{\mathbb A} \cap w\hat{U}_{\mathbb A} w^{-1}$, the map that sends $u_1$ to $\tilde{\pi}(u_+ u_1 (u_+)^{-1})$ is a unimodular change of variables, so the integral in Proposition \ref{prop-step-1} becomes
\begin{eqnarray*}
\int_{\hat{U}_{w,\mathbb A}} \Phi_{\chi}(g\eta^{mD} u_1 w) \ du_1
&=& \int_{\hat{U}_{w,\mathbb A}} \Phi_{\chi}(h\eta^{mD} u'_- \tilde{\pi}(u'_+ u_1 (u'_+)^{-1}) w) \ du_1\\
&=& \int_{\hat{U}_{w,\mathbb A}} \Phi_{\chi}(h\eta^{mD} u'_-  u_1 w) \ du_1.
\end{eqnarray*}
However, note that $u'_- \in\hat{U}_{w,\mathbb A}$ remains fixed as $u_1$ ranges over $\hat{U}_{w,\mathbb A}$, and since $du_1$ is $\hat{U}_{w,\mathbb A}$-translation invariant, our integral may now be expressed as
\begin{equation} \label{eqn-integral-3} \int_{\hat{U}_{w,\mathbb A}} \Phi_{\chi}(h\eta^{mD} u_1 w) \ du_1.\end{equation}
  Continuing, we note that $$ \Phi_{\chi}(h\eta^{mD} u_1 w)= \Phi_{\chi} (h\eta^{mD} u_1 (\eta^{mD})^{-1}h^{-1} w w^{-1} h \eta^{mD} w ),$$ and since $w^{-1} h \eta^{mD} w\in\hat H_{\mathbb A}$, we obtain from Lemma \ref{lem-123} part (1)
\[
 \Phi_{\chi}(h\eta^{mD} u_1 w) = (w^{-1} h \eta^{mD} w)^{\chi} \, \Phi_{\chi}(h \eta^{mD} u_1 (\eta^{mD})^{-1} h^{-1} w) .
\]
Since $(w^{-1} h \eta^{mD} w)^{\chi}=(h \eta^{mD})^{w\chi},$ the integral (\ref{eqn-integral-3}) becomes
 \begin{equation}  \label{eqn-integral-4} (h\eta^{mD})^{w\chi}\int_{\hat{U}_{w,\mathbb A}} \Phi_{\chi}(h\eta^{mD} u_1 (\eta^{mD})^{-1}h^{-1} w) du_1.\end{equation}

\medskip

Set  $\hat{\Delta}_w=\hat{\Delta}_{W,+}\cap w\, \hat{\Delta}_{W,-}.$ Then applying the change of variables $$ h\eta^{mD} u_1 (h\eta^{mD})^{-1} \mapsto u_1$$ has the following effect on our integral in (\ref{eqn-integral-4}):
$$(h\eta^{mD})^{w\chi}\int_{\hat{U}_{w,\mathbb A}} \Phi_{\chi}(h\eta^{mD} u_1 (h\eta^{mD})^{-1} w) du_1=(h\eta^{mD})^{w\chi} (h\eta^{mD})^{-\Sigma}\int_{\hat{U}_{w,\mathbb A}} \Phi_{\chi}(u_1w) du_1,$$
where $\Sigma= \displaystyle\sum_{\alpha\in\hat{\Delta}_w} \alpha$. It is known (\cite [p.50]{GaLep}) that $$ \sum_{\alpha\in\hat{\Delta}_{w}} \alpha=\rho-w\rho,$$ where $\rho\in\hat{\frak h}^*$ such that $\rho(h_{\alpha_i})=1$ for $i=1,\dots,l+1$. Therefore,
\begin{eqnarray}
(h\eta^{mD})^{w\chi} (h\eta^{mD})^{-\Sigma}\int_{\hat{U}_{w,\mathbb A}} \Phi_{\chi}(u_1 w) du_1 &=& (h\eta^{mD})^{w\chi} (h\eta^{mD})^{w\rho-\rho}\int_{\hat{U}_{w,\mathbb A}} \Phi_{\chi}(u_1 w) du_1 \nonumber \\
&=& (h\eta^{mD})^{w(\chi+\rho)-\rho}\int_{\hat{U}_{w,\mathbb A}} \Phi_{\chi}(u_1 w) du_1. \label{eqn-integral-5}
\end{eqnarray}

  Let $\hat{U}_{-,w,\mathbb A}= w^{-1}\hat{U}_{w,\mathbb A} w$. Then $w^{-1}u_1w\in\hat{U}_{-,w,\mathbb A}$.  So considering $\Phi_{\chi}(u_1w)$ for $u_1\in\hat{U}_{w,\mathbb A}$ is exactly the same as considering $\Phi_{\chi}(wu_-)$ for $u_-\in\hat{U}_{-,w,\mathbb A}$.  Finally, since we may assume $w\in\hat{\mathbb K}$, the $\hat{\mathbb K}$-left invariance of $\Phi_{\chi}$ allows us to rewrite our integral (\ref{eqn-integral-5}) as
$$(h\eta^{mD})^{w(\chi+\rho)-\rho}\int_{\hat{U}_{-,w,\mathbb A}} \Phi_{\chi}(u_-) du_-,$$
where $du_-$ is the Haar measure induced by conjugating by $w^{-1}$.

Using Definition \ref{def-constant-term}, our results from this section appear in the following proposition:

\begin{Prop} \label{prop-step-2}

For any $g\in\hat G_{\mathbb A}$, we have $$E_{\chi}^{\#}(g\eta^{mD})=\sum_{w\in\hat{W}} \ (h\eta^{mD})^{w(\chi+\rho)-\rho}\int_{\hat{U}_{-,w,\mathbb A}} \Phi_{\chi}(u_-) du_-.$$
\end{Prop}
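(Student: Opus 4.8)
The plan is to assemble Proposition~\ref{prop-step-1} with the chain of reductions carried out above and then sum over $w\in\hat W$. By Definition~\ref{def-constant-term},
$$E_{\chi}^{\#}(g\eta^{mD})=\sum_{w\in\hat{W}}\ \int_{\hat{U}_{\mathbb A}/(\hat{U}_{\mathbb A}\cap\hat{\Gamma}_F)} E_{\chi, w}(g\eta^{mD} u)\, du,$$
and Proposition~\ref{prop-step-1} replaces each summand by $\int_{\hat{U}_{w,\mathbb A}}\Phi_{\chi}(g\eta^{mD} u_1 w)\,du_1$. So everything reduces to the single-$w$ identity
$$\int_{\hat{U}_{w,\mathbb A}}\Phi_{\chi}(g\eta^{mD} u_1 w)\,du_1 = (h\eta^{mD})^{w(\chi+\rho)-\rho}\int_{\hat{U}_{-,w,\mathbb A}} \Phi_{\chi}(u_-)\, du_-,$$
where $h$ denotes the $\hat H_{\mathbb A}$-component of an Iwasawa decomposition $g=khu$; this is legitimate because $\Phi_\chi$ depends on $h$ only modulo $\hat H_{\mathbb A}\cap\hat{\mathbb K}$ by Proposition~\ref{prop-well} and Corollary~\ref{cor-I}, and $\chi$ is trivial there.

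To prove that identity I would run four steps. First, using that $\eta^{mD}$ and $\hat H_{\mathbb A}$ normalize $\hat{U}_{\mathbb A}$, that $\Phi_\chi$ is left $\hat{\mathbb K}$-invariant and right $\hat{U}_{\mathbb A}$-invariant, and the decomposition \eqref{eqn-decom} of $\hat{U}_{\mathbb A}$, reduce the integrand to $\Phi_\chi(h\eta^{mD}u_1w)$: write $u\eta^{mD}=\eta^{mD}u'$ and split $u'=u'_-u'_+$ with $u'_-\in\hat{U}_{w,\mathbb A}$, $u'_+\in\hat{U}_{\mathbb A}\cap w\hat{U}_{\mathbb A}w^{-1}$; conjugating $u'_+$ past $w$ and projecting by $\tilde{\pi}$, the map $u_1\mapsto\tilde{\pi}(u'_+u_1(u'_+)^{-1})$ is a unimodular change of variables on $\hat{U}_{w,\mathbb A}$ (a finite product of copies of $\mathbb A$, since $\hat\Delta_w=\hat\Delta_{W,+}\cap w\hat\Delta_{W,-}$ is finite), after which the fixed factor $u'_-\in\hat U_{w,\mathbb A}$ is absorbed by translation invariance of $du_1$. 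Second, insert $w\,w^{-1}h\eta^{mD}w$ on the right and apply Lemma~\ref{lem-123}(1) together with $(w^{-1}h\eta^{mD}w)^{\chi}=(h\eta^{mD})^{w\chi}$ to pull out the scalar $(h\eta^{mD})^{w\chi}$. Third, apply the change of variables $h\eta^{mD}u_1(h\eta^{mD})^{-1}\mapsto u_1$ on $\hat{U}_{w,\mathbb A}$, whose Jacobian is $(h\eta^{mD})^{-\Sigma}$ with $\Sigma=\sum_{\alpha\in\hat{\Delta}_w}\alpha$, and invoke the Garland--Lepowsky identity $\Sigma=\rho-w\rho$ (\cite{GaLep}) to combine $(h\eta^{mD})^{w\chi}(h\eta^{mD})^{-\Sigma}=(h\eta^{mD})^{w(\chi+\rho)-\rho}$. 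Fourth, take the coset representative $w\in\hat{\mathbb K}$ (Lemma~\ref{lem-parts}(3)) and put $u_-=w^{-1}u_1w\in\hat{U}_{-,w,\mathbb A}$; then $\Phi_\chi(u_1w)=\Phi_\chi(wu_-)=\Phi_\chi(u_-)$ by left $\hat{\mathbb K}$-invariance, and $du_1$ transports to the Haar measure $du_-$ obtained by conjugating by $w^{-1}$. Summing over $w$ yields the proposition.

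The step I expect to demand the most care is the Jacobian computation in the third step: one must verify that conjugation by $h\eta^{mD}$ scales the chosen Haar measure on $\hat{U}_{w,\mathbb A}$ by exactly $\prod_{\alpha\in\hat{\Delta}_w}(h\eta^{mD})^{-\alpha}$. Since $\hat{U}_{w,\mathbb A}$ factors, over the finite set $\hat{\Delta}_w$, as a product of root subgroups each identified with $\mathbb A$ carrying additive Haar measure, this comes down to checking that $h\eta^{mD}$ normalizes each such root subgroup and acts on it by the id\`elic scalar $(h\eta^{mD})^{\alpha}$, and that dilation of $\mathbb A$ by an id\`ele of norm $c$ multiplies Haar measure by $c$. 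Unwinding $(h\eta^{mD})^{\alpha}$ requires in particular tracking the contribution of $\eta^{mD}$, which acts as $\pi_\nu^{m_\nu\mu(D)}$ on the weight space $V^\lambda_{\mu,F_\nu}$, so that it contributes the factor $\prod_{\nu}|\pi_\nu|_\nu^{m_\nu\alpha(D)}$ to the scaling on the $\alpha$-root subgroup. Everything else --- the unimodularity in step one, the identity $\Sigma=\rho-w\rho$, the invariances of $\Phi_\chi$, and the choice $w\in\hat{\mathbb K}$ --- is either standard or already available from the lemmas of Sections 2 and 3.
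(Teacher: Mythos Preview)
Your proposal is correct and follows essentially the same route as the paper's proof in Section~4.2: the four steps you outline---reducing to $\Phi_\chi(h\eta^{mD}u_1 w)$ via the decomposition~\eqref{eqn-decom} and a unimodular change of variables, extracting $(h\eta^{mD})^{w\chi}$ using Lemma~\ref{lem-123}(1), applying the conjugation change of variables with Jacobian $(h\eta^{mD})^{-\Sigma}$ and the identity $\Sigma=\rho-w\rho$, and finally passing to $\hat U_{-,w,\mathbb A}$ via $w\in\hat{\mathbb K}$---match the paper's argument step for step. Your added remarks on the Jacobian in step three are a helpful expansion of a point the paper leaves implicit.
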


\medskip

\subsection{Calculating the Local Integrals} \label{sub-cal-local}

  In this subsection, we shift our focus away from the global $\hat G_{\mathbb A}$ and into the local pieces of $\hat G_{\nu}$. We aim to calculate some local integrals that will help us determine the value of the integral in Proposition \ref{prop-step-2}. We will see in the next subsection that the integral in Proposition \ref{prop-step-2} may be expressed as a product of the local integrals that we discuss in this section.

\medskip


  As before we use $\hat{\Delta}_w$ to denote $\hat{\Delta}_{W,+}\cap w\hat{\Delta}_{W,-}$. Then $\hat{\Delta}_w$ is a finite set of affine Weyl roots that we can explicitly describe.  If $w=w_{i_r}\dots w_{i_1}$ is a minimal expression in terms of the generators of $\hat{W}$, then by setting $\beta_j=w_{i_r}\dots w_{i_{j+1}}\alpha_{i_j}$, we have $\hat{\Delta}_w=\{\beta_1, \dots, \beta_r\}$.  Using these roots we can completely describe $$\hat{U}_{w,\nu}=\{\chi_{\beta_r}(s_r)\dots\chi_{\beta_1}(s_1) \ | \ s_i\in F_\nu\}.$$ For more information see (\cite{R},\ \S6) and (\cite{LG2},\ \S6). Moreover, each element in this group is uniquely expressed in this way, so if we set $U_{\beta_i,\nu}=\{\chi_{\beta_i}(s)\ | \ s\in F_\nu\}$, then we have that $\hat{U}_{w,v}$ uniquely decomposes into $U_{\beta_r,\nu}\dots U_{\beta_1,\nu}$.

  In $\S13$ of \cite{LG1}, we see that for any $\beta \in\hat{\Delta}_W$ the effect of conjugation by $w$ is  \begin{equation}w \, \chi_{\beta}(s) \, w^{-1} \in U_{w\cdot \beta, \nu} \, .\end{equation} So if we set $U_{-,w,\nu}=w^{-1} \hat{U}_{w,\nu} w$, then $$U_{-,w,\nu}=U_{w^{-1}\beta_r}\dots U_{w^{-1}\beta_1}$$ with uniqueness of expression. Calculating these roots we find:
\begin{eqnarray*}
w^{-1}\beta_j&=&w^{-1}(w_{i_r} \dots w_{i_{j+1}}\alpha_{i_j})=(w_{i_1}\dots w_{i_r})(w_{i_r} \dots w_{i_{j+1}}\alpha_{i_j})\\
&=&w_{i_1}\dots w_{i_j}\alpha_{i_j}= -w_{i_1}\dots w_{i_{j-1}}\alpha_{i_j}.
\end{eqnarray*}
For convenience we set $\gamma_j= w_{i_1}\dots w_{i_{j-1}}\alpha_{i_j}$ and conclude
\begin{equation}U_{-,w,\nu}=U_{-\gamma_r,\nu}\dots U_{-\gamma_1, \nu}.\end{equation}

\begin{Rmk} \label{rmk-measure} Each of the spaces $U_{-\gamma_i,\nu}$ is isomorphic to $F_\nu$, so we can define a measure on these spaces using this isomorphism and the usual Haar measure $\mu_\nu$ on $F_\nu$. The measure $du_-$ on $\hat{U}_{-,w,\nu}$ may now be considered the product measure induced by the $\mu_{\nu}$.   For more information, see Appendix A.
\end{Rmk}

  If we let $w'=w_{i_{r-1}}\dots w_{i_1}$ and $\beta_j'=w_{i_{r-1}}\dots w_{i_{j+1}}\alpha_{i_j}$, then by the same construction we get that $\hat{\Delta}_{w'}=\{\beta_1', \dots, \beta_{r-1}'\}$, and hence $\hat{U}_{w',\nu}=U_{\beta_{r-1}',\nu}\dots U_{\beta_1',\nu}\,$.  Similarly, setting $U_{-,w',\nu} =(w')^{-1} \ \hat{U}_{-,w',\nu} \ w'$, we obtain its unique decomposition $$U_{-\gamma_{r-1},\nu}\dots U_{-\gamma_1,\nu}\,.$$

Recall that we fixed $\chi\in\hat{\frak h}^*$, so for every $\nu\in\mathcal V$ we can define a function $\Phi_{\chi}:\hat G_{\nu}\rightarrow\mathbb R_{>0}$ in precisely the same way we defined $\Phi_{\chi}$ on $\hat G_{\mathbb A}$. If $g=khu$ and $h=\prod_{i=1}^{l+1} h_{\alpha_i}(s_i)$ for $s_i\in F_\nu^{\times}$, then we let $$\Phi_{\chi}(g)=h^{\chi}=\prod_{i=1}^{l+1} |s_i|_{\nu}^{\chi(h_{\alpha_i})}.$$
\begin{Rmk}
Using the same argument of Proposition \ref{prop-well}, we may conclude that this map is well-defined.
\end{Rmk}

  It is our goal in this subsection to prove Proposition 4.16 which calculates the value of the local integral involving the function $\Phi_{\chi}$ on $\hat G_{\nu}$.  As before, we let $h_{\alpha}$ denote the co-root corresponding to $\alpha\in\hat{\Delta}_W$, and in particular, $h_{\alpha_i}$ for $i=1,\dots,l+1$ denote the simple co-roots corresponding to the simple roots $\alpha_i$. Finally, let $\rho$ be the element of $\hat{\frak h}^*$ defined by $\rho(h_{\alpha_i})=1$ for $i=1,\dots, l+1$.


\begin{Prop} \label{prop-local}
Assume $\chi(h_{\alpha_i})<-2$ for $i=1, \dots, l+1$. Then for any $\nu\in\mathcal V$ and $w\in\hat{W}$
$$\int_{U_{-,w,\nu}} \Phi_{\chi}(u_{-}) \ du_{-}= \prod_{\alpha\in\hat{\Delta}_{W,+}\cap w^{-1} \hat{\Delta}_{W,-}} \ \frac{1-\frac{1}{q_\nu^{-(\chi+\rho)(h_{\alpha})+1}}}{1-\frac{1}{q_\nu^{-(\chi+\rho)(h_{\alpha})}}}.$$
\end{Prop}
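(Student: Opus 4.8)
The plan is to reduce the global computation to a product of one-variable integrals over $F_\nu$, one for each root in $\hat\Delta_{W,+}\cap w^{-1}\hat\Delta_{W,-}$, and then to evaluate each such one-variable integral explicitly using the rank-one $SL_2$ machinery of Lemma \ref{lem-hom}. Concretely, write a reduced expression $w=w_{i_r}\cdots w_{i_1}$ and recall from the discussion preceding the statement that $U_{-,w,\nu}=U_{-\gamma_r,\nu}\cdots U_{-\gamma_1,\nu}$ with uniqueness of expression, where $\gamma_j=w_{i_1}\cdots w_{i_{j-1}}\alpha_{i_j}$, and that $du_-$ is the corresponding product of Haar measures $\mu_\nu$ on each $U_{-\gamma_j,\nu}\cong F_\nu$ (Remark \ref{rmk-measure}). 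So the integral is an $r$-fold iterated integral in variables $s_1,\dots,s_r\in F_\nu$ of $\Phi_\chi\big(\chi_{-\gamma_r}(s_r)\cdots\chi_{-\gamma_1}(s_1)\big)$.

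The key step is an inductive ``peeling off'' of the innermost factor. First I would handle the base case $w=w_{i_1}$, a simple reflection: here $U_{-,w,\nu}=U_{-\alpha_{i_1},\nu}=\{\chi_{-\alpha_{i_1}}(s):s\in F_\nu\}$, and one applies the $SL_2$-homomorphism $\varphi_{\alpha_{i_1}}$ together with the Iwasawa decomposition $SL_2(F_\nu)=K\,B$ to compute the Iwasawa $\hat H_\nu$-component of $\chi_{-\alpha_{i_1}}(s)$ and hence $\Phi_\chi\big(\chi_{-\alpha_{i_1}}(s)\big)$. A standard rank-one calculation (writing $\big(\begin{smallmatrix}1&0\\s&1\end{smallmatrix}\big)=k\big(\begin{smallmatrix}r&*\\0&r^{-1}\end{smallmatrix}\big)$ with $|r|_\nu=\max(1,|s|_\nu)^{-1}$) gives $\Phi_\chi\big(\chi_{-\alpha_{i_1}}(s)\big)=\max(1,|s|_\nu)^{(\chi+\text{shift})(h_{\alpha_{i_1}})}$, and then $\int_{F_\nu}\max(1,|s|_\nu)^{\chi(h_{\alpha_{i_1}})}\,d\mu_\nu(s)$ is the elementary geometric-series computation $\sum_{n\ge 0}q_\nu^{n}\cdot q_\nu^{-n}+\sum_{n\ge1}(q_\nu^{n}-q_\nu^{n-1})q_\nu^{n\chi(h_{\alpha_{i_1}})}$, which converges precisely when $\mathrm{Re}\,\chi(h_{\alpha_{i_1}})<-1$ (the hypothesis $<-2$ is more than enough) and sums to the claimed ratio $\dfrac{1-q_\nu^{\chi(h_{\alpha_{i_1}})}}{1-q_\nu^{\chi(h_{\alpha_{i_1}})+1}}$, which one checks equals the displayed factor for $\alpha=\alpha_{i_1}$. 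For the inductive step, factor $\chi_{-\gamma_r}(s_r)\cdots\chi_{-\gamma_1}(s_1)$, move the outermost generator to conjugate $U_{-,w',\nu}$ (with $w'=w_{i_{r-1}}\cdots w_{i_1}$) appropriately, and reduce $\Phi_\chi$ on $U_{-,w,\nu}$ to $\Phi_\chi$ on $U_{-,w',\nu}$ times a rank-one factor in $s_r$; this is exactly Garland's inductive argument in \cite{R}, \S8, adapted verbatim to the non-archimedean place $F_\nu$, and it exhibits the integral over $U_{-,w,\nu}$ as the integral over $U_{-,w',\nu}$ times the one-variable integral in $s_r$, with the character shifted by $w'{}^{-1}$. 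Iterating, one gets the product over $j=1,\dots,r$, and since $\{\gamma_1,\dots,\gamma_r\}=\hat\Delta_{W,+}\cap w^{-1}\hat\Delta_{W,-}$ (the inverse-root-set of $w$), relabeling the product index by $\alpha=\gamma_j$ and using $\rho(h_{\alpha_i})=1$ to absorb the cumulative shifts into $\chi+\rho$ yields exactly the stated formula.

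The main obstacle will be the bookkeeping in the inductive step: one must track how the character $\chi$ gets shifted as each simple reflection is peeled off, verify that after conjugating the group $U_{-,w',\nu}$ the Haar measure $d\mu_\nu$ on each coordinate is preserved (which follows since conjugation by a Weyl group element permutes the coordinate subgroups $U_{-\gamma_j,\nu}$ up to unipotent factors that act on $\Phi_\chi$ trivially and on the measure unimodularly, using the block-matrix description from Section 3.2), and confirm that the reduced-word data assembles into precisely the set $\hat\Delta_{W,+}\cap w^{-1}\hat\Delta_{W,-}$ — this last point being the standard fact that the inversion set of $w^{-1}$ is $\{w_{i_1}\cdots w_{i_{j-1}}\alpha_{i_j}\}_{j=1}^r$, which is exactly our $\{\gamma_j\}$. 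A secondary technical point is to confirm that the needed Iwasawa $\hat H_\nu$-component of each factor $\chi_{-\gamma_j}(s_j)$ can be read off from the rank-one $\varphi_{\gamma_j}$-image without interference from the other factors; this is handled by the unique-expression property of $U_{-,w,\nu}=U_{-\gamma_r,\nu}\cdots U_{-\gamma_1,\nu}$ together with repeated use of the $\hat{\mathbb K}$-left and $\hat U$-right invariance of $\Phi_\chi$, exactly as in the global reductions already carried out in Section 4.
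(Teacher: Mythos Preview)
Your proposal is correct and follows essentially the same route as the paper: induction on $\ell(w)$, with the base case handled by the rank-one $SL_2$ computation (packaged in the paper as Lemmas~\ref{lem-GK-SL2} and~\ref{lem-GK-A}), and the inductive step carried out by writing $U_{-,w,\nu}=U_{-\gamma_r,\nu}\,U_{-,w',\nu}$, taking an Iwasawa decomposition of the outer factor, and using unimodular changes of variables to split off a rank-one integral with the character shifted to $\chi+\rho-(w')^{-1}\rho$. The only slip is cosmetic: in your base-case geometric series the first sum $\sum_{n\ge 0}q_\nu^{n}\cdot q_\nu^{-n}$ should simply be the $\mathcal O_\nu$-contribution $1$, but the resulting ratio you write down is correct and matches the paper's.
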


 The above identity is an affine analogue of  the Gindikin-Karpelevich formula (\cite{La}).   The proof is by induction on the length of $w\in\hat{W}$.   Our first step is to consider a local integral over a unipotent subgroup of $SL_2(F_\nu)$.

  For each $\nu\in \mathcal V$ and $a\in\hat{\Delta}_W$, we have a unique group homomorphism $\varphi_a$ from $ SL_2(F_\nu)$ into $\hat G_{\nu}$ by Lemma \ref{lem-hom}.  Moreover, $SL_2(F_\nu)$ has an Iwasawa decomposition into $K A U$ (\cite{IM}), where
\[
K = SL_2(\mathcal O_{\nu}),\quad A =  \big\{(\begin{smallmatrix} a&0\\0&a^{-1}\end{smallmatrix})\ | \ a\in F_\nu^{\times} \big\}, \quad \text{ and }\quad
U = \big\{(\begin{smallmatrix} 1& \xi\\0&1&\end{smallmatrix})\ | \ \xi\in F_\nu \big\}.
\]

We can define a real character on $A$ by fixing a real number $\kappa$ and setting
$$ \mathbf{a}^{\kappa}=(\begin{smallmatrix} a&0\\0&a^{-1}\end{smallmatrix})^{\kappa}= |a|_{\nu}^{\kappa}.$$
Using this character, we define the function $\tilde{\Phi}_{\kappa}:SL_2(F_\nu)\rightarrow\mathbb R_{>0}$ by  \[ \tilde{\Phi}_{\kappa}(g)=\tilde{\Phi}_{\kappa}(\mathbf k \mathbf a \mathbf u )=\mathbf a^{\kappa} \quad \text{ for } g= \mathbf k \mathbf a \mathbf u \in SL_2(F_\nu) ,\] where $\mathbf k \in K$, $\mathbf a \in A$ and $\mathbf u \in U$.

\begin{Rmk}
The Iwasawa decomposition of $SL_2(F_\nu)$ is not unique for an element.  However in \cite{IM}, we see that if $g=\mathbf k \,\mathbf a\,\mathbf u =\mathbf k '\,\mathbf a'\,\mathbf u '$, where $\mathbf a=(\begin{smallmatrix} a&0\\0&a^{-1}\end{smallmatrix})$ and $\mathbf a'=(\begin{smallmatrix} a'&0\\0&a'^{-1}\end{smallmatrix})$, then $|a|_{\nu}=|a'|_{\nu}$.  As a result, $\tilde{\Phi}_{\kappa}$ is well-defined.
\end{Rmk}

  We let $U_{-,\nu}=\big\{(\begin{smallmatrix} 1& 0\\ s&1&\end{smallmatrix})\ | \ s\in F_\nu \big\}\leq SL_2(F_\nu)$, and note that this group is isomorphic to the additive group $F_\nu$, and so we define a measure $d\tilde{u}_{-}$ on $U_{-,\nu}$ to be the Haar measure $\mu_\nu$ on $F_\nu$ normalized so that $\mathcal O_{\nu}$ has a total measure of 1. Then the following lemma is well-known.

\begin{Lem} \label{lem-GK-SL2}
If we fix a real number $\kappa<-2$, then for any $\nu\in\mathcal V$ we have $$\int_{U_{-,\nu}} \tilde{\Phi}_{\kappa}(u_{-}) \ d\tilde{u}_{-}= \frac{1-\frac{1}{(q_\nu)^{-\kappa}}}{1-\frac{1}{(q_\nu)^{-(\kappa+1)}}}.$$
\end{Lem}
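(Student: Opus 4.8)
The plan is to reduce this to an explicit one-dimensional $p$-adic integral and evaluate it directly. First I would parametrize $U_{-,\nu}$ by $s\in F_\nu$ via $u_-(s) = \left(\begin{smallmatrix}1&0\\s&1\end{smallmatrix}\right)$, so that the integral becomes $\int_{F_\nu}\tilde\Phi_\kappa\bigl(u_-(s)\bigr)\,d\mu_\nu(s)$ with $\mu_\nu$ normalized so $\mathcal O_\nu$ has measure $1$. The key computation is to determine the $A$-component of $u_-(s)$ in the Iwasawa decomposition $SL_2(F_\nu) = KAU$. When $|s|_\nu \le 1$ we have $u_-(s)\in K = SL_2(\mathcal O_\nu)$, so $\tilde\Phi_\kappa(u_-(s)) = 1$; this contributes $\mu_\nu(\mathcal O_\nu) = 1$ to the integral. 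When $|s|_\nu > 1$, I would use the standard matrix identity
\[
\begin{pmatrix}1&0\\s&1\end{pmatrix} = \begin{pmatrix}s^{-1}&\ast\\ \ast&\ast\end{pmatrix}\begin{pmatrix}s&0\\0&s^{-1}\end{pmatrix}\begin{pmatrix}1&s^{-1}\\0&1\end{pmatrix},
\]
more precisely writing $u_-(s) = k\cdot\mathrm{diag}(s^{-1},s)\cdot u$ where $k = \left(\begin{smallmatrix}s^{-1}&1\\0&s\end{smallmatrix}\right)\left(\begin{smallmatrix}1&0\\0&1\end{smallmatrix}\right)^{-1}$-type correction lies in $SL_2(\mathcal O_\nu)$ precisely because $|s^{-1}|_\nu < 1$. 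Then $\mathbf a^\kappa = |s^{-1}|_\nu^\kappa = |s|_\nu^{-\kappa}$, so $\tilde\Phi_\kappa(u_-(s)) = |s|_\nu^{-\kappa}$.

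Next I would break the region $|s|_\nu > 1$ into shells $|s|_\nu = q_\nu^{\,n}$ for $n \ge 1$, i.e. $s\in \pi_\nu^{-n}\mathcal O_\nu^\times$. The measure of this shell is $\mu_\nu(\pi_\nu^{-n}\mathcal O_\nu) - \mu_\nu(\pi_\nu^{-(n-1)}\mathcal O_\nu) = q_\nu^{\,n} - q_\nu^{\,n-1} = q_\nu^{\,n-1}(q_\nu - 1)$, using that scaling by $\pi_\nu^{-n}$ multiplies Haar measure by $q_\nu^{\,n}$. On that shell the integrand equals $q_\nu^{-n\kappa}$. Summing,
\[
\int_{U_{-,\nu}} \tilde\Phi_\kappa(u_-)\,d\tilde u_- = 1 + \sum_{n\ge 1} q_\nu^{-n\kappa}\, q_\nu^{\,n-1}(q_\nu-1) = 1 + \frac{q_\nu - 1}{q_\nu}\sum_{n\ge 1} q_\nu^{\,n(1-\kappa)}.
\]
Since $\kappa < -2 < 1$, we have $q_\nu^{\,1-\kappa} > 1$, so I should double-check convergence — and indeed here is the subtlety: the geometric series $\sum_{n\ge1} q_\nu^{n(1-\kappa)}$ \emph{diverges}. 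The resolution is that the stated formula is the value obtained by analytic continuation (or, equivalently, by first computing for $\kappa$ in a range where it converges and then continuing): for $\mathrm{Re}(\kappa) > 1$ one has absolute convergence with sum $\dfrac{q_\nu^{1-\kappa}}{1 - q_\nu^{1-\kappa}}$, giving
\[
1 + \frac{q_\nu-1}{q_\nu}\cdot\frac{q_\nu^{1-\kappa}}{1-q_\nu^{1-\kappa}} = \frac{1 - q_\nu^{1-\kappa} + (1 - q_\nu^{-1})q_\nu^{1-\kappa}}{1-q_\nu^{1-\kappa}} = \frac{1 - q_\nu^{-\kappa}}{1 - q_\nu^{1-\kappa}},
\]
which rearranges to $\dfrac{1 - q_\nu^{-\kappa}}{1 - q_\nu^{-(\kappa+1)}}$ after multiplying numerator and denominator appropriately — one must be careful to match the exact form in the statement, flipping the roles of numerator and denominator as written there (the statement has $1-q_\nu^{-\kappa}$ in the numerator's reciprocal position, so a final algebraic check against $\frac{1-(q_\nu)^{-\kappa}}{1-(q_\nu)^{-(\kappa+1)}}$ is needed). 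Then I would invoke analytic continuation in $\kappa$ to extend the identity to $\kappa < -2$ as a meromorphic identity; since both sides are rational functions of $q_\nu^{-\kappa}$ and agree on an open set, they agree everywhere they are defined.

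The main obstacle is the convergence issue just flagged: at face value the integral does not converge for $\kappa < -2$, so the lemma must be understood in the sense of a regularized (analytically continued) integral — and one should be careful to state this correctly rather than claiming naive absolute convergence. A secondary technical point is getting the Iwasawa $A$-component of $\left(\begin{smallmatrix}1&0\\s&1\end{smallmatrix}\right)$ exactly right, including the verification that the ``$K$-part'' genuinely lies in $SL_2(\mathcal O_\nu)$ when $|s|_\nu > 1$; this is where the hypothesis on the valuation enters and where well-definedness of $\tilde\Phi_\kappa$ (via the remark preceding the lemma) is used. Everything else is bookkeeping with the geometric series and a routine algebraic simplification to match the stated closed form.
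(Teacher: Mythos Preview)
Your overall strategy is the standard one, and indeed the paper gives no proof of this lemma beyond calling it ``well-known''. However, there is a concrete computational error in your Iwasawa decomposition that propagates and leads you to the false conclusion that the integral diverges.

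For $|s|_\nu > 1$ the correct $KAU$ factorization is
\[
\begin{pmatrix} 1 & 0 \\ s & 1 \end{pmatrix}
=
\begin{pmatrix} s^{-1} & -1 \\ 1 & 0 \end{pmatrix}
\begin{pmatrix} s & 0 \\ 0 & s^{-1} \end{pmatrix}
\begin{pmatrix} 1 & s^{-1} \\ 0 & 1 \end{pmatrix},
\]
and one checks that the first factor lies in $SL_2(\mathcal O_\nu)$ precisely because $|s^{-1}|_\nu \le 1$. Thus the $A$-component has first diagonal entry $s$, not $s^{-1}$, and hence $\tilde\Phi_\kappa(u_-(s)) = |s|_\nu^{\kappa}$, not $|s|_\nu^{-\kappa}$. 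Since $\kappa < -2$, the integrand \emph{decays} on the shells $|s|_\nu = q_\nu^{n}$ and the integral converges absolutely; no analytic continuation is needed, and in fact the hypothesis $\kappa < -2$ is there precisely to guarantee this convergence. Redoing your shell computation with the correct exponent gives
\[
1 + (1 - q_\nu^{-1})\sum_{n\ge 1} q_\nu^{\,n(\kappa+1)}
= \frac{1 - q_\nu^{\,\kappa}}{1 - q_\nu^{\,\kappa+1}}
= \frac{1 - (q_\nu)^{-(-\kappa)}}{1 - (q_\nu)^{-(-(\kappa+1))}},
\]
which is exactly the stated formula. Your ``subtlety'' about divergence and the subsequent appeal to analytic continuation are therefore symptoms of the inverted $A$-part, not features of the problem; once that is corrected the argument is a clean two-line computation.
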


  Observe that the map $\varphi_a$ of Lemma \ref{lem-hom} provides an isomorphism between $U_{-,\nu}$ and $U_{-a,\nu}$. Moreover, the measures $d\tilde{u}_-$ and $du_-$ are identified under this isomorphism. As a result, we can equate the following integrals
$$\int_{U_{-a,\nu}} \Phi_{\chi}(u_{-}) \ du_{-} \ =\ \int_{U_{-,\nu}} \tilde{\Phi}_{\kappa} (\varphi_{a}^{-1}(u_{-}) ) \ d\tilde{u}_{-} \ ,$$
assuming that we choose $\kappa\in\mathbb R$ correctly.
\begin{Lem}  \label{lem-GK-A} Fix $\chi\in\hat{\frak h}^*$ to be real valued such that  $\chi(h_{\alpha_i})<-2$. Then for any $\nu\in\mathcal V$ and $a\in\hat{\Delta}_W$,
\begin{equation} \label{eqn-GK-A} \int_{U_{-a,\nu}} \Phi_{\chi}(u_{-}) \ du_{-}= \frac{1-\frac{1}{(q_\nu)^{-\chi(h_{a})}}}{1-\frac{1}{(q_\nu)^{-(\chi(h_{a})+1)}}}. \end{equation}
\end{Lem}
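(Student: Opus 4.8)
The plan is to deduce Lemma \ref{lem-GK-A} directly from Lemma \ref{lem-GK-SL2} by transporting the computation through the homomorphism $\varphi_a$ of Lemma \ref{lem-hom}. First I would fix $a\in\hat\Delta_W$ and observe, as noted in the paragraph preceding the statement, that $\varphi_a$ restricts to a group isomorphism $U_{-,\nu}\xrightarrow{\ \sim\ }U_{-a,\nu}$ sending $\left(\begin{smallmatrix}1&0\\s&1\end{smallmatrix}\right)$ to $\chi_{-a}(s)$, and that under this isomorphism the measure $d\tilde u_-$ (the Haar measure on $F_\nu$ normalized so $\mathcal O_\nu$ has measure $1$) is carried to $du_-$. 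Hence the change of variables $u_-=\varphi_a(\tilde u_-)$ gives
\[
\int_{U_{-a,\nu}}\Phi_\chi(u_-)\,du_- \;=\; \int_{U_{-,\nu}}\Phi_\chi\bigl(\varphi_a(\tilde u_-)\bigr)\,d\tilde u_-.
\]
So everything reduces to identifying the integrand $\Phi_\chi\circ\varphi_a$ on $U_{-,\nu}$ with $\tilde\Phi_\kappa$ for the right choice of $\kappa$.

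The key step is the choice $\kappa=\chi(h_a)$ and the verification that $\Phi_\chi(\varphi_a(g))=\tilde\Phi_\kappa(g)$ for all $g\in SL_2(F_\nu)$. To see this, take an Iwasawa decomposition $g=\mathbf k\,\mathbf a\,\mathbf u$ in $SL_2(F_\nu)$ with $\mathbf a=\left(\begin{smallmatrix}r&0\\0&r^{-1}\end{smallmatrix}\right)$. By Lemma \ref{lem-parts}(2), $\varphi_a(\mathbf k)\in\hat K_\nu$; by Lemma \ref{lem-hom}, $\varphi_a(\mathbf a)=h_a(r)$; and $\varphi_a(\mathbf u)=\chi_a(\xi)\in\hat U_\nu$ (using $a\in\hat\Delta_{W,+}$, passing to $-a$ if necessary — see below). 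Thus $\varphi_a(g)=\varphi_a(\mathbf k)\,h_a(r)\,\varphi_a(\mathbf u)$ is already an Iwasawa decomposition of $\varphi_a(g)$ in $\hat G_\nu$, so by definition of $\Phi_\chi$ on $\hat G_\nu$ we get $\Phi_\chi(\varphi_a(g))=h_a(r)^\chi$. It remains to compute $h_a(r)^\chi$: writing $h_a(r)$ in terms of the generators $h_{\alpha_i}(\cdot)$ — or, more directly, using that $h_a(r)$ acts on a weight-$\mu$ vector by the scalar $r^{\mu(h_a)}$ and is a scalar matrix on each weight space — one finds $h_a(r)^\chi=|r|_\nu^{\chi(h_a)}$, which is exactly $\tilde\Phi_\kappa(g)=\mathbf a^\kappa=|r|_\nu^\kappa$ with $\kappa=\chi(h_a)$. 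Plugging into Lemma \ref{lem-GK-SL2} with this $\kappa<-2$ yields the stated formula
\[
\int_{U_{-a,\nu}}\Phi_\chi(u_-)\,du_- \;=\; \frac{1-\frac{1}{(q_\nu)^{-\chi(h_a)}}}{1-\frac{1}{(q_\nu)^{-(\chi(h_a)+1)}}}.
\]

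The main obstacle is the bookkeeping of signs and positivity: Lemma \ref{lem-parts}(1) only guarantees $\varphi_a(B)\subset\hat B_\nu$ for $a\in\hat\Delta_{W,+}$, while the statement of Lemma \ref{lem-GK-A} ranges over all $a\in\hat\Delta_W$, and the relevant integral is over the lower-triangular $U_{-a,\nu}$. I would handle this by noting $h_{-a}=-h_a$ and $\chi(h_{-a})=-\chi(h_a)$, but being careful that the hypothesis is $\chi(h_{\alpha_i})<-2$ on the \emph{simple} co-roots — so one should really reduce to the case where $a$ (or $-a$) is a root for which the relevant $SL_2$-copy sits with its Borel inside $\hat B_\nu$, and track which of $a,-a$ makes $U_{-a,\nu}\subset\hat U_\nu$ versus contributing a genuine ``lowering'' direction. (In the intended application — the induction for Proposition \ref{prop-local} — the roots $a$ that occur are precisely of the form $\gamma_j=w_{i_1}\cdots w_{i_{j-1}}\alpha_{i_j}$, but those details belong to the proof of Proposition \ref{prop-local}, not here.) A second, more technical point is justifying that $\varphi_a^{-1}$ matches the two normalized Haar measures; this follows because both $U_{-,\nu}\cong F_\nu$ and $U_{-a,\nu}\cong F_\nu$ via the additive parameter $s$, and the normalization ``$\mathcal O_\nu$ has measure $1$'' is preserved, as recorded in Remark \ref{rmk-measure} and Appendix A.
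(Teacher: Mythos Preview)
Your proposal is correct and follows essentially the same route as the paper: transport the integral through $\varphi_a$, identify $\Phi_\chi\circ\varphi_a$ with $\tilde\Phi_\kappa$ for $\kappa=\chi(h_a)$ by pushing an $SL_2$ Iwasawa decomposition into $\hat G_\nu$, and invoke Lemma~\ref{lem-GK-SL2}. The paper's write-up is slightly more terse---it only checks the identity on the lower unipotent piece rather than on all of $SL_2(F_\nu)$---but the mechanism and the key choice $\kappa=\chi(h_a)$ are identical.

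Your caution about positivity is well placed and is in fact an issue the paper glosses over: the step $\varphi_a(\mathbf u)\in\hat U_\nu$ (and hence the claim that $h_a(b)$ is the $\hat H_\nu$-component) uses Lemma~\ref{lem-parts}(1), which requires $a\in\hat\Delta_{W,+}$. The paper's proof implicitly assumes this, and the lemma as stated for all $a\in\hat\Delta_W$ is really only meaningful (and convergent) for positive $a$, since for $a\in\hat\Delta_{W,-}$ one has $\chi(h_a)>2$ and the integral diverges. This is harmless because every application---the base case and inductive step of Proposition~\ref{prop-local}---uses roots $\gamma_j\in\hat\Delta_{W,+}\cap w^{-1}\hat\Delta_{W,-}$, as you note.
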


\begin{proof}  Let $(\begin{smallmatrix} 1&0\\s&1\end{smallmatrix})\in {U}_{-,\nu}$. Then $\varphi_{a}(\begin{smallmatrix} 1&0\\s&1\end{smallmatrix})=\chi_{-{a}}(s)=u_- \in \hat{U}_{-{a},\nu}$.  The Iwasawa decomposition of $SL_2(F_\nu)$ implies $$(\begin{smallmatrix} 1&0\\s&1\end{smallmatrix})=\mathbf k \, (\begin{smallmatrix} b&0\\0&b^{-1}\end{smallmatrix})\, \mathbf u .$$
As a result, in an Iwasawa decomposition for $u_-$, we may take its $\hat{H}_{\nu} $-component to be of the form $\varphi_{a}(\begin{smallmatrix} b&0\\0&b^{-1}\end{smallmatrix})=h_{a}(b)$ for some $b\in F_\nu^{\times}$. Our goal is to choose $\kappa\in\mathbb R$ so that $$\tilde{\Phi}_{\kappa}(\varphi_{a}^{-1}(u_{-}))=\Phi_{\chi}(u_{-})\text{ for }u_{-}\in U_{-{a},\nu}. $$ We claim that we must set $\kappa=\chi(h_{a})$. Indeed, with this choice,  we have
\[ \tilde{\Phi}_{\kappa}(\varphi_{a}^{-1}(u_{-}))=  (|b|_{\nu})^{\kappa} = (|b|_{\nu})^{\chi(h_{a})}= \Phi_{\chi}(h_{a}(b)) =\Phi_{\chi}(u_{-}) .\]
Now the identity (\ref{eqn-GK-A})  is a direct result of Lemma \ref{lem-GK-SL2} and our choice of $\kappa$.
\end{proof}

  Armed with Lemma \ref{lem-GK-A}, we are prepared to prove Proposition \ref{prop-local}:

\begin{proof}[Proof of Proposition \ref{prop-local}]
We assume $\chi(h_{\alpha_i})<-2$ for $i=1, \dots, l+1$, and we want to show that for any $\nu\in\mathcal V$ and $w\in\hat{W}$
$$\int_{U_{-,w,\nu}} \Phi_{\chi}(u_{-}) \ du_{-}= \prod_{a\in\hat{\Delta}_{W,+}\cap w^{-1} \hat{\Delta}_{W,-}} \ \frac{1-\frac{1}{q_\nu^{-(\chi+\rho)(h_{a})+1}}}{1-\frac{1}{q_\nu^{-(\chi+\rho)(h_{a})}}},$$
As mentioned previously, the proof is by induction on $l(w)$, the length of the Weyl group element.

\textbf{\underline{Base case:}} \hskip 4mm To prove the base case, we assume that $l(w)=1$, and therefore that $w=w_i$ for some $i=1,\dots,l+1$. We begin by observing that $\hat{\Delta}_{w_i}=\{\alpha_i\}$. Hence we have $\hat{U}_{w_i, \nu}=\{\chi_{\alpha_i}(s)\ | \ s\in F_\nu\}=U_{\alpha_i,\nu}$. Moreover, $U_{-,w_i,\nu}=w_i^{-1}U_{\alpha_i,\nu} w_i =U_{w_i^{-1}\cdot \alpha_i,\nu}=U_{-\alpha_i,\nu}$, and so
\[
\int_{U_{-,w_i,\nu}} \Phi_{\chi}(u_{-}) \ du_{-}=  \int_{U_{-\alpha_i,\nu}} \Phi_{\chi}(u_{-}) \ du_{-} = \frac{1-\frac{1}{(q_\nu)^{-(\chi+\rho)(h_{\alpha_i})+1}}}{1-\frac{1}{(q_\nu)^{-(\chi+\rho)(h_{\alpha_i})}}}  \]
by Lemma \ref{lem-GK-A} and the fact that $\rho(h_{\alpha_i})=1$.

\vskip 4mm

\textbf{\underline{Induction step:}} \hskip 4mm Now suppose we choose $w\in\hat{W}$ with reduced expression $w_{i_r}\dots w_{i_1}$, and that our proposition holds for all $w'\in \hat{W}$ such that $l(w')<l(w)$. Specifically, we set $w' = w_{i_{r-1}}\dots w_{i_1}$.
At the beginning of this subsection, we showed that $U_{-,w,\nu}=U_{-\gamma_r,\nu}U_{-,w',\nu}$, and so our integral breaks into
\begin{equation}  \label{eqn-break} \int_{U_{-\gamma_r,\nu}} \int_{U_{-,w',\nu}}  \Phi_{\chi}(u_{-,1}\,u_{-,2}) \ du_{-,2} \ du_{-,1}.\end{equation}
The measure $du_-$ decomposes naturally by Remark \ref{rmk-measure}. Let the element $u_{-,1}$ have the Iwasawa decomposition $k_1 h_1 u_1$. Recall that by definition $\Phi_{\chi}$ is left invariant by $\hat{K}_\nu$ and right invariant by $\hat{U}_\nu$. In light of these observations and part (1) of Lemma \ref{lem-123}, we can make the following manipulations:
\begin{eqnarray*}
\Phi_{\chi}(u_{-,1}u_{-,2}) \ &=& \ \Phi_{\chi}(k_1 h_1 u_1 u_{-,2})= \ \Phi_{\chi}(h_1 u_1 u_{-,2})\\
&=& \  \Phi_{\chi}(h_1 u_1 u_{-,2} u_1^{-1} h_1^{-1}h_1u_1)= \  \Phi_{\chi}(h_1 u_1 u_{-,2} u_1^{-1} h_1^{-1} h_1) \\
&=& \  \Phi_{\chi}(h_1 u_1 u_{-,2} u_1^{-1} h_1^{-1})\Phi_{\chi}(h_1)=  \  h_1^{\chi}\ \Phi_{\chi}(h_1 u_1 u_{-,2} u_1^{-1} h_1^{-1}).
\end{eqnarray*}
The integral in (\ref{eqn-break}) now becomes
\begin{equation} \label{eqn-become} \int_{U_{-\gamma_r,\nu}} h_1^{\chi} \int_{U_{-,w',\nu}}  \Phi_{\chi}(h_1 u_1 u_{-,2} u_1^{-1} h_1^{-1}) \ du_{-,2}\  du_{-,1}\,. \end{equation}
  We wish to show $u_1 u_{-,2} u_1^{-1}\in   w'^{-1}\hat{U}_\nu w'$, and note that it suffices to prove that $w'\,(u_1 u_{-,2} u_1^{-1})\, w'^{-1}\in \hat{U}_\nu$.  We have
\[
w'u_1 u_{-,2} u_1^{-1} w'^{-1}= (w'u_1w'^{-1})(w'u_{-,2}w'^{-1})(w'u_1^{-1} w'^{-1}) .
\]
Since $u_{-,2}\in U_{-,w',\nu},$ we have $w'u_{-,2}w'^{-1}\in\hat{U}_{w',\nu} \subset \hat{U}_\nu$ by definition.  For any $u\in U_{\gamma_r}$ we have $w' u w'^{-1}\in U_{w'\cdot \gamma_r}=U_{\alpha_{i_r}}$, and so both $w'u_1w'^{-1}$ and $w'u_1^{-1}w'^{-1}$ are elements of $U_{\alpha_{i_r}} \subset \hat{U}_\nu$. Hence, $$u_1 u_{-,2} u_1^{-1}\in  w'^{-1}\hat{U}_\nu w'.$$

The decomposition (\ref{eqn-decom}) provides us with the unique group decomposition
$$w'^{-1}\hat{U}_\nu w'= (U_{-,w',\nu})(w'^{-1}\hat{U}_\nu w' \cap \hat{U}_\nu).$$
Let $\pi_{\nu}$ be the projection from $w'^{-1}\hat{U}_\nu w'\twoheadrightarrow U_{-,w',\nu}$, which exists by the decomposition above. Then $$u_1 u_{-,2} u_1^{-1}=\pi_{\nu}(u_1 u_{-,2} u_1^{-1}) u^+_2\, , \text{ for }u_2^+\in w'^{-1}\hat{U}_\nu w' \cap \hat{U}_\nu.$$  Since $\hat{H}_{\nu} $ normalizes $\hat{U}_\nu$, we get
\begin{eqnarray*}
\Phi_{\chi}(h_1 u_1 u_{-,2} u_1^{-1} h_1^{-1})& = &\Phi_{\chi}(h_1\ \pi_{\nu}(u_1 u_{-,2} u_1^{-1})\,  u_2^+ h_1^{-1}) = \Phi_{\chi}(h_1\ \pi_{\nu}(u_1 u_{-,2} u_1^{-1})\,  h_1^{-1}h_1u_2^+ h_1^{-1})\\
&=&  \Phi_{\chi}(h_1\  \pi_{\nu}(u_1 u_{-,2} u_1^{-1})\,  h_1^{-1} u')=  \Phi_{\chi}(h_1\  \pi_{\nu}(u_1 u_{-,2} u_1^{-1})\,  h_1^{-1}).
\end{eqnarray*}
The map from $U_{-,w',\nu}$ to itself defined by $u_{-,2}\mapsto \pi_{\nu}( u_1 u_{-,2} u_1^{-1})$ is a unimodular change of variables, so the integral (\ref{eqn-become}) becomes
$$\int_{U_{-\gamma_r,\nu}} h_1^{\chi} \int_{U_{-,w',\nu}}  \Phi_{\chi}(h_1 u_{-,2} h_1^{-1}) \ du_{-,2} du_{-,1}.$$
Now applying the change of variables $ h_1 u_{-,2} h_1^{-1} \mapsto u_{-,2}$ has the following effect on our integral:
$$\int_{U_{-\gamma_r,\nu}} h_1^{\chi} \int_{U_{-,w',\nu}}  \Phi_{\chi}(h_1 u_{-,2} h_1^{-1}) \ du_{-,2} du_{-,1}=\int_{U_{-\gamma_r,\nu}} h_1^{\chi} h_1^{-\Upsilon}\int_{U_{-,w',\nu}}  \Phi_{\chi}(u_{-,2}) \ du_{-,2} du_{-,1},$$
where we have \[  \Upsilon=  \sum_{a\in(w')^{-1}\hat{\Delta}_{W,+} \cap \hat{\Delta}_{W,-}}  \hskip -0.7 cm a \qquad =(w')^{-1}\rho -\rho \] (See \cite{GaLep}).

   Our calculations so far have proven the following result regarding our local integrals:
\begin{eqnarray*}
\int_{U_{-,w,\nu}} \Phi_{\chi}(u_{-}) \ du_{-} \ &=& \ \int_{U_{-\gamma_r,\nu}} h_1^{\chi +\rho-(w')^{-1}\rho}\int_{U_{-,w',\nu}}  \Phi_{\chi}(u_{-,2}) \ du_{-,2} du_{-,1}\\
&=& \ \int_{U_{-\gamma_r,\nu}} h_1^{\chi +\rho-(w')^{-1}\rho} \ du_{-,1} \ \int_{U_{-,w',\nu}}  \Phi_{\chi}(u_{-,2}) \ du_{-,2}.
\end{eqnarray*}
By our inductive hypothesis,
$$\int_{U_{-,w',\nu}}  \Phi_{\chi}(u_{-,2}) \ du_{-,2}= \prod_{a\in\hat{\Delta}_{W,+}\cap w'^{-1}(\hat{\Delta}_{W,-})} \frac{1-\frac{1}{q_\nu^{-(\chi+\rho)(h_{a})+1}}}{1-\frac{1}{q_\nu^{-(\chi+\rho)(h_{a})}}}.$$
Moreover, observe that
$$\int_{U_{-\gamma_r,\nu}} h_1^{\chi +\rho-(w')^{-1}\rho} \ du_{-,1}=\int_{U_{-\gamma_r,\nu}} \Phi_{\chi +\rho-(w')^{-1}\rho}(u_-) \ du_{-,1},$$
and so by Lemma \ref{lem-GK-A} we obtain
$$\int_{U_{-\gamma_r,\nu}} h_1^{\chi +\rho-(w')^{-1}\rho} \ du_{-,1}= \frac{1-\frac{1}{q_\nu^{-(\chi +\rho-(w')^{-1}\rho)(h_{\gamma_r})}}}{1-\frac{1}{q_\nu^{-((\chi +\rho-(w')^{-1}\rho)(h_{\gamma_r})+1)}}}.$$

Since we have
\[
(w')^{-1}\rho(h_{\gamma_r}) =  \rho(w'\cdot h_{\gamma_r})=  \rho(h_{(w')^{-1}\cdot \gamma_r})= \rho(h_{\alpha_{i_r}})= 1 ,
\]
we obtain $$(\chi +\rho-(w')^{-1}\rho)(h_{\gamma_r})=(\chi+\rho)(h_{\gamma_r})-1,$$ and
$$\int_{U_{-\gamma_r,\nu}} h_1^{\chi +\rho-(w')^{-1}\rho} \ du_{-,1} \ = \ \frac{1-\frac{1}{q_\nu^{-(\chi+\rho)(h_{\gamma_r})+1}}}{1-\frac{1}{q_\nu^{-(\chi+\rho)(h_{\gamma_r})}}}.$$
Putting all of these calculations together we see
\begin{eqnarray*}
\int_{U_{-,w,\nu}} \Phi_{\chi}(u_{-}) \ du_{-} \ &=& \  \int_{U_{-\gamma_r,\nu}} h_1^{\chi +\rho-(w')^{-1}\rho} \ du_{-,1}  \, \  \int_{U_{-,w',\nu}}  \Phi_{\chi}(u_{-,2}) \ du_{-,2} \\
\ &=&
\  \frac{1-\frac{1}{q_\nu^{-(\chi+\rho)(h_{\gamma_r})+1}}}{1-\frac{1}{q_\nu^{-(\chi+\rho)(h_{\gamma_r})}}}  \, \left (  \prod_{a\in\hat{\Delta}_{+}\cap w'^{-1}(\hat{\Delta}_{-})} \frac{1-\frac{1}{q_\nu^{-(\chi+\rho)(h_{a})+1}}}{1-\frac{1}{q_\nu^{-(\chi+\rho)(h_{a})}}}  \right ) .
\end{eqnarray*}
Finally, since $\hat{\Delta}_{W,+}\cap w^{-1} \hat{\Delta}_{W,-}=\{\gamma_r\}\cup (\hat{\Delta}_+ \cap w'^{-1}\hat{\Delta}_-)$, we obtain the desired result.
\end{proof}

\subsection{Finishing the Computation}

  In this final subsection, we use the previous results to finish our calculation of the constant term $E^{\#}_{\chi}$.  Recall that by Proposition \ref{prop-step-2}, we have
$$E^{\#}_{\chi}(g\eta^{mD})=\sum_{w\in\hat{W}} \ (h\eta^{mD})^{w(\chi+\rho)-\rho}\int_{\hat{U}_{-,w,\mathbb A}} \Phi_{\chi}(u_-) du_-.$$
In our next step, we use Proposition \ref{prop-local} to evaluate $\int_{\hat{U}_{-,w,\mathbb A}} \Phi_{\chi}(u_-) du_-.$

Note that the space $\hat{U}_{-,w,\mathbb A}$ can be identified with the product of $\ell(w)$ copies of $\mathbb A$.
Since we have assumed that $\chi$ is real, we can apply the monotone convergence theorem to see that
$$\int_{\hat{U}_{-,w,\mathbb A}} \Phi_{\chi}(u_-) du_-=  q^{\ell(w)(1-g)} \displaystyle\lim_{S}\ \ \prod_{\nu\in S} \ \ \int_{U_{-,w,\nu}} \Phi_{\chi}(u_{-}) \ du_{-},$$ where we use the relation (\ref{eqn-me-la}) and the following remark there.
In the expression above, we take $S$ to range over the finite subsets of $\mathcal V$. Now by Proposition \ref{prop-local}, this is equal to
\begin{eqnarray*}
& & \lim_{S}\ \ \prod_{\nu\in S} \ \ \prod_{a\in\hat{\Delta}_{W,+}\cap w^{-1} \hat{\Delta}_{W,-}}  \frac{1-\frac{1}{q_\nu^{-(\chi+\rho)(h_{a})+1}}}{1-\frac{1}{q_\nu^{-(\chi+\rho)(h_{a})}}}  \\
&=&\prod_{a\in\hat{\Delta}_{W,+}\cap w^{-1} \hat{\Delta}_{W,-}}  \displaystyle\lim_{S} \ \  \prod_{\nu\in S}\ \  \frac{1-\frac{1}{q_\nu^{-(\chi+\rho)(h_{a})+1}}}{1-\frac{1}{q_\nu^{-(\chi+\rho)(h_{a})}}}  \\
&=&\prod_{a\in\hat{\Delta}_{W,+}\cap w^{-1} \hat{\Delta}_{W,-}} \ \  \prod_{\nu\in\mathcal V} \  \ \frac{1-\frac{1}{q_\nu^{-(\chi+\rho)(h_{a})+1}}}{1-\frac{1}{q_\nu^{-(\chi+\rho)(h_{a})}}}  .
\end{eqnarray*}

  Let $\zeta_F(s)$ denote the zeta function associated to the function field $F$ (see \cite{Ros}). Then $$\zeta_F(s)=\prod_{\nu\in\mathcal V} \ \ \frac{1}{1-\frac{1}{q_\nu^s}},$$ whenever $\mathrm{Re}(s)>1$. Since we have assumed $\chi(h_{\alpha_i})<-2$, we have that $-(\chi +\rho)(h_a)>1$ for any $a\in\hat{\Delta}_{W,+}$.  As a result, we obtain that
$$\prod_{\nu\in\mathcal V} \  \ \frac{1-\frac{1}{q_\nu^{-(\chi+\rho)(h_{a})+1}}}{1-\frac{1}{q_\nu^{-(\chi+\rho)(h_{a})}}}=\frac{\zeta_F (-(\chi+\rho)(h_{a}) )}{\zeta_F(-(\chi+\rho)(h_a)+1)}.$$
We set $$c(\chi,w)= q^{\ell(w)(1-g)} \prod_{a\in\hat{\Delta}_{W,+}\cap w^{-1} \hat{\Delta}_{W,-}} \ \ \frac{\zeta_F(-(\chi+\rho)(h_{a}))}{\zeta_F(-(\chi+\rho)(h_a)+1)}.$$
Finally, we obtain the main result of this section:

\begin{Thm}  \label{thm-const}
For any $g=khu\in\hat G_{\mathbb A}$ and $\chi\in\hat{\frak h}^*$ such that $\chi(h_{\alpha_i})<-2$ for $i=1,\dots,l+1$, we have
\begin{equation} \label{eqn-sum-h} E^{\#}_{\chi}(g\eta^{mD})=\sum_{w\in\hat{W}} \ (h\eta^{mD})^{w(\chi+\rho)-\rho} \ c(\chi, w).\end{equation}
\end{Thm}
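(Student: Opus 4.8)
The plan is to pick up from the formula established in Proposition~\ref{prop-step-2},
$$E^{\#}_{\chi}(g\eta^{mD})=\sum_{w\in\hat{W}} \ (h\eta^{mD})^{w(\chi+\rho)-\rho}\int_{\hat{U}_{-,w,\mathbb A}} \Phi_{\chi}(u_-)\, du_-,$$
and to evaluate the adelic integral $\int_{\hat{U}_{-,w,\mathbb A}}\Phi_{\chi}(u_-)\, du_-$ for each fixed $w\in\hat{W}$. First I would use the decomposition (with uniqueness of expression) of $\hat{U}_{-,w,\mathbb A}$ into the product of the $\ell(w)$ root subgroups $U_{-\gamma_j,\mathbb A}$, each isomorphic to $\mathbb A$, so that $\hat{U}_{-,w,\mathbb A}$ is identified with $\ell(w)$ copies of $\mathbb A$. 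Under this identification the Haar measure $du_-$ becomes, up to a global normalizing constant, the restricted-direct-product of the local Haar measures $\mu_\nu$ on the copies of $F_\nu$ normalized so that $\mathcal O_\nu$ has total measure $1$; the normalizing constant is $q^{\ell(w)(1-g)}$, coming from the comparison of the adelic and local measures in Appendix~A (the relation \eqref{eqn-me-la} and the remark following it), which is where the genus $g$ of $F$ enters.

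The key analytic point is that, under the standing hypothesis that $\chi$ is real-valued with $\chi(h_{\alpha_i})<-2$, the integrand $\Phi_{\chi}$ is everywhere positive. Hence the monotone convergence theorem lets me write
$$\int_{\hat{U}_{-,w,\mathbb A}}\Phi_{\chi}(u_-)\, du_- = q^{\ell(w)(1-g)}\,\lim_{S}\ \prod_{\nu\in S}\int_{U_{-,w,\nu}}\Phi_{\chi}(u_-)\, du_-,$$
where $S$ ranges over the finite subsets of $\mathcal V$. Applying Proposition~\ref{prop-local} to each factor, and then interchanging the finite product over the roots $a\in\hat{\Delta}_{W,+}\cap w^{-1}\hat{\Delta}_{W,-}$ with the limit over $S$ (this is just a rearrangement of finite products followed by a limit-of-finite-products, and each inner limit exists since the relevant Euler products converge absolutely), reduces the task to identifying, for each such root $a$, the Euler product
$$\prod_{\nu\in\mathcal V}\frac{1-\frac{1}{q_\nu^{-(\chi+\rho)(h_a)+1}}}{1-\frac{1}{q_\nu^{-(\chi+\rho)(h_a)}}}$$
as the ratio $\zeta_F(-(\chi+\rho)(h_a))/\zeta_F(-(\chi+\rho)(h_a)+1)$. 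For this one uses the Euler product $\zeta_F(s)=\prod_{\nu}(1-q_\nu^{-s})^{-1}$, valid for $\mathrm{Re}(s)>1$; the arguments appearing here lie in that half-plane because $h_a$ is a nonnegative, not identically zero, integer combination of the simple coroots $h_{\alpha_i}$, so $-(\chi+\rho)(h_a)$ is a positive integer combination of the numbers $-(\chi+\rho)(h_{\alpha_i})=-\chi(h_{\alpha_i})-1>1$, which forces $-(\chi+\rho)(h_a)>1$ and hence also $-(\chi+\rho)(h_a)+1>1$. Collecting the constant $q^{\ell(w)(1-g)}$ together with this product over roots yields exactly $c(\chi,w)$, and substituting back into the sum over $w\in\hat{W}$ gives the stated identity; the dependence of the right-hand side on $h$ alone (rather than on the full $g=khu$) is already built into Proposition~\ref{prop-step-2}.

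I expect the main obstacle to be the careful bookkeeping of the measure normalization: keeping track of how the adelic Haar measure on each copy of $\mathbb A$ inside $\hat{U}_{-,w,\mathbb A}$ compares with the product of the local measures giving $\mathcal O_\nu$ volume $1$ — the Riemann--Roch-type content of \eqref{eqn-me-la} in Appendix~A — and making precise the several interchanges of limits and products. All of these are ultimately justified by the positivity of $\Phi_{\chi}$ under the hypothesis $\chi(h_{\alpha_i})<-2$, but the accounting of the constant $q^{\ell(w)(1-g)}$, which must appear uniformly in every term of the sum over $\hat{W}$, is the delicate part.
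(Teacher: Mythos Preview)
Your proposal is correct and follows essentially the same route as the paper: starting from Proposition~\ref{prop-step-2}, factoring the adelic integral over $\hat{U}_{-,w,\mathbb A}\cong\mathbb A^{\ell(w)}$ via monotone convergence (positivity of $\Phi_{\chi}$ for real $\chi$), applying Proposition~\ref{prop-local} at each place, interchanging the finite product over $\hat{\Delta}_{W,+}\cap w^{-1}\hat{\Delta}_{W,-}$ with the limit over $S$, and recognizing the resulting Euler products as ratios of $\zeta_F$ using $-(\chi+\rho)(h_a)>1$. Your attention to the measure normalization constant $q^{\ell(w)(1-g)}$ from \eqref{eqn-me-la} is exactly the point the paper handles, so there is no gap.
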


   We saw at the end of Section 3 that in order to prove the almost everywhere convergence of the series $E_{\chi}$, it suffices to show \begin{equation} \label{eqn-again}   \sum_{w\in\hat{W}} \ \ \int_{\hat{U}_{\mathbb A}/(\hat{U}_{\mathbb A}\cap\hat{\Gamma}_F)}  E_{\chi, w}(h\eta^{mD} u)  \ du \ < \infty \end{equation} for $h$ varying in compact sets of $\hat H_{\mathbb A}$, when $\chi$ is a real character. In this section, we showed that the series (\ref{eqn-again}) is the same as  the series (\ref{eqn-sum-h}).  So to establish the almost everywhere convergence of the Eisenstein series $E_{\chi}$, we direct our attention to proving that the series (\ref{eqn-sum-h}) converges for $h$ varying in compact sets of $\hat H_{\mathbb A}$. We will prove this result in the next section.

  We also note that Theorem  \ref{thm-const} closely resembles Garland's result for the constant term of the Eisenstein series over $\hat G_{\mathbb R}$ (\cite{R}) with the Riemann zeta function replaced by the zeta function of the function field.

\vskip 1 cm

\section{Convergence of the Constant Term}

In this section, we prove the convergence of the series (\ref{eqn-sum-h}) by showing that if $h$ varies in a compact subset of $\hat H_{\mathbb A}$, then the series (\ref{eqn-sum-h}) is bounded above by a theta series in $\mathbb R$.  In order to establish this result, we require an additional condition on the tuple $m=(m_\nu)_{\nu\in\mathcal V}$, which determines the automorphism $\eta^{mD}$.  Specifically, we will further assume that $\sum_{\nu\in\mathcal V} \log(q_\nu)m_v>0$; this will be important for the calculation in section 5.4.

  We will approach this proof by treating the factor $(h\eta^{mD})^{w(\chi+\rho)-\rho}$ in subsections \ref{subsec-5.2} through \ref{subsec-5.5}, and the factor $c(\chi,w)$ in subsection \ref{subsec-5.6} . In the final subsection, we will combine the results to finish proving the convergence of  the series (\ref{eqn-sum-h}), which is indeed the constant term $E^{\#}_{\chi}$ of $E_\chi$. We begin with considering the compact subsets of $\hat H_{\mathbb A}$.

\subsection{Topology of the Torus and Compact Sets}

  Recall that $(\mathbb A^{\times})^{l+1}$ has the product topology induced by the standard topology on $\mathbb A^{\times}$. In Remark \ref{rmk-iso}, we defined the surjective group homomorphism $\vartheta: (\mathbb A^{\times})^{l+1} \rightarrow \hat H_{\mathbb A}$. Through this map, we put the quotient topology on $\hat H_{\mathbb A}$. Clearly the id\`{e}lic norm $|\cdot|: \mathbb A^\times\rightarrow\mathbb R_{>0}$ is a continuous map. Moreover, one can see that if $ \vartheta (s_1, \dots , s_{l+1})=\prod_{i=1}^{l+1} h_{\alpha_i}(s_i)=1$ then $|s_i|=1$ for each $i$. As a result, we have a well-defined continuous map $\bar \vartheta : \hat H_{\mathbb A} \rightarrow (\mathbb R_{>0})^{l+1}$ given by $\prod_{i=1}^{l+1} h_{\alpha_i}(s_i) \mapsto (|s_1|, \dots , |s_{l+1}|).$

     Let $\mathbf{C}$ be a compact set of $\hat H_{\mathbb A}$ and $\mathrm{pr}_i$ be the $i$-th projection of $(\mathbb R_{>0})^{l+1}$.  Then, for each $i$,  the image $\mathrm{pr}_i(\bar \vartheta(\mathbf{C}))$ is compact in $\mathbb R_{>0}$ and there exist $r_i,  R_i \in \mathbb R_{>0}$ such that \[  r_i <  \mathrm{pr}_i(\bar \vartheta(h)) < R_i \qquad \text{for any } h \in \mathbf C .\] If we set \[ r =\min \{ r_1, \dots , r_{l+1} \} \quad \text{ and } \quad R=\max \{ R_1, \dots , R_{l+1} \}, \] then for any  $h = \prod_{i=1}^{l+1} h_{\alpha_i}(s_i)  \in\mathbf{C}$, we have $$r<|s_i|<R$$ for $i=1,\dots,l+1$. Therefore, we conclude:

\begin{Lem} \label{lem-compact}
Let $\mathbf C$ be a compact set of $\hat H_{\mathbb A}$. Then there exists $r,R\in\mathbb R_{>0}$ such that for any $h=h_{\alpha_1}(s_1) \dots h_{\alpha_{l+1}}(s_{l+1})\in \mathbf C$ we have
$$ r < |s_i|< R$$ for $i=1,\dots,l+1$.
\end{Lem}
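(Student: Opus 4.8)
The plan is to manufacture a continuous map out of $\hat H_{\mathbb A}$ that records the id\`{e}lic norms of the coordinates, and then feed the compact set $\mathbf C$ into it. The ingredients are already at hand: the surjective group homomorphism $\vartheta\colon(\mathbb A^{\times})^{l+1}\to\hat H_{\mathbb A}$ of Remark \ref{rmk-iso}, which presents $\hat H_{\mathbb A}$ as a topological quotient of $(\mathbb A^{\times})^{l+1}$, together with the continuity of the id\`{e}lic norm $|\cdot|\colon\mathbb A^{\times}\to\mathbb R_{>0}$. The product map $N\colon(s_1,\dots,s_{l+1})\mapsto(|s_1|,\dots,|s_{l+1}|)$ from $(\mathbb A^{\times})^{l+1}$ to $(\mathbb R_{>0})^{l+1}$ is then a continuous group homomorphism.

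First I would check that $N$ factors through $\vartheta$, i.e.\ that $\vartheta(s_1,\dots,s_{l+1})=1$ forces $|s_i|=1$ for every $i$. Indeed, if $\prod_j h_{\alpha_j}(s_j)=1$ in $\hat G_{\mathbb A}$, then for each place $\nu$ the local component $\prod_j h_{\alpha_j}(s_{j,\nu})$ equals the identity, which lies in $\hat H_\nu\cap\hat K_\nu$; Proposition \ref{prop-zero} then gives $\mathrm{ord}_\nu(s_{i,\nu})=0$ for all $\nu$, so $|s_i|=\prod_\nu|s_{i,\nu}|_\nu=1$. Since $\vartheta$ is a homomorphism of abelian groups, any two preimages of a given $h\in\hat H_{\mathbb A}$ differ by an element of $\ker\vartheta\subseteq\ker N$, so $N$ is constant on the fibres of $\vartheta$; as $\hat H_{\mathbb A}$ carries the quotient topology, this yields the well-defined continuous map $\bar\vartheta\colon\hat H_{\mathbb A}\to(\mathbb R_{>0})^{l+1}$, $\prod_i h_{\alpha_i}(s_i)\mapsto(|s_1|,\dots,|s_{l+1}|)$.

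Finally I would invoke compactness: $\bar\vartheta(\mathbf C)$ is compact in $(\mathbb R_{>0})^{l+1}$, so for each $i$ the coordinate projection $\mathrm{pr}_i(\bar\vartheta(\mathbf C))$ is a compact subset of $\mathbb R_{>0}$ and hence contained in some open interval $(r_i,R_i)$ with $r_i,R_i\in\mathbb R_{>0}$. Setting $r=\min_i r_i$ and $R=\max_i R_i$ gives $r<|s_i|<R$ for every $h=\prod_i h_{\alpha_i}(s_i)\in\mathbf C$ and every $i=1,\dots,l+1$, which is the assertion.

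The only step carrying any content is the implication $\vartheta(s)=1\Rightarrow|s_i|=1$, needed for the well-definedness of $\bar\vartheta$; everything else is the formal interplay of continuity, the universal property of the quotient topology, and compactness. I do not expect this to be a real obstacle, since it is Proposition \ref{prop-zero} applied one place at a time and then reassembled across all of $\mathcal V$ via the product formula for the id\`{e}lic norm.
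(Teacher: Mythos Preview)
Your proposal is correct and follows essentially the same route as the paper: construct the continuous map $\bar\vartheta\colon\hat H_{\mathbb A}\to(\mathbb R_{>0})^{l+1}$ by factoring the norm map through $\vartheta$, push $\mathbf C$ forward, and bound each coordinate projection. The only difference is that you spell out the well-definedness step via Proposition~\ref{prop-zero}, whereas the paper simply asserts it; otherwise the arguments coincide.
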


  For the rest of the section, we fix a compact subset $\mathbf C$ of $\hat H_{\mathbb A}$ along with positive real numbers $r$ and $R$ that satisfy the conditions of Lemma \ref{lem-compact}.  We wish to prove that
\begin{equation} \sum_{w\in\hat{W}} \, (h\eta^{mD})^{w(\chi+\rho)-\rho} \ c(\chi,w) \end{equation}
is bounded by a theta series in $\mathbb R$ as $h$ varies in $\mathbf C$. Clearly
\[ \sum_{w\in\hat{W}} \,(h\eta^{mD})^{w(\chi+\rho)-\rho} \ c(\chi,w)= (h\eta^{mD})^{-\rho}  \sum_{w\in\hat{W}} \, (h\eta^{mD})^{w(\chi+\rho)} c(\chi,w) . \]
In the next four subsections, we focus on finding a bound for $(h\eta^{mD})^{w(\chi+\rho)} $.

\subsection{Preliminary Calculation} \label{subsec-5.2}

In Section 3, we fixed the automorphism $\eta^{mD}$ where $m=(m_{\nu})_{\nu\in\mathcal V}$ is an infinite tuple of integers such that  $m_{\nu}=0$ for almost all $\nu\in\mathcal V$.  However, we could replace $D$ with any element of $\hat{\frak h}^{\,e}$ to obtain a similar automorphism.  To be more precise, let $n=(n_{\nu})_{\nu\in\mathcal V}$ be an infinite tuple of integers where $n_{\nu}=0$ for almost all $\nu\in\mathcal V$.  For $h\in\hat{\frak h}^{\,e}$ and $\nu\in\mathcal V$, we define the local automorphism $\eta_{\nu}^{n_\nu h}$ to
\begin{enumerate}
\item[($i$)] preserve each weight space $V^{\lambda}_{\mu, F_\nu}\,$, and
\item[($ii$)] act on each weight space by $\eta_{\nu}^{n_\nu h} \cdot v= \pi_{\nu}^{n_\nu \mu(h)} \, v$ for $v\in V^{\lambda}_{\mu,F_\nu}$.
\end{enumerate}
Then we consider the product of these local automorphisms and define the global automorphism $\eta^{n h}:=\prod_{\nu\in\mathcal V} \eta_\nu^{n_\nu h}.$
Note that this automorphism only affects a finite number of places.

An element $\chi\in\hat{\frak h}^*$ can be considered an element of $(\hat{\frak h}^{\,e})^*$ by setting $\chi(D)=0$.  With this in mind, for $\chi\in\hat{\frak h}^*$ we define
\begin{equation} (\eta^{n h})^{\chi}= \prod_{\nu\in\mathcal V} \ |\pi_{\nu}|_{\nu}^{n_\nu \chi(h)}.
\end{equation}
One can easily see that for $w \in \hat W$ we have
$$(\eta^{n h} )^{w(\chi)}=(\eta^{\ w^{-1}(n h)})^{\chi}.$$

  If $s=(s_\nu)_{\nu\in\mathcal V} \in\mathbb A^\times$ then $\mathrm{ord}(s):=(\mathrm{ord}_{\nu}(s_\nu))_{\nu\in\mathcal V}$ is a tuple of integers with the property that $\mathrm{ord}_{\nu}(s_\nu)=0$ for almost all $\nu$.
Then we see that \begin{equation} \label{eqn-ord-s} (\eta^{\mathrm{ord}(s)\, h_{\alpha_i}})^{\chi}=(h_{\alpha_i}(s))^{\chi} \qquad \text{for each } i. \end{equation} Moreover, since $\chi(D)=0$ we also have $(\eta^{mD})^{\, \chi}=1.$

   As a notational convenience, we set $\bar{\chi}:=\chi+\rho$ and consider the factor
 \begin{equation} \label{eqn-mD} (h\eta^{mD})^{w(\bar{\chi})} \end{equation}
 for some $w\in\hat{W}$.  Let $h=h_{\alpha_1}(s_1) \dots h_{\alpha_{l+1}}(s_{l+1})$. Then by our observation in (\ref{eqn-ord-s}) we have
 \[
 h^{w(\bar{\chi})} = \prod_{i=1}^{l+1} |s_i|^{\,w(\bar{\chi})(h_{\alpha_i})}= \left( \ \prod_{i=1}^{l+1} \ \eta^{\,\mathrm{ord}(s_i) h_{\alpha_i}}  \ \right)^{w(\bar{\chi})}.
 \]
 As a result, the factor (\ref{eqn-mD}) becomes
 \begin{eqnarray}
\left(\ \prod_{i=1}^{l+1} (\eta^{\,\mathrm{ord}(s_i) h_{\alpha_i}})\  \eta^{mD}\ \right )^{w(\bar{\chi})} & = & \left(\ \eta^{\,\mathrm{ord}(s_1)h_{\alpha_1} +\dots+\mathrm{ord}(s_{l+1})h_{\alpha_{l+1}} + mD}  \ \right)^{w(\bar{\chi})} \nonumber \\ &=& \left(\ \eta^{w^{-1}(\mathrm{ord}(s_1)h_{\alpha_1} \,+\,\dots\,+\,\mathrm{ord}(s_{l+1})h_{\alpha_{l+1}} \,+\, mD)} \ \right)^{\bar{\chi}} \nonumber \\ &=& \prod_{\nu\in\mathcal V} \ |\pi_{\nu}|_{\nu}^{\, \bar{\chi} ( w^{-1}(\mathrm{ord}_{\nu}(s_{1,\nu})h_{\alpha_1}\, +\,\dots\,+\,\mathrm{ord}_{\nu}(s_{l+1,\nu})h_{\alpha_{l+1}} \,+\, m_{\nu}D))} \nonumber \\
&=& \prod_{\nu\in\mathcal V} \ \left ( \frac 1{q_\nu} \right )^{ \bar{\chi} ( w^{-1}(\mathrm{ord}_{\nu}(s_{1,\nu})h_{\alpha_1}\, +\,\dots\,+\,\mathrm{ord}_{\nu}(s_{l+1,\nu})h_{\alpha_{l+1}}\, + \,m_{\nu}D))}. \label{eqn-q}
\end{eqnarray}
For the remainder of this subsection, we will focus on calculating
\begin{equation} w^{-1}\big(\mathrm{ord}_{\nu}(s_{1,\nu})h_{\alpha_1} +\dots+\mathrm{ord}_{\nu}(s_{l+1,\nu})h_{\alpha_{l+1}} + m_{\nu}D\big).
\end{equation}

  The affine Weyl group of $\hat{\frak g}^{\,e}$ can be decomposed into a semi-direct product of the classical Weyl group and a group of translations: $\hat{W}= W \ltimes T$.  In particular,  $T=\{T_H \ | \ H\in\frak h_{\mathbb Z} \}$.  For more information see \cite{LG1}, \cite{R}, or \cite{Kac}. Using this product decomposition, we may express $w^{-1}=w_1 T_H$, for some $w_1\in W$, and $H\in\frak h_{\mathbb Z}$. Moreover, we note that for any $H\in\frak h_{\mathbb Z}$ the translation $T_H$ does not affect the imaginary co-root $h_{\delta}$, and we recall that the classical Weyl group leaves $h_{\delta}$ and $D$ invariant.

  In order to calculate the element of $\hat{\frak h}^{\,e}$ in (5.7), we will use this decomposition of $w^{-1}$ and a particular result from \cite{R} which we summarize in the following lemma:
\begin{Lem}[Garland, \cite{R}] \label{lem-trans} Let $h' \in\frak h$ and $T_H$ be the translation element associated to $H\in\frak h_{\mathbb Z}$. We have the following formula:
\begin{equation}T_H\cdot(h'-D)=h' + H - D + \left ( \frac{(H,H)}{2} +(h',H)  \right ) h_{\delta},
\end{equation} where $(\ ,\ )$ is the normalized bilinear form on $\frak h$.
\end{Lem}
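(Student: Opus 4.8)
The plan is to prove the formula first for the generating translations $T_{h_\alpha}$ by the finite simple coroots, where each such $T_{h_\alpha}$ admits a two-reflection expression inside $\hat W$, and then to propagate the identity to an arbitrary $T_H$ using the group law $T_HT_{H'}=T_{H+H'}$ on the translation lattice.

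For the first step I would fix the relevant normalizations. Under the form normalized by $(h_{\alpha_0},h_{\alpha_0})=2$ one has $\tfrac{2}{(\alpha,\alpha)}\,\alpha(h')=(h_\alpha,h')$ and $\tfrac{2}{(\alpha,\alpha)}=\tfrac12(h_\alpha,h_\alpha)$ for every $\alpha\in\Delta$ and $h'\in\frak h$; moreover $h_\delta=c$ (consistent with $h_{l+1}=-h_{\alpha_0}+\tfrac{2}{(\alpha_0,\alpha_0)}c$), $\delta(D)=1$, $\delta|_{\frak h}=0$, and the finite reflection $w_\alpha$ fixes both $D$ and $c$ while sending $h_\alpha\mapsto-h_\alpha$. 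For a finite simple root $\alpha$, the affine root $\delta-\alpha$ lies in $\hat\Delta_{W,+}$, so $w_{\delta-\alpha}\in\hat W$; its coroot is $h_{\delta-\alpha}=-h_\alpha+\tfrac{2}{(\alpha,\alpha)}h_\delta$, and with Garland's orientation $T_{h_\alpha}=w_\alpha\,w_{\delta-\alpha}$ (the opposite ordering giving $T_{-h_\alpha}$). Applying $w_a(x)=x-a(x)h_a$ twice, one computes $T_{h_\alpha}(h')=h'+(h_\alpha,h')\,h_\delta$ and $T_{h_\alpha}(D)=D-h_\alpha-\tfrac12(h_\alpha,h_\alpha)\,h_\delta$; subtracting gives $T_{h_\alpha}(h'-D)=h'+h_\alpha-D+\big(\tfrac{(h_\alpha,h_\alpha)}{2}+(h',h_\alpha)\big)h_\delta$, which is the assertion for $H=h_\alpha$.

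To reach a general $H\in\frak h_{\mathbb Z}$, write $T_H$ as a word in the $T_{h_{\alpha_i}}^{\pm1}$ and induct on its length. The induction step uses that $T_H$ fixes $h_\delta=c$ (it is central) and acts on $\frak h$ by $h'\mapsto h'+(H,h')h_\delta$, together with the elementary identity $\tfrac12(H+H',H+H')=\tfrac12(H,H)+(H,H')+\tfrac12(H',H')$; comparing the $h_\delta$-coefficients in $T_H\big(T_{H'}(h'-D)\big)$ then reproduces exactly $\tfrac{(H+H',H+H')}{2}+(h',H+H')$, closing the induction and yielding the formula for all $H$.

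A more conceptual alternative is to transport the problem to $(\hat{\frak h}^{\,e})^*$ through the nondegenerate form: there $T_H$ acts by the standard affine translation formula (following \cite{Kac}), and $h'-D$ corresponds to a weight of level $(h'-D,c)=-1$, so the stated formula drops out after a single substitution. In either approach the only genuine obstacle is keeping the normalizations consistent — the identifications $h_\delta=c$ and $\frak h\cong\frak h^*$, the level of $h'-D$, and the orientation convention for $T_H$; once these are pinned down, the rest is a routine computation.
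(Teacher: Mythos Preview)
The paper does not prove this lemma at all: it is stated with the attribution ``[Garland, \cite{R}]'' and used as a black box in the computation that follows. So there is no argument in the paper to compare against, and your proposal in fact supplies strictly more than the paper does.

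Your argument is correct. The two-reflection computation for $T_{h_\alpha}=w_\alpha w_{\delta-\alpha}$ checks out: with $h_{\delta-\alpha}=-h_\alpha+\tfrac{2}{(\alpha,\alpha)}h_\delta$ and $(\delta-\alpha)(D)=1$, one gets $T_{h_\alpha}(h')=h'+(h_\alpha,h')h_\delta$ and $T_{h_\alpha}(D)=D-h_\alpha-\tfrac12(h_\alpha,h_\alpha)h_\delta$, exactly as you wrote. The induction via $T_HT_{H'}=T_{H+H'}$ is clean, using only that $T_H$ fixes $h_\delta$ and acts on $\frak h$ by $h'\mapsto h'+(H,h')h_\delta$; the polarization identity then closes the $h_\delta$-coefficient. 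Since in the untwisted case the translation lattice $\frak h_{\mathbb Z}$ is the coroot lattice, the simple coroots do generate it and the induction reaches every $H$.

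Your ``more conceptual alternative'' is in fact closer to what one finds in the cited sources: Garland's formula is the coroot-side transcription of the standard affine translation formula on $(\hat{\frak h}^{\,e})^*$ (as in Kac, \S6.5), specialized to a level-$(-1)$ element. Either route is fine; the first is more self-contained for a reader of this paper, while the second makes the origin of the quadratic term $\tfrac{(H,H)}{2}$ transparent. Your caution about orientation is warranted: reversing the order of the two reflections flips the sign of $H$, so one should fix the convention by matching $T_{h_\alpha}(D)$ against the stated formula once.
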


   We let $w^{-1}=w_1 T_H$ for $w_1\in W$ and $T_H\in T$. For each $\nu\in\mathcal V$, we set \[ \sum_{i=1}^{l+1} \mathrm{ord}_{\nu}(s_{i,\nu})h_{\alpha_i}= h_{\nu} + e_{\nu} h_{\delta} ,\] where $h_{\nu}\in\frak h$ and $e_{\nu}\in\mathbb R$. We begin our calculation by making these substitutions:
\[
w^{-1}\big(\mathrm{ord}(s_{1, \nu})h_{\alpha_1} +\dots+\mathrm{ord}(s_{l+1, \nu})h_{\alpha_{l+1}} + m_\nu D\big)= w_1 \, T_H\big(h_{\nu} + e_{\nu} h_{\delta} + m_{\nu}  D\big).
\]
After factoring out $-m_{\nu} $ for $m_\nu \neq 0$,  we have
\begin{eqnarray*}
w_1 \, T_H\big(h_{\nu} + e_{\nu} h_{\delta} + m_{\nu}  D\big) \ &=& \ w_1 \, \left( -m_{\nu}  \, T_H \left(\frac{-h_{\nu} }{m_{\nu} } - D - \frac{e_{\nu} }{m_{\nu} }h_{\delta} \right)   \right)\\
&=& \ w_1 \, \left ( -m_{\nu}  \,\bigg[ T_H \left(\frac{-h_{\nu} }{m_{\nu} } - D\right) - T_H\left(\frac{e_{\nu} }{m_{\nu} }h_{\delta} \right)\bigg]   \right ).
\end{eqnarray*}
The translation $T_H$ does not affect the imaginary root, so if we apply Lemma \ref{lem-trans} we see this equals
\begin{eqnarray}
& & w_1 \, \left ( -m_{\nu}  \,\bigg[ -\frac{h_{\nu} }{m_{\nu} } + H - D + \left ( \frac{(H,H)}{2} -\bigg(\frac{h_{\nu} }{m_{\nu} },H\bigg) \right ) h_{\delta}  - \bigg(\frac{e_{\nu} }{m_{\nu} }h_{\delta} \bigg)\bigg]   \right ) \nonumber
\\
&=& \ w_1 \, \left ( h_{\nu}  -m_{\nu}  H + m_{\nu}  D + \left ( \frac{-m_{\nu} (H,H)}{2} +(h_{\nu} ,H) + e_{\nu} \right ) h_{\delta} \right )  \nonumber \\ &=& \ (w_1h_{\nu} ) - m_{\nu} (w_1H) + m_{\nu}  D + \bigg(\frac{-m_{\nu} (H,H)}{2} +(h_{\nu} ,H) + e_{\nu} \bigg)h_{\delta}, \label{eqn-Weyl-three}
\end{eqnarray}
where the last equality follows from the fact that  the classical Weyl group element $w_1$ does not affect $D$ or $h_{\delta}$. One can check that the same formula holds for $m_\nu =0$.

Substituting (\ref{eqn-Weyl-three}) into (\ref{eqn-q}), we obtain that
\begin{eqnarray*}
(h\eta^{mD})^{w(\bar{\chi})}
&=& \prod_{\nu\in\mathcal V}  \left (\frac{1}{q_\nu} \right )^{\bar{\chi} ( w^{-1}(\mathrm{ord}_{\nu}(s_{1,\nu})h_{\alpha_1} +\dots+\mathrm{ord}_{\nu}(s_{l+1,\nu})h_{\alpha_{l+1}} + m_{\nu}D))}\\
&=&  \prod_{\nu\in\mathcal V}  \left (\frac{1}{q_\nu} \right )^{\bar{\chi} ((w_1h_{\nu} ) - m_{\nu} (w_1H) + m_{\nu}  D + (\frac{-m_{\nu} (H,H)}{2} +(h_{\nu} ,H) + e_{\nu} )h_{\delta} ) }.
\end{eqnarray*}

We continue to break up this product by writing it as a product of three factors that we obtain by grouping the parts appearing in the exponent of $\frac{1}{q_\nu}$. Since $\bar \chi(D)=0$, the factor containing $m_{\nu}D$ disappears from our computation. In particular, we have the following proposition.
\begin{Prop} \label{prop-three-factors} The term $(h\eta^{mD})^{w(\bar{\chi})}$ appearing in $E_{\chi}^{\#}(g\eta^{mD})$ is the product of the following three factors:
\begin{enumerate}[{\em (i)}]
\item \ $\prod_{\nu\in\mathcal V} \ (\frac{1}{q_\nu})^{\bar{\chi}(w_1h_{\nu} )+e_{\nu} \bar{\chi}(h_{\delta})} $
\item \ $ \prod_{\nu\in\mathcal V} \ (\frac{1}{q_\nu})^{-(m_{\nu} \bar{\chi}(w_1H) + \frac{m_{\nu} (H,H)}{2}\bar{\chi}(h_{\delta}) )} $
\item \ $ \prod_{\nu\in\mathcal V} \ (\frac{1}{q_\nu})^{(h_{\nu} ,H)\bar{\chi}(h_{\delta})}.$
\end{enumerate}
\end{Prop}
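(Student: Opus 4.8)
The plan is to read the statement off directly from the chain of equalities carried out immediately before the proposition, so the proof is essentially bookkeeping. First I would start from the formula
\[ (h\eta^{mD})^{w(\bar{\chi})} = \prod_{\nu\in\mathcal V}  \left (\frac{1}{q_\nu} \right )^{\bar{\chi} ((w_1h_{\nu}) - m_{\nu} (w_1H) + m_{\nu}  D + (\frac{-m_{\nu} (H,H)}{2} +(h_{\nu},H) + e_{\nu} )h_{\delta} )}, \]
which was obtained by substituting \eqref{eqn-Weyl-three} into \eqref{eqn-q}.

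Next I would use the linearity of $\bar{\chi}$ on $\hat{\frak h}^{\,e}$ to expand the exponent of $\frac{1}{q_\nu}$ as the sum of four terms,
\[ \bar{\chi}(w_1 h_\nu) \;-\; m_\nu \bar{\chi}(w_1 H) \;+\; m_\nu \bar{\chi}(D) \;+\; \Bigl(\tfrac{-m_\nu(H,H)}{2} + (h_\nu,H) + e_\nu\Bigr)\bar{\chi}(h_\delta). \]
Recall that $\chi$ is viewed in $(\hat{\frak h}^{\,e})^*$ via $\chi(D)=0$, and likewise $\rho(D)=0$, so $\bar{\chi}(D)=\chi(D)+\rho(D)=0$ and the third term drops out; this is the step at which the contribution of $m_\nu D$ disappears, as already remarked in the text preceding the proposition.

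Finally I would distribute $\prod_{\nu\in\mathcal V}$ over the three surviving summands in the exponent: the part $\bar{\chi}(w_1 h_\nu)+e_\nu\bar{\chi}(h_\delta)$ yields factor (i), the part $-m_\nu\bar{\chi}(w_1 H)-\tfrac{m_\nu(H,H)}{2}\bar{\chi}(h_\delta)$ yields factor (ii), and the part $(h_\nu,H)\bar{\chi}(h_\delta)$ yields factor (iii). The one point deserving a remark is the legitimacy of splitting the product over the infinite set $\mathcal V$; but $\mathrm{ord}_\nu(s_{i,\nu})=0$ for almost all $\nu$ (hence $h_\nu=0$ and $e_\nu=0$ for almost all $\nu$) and $m_\nu=0$ for almost all $\nu$, so in each of (i), (ii), (iii) all but finitely many factors equal $1$ and each displayed product is in fact a finite product. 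There is therefore no genuine obstacle: the content of the proposition is entirely the regrouping of \eqref{eqn-Weyl-three} together with the vanishing $\bar{\chi}(D)=0$.
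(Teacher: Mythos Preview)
Your proposal is correct and follows exactly the paper's approach: the proposition is simply read off from the preceding chain of equalities by expanding the exponent via linearity of $\bar\chi$, dropping the $m_\nu D$ term since $\bar\chi(D)=0$, and regrouping into the three displayed factors. Your added remark justifying the splitting of the product over $\mathcal V$ (all but finitely many factors equal $1$) is a helpful point the paper leaves implicit.
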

\begin{Rmk}
Note the calculations in this subsection work for any $w\in\hat{W}$.
\end{Rmk}

  In this subsection, we calculated the term $(h\eta^{mD})^{w(\bar{\chi})}$ explicitly and showed that it is a product of the factors (i), (ii), and (iii) above. In the next three subsections, we will work with each factor individually showing that if we let $h$ vary in a compact set $\mathbf C$, we have an upper bound that depends on $w\in \hat{W}$.

\subsection{Bounding Factor (i) of the Product}

	  We wish to find an upper bound for the product \begin{equation}\prod_{\nu\in\mathcal V} \left ( \frac{1}{q_\nu} \right )^{\bar{\chi}(w_1h_{\nu} )+e_{\nu} \bar{\chi}(h_{\delta})}.\end{equation}
	First notice that since we set $h_{\nu} +e_{\nu} h_{\delta}=\sum_{i=1}^{l+1} \mathrm{ord}_{\nu}(s_{i,\nu}) h_{\alpha_i}$, we have
	\begin{eqnarray*}
	\prod_{\nu\in\mathcal V} \left ( \frac{1}{q_\nu} \right )^{\bar{\chi}(w_1h_{\nu} )+e_{\nu} \bar{\chi}(h_{\delta})} &=& \prod_{\nu\in\mathcal V} |\pi_{\nu}|_{\nu}^{\bar{\chi}(w_1h_{\nu} )+e_{\nu} \bar{\chi}(w_1h_{\delta})}= \prod_{i=1}^{l+1} (\eta^{\mathrm{ord}(s_i) \ w_1(h_{\alpha_i})})^{\bar{\chi}}\\
	&=& \prod_{i=1}^{l+1} (\eta^{\mathrm{ord}(s_i) h_{\alpha_i}})^{w_1^{-1}\bar{\chi}}=  h^{w_1^{-1}\bar{\chi}},
	\end{eqnarray*}
by our observation in (\ref{eqn-ord-s}). Hence it is our goal to prove there exists an upper bound for the factor $$h^{w_1^{-1}\bar{\chi}}$$ for any $w_1\in W$, the classical Weyl group.

  Since the classical Weyl group is a finite group and we fixed $\bar{\chi}\in\hat{\frak h}$, we know there exist real numbers $m_1$ and $M_1$ such that $$m_1< w_1^{-1}\bar{\chi}(h_{\alpha_i})<M_1,$$ for all $i=1,\dots,l+1$ and $w_1\in W$.
Moreover, we assumed in the beginning of the section that we are choosing $h$ to vary in a compact set $\mathbf C$ of $\hat H_{\mathbb A}$ such that for any $h= h_{\alpha_1}(s_1)\dots h_{\alpha_{l+1}}(s_{l+1})\in \mathbf C$ we have $r< |s_i|<R$ for each $i$. Combining these, we see that there exists a positive constant $\mathcal M$ such that
\[ h_{\alpha_i}(s_i)^{w_1^{-1} \bar \chi} = |s_i|^{w_1^{-1} \bar \chi (h_{\alpha_i})} < \mathcal M \] for all $i=1,\dots,l+1$ and $w_1\in W$.  Hence, we conclude:

\begin{Lem} \label{lem-first}
\begin{equation} \label{eqn-first-factor}
\prod_{\nu\in\mathcal V} \left ( \frac{1}{q_\nu} \right )^{\bar{\chi}(w_1h_{\nu} )+e_{\nu} \bar{\chi}(h_{\delta})}
= h^{w_1^{-1}\bar{\chi}} < \mathcal M^{l+1},
\end{equation}
for all $w_1\in W$.
\end{Lem}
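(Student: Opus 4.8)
The plan is to establish the two claims in the display separately: the identity
$$\prod_{\nu\in\mathcal V}\left(\frac1{q_\nu}\right)^{\bar\chi(w_1 h_\nu)+e_\nu\bar\chi(h_\delta)}=h^{w_1^{-1}\bar\chi},$$
and then the uniform bound $h^{w_1^{-1}\bar\chi}<\mathcal M^{l+1}$ for all $w_1\in W$ and all $h\in\mathbf C$.

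First I would prove the identity by unwinding definitions. Writing $h=h_{\alpha_1}(s_1)\cdots h_{\alpha_{l+1}}(s_{l+1})$ and recalling $h_\nu+e_\nu h_\delta=\sum_{i=1}^{l+1}\mathrm{ord}_\nu(s_{i,\nu})h_{\alpha_i}$, together with the fact that $W$ fixes $h_\delta$, the exponent in the $\nu$-th factor becomes $\bar\chi\big(w_1\sum_i\mathrm{ord}_\nu(s_{i,\nu})h_{\alpha_i}\big)=\sum_i\mathrm{ord}_\nu(s_{i,\nu})\,\bar\chi(w_1h_{\alpha_i})$. Taking the product first over $\nu$ and then over $i$, and using that $W$ acts on $\hat{\frak h}^*$ as the contragredient of its action on $\hat{\frak h}$, so that $\bar\chi(w_1h_{\alpha_i})=(w_1^{-1}\bar\chi)(h_{\alpha_i})$, the left-hand side becomes $\prod_{i=1}^{l+1}(\eta^{\mathrm{ord}(s_i)h_{\alpha_i}})^{w_1^{-1}\bar\chi}$, which by \eqref{eqn-ord-s} equals $\prod_{i=1}^{l+1}h_{\alpha_i}(s_i)^{w_1^{-1}\bar\chi}=h^{w_1^{-1}\bar\chi}$. (Equivalently, one may route this through the relation $(\eta^{nh'})^{w(\chi)}=(\eta^{w^{-1}(nh')})^{\chi}$ recorded in Subsection \ref{subsec-5.2}.) This step is bookkeeping with the diagonal automorphisms $\eta^{\bullet}$, which act by scalars on weight spaces, so all the manipulations are legitimate.

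For the bound, I would use finiteness of the classical Weyl group $W$: the set $\{(w_1^{-1}\bar\chi)(h_{\alpha_i}):w_1\in W,\ 1\le i\le l+1\}$ is finite, hence contained in an interval $(m_1,M_1)$, as already noted before the statement. By Lemma \ref{lem-compact}, every $h=h_{\alpha_1}(s_1)\cdots h_{\alpha_{l+1}}(s_{l+1})\in\mathbf C$ satisfies $r<|s_i|<R$. Separating the cases $|s_i|\ge 1$ and $|s_i|<1$ to control the sign of the exponent, one gets $h_{\alpha_i}(s_i)^{w_1^{-1}\bar\chi}=|s_i|^{(w_1^{-1}\bar\chi)(h_{\alpha_i})}<\mathcal M$ for the single constant $\mathcal M:=\max\{r^{m_1},r^{M_1},R^{m_1},R^{M_1}\}$, uniformly in $w_1$ and $i$; multiplying the $l+1$ factors yields $h^{w_1^{-1}\bar\chi}<\mathcal M^{l+1}$. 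Neither step is a genuine obstacle — the lemma is a compactness estimate dressed up in the $\eta$-notation — and the only error-prone point is the first step, where one must keep straight whether $W$ acts on the co-roots $h_{\alpha_i}$ or dually on $\bar\chi$.
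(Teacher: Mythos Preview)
Your proposal is correct and follows essentially the same route as the paper: the identity is obtained by unwinding $h_\nu+e_\nu h_\delta=\sum_i\mathrm{ord}_\nu(s_{i,\nu})h_{\alpha_i}$ together with $w_1 h_\delta=h_\delta$ and \eqref{eqn-ord-s}, and the bound comes from the finiteness of $W$ combined with Lemma~\ref{lem-compact}. Your explicit choice $\mathcal M=\max\{r^{m_1},r^{M_1},R^{m_1},R^{M_1}\}$ is a nice clarification of what the paper leaves implicit.
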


\subsection{Bounding Factor (ii) of the Product}

  We now turn our focus to factor (ii), so we consider the infinite product \begin{equation}\prod_{\nu\in\mathcal V} \left ( \frac{1}{q_\nu} \right )^{-(m_{\nu} \bar{\chi}(w_1H) + \frac{m_{\nu} (H,H)}{2}\bar{\chi}(h_{\delta}) )}.\end{equation}
However, recall that in the definition of the automorphism $\eta^{mD}$ we specified $m=(m_{\nu} )_{\nu\in\mathcal V}$ where $m_{\nu} \in\mathbb Z$, $m_{\nu} =0$ for all but a finite number of $\nu$. Now we further assume that $\sum_\nu (\log q_\nu) \, m_\nu>0$.  Let $S=\{\nu\in\mathcal V \ | \ m_{\nu} \neq0\}$. Then our infinite product above reduces to the finite product:
\begin{eqnarray}
& & \prod_{\nu\in S} \left ( \frac{1}{q_\nu} \right )^{-(m_{\nu} \bar{\chi}(w_1H) + \frac{m_{\nu} (H,H)}{2}\bar{\chi}(h_{\delta}) )} \nonumber \\
& = & \ \prod_{\nu\in S} \left (\frac{1}{q_\nu} \right )^{-m_{\nu} \bar{\chi}(w_1H)} \ \prod_{\nu\in S} \left (\frac{1}{q_\nu} \right )^{-\frac{m_{\nu} (H,H)}{2}\bar{\chi}(h_{\delta})}. \label{eqn-sec}
\end{eqnarray}
  We will treat each product in (\ref{eqn-sec}) separately, beginning with the factor involving $\bar{\chi}(w_1H)$. If we write $\| H \| = (H,H)^{1/2}$ for $H \in \frak h$, we can prove the following lemma:

  \begin{Lem} \label{lem-N1}
There exists $\mathcal N_1 >0$ such that  for any $H\in\frak h_{\mathbb Z}$ and $w_1\in W$,
\begin{equation} \label{eqn-des}
\prod_{\nu\in S} \left ( \frac{1}{q_\nu} \right )^{-m_{\nu} \bar{\chi}(w_1H)} < \mathcal N_1^{\|H\|}.
\end{equation}
\end{Lem}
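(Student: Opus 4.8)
The plan is to bound the exponent $-m_\nu\bar\chi(w_1H)$ uniformly and then recognize the resulting product as a single power whose base is bounded. First I would observe that the product over the finite set $S$ can be rewritten as
\[
\prod_{\nu\in S}\left(\frac1{q_\nu}\right)^{-m_\nu\bar\chi(w_1H)}=\exp\!\left(\bar\chi(w_1H)\sum_{\nu\in S}(\log q_\nu)\,m_\nu\right)=\left(\exp\!\Big(\sum_{\nu}(\log q_\nu)m_\nu\Big)\right)^{\bar\chi(w_1H)}.
\]
Call $c_0=\exp\big(\sum_\nu(\log q_\nu)m_\nu\big)$; by the newly imposed hypothesis $\sum_\nu(\log q_\nu)m_\nu>0$ we have $c_0>1$, which is exactly what makes a bound of the stated shape possible.

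Next I would bound $\bar\chi(w_1H)$ in terms of $\|H\|$. Since $\bar\chi=\chi+\rho$ is a fixed element of $\hat{\frak h}^*$, its restriction to $\frak h$ is a fixed linear functional, and $W$ is a finite group, so $\{w_1^{-1}\bar\chi\mid w_1\in W\}$ is a finite set of linear functionals on $\frak h$. Writing $\bar\chi(w_1H)=(w_1^{-1}\bar\chi)(H)$ and using Cauchy--Schwarz with respect to the normalized form $(\,,\,)$ on $\frak h$, there is a constant $A>0$ (the maximum over $w_1\in W$ of the norm of the vector in $\frak h$ representing $w_1^{-1}\bar\chi$) such that $|\bar\chi(w_1H)|\le A\|H\|$ for all $H\in\frak h_{\mathbb Z}$ and all $w_1\in W$. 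Hence $\bar\chi(w_1H)\le A\|H\|$, and since $c_0>1$,
\[
c_0^{\,\bar\chi(w_1H)}\le c_0^{\,A\|H\|}=(c_0^{\,A})^{\|H\|}.
\]
Setting $\mathcal N_1=c_0^{\,A}$ (or any slightly larger number to make the inequality strict) gives the claim.

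The only subtlety — not really an obstacle — is making the inequality strict as written in \eqref{eqn-des}: one can either note that at $H=0$ both sides equal $1$ versus $\mathcal N_1^0=1$, so strictness fails at $H=0$, and either restrict attention to the $H$ that actually occur or simply replace $\mathcal N_1$ by any constant exceeding $c_0^{A}$ together with absorbing the boundary case, exactly as is done implicitly elsewhere in the paper. I do not anticipate a genuine difficulty here; the work is entirely the elementary estimate $|\bar\chi(w_1H)|\le A\|H\|$, whose content is just finiteness of $W$ plus Cauchy--Schwarz, and the recognition that the finiteness of $S$ collapses the product to the single power $c_0^{\bar\chi(w_1H)}$ with $c_0>1$.
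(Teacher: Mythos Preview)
Your argument is correct and essentially identical to the paper's: both rewrite the product as $\exp\big(\bar\chi(w_1H)\sum_{\nu\in S}(\log q_\nu)m_\nu\big)$, use the positivity of $\sum_\nu(\log q_\nu)m_\nu$, and bound $\bar\chi(w_1H)$ linearly in $\|H\|$ via the finiteness of $W$ (the paper uses compactness of the unit sphere in $\frak h$ instead of Cauchy--Schwarz, but this is cosmetic). Your observation about the failure of strict inequality at $H=0$ is valid and is in fact a minor imprecision in the paper's own proof as well; it is harmless for the intended application since the $H=0$ terms are finitely many.
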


\begin{proof} Let $B_0=\{H\in\frak h \ | \ \| H \|=1 \}$. Then this is clearly a compact set of $\frak h$, where the topology on $\frak h$ is the metric topology induced by the norm $\| \cdot \|$.
The linear map $\bar \chi :B_0\rightarrow \mathbb R$ is a continuous map, and hence is bounded. Since $W$ preserves $B_0$, we can find a real number $N_1$ such that $$\bar{\chi}(w_1H)<N_1,$$ for all $w_1\in W$ and $H\in B_0$.
Let $H$ be any element of $\frak h_{\mathbb Z}$. Then $\bar{\chi}(w_1H)=\bar{\chi}(w_1 \frac{H}{\| H \|}) \, \| H\|$, and we note that $ \frac{H}{\| H \|}\in B_0$. So we see that for all $w_1\in W$ and $H\in\frak h_{\mathbb Z}$,
$$ \bar{\chi}(w_1H)<N_1\|H\|.$$

 Now we have
 \[ \prod_{\nu\in S} \left ( \frac{1}{q_\nu} \right )^{-m_{\nu} \bar{\chi}(w_1H)} = \exp\left ( \sum_{\nu \in S} (\log q_\nu) \, m_\nu  \bar{\chi}(w_1H) \right ) < \exp \left ( \sum_{\nu \in S} (\log q_\nu) \, m_\nu  N_1\|H\|\right ) .\]
We set $\mathcal N_1 = \exp \left ( \sum_{\nu \in S} (\log q_\nu) \, m_\nu  N_1 \right )$, and obtain the desired inequality \eqref{eqn-des}.
\end{proof}

\medskip

  Next we consider the other factor from (\ref{eqn-sec}), specifically
$$\prod_{\nu\in S} \left ( \frac{1}{q_\nu} \right )^{-\frac{m_{\nu} (H,H)}{2}\bar{\chi}(h_{\delta})}.$$
Recalling that $\chi (h_{\alpha_i}) <-2$ for each $i =1, 2, \dots, l+1$, we see $\bar{\chi}(h_{\delta})<0$. We have
\[ \prod_{\nu\in S} \left ( \frac{1}{q_\nu} \right )^{-\frac{m_{\nu} (H,H)}{2}\bar{\chi}(h_{\delta})} = \exp \left ( \frac 1 2 \sum_{\nu \in S} (\log q_\nu) \, m_\nu \|H\|^2 \bar{\chi}(h_{\delta}) \right ) . \]
We write $\mathcal N_2 = \exp \left ( \frac 1 2 \sum_{\nu \in S} (\log q_\nu) \, m_\nu  \bar{\chi}(h_{\delta}) \right )$. Since we assumed $\sum_{\nu \in S} (\log q_\nu) \, m_\nu >0$, we get $0<\mathcal N_2 <1$. Thus we obtain:
\begin{Lem} \label{lem-N2}
If we write
\begin{equation} \prod_{\nu\in S} \left ( \frac{1}{q_\nu} \right )^{-\frac{m_{\nu} (H,H)}{2}\bar{\chi}(h_{\delta})}=\mathcal N_2^{\ \|H\|^2},
\end{equation} then $0<\mathcal N_2 <1$.
\end{Lem}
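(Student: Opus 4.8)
The plan is to reduce the product in the statement to a single exponential, pull out the factor $\|H\|^2$, and then verify by a sign check that the resulting constant lies strictly between $0$ and $1$.

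First I would use that $S=\{\nu\in\mathcal V\mid m_\nu\neq 0\}$ is finite (by the definition of the tuple $m$), so the product is a genuine finite product and there is no convergence issue. Writing each factor as $(1/q_\nu)^{x}=\exp(-x\log q_\nu)$ with $x=-\frac{m_\nu(H,H)}{2}\bar{\chi}(h_\delta)$, and using $\|H\|^2=(H,H)$, I would obtain
\[
\prod_{\nu\in S}\left(\frac{1}{q_\nu}\right)^{-\frac{m_\nu(H,H)}{2}\bar{\chi}(h_\delta)}=\exp\left(\frac{1}{2}\,\|H\|^2\,\bar{\chi}(h_\delta)\sum_{\nu\in S}(\log q_\nu)\,m_\nu\right),
\]
which is exactly $\mathcal N_2^{\,\|H\|^2}$ once we set
\[
\mathcal N_2=\exp\left(\frac{1}{2}\,\bar{\chi}(h_\delta)\sum_{\nu\in S}(\log q_\nu)\,m_\nu\right).
\]

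It then remains only to pin down the sign of the exponent defining $\mathcal N_2$. On one hand, the standing hypothesis $\chi(h_{\alpha_i})<-2$ for all $i$ forces $\bar{\chi}(h_\delta)=\chi(h_\delta)+\rho(h_\delta)<0$, as already recorded just before the statement (using that $h_\delta$ is a positive integral combination of the simple coroots $h_{\alpha_1},\dots,h_{\alpha_{l+1}}$, so that $\rho(h_\delta)>0$ and $\chi(h_\delta)<-2\rho(h_\delta)$). On the other hand, the extra assumption imposed at the start of this section, $\sum_\nu(\log q_\nu)\,m_\nu>0$, says precisely that $\sum_{\nu\in S}(\log q_\nu)\,m_\nu$ is a strictly positive real number. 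Hence the exponent $\frac{1}{2}\,\bar{\chi}(h_\delta)\sum_{\nu\in S}(\log q_\nu)\,m_\nu$ is strictly negative, so $0<\mathcal N_2<1$.

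There is no substantive obstacle here: the lemma is a bookkeeping step preparing the theta-series estimate, and everything is elementary. The only point requiring care is to track which hypothesis supplies which sign — the negativity of $\bar{\chi}(h_\delta)$ is exactly what the condition $\mathrm{Re}(\chi(h_{\alpha_i}))<-2$ is for, whereas the positivity of $\sum_\nu(\log q_\nu)\,m_\nu$ is the freshly imposed condition on $m$, and it is the product of these two quantities that must come out negative for the desired strict inequality $\mathcal N_2<1$ to hold.
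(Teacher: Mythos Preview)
Your proposal is correct and matches the paper's own argument essentially line for line: rewrite the finite product as a single exponential, factor out $\|H\|^2$, define $\mathcal N_2=\exp\bigl(\tfrac12\,\bar{\chi}(h_\delta)\sum_{\nu\in S}(\log q_\nu)\,m_\nu\bigr)$, and conclude $0<\mathcal N_2<1$ from $\bar{\chi}(h_\delta)<0$ together with the assumption $\sum_\nu(\log q_\nu)\,m_\nu>0$. Your added remark explaining why $\bar{\chi}(h_\delta)<0$ via the expansion of $h_\delta$ in simple coroots is a welcome bit of extra detail, but the method is identical.
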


\subsection{Bounding Factor (iii) of the Product} \label{subsec-5.5}

  We finally consider factor (iii).  In particular, we find a bound for the infinite product \begin{equation} \label{eqn-third} \prod_{\nu\in\mathcal V} \left (\frac{1}{q_\nu} \right )^{(h_{\nu} ,H)\bar{\chi}(h_{\delta})}.\end{equation}

Since we set $h_{\nu} +e_{\nu} h_{\delta}= \sum_{i=1}^{l+1} \mathrm{ord}_{\nu}(s_{i,\nu}) h_{\alpha_i}\,$, and $(h_{\delta},H)=0$ for all $H\in\frak h_{\mathbb Z}$, we can make the following changes
\[
(h_{\nu} ,H)=(\ \sum_{i=1}^{l+1} \mathrm{ord}_{\nu}(s_{i,\nu}) h_{\alpha_i} - e_{\nu} h_{\delta} \ ,\ H \ )= \sum_{i=1}^{l+1} \mathrm{ord}_{\nu}(s_{i,\nu}) (h_{\alpha_i}, H).
\]
As a result, the infinite product in (\ref{eqn-third}) becomes
\[
\prod_{\nu\in\mathcal V}  \prod_{i=1}^{l+1} \left ( \frac{1}{q_\nu} \right )^{\mathrm{ord}_{\nu}(s_{i,\nu}) \, (h_{\alpha_i}, H) \bar{\chi}(h_{\delta})}  =  \prod_{i=1}^{l+1} \prod_{\nu\in\mathcal V} \left ( \frac{1}{q_\nu} \right )^{\mathrm{ord}_{\nu}(s_{i,\nu}) \, (h_{\alpha_i}, H) \bar{\chi}(h_{\delta})}= \prod_{i=1}^{l+1} \, |s_i|^{\, (h_{\alpha_i}, H) \bar{\chi}(h_{\delta})}.
\]

  As before, we let $B_0=\{ H\in\frak h \ | \ \|H\|=1\}$. For each $i=1,\dots,l+1$, we consider the continuous maps determined by the bilinear form $$(h_{\alpha_i}, \cdot):B_0\rightarrow\mathbb R.$$
Since we are only considering $l+1$ different images of the compact set $B_0$ in $\mathbb R$, we can find numbers $n$ and $N$ such that $n<(h_{\alpha_i},H)<N$ for all $H\in B_0$. Using the same argument as in Lemma \ref{lem-N1}, we conclude that for any $H\in\frak h_{\mathbb Z}$ and $i=1, 2, \dots, l+1$, we have
$$n \| H \| < (h_{\alpha_i},H)<N\|H\|,$$
or equivalently,
$$N\|H\|\bar{\chi}(h_{\delta})< (h_{\alpha_i},H)\bar{\chi}(h_{\delta}) < n\|H\|\bar{\chi}(h_{\delta}),$$
since $\bar{\chi}(h_{\delta})<0$.

  Since we assumed that $h$ varies in the compact set $ \mathbf C \subset\hat H_{\mathbb A}$, we know that $r<|s_i|<R$ for $i=1,\dots,l+1$. It is now straightforward to prove the following lemma.

\begin{Lem} \label{lem-third}
There exists a constant $\mathcal N_3$ such that for any $H\in\frak h_{\mathbb Z}$ we have \begin{equation}
\prod_{\nu\in\mathcal V} \left (\frac{1}{q_\nu} \right )^{(h_{\nu} ,H)\bar{\chi}(h_{\delta})}=\prod_{i=1}^{l+1} \, |s_i|^{\, (h_{\alpha_i}, H) \bar{\chi}(h_{\delta})} < \mathcal N_3^{(l+1)\|H\|}.
\end{equation}
\end{Lem}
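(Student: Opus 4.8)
The plan is to combine the three facts already assembled immediately above the statement: the identity $\prod_{\nu\in\mathcal V}\left(\frac{1}{q_\nu}\right)^{(h_\nu,H)\bar{\chi}(h_\delta)}=\prod_{i=1}^{l+1}|s_i|^{(h_{\alpha_i},H)\bar{\chi}(h_\delta)}$; the two-sided bound $N\|H\|\,\bar{\chi}(h_\delta)<(h_{\alpha_i},H)\,\bar{\chi}(h_\delta)<n\|H\|\,\bar{\chi}(h_\delta)$, valid for every $H\in\frak h_{\mathbb Z}$ and every $i$ (which comes from $n<(h_{\alpha_i},H)<N$ on the unit sphere $B_0$ together with $\bar{\chi}(h_\delta)<0$); and the compactness estimate $r<|s_i|<R$ for $h=\prod_i h_{\alpha_i}(s_i)\in\mathbf{C}$. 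Thus it suffices to bound a single factor $|s_i|^{(h_{\alpha_i},H)\bar{\chi}(h_\delta)}$ by a fixed constant raised to the power $\|H\|$, uniformly in $H$.

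First I would record that the exponent $e_i:=(h_{\alpha_i},H)\,\bar{\chi}(h_\delta)$ satisfies $|e_i|<c\,\|H\|$, where $c:=\max\{|n|,|N|\}\cdot|\bar{\chi}(h_\delta)|$ is a constant depending only on $\chi$ and the normalized form on $\frak h$; this is immediate from $|(h_{\alpha_i},H)|<\max\{|n|,|N|\}\,\|H\|$. Next, since $0<r<|s_i|<R$, I set $\rho:=\max\{R,\,1/r\}$, so that $\rho>1$ and $1/\rho\le|s_i|\le\rho$; then for any real number $e$ one has $|s_i|^{e}\le\rho^{|e|}$, as one checks by treating the cases $e\ge 0$ and $e<0$ separately. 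Combining these two observations gives $|s_i|^{e_i}\le\rho^{|e_i|}<\rho^{\,c\,\|H\|}=(\rho^{c})^{\|H\|}$, and multiplying over $i=1,\dots,l+1$ yields the assertion with $\mathcal N_3:=\rho^{c}$.

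I do not expect any genuine obstacle; the one point that requires care — and the reason one cannot simply estimate $|s_i|^{e_i}<R^{e_i}$ — is that neither $|s_i|$ nor the exponent $(h_{\alpha_i},H)\bar{\chi}(h_\delta)$ has a controlled sign: the compact set $\mathbf{C}$ need not be contained in $\hat{\mathbb K}$, so $r$ and $R$ need not straddle $1$, and the nonzero linear functional $(h_{\alpha_i},\cdot)$ takes both signs on $B_0$. Replacing $r,R$ by the single symmetric bound $\rho=\max\{R,1/r\}$, and the interval constraint on $e_i$ by the symmetric bound $|e_i|<c\,\|H\|$, is precisely what makes the estimate uniform over all $H\in\frak h_{\mathbb Z}$, equivalently over all $w\in\hat{W}$ (recall that $H$ is the translation part of $w^{-1}=w_1T_H$).
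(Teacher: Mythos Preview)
Your argument is correct and follows essentially the same approach as the paper: the paper sets up the identical two-sided bounds on $(h_{\alpha_i},H)\bar{\chi}(h_\delta)$ and the compactness bounds $r<|s_i|<R$, then declares the conclusion ``straightforward''; you have simply supplied the straightforward details cleanly via the symmetrized constants $\max\{|n|,|N|\}\cdot|\bar{\chi}(h_\delta)|$ and $\max\{R,1/r\}$. One purely notational point: avoid calling your symmetric bound $\rho$, since the paper already reserves $\rho\in\hat{\frak h}^*$ for the element with $\rho(h_{\alpha_i})=1$.
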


\medskip

 Now we collect the results of subsections \ref{subsec-5.2} through \ref{subsec-5.5} and summarize them in a proposition.  Recall that Proposition \ref{prop-three-factors} proved that  $(h \eta^{mD})^{w\bar{\chi}}$ is the product of the factors (i), (ii) and (iii).
By combining this with the results of Lemmas \ref{lem-first}, \ref{lem-N1}, \ref{lem-N2} and \ref{lem-third},  we obtain:

\begin{Lem} \label{lem-upper}
For $w\in\hat{W}$, we write $w^{-1}=w_1 \, T_{H}$, where $w_1 \in W$ and $H \in\frak h_{\mathbb Z}$. We have the following upper bound for the factor $(h \eta^{mD})^{w\bar{\chi}}$:
$$
(h \eta^{mD})^{w\bar{\chi}}  \, < \, \mathcal M^{l+1} \  \mathcal N_1^{\|H\|}  \ \mathcal N_3^{(l+1)\|H\|} \ \mathcal N_2^{\|H\|^2}.
$$
\end{Lem}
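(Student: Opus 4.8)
The plan is simply to assemble the bounds established in the preceding subsections. By Proposition \ref{prop-three-factors}, once we write $w^{-1} = w_1 T_H$ with $w_1 \in W$, $H \in \frak h_{\mathbb Z}$, and set $\sum_{i=1}^{l+1} \mathrm{ord}_\nu(s_{i,\nu}) h_{\alpha_i} = h_\nu + e_\nu h_\delta$ as in subsection \ref{subsec-5.2}, the quantity $(h\eta^{mD})^{w\bar\chi}$ is exactly the product of the three factors (i), (ii), (iii) displayed there. So it suffices to bound each factor and multiply. By the remark following Proposition \ref{prop-three-factors}, this factorization is valid for every $w \in \hat W$, so no element of the affine Weyl group is excluded.

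First I would dispatch factor (i): Lemma \ref{lem-first} gives
\[
\prod_{\nu\in\mathcal V} \left ( \frac{1}{q_\nu} \right )^{\bar{\chi}(w_1h_{\nu} )+e_{\nu} \bar{\chi}(h_{\delta})} = h^{w_1^{-1}\bar\chi} < \mathcal M^{l+1},
\]
with $\mathcal M$ depending only on the fixed compact set $\mathbf C$ and on $\bar\chi$, hence uniformly over $w_1 \in W$. Next, for factor (ii) I would use that $m_\nu = 0$ for all but finitely many $\nu$, so the product over $\mathcal V$ collapses to one over the finite set $S = \{\nu : m_\nu \neq 0\}$, and by \eqref{eqn-sec} it splits as $\prod_{\nu\in S}(1/q_\nu)^{-m_\nu \bar\chi(w_1 H)} \cdot \prod_{\nu\in S}(1/q_\nu)^{-\frac{m_\nu(H,H)}{2}\bar\chi(h_\delta)}$; Lemma \ref{lem-N1} bounds the first piece by $\mathcal N_1^{\|H\|}$ (uniformly in $w_1$ and $H$), and Lemma \ref{lem-N2} identifies the second piece with $\mathcal N_2^{\|H\|^2}$ where $0 < \mathcal N_2 < 1$. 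Finally, Lemma \ref{lem-third} bounds factor (iii) by $\mathcal N_3^{(l+1)\|H\|}$. Multiplying the four estimates yields
\[
(h\eta^{mD})^{w\bar\chi} < \mathcal M^{l+1}\,\mathcal N_1^{\|H\|}\,\mathcal N_3^{(l+1)\|H\|}\,\mathcal N_2^{\|H\|^2},
\]
which is the asserted bound.

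There is no genuine obstacle in this step: it is pure bookkeeping. The only point that deserves attention is that the constants $\mathcal M, \mathcal N_1, \mathcal N_2, \mathcal N_3$ must be independent of $w$, equivalently of the pair $(w_1, H)$ produced by the decomposition $w^{-1} = w_1 T_H$ — and this independence is precisely what was arranged in the statements of Lemmas \ref{lem-first}, \ref{lem-N1}, \ref{lem-N2}, and \ref{lem-third}, since each relied only on the finiteness of $W$, the compactness of $B_0 = \{H \in \frak h : \|H\| = 1\}$, and the fixed compact set $\mathbf C$. I would make this uniformity explicit when writing up the proof so that the bound can be summed over $w \in \hat W$ in the next subsection.
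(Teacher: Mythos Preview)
Your proposal is correct and matches the paper's approach exactly: the paper simply states that by combining Proposition \ref{prop-three-factors} with Lemmas \ref{lem-first}, \ref{lem-N1}, \ref{lem-N2}, and \ref{lem-third} one obtains the stated bound. Your write-up is in fact more detailed than the paper's one-line justification, and your emphasis on the $w$-independence of the constants is a useful observation to include.
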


  In subsection \ref{subsec-5.7}, we wish to bound the constant term by a theta series. To this end, we rewrite
   \[ \mathcal N_1^{\|H\|}  \ \mathcal N_3^{(l+1)\|H\|}= e^{\log(\mathcal N_1 \, \mathcal N_3^{(l+1)})\|H\|} \quad \text{ and } \quad \mathcal N_2^{\|H\|^2} = e^{\log \mathcal N_2 \| H\|^2}. \]
We set $\sigma_1=\log(\mathcal N_1 \, \mathcal N_3^{(l+1)})$ and $\sigma_2= - \log \mathcal N_2$. Since $\mathcal N_2 <1$, we have $\sigma_2>0$.  We state our final result in the following proposition:

\begin{Prop} \label{prop-h-bound}
For $w\in\hat{W}$, suppose $w^{-1}=w_1 \, T_{H}$ where $w_1 \in W$ and $H \in\frak h_{\mathbb Z}$. There exist constants $\mathcal M$, $\sigma_1$, and $\sigma_2$ that do not depend on $w\in\hat{W}$, such that $\sigma_2>0 $ and
\begin{equation}(h \eta^{mD})^{w\bar{\chi}}  \, < \, \mathcal M^{l+1} \ e^{\sigma_1 \|H\| - \sigma_2\|H\|^2}.
\end{equation}
\end{Prop}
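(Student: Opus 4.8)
The plan is to obtain the statement as an immediate consolidation of Lemma~\ref{lem-upper} together with a routine rewriting of powers as exponentials; no new idea is needed. By Lemma~\ref{lem-upper}, for $w \in \hat{W}$ with $w^{-1} = w_1 T_H$ (where $w_1 \in W$ and $H \in \frak h_{\mathbb Z}$) we already have
\[
(h\eta^{mD})^{w\bar{\chi}} < \mathcal M^{l+1}\,\mathcal N_1^{\|H\|}\,\mathcal N_3^{(l+1)\|H\|}\,\mathcal N_2^{\|H\|^2},
\]
where $\mathcal M, \mathcal N_1, \mathcal N_3$ are the positive constants supplied by Lemmas~\ref{lem-first}, \ref{lem-N1} and~\ref{lem-third}, and $\mathcal N_2 \in (0,1)$ is the constant of Lemma~\ref{lem-N2}. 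The first thing I would emphasize is that none of these four constants depends on $w$: $\mathcal M$ depends only on the fixed compact set $\mathbf C$ and on the finite classical Weyl group $W$; $\mathcal N_1$ and $\mathcal N_3$ depend only on the fixed functional $\bar{\chi}$, the compact unit sphere $B_0 \subset \frak h$, the fixed tuple $m$, and $\mathbf C$; and $\mathcal N_2 < 1$ precisely because of the standing assumption $\sum_{\nu}(\log q_\nu)\,m_\nu > 0$ imposed at the start of the section.

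Next I would simply set $\sigma_1 = \log(\mathcal N_1\,\mathcal N_3^{\,l+1})$ and $\sigma_2 = -\log \mathcal N_2$, so that $\mathcal N_1^{\|H\|}\,\mathcal N_3^{(l+1)\|H\|} = e^{\sigma_1\|H\|}$ and $\mathcal N_2^{\|H\|^2} = e^{-\sigma_2\|H\|^2}$. Because $0 < \mathcal N_2 < 1$ we get $\sigma_2 > 0$, which is the one non-formal point of the argument and is exactly the feature needed in subsection~\ref{subsec-5.7} to dominate the constant term by a convergent theta series. Substituting these two identities into the bound above yields
\[
(h\eta^{mD})^{w\bar{\chi}} < \mathcal M^{l+1}\, e^{\sigma_1\|H\| - \sigma_2\|H\|^2},
\]
with $\mathcal M, \sigma_1, \sigma_2$ independent of $w$, which is the assertion.

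I do not expect any genuine obstacle here; the proposition is a bookkeeping step. The only thing that merits a sentence of care is the claim of uniformity in $w$, namely that $w$ enters the preceding estimates only through the finite-group element $w_1$ (controlled by the finiteness of $W$ and the fact that $W$ preserves the compact sphere $B_0$) and through the lattice vector $H$ (controlled by the explicit dependence of every bound on $\|H\|$). Since this uniformity is already embedded in the statements of Lemmas~\ref{lem-first}--\ref{lem-upper}, the proof reduces to the one-line substitution indicated above.
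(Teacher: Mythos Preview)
Your proposal is correct and follows essentially the same approach as the paper: the paper likewise derives the proposition directly from Lemma~\ref{lem-upper} by setting $\sigma_1=\log(\mathcal N_1\,\mathcal N_3^{\,l+1})$ and $\sigma_2=-\log\mathcal N_2$, noting $\sigma_2>0$ since $\mathcal N_2<1$. Your additional remarks on the uniformity in $w$ are accurate and consistent with the paper's treatment.
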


\medskip

\subsection{Bounding the Zeta Functions} \label{subsec-5.6}

   In this subsection, we focus on bounding the $c(\chi,w)$ factor in the constant term $E_{\chi}^{\#}$. Recall the definition:
$$c(\chi,w)= q^{\ell(w)(1-g)} \prod_{a\in\hat{\Delta}_{W,+}\cap w^{-1} \hat{\Delta}_{W,-}}\ \ \frac{\zeta_F(-(\chi+\rho)(h_{a}))}{\zeta_F(-(\chi+\rho)(h_a)+1)}.$$
Standard techniques involving zeta functions establish the following lemma:
\begin{Lem} \label{lem-zeta}
Let $s\in\mathbb C$ and $\varepsilon>0$. Then for all $s$ such that $\mathrm{Re}(s) \geq 1+\varepsilon$ we have $$\left | \ q^{(1-g)}\  \frac{\zeta_F(s)}{\zeta_F(s+1)} \ \right | < M_{\varepsilon},$$
where $M_{\varepsilon}$ is a positive constant.  In particular, we can take $M_{\varepsilon}=q^{(1-g)}(\zeta_F(1+\varepsilon))^2$.
\end{Lem}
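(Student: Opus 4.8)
The plan is to reduce everything to the Euler product $\zeta_F(s)=\prod_{\nu\in\mathcal V}(1-q_\nu^{-s})^{-1}$, which is absolutely convergent on the half-plane $\mathrm{Re}(s)>1$ (see \cite{Ros}, and as recorded earlier in this section). The hypothesis $\mathrm{Re}(s)\ge 1+\varepsilon$ places $s$ in this region, and it places $s+1$ in the region $\mathrm{Re}(s+1)\ge 2+\varepsilon$, so both $\zeta_F(s)$ and $\zeta_F(s+1)^{-1}=\prod_{\nu\in\mathcal V}(1-q_\nu^{-s-1})$ may be manipulated as convergent products. The strategy is then simply to bound $|\zeta_F(s)|$ from above and $|\zeta_F(s+1)^{-1}|$ from above, and multiply.

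For the numerator, I would use the elementary estimate $|1-q_\nu^{-s}|\ge 1-|q_\nu^{-s}|=1-q_\nu^{-\mathrm{Re}(s)}>0$ (positivity is automatic since $q_\nu\ge 2$ and $\mathrm{Re}(s)>0$) to get
$$|\zeta_F(s)|\ \le\ \prod_{\nu\in\mathcal V}\frac{1}{1-q_\nu^{-\mathrm{Re}(s)}}\ =\ \zeta_F(\mathrm{Re}(s))\ \le\ \zeta_F(1+\varepsilon),$$
the last step because each Euler factor $(1-q_\nu^{-t})^{-1}$ is decreasing in the real variable $t>1$, hence $\zeta_F$ is decreasing there. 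For the denominator, $|1-q_\nu^{-s-1}|\le 1+q_\nu^{-\mathrm{Re}(s)-1}\le 1+q_\nu^{-2-\varepsilon}\le (1-q_\nu^{-2-\varepsilon})^{-1}$, where the last inequality is just $1+x\le\frac{1}{1-x}$ for $0<x<1$; therefore
$$\bigl|\zeta_F(s+1)^{-1}\bigr|\ \le\ \prod_{\nu\in\mathcal V}\frac{1}{1-q_\nu^{-2-\varepsilon}}\ =\ \zeta_F(2+\varepsilon)\ <\ \zeta_F(1+\varepsilon),$$
the final strict inequality again being the (strict) monotonicity of $\zeta_F$ on $(1,\infty)$.

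Multiplying the two bounds gives
$$\left|\,q^{(1-g)}\,\frac{\zeta_F(s)}{\zeta_F(s+1)}\,\right|\ =\ q^{(1-g)}\,|\zeta_F(s)|\cdot\bigl|\zeta_F(s+1)^{-1}\bigr|\ <\ q^{(1-g)}\bigl(\zeta_F(1+\varepsilon)\bigr)^{2}\ =\ M_{\varepsilon},$$
and $q^{(1-g)}>0$, so $M_\varepsilon$ is a positive constant, as claimed. There is no serious obstacle here; the only points worth a word of justification are the absolute convergence of the relevant Euler products in the half-planes $\mathrm{Re}(s)>1$ and $\mathrm{Re}(s+1)>1$, which legitimizes the term-by-term manipulations, and the strict monotonicity of $\zeta_F$ on the ray $(1,\infty)$, which is what upgrades the bound to the strict inequality $<M_\varepsilon$.
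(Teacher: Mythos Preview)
Your argument is correct and is exactly the sort of ``standard technique involving zeta functions'' the paper alludes to without giving details: bound $|\zeta_F(s)|$ above by $\zeta_F(\mathrm{Re}(s))\le\zeta_F(1+\varepsilon)$ via the Euler product, bound $|\zeta_F(s+1)^{-1}|$ above by $\zeta_F(2+\varepsilon)<\zeta_F(1+\varepsilon)$, and multiply. The paper gives no proof of its own here, so there is nothing further to compare.
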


\begin{Cor} \label{cor-pos-const}
There exists a positive constant $M_{\varepsilon}$ such that for each $w\in\hat{W}$ we have $$c(\chi,w)  < M_{\varepsilon}^{\ell(w)}.$$
\end{Cor}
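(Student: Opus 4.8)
The plan is to read the bound straight off Lemma \ref{lem-zeta}, after rewriting $c(\chi,w)$ so that the power $q^{\ell(w)(1-g)}$ is distributed over the factors of the product and then checking that a single $\varepsilon>0$, independent of $w$, makes all of those factors simultaneously controlled by Lemma \ref{lem-zeta}. Concretely, since
$$c(\chi,w)= q^{\ell(w)(1-g)} \prod_{a\in\hat{\Delta}_{W,+}\cap w^{-1} \hat{\Delta}_{W,-}}\ \frac{\zeta_F(-(\chi+\rho)(h_{a}))}{\zeta_F(-(\chi+\rho)(h_a)+1)}= \prod_{a\in\hat{\Delta}_{W,+}\cap w^{-1} \hat{\Delta}_{W,-}}\ \left( q^{1-g}\,\frac{\zeta_F(-(\chi+\rho)(h_{a}))}{\zeta_F(-(\chi+\rho)(h_a)+1)} \right),$$
it suffices to show that the product has exactly $\ell(w)$ factors and that every factor is bounded above by one constant $M_\varepsilon$.

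The count of factors is a reuse of the combinatorial description already recalled in Section \ref{section-Cal}: for a reduced expression $v=w_{i_r}\cdots w_{i_1}$ the set $\hat{\Delta}_v=\hat{\Delta}_{W,+}\cap v\,\hat{\Delta}_{W,-}$ equals $\{\beta_1,\dots,\beta_r\}$, so $|\hat{\Delta}_v|=\ell(v)=r$; applied to $v=w^{-1}$ this gives exactly $\ell(w^{-1})=\ell(w)$ indices $a$. For the uniform exponent bound, recall that $\chi$ is real with $\chi(h_{\alpha_i})<-2$ and that $\rho(h_{\alpha_i})=1$, so $-(\chi+\rho)(h_{\alpha_i})=-\chi(h_{\alpha_i})-1>1$ for $i=1,\dots,l+1$; set $1+\varepsilon:=\min_{1\le i\le l+1}\big(-\chi(h_{\alpha_i})-1\big)$, which is $>1$, hence $\varepsilon>0$. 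Now every $a\in\hat{\Delta}_{W,+}$ is a positive real root, so its co-root is a nonzero, nonnegative integral combination of the simple co-roots, $h_a=\sum_{i=1}^{l+1}n_i h_{\alpha_i}$ with $n_i\in\mathbb Z_{\ge0}$ and $\sum_i n_i\ge 1$ (standard structure theory of real roots; see \cite{Kac}). Therefore
$$-(\chi+\rho)(h_a)=\sum_{i=1}^{l+1}n_i\big(-\chi(h_{\alpha_i})-1\big)\ \ge\ (1+\varepsilon)\sum_{i=1}^{l+1}n_i\ \ge\ 1+\varepsilon ,$$
for every $a\in\hat{\Delta}_{W,+}$, with $\varepsilon$ depending only on $\chi$.

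To finish, for each index $a$ of the product put $s=-(\chi+\rho)(h_a)$; then $s$ is real with $\mathrm{Re}(s)=s\ge 1+\varepsilon$, so Lemma \ref{lem-zeta} gives $q^{1-g}\,\zeta_F(s)/\zeta_F(s+1)<M_\varepsilon$ with $M_\varepsilon=q^{1-g}(\zeta_F(1+\varepsilon))^2$, a finite positive constant since $\zeta_F$ converges at $1+\varepsilon$. Multiplying these $\ell(w)$ factors yields $c(\chi,w)<M_\varepsilon^{\ell(w)}$, which is the claim. The only point requiring care is the uniformity established in the previous paragraph: Section \ref{section-Cal} needed only the pointwise inequality $-(\chi+\rho)(h_a)>1$, whereas here one must prevent these values from accumulating at $1$ as $a$ ranges over the infinitely many positive real roots of $\hat{\frak g}^{\,e}$, and that is exactly what the nonnegative-integrality of the co-root $h_a$ provides; the remaining steps are bookkeeping.
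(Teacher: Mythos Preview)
Your proof is correct and follows essentially the same approach as the paper's own proof: both write $h_a=\sum_i n_i h_{\alpha_i}$ with $n_i\in\mathbb Z_{\ge0}$ to obtain a uniform lower bound $-(\chi+\rho)(h_a)\ge 1+\varepsilon$, count the factors as $\ell(w)$, and then apply Lemma~\ref{lem-zeta} factor by factor. Your write-up is more explicit (you give $\varepsilon$ concretely as $\min_i(-\chi(h_{\alpha_i})-1)-1$ and spell out why uniformity is the key point), but the argument is the same.
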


\begin{proof}
For any $a\in\hat{\Delta}_{W,+}$, we have $h_a = \sum_{i=1}^{l+1} k_i h_{\alpha_i}$, where $k_i \in \mathbb Z_{\ge 0}$ and at least one $k_i \neq 0$. Since $\chi(h_{\alpha_i}) <-2$ for $i=1, \dots , l+1$, there exists $\varepsilon >0$ such that  $$-(\chi+\rho)(h_a)\geq1+\varepsilon$$ for any $a\in\hat{\Delta}_{W,+}$. It is known that $\#(\hat{\Delta}_{W,+}\cap w^{-1} \hat{\Delta}_{W,-})= \ell(w)$ (\cite{Kumar}).
 Now the corollary follows from Lemma \ref{lem-zeta}.
 \end{proof}

  Each $w\in\hat{W}$ can be expressed as $w=w_1 T_H$ for $w_1\in W$ and some $H\in\frak h_{\mathbb Z}$, and we have $\ell(w)\le \ell(w_1) +\ell(T_H)$. However, since the classical Weyl group is finite, $\ell(w_1)$ can only be as large as the length of the longest element of $W$. Thus for each $w\in\hat{W}$, $$M_{\varepsilon}^{\ell(w)}\le M_{\varepsilon}^{\ell(w_1)}\ M_{\varepsilon}^{\ell(T_H)}\leq\overline{M_{\varepsilon}}\ M_{\varepsilon}^{\ell(T_H)}.$$
In (8.17) of \cite{R}, we see that there exists a postive constant $\sigma_3$ such that $\ell(T_H)\leq\sigma_3 \|H\|$.  In light of these observations, we have the following proposition:

\begin{Prop} \label{prop-c-bound}
There exists a positive constants $\overline{M_{\varepsilon}}$, $M_{\varepsilon}$, and $\sigma_3$ such that $$c(\chi,w) < \overline{M_{\varepsilon}}\ M_{\varepsilon}^{\sigma_3 \|H\|},$$
for each $w=w_1T_H\in\hat{W}$.
\end{Prop}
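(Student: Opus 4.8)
The plan is to bootstrap Corollary~\ref{cor-pos-const}, which already gives $c(\chi,w) < M_{\varepsilon}^{\ell(w)}$ for every $w\in\hat{W}$, into a bound phrased in terms of $\|H\|$. Since we only care about an upper bound, I would first note that we may replace $M_{\varepsilon}$ by $\max\{M_{\varepsilon},1\}$: this only weakens the inequality $c(\chi,w)<M_{\varepsilon}^{\ell(w)}$, and it guarantees that $t\mapsto M_{\varepsilon}^{t}$ is nondecreasing, so that from now on any upper bound on $\ell(w)$ produces an upper bound on $M_{\varepsilon}^{\ell(w)}$.

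Next, using $\hat{W}=W\ltimes T$, I would write $w=w_1 T_H$ with $w_1\in W$ and $H\in\frak h_{\mathbb Z}$ (the convention $w^{-1}=w_1 T_H$ used elsewhere is interchangeable, as $\ell(w)=\ell(w^{-1})$ and the vector $H$, hence $\|H\|$, is unchanged). Subadditivity of the length function gives $\ell(w)\le\ell(w_1)+\ell(T_H)$. Because $W$ is finite, $\ell(w_1)$ never exceeds the length $\ell(w_0)$ of the longest element $w_0\in W$; consequently $M_{\varepsilon}^{\ell(w_1)}\le M_{\varepsilon}^{\ell(w_0)}=:\overline{M_{\varepsilon}}$, a constant independent of $w$.

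For the translation part, I would quote the estimate (8.17) of \cite{R}: there is a positive constant $\sigma_3$, depending only on the underlying root system, with $\ell(T_H)\le\sigma_3\|H\|$ for all $H\in\frak h_{\mathbb Z}$, whence $M_{\varepsilon}^{\ell(T_H)}\le M_{\varepsilon}^{\sigma_3\|H\|}$. Combining the three inequalities yields
\[
c(\chi,w) < M_{\varepsilon}^{\ell(w)} \le M_{\varepsilon}^{\ell(w_1)}\,M_{\varepsilon}^{\ell(T_H)} \le \overline{M_{\varepsilon}}\,M_{\varepsilon}^{\sigma_3\|H\|},
\]
which is the asserted bound, with $\overline{M_{\varepsilon}}$, $M_{\varepsilon}$ and $\sigma_3$ all independent of $w$.

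There is no substantial obstacle here: the analytic content — uniform boundedness of the ratio $q^{1-g}\zeta_F(s)/\zeta_F(s+1)$ on the half-plane $\mathrm{Re}(s)\ge 1+\varepsilon$, together with the count $\#(\hat{\Delta}_{W,+}\cap w^{-1}\hat{\Delta}_{W,-})=\ell(w)$ — is already packaged in Lemma~\ref{lem-zeta} and Corollary~\ref{cor-pos-const}, and the combinatorial input, the linear growth of $\ell(T_H)$ in $\|H\|$, is taken from \cite{R}. The only points that need a moment of care are the monotonicity normalization (so that bounding $\ell(w)$ from above is the direction compatible with $c(\chi,w)<M_{\varepsilon}^{\ell(w)}$) and checking that the constant coming from $\ell(w_1)$ is genuinely uniform in $w$, which is automatic once it is bounded by the fixed quantity $\ell(w_0)$ attached to the finite group $W$.
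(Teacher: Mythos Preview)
Your argument is correct and matches the paper's own proof essentially step for step: start from Corollary~\ref{cor-pos-const}, use $\ell(w)\le\ell(w_1)+\ell(T_H)$, bound $\ell(w_1)$ by the length of the longest element of the finite Weyl group, and invoke (8.17) of \cite{R} for $\ell(T_H)\le\sigma_3\|H\|$. Your explicit normalization $M_\varepsilon\mapsto\max\{M_\varepsilon,1\}$ to guarantee monotonicity of $t\mapsto M_\varepsilon^t$ is a point the paper leaves implicit, so your write-up is in fact slightly more careful.
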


\subsection{Bounding the Constant Term by a Theta Series} \label{subsec-5.7}

In this final subsection, we prove that if $h$ varies in a compact set of $\hat H_{\mathbb A}$, then the sum
\begin{equation} \sum_{w\in\hat{W}} \, (h\eta^{mD})^{w(\chi+\rho)-\rho} \ c(\chi,w) \end{equation}
is bounded above by a theta series.  Hence the constant term of the Eisenstein $E_{\chi}^{\#}$ is absolutely convergent for these compact sets.

By substituting the results of Propositions \ref{prop-h-bound} and \ref{prop-c-bound}, we see
\begin{eqnarray*}
(h\eta^{mD})^{-\rho } \sum_{w\in\hat{W}} \, (h\eta^{mD})^{w(\chi+\rho)} \, c(\chi,w) &<& (h\eta^{mD})^{-\rho } \sum_{H\in\frak h_{\mathbb Z}} \, \mathcal M^{l+1} \ e^{\sigma_1 \|H\| - \sigma_2\|H\|^2} \ \overline{M_{\varepsilon}}\ M_{\varepsilon}^{\sigma_3 \|H\|} \\
&=&(h\eta^{mD})^{-\rho } \#(W) \mathcal M^{l+1} \ \overline{M_{\varepsilon}} \ \ \sum_{H\in\frak h_{\mathbb Z}} e^ { (\log(M_{\varepsilon})\sigma_3 +\sigma_1)\|H\| -\sigma_2 \|H\|^2},
\end{eqnarray*}
where $\#(W)$ is the cardinality of the finite Weyl group $W$.

It is essential to note that $\sigma_2>0$, so this theta series converges.  As a result of this computation, we have proven the following theorem.

\begin{Thm} \label{thm-conv}  Let $\chi\in\hat{\frak h}^*$ such that $\chi(h_{\alpha_i})<-2$ for $i=1,\dots,l+1$,  and let $$c(\chi,w)=q^{l(w)(1-g)} \prod_{a\in\hat{\Delta}_{W,+}\cap w^{-1} \hat{\Delta}_{W,-}}\ \ \frac{\zeta_F(-(\chi+\rho)(h_{a}))}{\zeta_F(-(\chi+\rho)(h_a)+1)}.$$ Assume that $\sum_\nu (\log q_\nu) \, m_\nu>0$ for $m=(m_\nu)$. Then the infinite series
$$\sum_{w\in\hat{W}} \, (h\eta^{mD})^{w(\chi+\rho)-\rho} \ c(\chi,w)$$
 converges absolutely and uniformly for $h$ varying in any compact set of $\hat H_{\mathbb A}$.
\end{Thm}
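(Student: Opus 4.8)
The plan is to assemble the upper bounds already proved in subsections \ref{subsec-5.2}--\ref{subsec-5.6} and to recognize the resulting majorant as a convergent Gaussian lattice sum over $\frak h_{\mathbb Z}$. First I would factor the general term as
\[
(h\eta^{mD})^{w(\chi+\rho)-\rho}=(h\eta^{mD})^{-\rho}\,(h\eta^{mD})^{w\bar\chi},\qquad \bar\chi:=\chi+\rho,
\]
so that the $w$-independent factor $(h\eta^{mD})^{-\rho}$ comes out of the summation. For $h$ ranging over the fixed compact set $\mathbf C$, Lemma \ref{lem-compact} gives $r<|s_i|<R$ for the id\`ele components of $h$, and since $(\eta^{mD})^{-\rho}=1$ this shows $(h\eta^{mD})^{-\rho}$ stays between two positive constants that depend only on $\mathbf C$; so it suffices to bound $\sum_{w\in\hat W}(h\eta^{mD})^{w\bar\chi}\,c(\chi,w)$ uniformly on $\mathbf C$.

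Next I would reparametrize the sum over $\hat W$ using the semidirect product decomposition $\hat W=W\ltimes T$: writing $w^{-1}=w_1T_H$ with $w_1\in W$ and $H\in\frak h_{\mathbb Z}$, the pair $(w_1,H)$ ranges bijectively over $W\times\frak h_{\mathbb Z}$ as $w$ ranges over $\hat W$. Proposition \ref{prop-h-bound} then yields $(h\eta^{mD})^{w\bar\chi}<\mathcal M^{l+1}\,e^{\sigma_1\|H\|-\sigma_2\|H\|^2}$ with $\sigma_2>0$, and Proposition \ref{prop-c-bound} yields $c(\chi,w)<\overline{M_\varepsilon}\,M_\varepsilon^{\sigma_3\|H\|}$, where all the constants $\mathcal M,\sigma_1,\sigma_2,\sigma_3,M_\varepsilon,\overline{M_\varepsilon}$ were chosen independently of $w$ and depend on $h$ only through $r,R$. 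Multiplying these two bounds and collapsing the $w_1$-summation into a factor $\#(W)$, the target series is dominated on $\mathbf C$ by
\[
\#(W)\,\mathcal M^{l+1}\,\overline{M_\varepsilon}\sum_{H\in\frak h_{\mathbb Z}}e^{(\sigma_1+\sigma_3\log M_\varepsilon)\,\|H\|-\sigma_2\|H\|^2},
\]
up to the bounded factor $(h\eta^{mD})^{-\rho}$.

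Finally I would argue convergence of this majorant: $\frak h_{\mathbb Z}$ is a lattice in the finite-dimensional Euclidean space $(\frak h,\|\cdot\|)$, so $\#\{H\in\frak h_{\mathbb Z}:\|H\|\le t\}$ grows only polynomially in $t$; since $\sigma_2>0$ the quadratic term $-\sigma_2\|H\|^2$ dominates the linear term in the exponent for large $\|H\|$, and the series is a convergent theta-type series. Because every constant in the bound is independent of $h$ and $(h\eta^{mD})^{-\rho}$ is bounded on $\mathbf C$, the convergence is uniform (and absolute, the terms being positive reals) for $h\in\mathbf C$, giving the theorem. The one genuinely essential point — the strict positivity $\sigma_2>0$, which is exactly what makes the majorant a \emph{convergent} Gaussian rather than a divergent one — is precisely where the hypothesis $\sum_\nu(\log q_\nu)\,m_\nu>0$ is used, through Lemma \ref{lem-N2}; everything else in the write-up is bookkeeping of constants and the grouping over $W\times\frak h_{\mathbb Z}$.
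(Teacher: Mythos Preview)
Your proposal is correct and follows essentially the same approach as the paper's own proof: factor out $(h\eta^{mD})^{-\rho}$, apply Propositions \ref{prop-h-bound} and \ref{prop-c-bound}, collapse the $W$-sum to $\#(W)$, and bound by a Gaussian lattice sum over $\frak h_{\mathbb Z}$. If anything, your write-up is slightly more explicit than the paper's in justifying that $(h\eta^{mD})^{-\rho}$ is bounded on $\mathbf C$ and in spelling out why the theta series converges (polynomial lattice-point count versus quadratic decay), and in pinpointing that $\sigma_2>0$ is exactly where the hypothesis on $m$ enters via Lemma \ref{lem-N2}.
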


\begin{Rmk}
Combining the above theorem with (\ref{eqn-const}), Definition \ref{def-constant-term} and Theorem \ref{thm-const},  we have proved the identities
\[ E^{\#}_\chi  (g\eta^{mD}) =
 \int_{\hat{U}_{\mathbb A}/(\hat{U}_{\mathbb A}\cap\hat{\Gamma}_F)}  E_{\chi}(g\eta^{mD} u) \ du = \sum_{w\in\hat{W}} \, (h\eta^{mD})^{w(\chi+\rho)-\rho} \ c(\chi,w) ,\] and established convergence of the constant term $E_\chi^{\#}$.
\end{Rmk}

\vskip 1 cm

\section{Convergence of the Eisenstein Series}{}

In this section, we will use the results of the previous sections to prove the convergence of the Eisenstein series $E_{\chi}$.  Recall that in Section 4, we skipped the proof of the measurability of $E_{\chi}$ with respect to $du$. In the next subsection, we prove this fact.

\subsection{Measurability of the Eisenstein Series}

  The constant term of the Eisenstein series $E_{\chi}$ is defined to be the following integral:
$$E_{\chi}^{\#}(g\eta^{mD})=\int_{\hat{U}_{\mathbb A}/(\hat{U}_{\mathbb A}\cap\hat{\Gamma}_F)} E_{\chi}(g\eta^{mD} u) \ du.$$ It is the purpose of this subsection to prove that the map $u\mapsto E_{\chi}(g\eta^{mD} u)$ is a $du$-measurable function. Observe however, that since $\Phi_{\chi}$ is left invariant by $\hat{\mathbb K}$, and $\eta^{mD}$ normalizes $\hat{U}_{\mathbb A}$, it is enough to show that for a fixed $h\in\hat H_{\mathbb A}$ the map $u\mapsto E_{\chi}(h\eta^{mD} u)$ is a $du$-measurable function.

  As in Section 4, we express the Eisenstein series as the sum $$E_{\chi}(h\eta^{mD} u)=\sum_{w\in\hat{W}} \, E_{\chi, w}(h\eta^{mD} u).$$ In turn, each $E_{\chi, w}(h\eta^{mD} u)$ is also a sum of particular values for $\Phi_{\chi}$.  Recall that we set $\hat{\Gamma}_F(w)= \hat{\Gamma}_F\cap(\hat{B}_F \, w \, \hat{B}_F)$ and defined
\begin{equation}
E_{\chi, w}(h\eta^{mD} u) =  \sum_{\gamma\in\hat{\Gamma}_F(w)/\hat{\Gamma}_F(w)\cap\hat{B}_F} \Phi_{\chi}(h \eta^{mD} u \gamma),
\end{equation}
and we saw in Section 4 that we may take the coset representatives $\gamma$ above to be of the form $\{b w \}$ where $b\in\hat{U}_{w,F}$. Now with respect to our coherently ordered basis, $\hat{U}_{w,F}$ is a finite-dimensional space for each $w\in\hat{W}$, and as such we can choose our coset representatives of $\hat{\Gamma}_F(w)/(\hat{\Gamma}_F(w)\cap\hat{B}_F)$ to come from this finite-dimensional space.

   In Appendix A, we construct the measure $du$ by expressing $\hat{U}_{\mathbb A}/(\hat{U}_{\mathbb A}\cap\hat{\Gamma}_F)$ as a projective limit of compact spaces. Part of this construction is important for this discussion, so we briefly state some definitions from that section.  For the coherently ordered basis $\mathcal B=\{v_{\lambda}, \, v_1, \, \dots\}$, we let $V^{\lambda}_{F_\nu,s}$ denote the $F_\nu$-span of the vectors $\{v_{\lambda}, \ v_1, \dots, \ v_s\}$. By setting $\hat{U}_{\nu,s}=\{u\in\hat{U}_{F_\nu} \ | \ u|_{V^{\lambda}_{F_\nu,s}} \equiv id \}$, we may define the restricted direct product
$$\hat{U}_{\mathbb A,s}={\prod}'  \hat{U}_{\nu,s}\text{ with respect to the subgroup }\hat{U}_{\nu,s}\cap\hat{K}_\nu,$$ and let $$\hat{U}_{\mathbb A}^{(s)} =\hat{U}_{\mathbb A}/\hat{U}_{\mathbb A,s}.$$

  As proved in the appendix, $\hat{U}_{\mathbb A}=\displaystyle{\varprojlim_s}\, \hat{U}_{\mathbb A}^{(s)}$.  Since we may choose our coset representatives $\gamma\in\hat{\Gamma}_F(w)/(\hat{\Gamma}_F(w)\cap\hat{B}_F)$ so that they come from a finite-dimensional space, there exists an $s$ large enough so that for any $u\in\hat{U}_{\mathbb A,s}$ and $\gamma$ as above, we have $u \gamma=\gamma u'$ for some $u'\in \hat{U}_{\mathbb A}$. Then we observe that  \begin{equation} \label{eqn-phi-gamma} \Phi_{\chi}(h\eta^{mD} u \gamma)=\Phi_{\chi}(h\eta^{mD} \gamma u') = \Phi_{\chi}(h\eta^{mD} \gamma). \end{equation}

  Using $\Phi_{\chi}$ and $\gamma\in\hat{\Gamma}_F(w)/(\hat{\Gamma}_F(w)\cap\hat{B}_F)$, we define the function $\psi_{\gamma}$ from $\hat{U}_{\mathbb A}$ to $\mathbb R_{>0}$ by setting
\begin{equation}
\psi_{\gamma}(u)=\Phi_{\chi}(h\eta^{mD} u\gamma).
\end{equation}
By the observation (\ref{eqn-phi-gamma}) in the previous paragraph, $\psi_{\gamma}$ defines a function on the finite dimensional space $\hat{U}_{\mathbb A}/\hat{U}_{\mathbb A,s}$ for $s$ large enough. We will see in Appendix A that we may consider $\hat{U}_{\mathbb A}/\hat{U}_{\mathbb A,s}$ as embedded into the group of upper triangular $(s+1)\times (s+1)$ block matrices with entries from $\mathbb A$. Most importantly, for any $\gamma\in\hat{\Gamma}_F(w)/(\hat{\Gamma}_F(w)\cap\hat{B}_F)$ the function $\psi_{\gamma}$ can be written as a composition of continuous maps and hence measurable on the space $\hat{U}_{\mathbb A}/\hat{U}_{\mathbb A,s}$. Then it follows from the definition of the measure $du$ in Appendix A that the function $\psi_\gamma$ is a measurable function on $\hat{U}_{\mathbb A}/(\hat{U}_{\mathbb A}\cap\hat{\Gamma}_F)$.

   For a fixed $h\in\hat H_{\mathbb A}$ and $w\in\hat{W}$, we define the function $\psi_w$ from $\hat{U}_{\mathbb A}/(\hat{U}_{\mathbb A}\cap\hat{\Gamma}_F)$ to $\mathbb R_{>0}$ by sending $$u\longmapsto E_{\chi, w}(h\eta^{mD} u)=\sum_{\gamma\in\hat{\Gamma}_F(w)/(\hat{\Gamma}_F(w)\cap\hat{B}_F)} \, \psi_{\gamma}(u).$$  The function $\psi_{\gamma}$ is a positive, measurable function for every $\gamma$ above, and so $\psi_w(u)$ is also a measurable function on $\hat{U}_{\mathbb A}/(\hat{\Gamma}_F\cap \hat{U}_{\mathbb A})$. To see this, we view \begin{equation}\psi_w \ = \text{sup} \{\text{finite sums of }\psi_{\gamma}\}, \end{equation} and note that this is measurable.

  Likewise, for a fixed $h\in\hat H_{\mathbb A}$ we can consider $E_{\chi}$ as a function from the quotient $\hat{U}_{\mathbb A}/(\hat{U}_{\mathbb A}\cap\hat{\Gamma}_F)$ to the positive real numbers by sending $u\mapsto E_{\chi}(h\eta^{mD} u)$. Since $E_{\chi}$ can be expressed as the sum over the affine Weyl group of the positive, measurable functions $\psi_w$, we conclude:

\begin{Lem}
For any $h\in\hat H_{\mathbb A}$, the function $E_{\chi}$ is a $du$-measurable function into the positive real numbers.
\end{Lem}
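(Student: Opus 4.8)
The plan is to deduce the lemma from three ingredients that have already been put in place above: the countability of the affine Weyl group $\hat{W}$ and of the relevant coset spaces, the measurability of each individual summand $\psi_\gamma$, and the elementary fact that a countable sum of nonnegative measurable functions is again measurable (with values in $\mathbb R_{>0}\cup\{\infty\}$). Since $\Phi_{\chi}$ is left $\hat{\mathbb K}$-invariant and $\eta^{mD}$ normalizes $\hat{U}_{\mathbb A}$, it suffices, as already observed, to show that for a fixed $h\in\hat H_{\mathbb A}$ the map $u\mapsto E_{\chi}(h\eta^{mD}u)$ is $du$-measurable on $\hat{U}_{\mathbb A}/(\hat{U}_{\mathbb A}\cap\hat{\Gamma}_F)$.

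First I would record that each $\psi_\gamma$ is measurable. Fixing $w\in\hat{W}$, the coset representatives $\gamma$ of $\hat{\Gamma}_F(w)/(\hat{\Gamma}_F(w)\cap\hat{B}_F)$ may be taken of the form $bw$ with $b\in\hat{U}_{w,F}$, which lies in a finite-dimensional space; hence for $s$ large enough $\Phi_{\chi}(h\eta^{mD}u\gamma)$ depends only on the image of $u$ in $\hat{U}_{\mathbb A}/\hat{U}_{\mathbb A,s}$, as in \eqref{eqn-phi-gamma}. On that finite-dimensional quotient, realized inside the group of upper triangular $(s+1)\times(s+1)$ block matrices over $\mathbb A$, the function $\psi_\gamma$ is a composition of continuous maps: matrix multiplication by the fixed element $h\eta^{mD}\gamma$, passage to the $\hat H_{\mathbb A}$-component of an Iwasawa decomposition, and the id\`{e}lic norm raised to the appropriate powers. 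It is therefore measurable on $\hat{U}_{\mathbb A}/\hat{U}_{\mathbb A,s}$, and by the construction of $du$ as the projective limit of the measures on these quotients (Appendix A) it is measurable on $\hat{U}_{\mathbb A}/(\hat{U}_{\mathbb A}\cap\hat{\Gamma}_F)$. Because $\hat{\Gamma}_F(w)/(\hat{\Gamma}_F(w)\cap\hat{B}_F)$ is countable, $\psi_w = E_{\chi,w}(h\eta^{mD}\,\cdot\,)$ is the pointwise supremum of the finite partial sums of the nonnegative measurable functions $\psi_\gamma$, hence measurable; applying the same reasoning to the countable family $\{\psi_w\}_{w\in\hat{W}}$ gives that $E_{\chi}(h\eta^{mD}\,\cdot\,)=\sum_{w\in\hat{W}}\psi_w$ is measurable into $\mathbb R_{>0}\cup\{\infty\}$, which is the assertion.

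The only step carrying any content is the measurability of the single functions $\psi_\gamma$, and this rests squarely on two ingredients supplied elsewhere: the explicit identification of $\hat{U}_{\mathbb A}/\hat{U}_{\mathbb A,s}$ with a matrix group over $\mathbb A$ on which $\Phi_{\chi}(h\eta^{mD}\,\cdot\,\gamma)$ becomes a concrete composition of continuous maps, and the projective-limit construction of $du$ in Appendix A, which guarantees that functions pulled back from the finite levels are $du$-measurable. Everything after that is the routine measure-theoretic fact that countable sums, equivalently suprema of finite sums, of nonnegative measurable functions are measurable, so I anticipate no genuine obstacle in assembling the argument.
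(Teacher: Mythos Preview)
Your proposal is correct and follows essentially the same route as the paper: reduce to fixed $h$, show each $\psi_\gamma$ factors through a finite level $\hat{U}_{\mathbb A}/\hat{U}_{\mathbb A,s}$ where it is a composition of continuous maps, lift measurability via the projective-limit construction of $du$, and then pass to $\psi_w$ and $E_\chi$ by taking suprema of finite partial sums of nonnegative measurable functions. The paper's argument is the same in structure and detail; your explicit mention of countability just makes transparent what the paper leaves implicit.
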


\subsection{Convergence of the Series} \label{subsec-almost}

  The Eisenstein series $E_{\chi}$ on $\hat G_{\mathbb A}\eta^{mD}$ can be considered as the function from $\hat H_{\mathbb A}\times\hat{U}_{\mathbb A}/(\hat{U}_{\mathbb A}\cap\hat{\Gamma}_F)$ to $\mathbb R_{>0}$ defined by
\begin{equation} (h,  u) \ \mapsto \ E_{\chi}(h\,\eta^{mD} \,u).
\end{equation}
Moreover, Theorem \ref{thm-conv} proved that the constant term
 \[ E^{\#}_\chi  (g\eta^{mD}) =
 \int_{\hat{U}_{\mathbb A}/(\hat{U}_{\mathbb A}\cap\hat{\Gamma}_F)}  E_{\chi}(g\eta^{mD} u) \ du = \sum_{w\in\hat{W}} \, (h\eta^{mD})^{w(\chi+\rho)-\rho} \ c(\chi,w) \]
 is absolutely convergent for $h$ varying in compact sets of $\hat H_{\mathbb A}$. However, this also tells us that the Eisenstein series $E_{\chi}$ is integrable with respect to $du$ for $h$ varying in any compact subset of $\hat H_{\mathbb A}$.  Hence the series $E_\chi$ is convergent almost everywhere on $\hat H_{\mathbb A}\times\hat{U}_{\mathbb A}/(\hat{U}_{\mathbb A}\cap\hat{\Gamma}_F)$, since $\hat H_{\mathbb A}$ is locally compact. Moreover, we can prove the following proposition:

\begin{Prop} Let $\chi\in\hat{\frak h}^*$ be a real character such that $\chi(h_{\alpha_i})<-2$ for $i=1,\dots,l+1$, and let $m=(m_\nu)_{\nu\in\mathcal V}$ be a tuple  such that $m_\nu \in \mathbb Z_{\ge 0}$ and $0<\sum_\nu m_\nu <\infty$.  Then the series $E_{\chi}(h\eta^{mD} u)$ (absolutely) converges to a positive real number for all $(h,u)\in\hat H_{\mathbb A}\times\hat{U}_{\mathbb A}/(\hat{U}_{\mathbb A}\cap\hat{\Gamma}_F)$.
\end{Prop}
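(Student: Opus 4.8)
The plan is to deduce the statement in two moves: first almost‑everywhere convergence from the finiteness of the constant term already obtained, and then an upgrade from ``almost every'' to ``every'' point by exploiting the transitive action of $\hat U_{\mathbb A}$ on the compact quotient $X:=\hat U_{\mathbb A}/(\hat U_{\mathbb A}\cap\hat\Gamma_F)$ together with a quasi‑invariance property of $\Phi_\chi$. For the a.e.\ part, Theorem~\ref{thm-conv} and the identity \eqref{eqn-const} give, for every single $h$ (take the compact set in Theorem~\ref{thm-conv} to be $\{h\}$),
\[
\int_{X} E_\chi(h\eta^{mD}u)\,du \;=\; E^{\#}_\chi(h\eta^{mD}) \;=\; \sum_{w\in\hat W}(h\eta^{mD})^{w(\chi+\rho)-\rho}\,c(\chi,w)\;<\;\infty .
\]
Since $du$ is a probability measure on the compact $X$ and $u\mapsto E_\chi(h\eta^{mD}u)$ is non‑negative and measurable (as shown in the previous subsection), for each fixed $h$ the set $G_h:=\{\,u\in X:E_\chi(h\eta^{mD}u)<\infty\,\}$ has full measure; and as $du$ is $\hat U_{\mathbb A}$‑invariant and $\hat U_{\mathbb A}$ acts transitively on $X$, every nonempty open subset of $X$ has positive measure, so $G_h$ is dense in $X$.

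The key point is a quasi‑invariance estimate. Fix $h_0\in\hat H_{\mathbb A}$. Since $\eta^{mD}$ and $h_0$ each normalize $\hat U_{\mathbb A}$, for $v\in\hat U_{\mathbb A}$ one has $h_0\eta^{mD}v=v''\,h_0\eta^{mD}$ with $v'':=h_0\eta^{mD}v(\eta^{mD})^{-1}h_0^{-1}\in\hat U_{\mathbb A}$, and as $v$ ranges over the compact open subgroup $\hat U_{\mathbb A}\cap\hat{\mathbb K}=\prod_\nu(\hat U_\nu\cap\hat K_\nu)$ the element $v''$ ranges over a fixed compact set $\mathcal V\subset\hat U_{\mathbb A}$. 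I would then show there is a constant $C>0$ with $C^{-1}\Phi_\chi(y)\le\Phi_\chi(v''y)\le C\,\Phi_\chi(y)$ for all $v''\in\mathcal V$ and $y\in\hat G_{\mathbb A}$: writing $y=khu$ in its Iwasawa decomposition and using right $\hat U_{\mathbb A}$‑invariance of $\Phi_\chi$ and the fact that $\hat H_{\mathbb A}$ normalizes $\hat U_{\mathbb A}$ gives $\Phi_\chi(v''y)=\Phi_\chi(v''k)\,\Phi_\chi(y)$, and $v''k$ ranges over the compact set $\mathcal V\,\hat{\mathbb K}\subset\hat G_{\mathbb A}$, on which the positive function $\Phi_\chi$ is bounded above and below. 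Applying this termwise in the series of Definition~\ref{def-Eis} yields, for every $v\in\hat U_{\mathbb A}\cap\hat{\mathbb K}$ and $u\in X$,
\[
C^{-1}\,E_\chi(h_0\eta^{mD}u)\;\le\;E_\chi(h_0\eta^{mD}vu)\;\le\;C\,E_\chi(h_0\eta^{mD}u).
\]

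To conclude, let $(h_0,u_0)$ be arbitrary. The orbit map $\hat U_{\mathbb A}\to X$, $v\mapsto vu_0$, is open, so the image of $\hat U_{\mathbb A}\cap\hat{\mathbb K}$ is an open neighbourhood of $u_0$; by the density of $G_{h_0}$ it contains a point $u_1=vu_0$ with $v\in\hat U_{\mathbb A}\cap\hat{\mathbb K}$ and $E_\chi(h_0\eta^{mD}u_1)<\infty$, whence $E_\chi(h_0\eta^{mD}u_0)\le C\,E_\chi(h_0\eta^{mD}u_1)<\infty$ by the estimate above. Since $\chi$ is real every term of the series is a positive real number, and the coset of the identity contributes $\Phi_\chi(h_0\eta^{mD}u_0)=h_0^{\chi}>0$, so the series converges absolutely to a positive real number for every $(h_0,u_0)\in\hat H_{\mathbb A}\times X$.

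I expect the main obstacle to be the quasi‑invariance estimate of the second paragraph, i.e.\ showing that $\Phi_\chi$ is bounded above and below on compact subsets of $\hat G_{\mathbb A}$ --- equivalently, controlling the $\hat H_{\mathbb A}$‑component of the (non‑unique, cf.\ Remark~\ref{rmk-I} and Corollary~\ref{cor-I}) Iwasawa decomposition of $v''k$ uniformly over $k\in\hat{\mathbb K}$ in the infinite‑dimensional Kac--Moody setting. Reducing this to the rank‑one case via the homomorphisms $\varphi_a$ of Lemma~\ref{lem-hom} and the weight‑space structure of $V^{\lambda}$ --- for instance by expressing $\Phi_\chi$ through the norms of $g\,v_{\Lambda_i}$ for the fundamental representations and using that $v''k$ and its inverse have uniformly bounded operator norm on the Chevalley lattice --- will be the technical core.
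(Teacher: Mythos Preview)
Your two–step strategy (almost everywhere convergence from the integrability of $E_\chi$ against $du$, followed by an upgrade to everywhere via left translation by elements of $\hat U_{\mathbb A}$) is exactly the route the paper takes. The difference lies in how the upgrade is carried out.

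The paper avoids your quasi-invariance estimate entirely. Instead of taking all $v\in\hat U_{\mathbb A}\cap\hat{\mathbb K}$ and arguing that the conjugates $v''=h\eta^{mD}v(\eta^{mD})^{-1}h^{-1}$ lie in some compact set on which $\Phi_\chi$ is bounded above and below, the paper produces a \emph{smaller} positive-measure set $U'\subset\hat U_{\mathbb A}$ with the sharper property that $v''\in\hat U_{\mathbb A}\cap\hat{\mathbb K}$ for every $v\in U'$. Then left $\hat{\mathbb K}$-invariance of $\Phi_\chi$ gives the exact equality $E_\chi(h\eta^{mD}vu)=E_\chi(h\eta^{mD}u)$ term by term, and the contradiction with almost-everywhere finiteness is immediate. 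The construction of $U'$ is elementary: conjugation by $\eta^{m_\nu D}$ sends $\chi_\alpha(\sum a_nX^n)$ to $\chi_\alpha(\sum a_n\pi_\nu^{nm_\nu}X^n)$, and conjugation by $h_{\alpha_i}(s_{i,\nu})$ scales by $s_{i,\nu}^{\alpha(h_{\alpha_i})}$; since $m_\nu\ge 0$ for all $\nu$ and $\mathrm{ord}_\nu(s_{i,\nu})=0$ for almost all $\nu$, one can write down explicit congruence conditions on the coefficients of $v$ (at the finitely many bad places) that force the conjugate into $\hat{\mathbb K}$, and these conditions cut out a set of positive measure.

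So the obstacle you flag --- controlling $\Phi_\chi$ uniformly on compacts of the infinite-dimensional group $\hat G_{\mathbb A}$ --- is genuine, but the paper's sharper choice of translating set makes it disappear: one never needs to bound $\Phi_\chi$ on anything larger than $\hat{\mathbb K}$ itself, where it is identically~$1$. Your approach would likely go through as well (your multiplicativity identity $\Phi_\chi(v''y)=\Phi_\chi(v''k)\,\Phi_\chi(y)$ is correct, and the representation-theoretic reduction you sketch is reasonable), but it is strictly more work than necessary.
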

\begin{proof}
The remarks in the previous paragraph tell us that the series $E_{\chi}(h\eta^{mD} u)$ converges for all $(h,u)\in\hat H_{\mathbb A}\times\hat{U}_{\mathbb A}/(\hat{U}_{\mathbb A}\cap\hat{\Gamma}_F)$ off a set of measure zero. Assume that $E_{\chi}(h\eta^{mD} u)= \infty$ for some $(h,u)$.
We claim that there exists a subset $U' \subset \hat{U}_{\mathbb A}$ of positive measure such that $  h \eta^{mD} u' (h \eta^{mD})^{-1} \in \hat{U}_{\mathbb A} \cap \hat{\mathbb K}$ for all $u' \in U'$. If the claim is true, we will have
 \[ E_{\chi}(h\eta^{mD} u' u) =  E_\chi(h \eta^{mD} u) = \infty \] for all $u' \in U'$. Since the set $ U' u$ has positive measure, it is a contradiction.

Now we prove the claim. We write $u' =(u'_\nu)$ and
\[ u'_\nu = \prod_{\alpha \in \Delta_+} \chi_\alpha (\sigma_\alpha) \prod_{i=1}^l h_{\alpha_i}(\sigma_i) \prod_{\alpha \in \Delta_-} \chi_\alpha (\sigma'_\alpha), \] where $\sigma_\alpha \in F_\nu[[X]]$, $\sigma'_\alpha \in X F_\nu[[X]]$ and $\sigma_i \in F_\nu[[X]]$ with $\sigma_i \equiv 1$ (mod $X$). It follows from direct computation that if $\sigma_\alpha = a_0 + a_1 X + a_2 X^2 + \cdots$,\  then  $\eta^{m_\nu D} \chi_\alpha(\sigma_\alpha) \eta^{-m_\nu D}= \chi_\alpha(\tau_\alpha)$, where $\tau_\alpha = a_0 + a_1 \pi_\nu^{m_\nu} X + a_2 \pi_\nu^{2m_\nu}X^2 + \cdots$. We also write $h=(h_\nu)_{\nu\in\mathcal V}$ and $h_\nu=\prod_{i=1}^{l+1} h_{\alpha_i}(s_{i,\nu})$. Then we obtain $h_{\alpha_i}(s_{i,\nu}) \chi_\alpha (\sigma_\alpha) h_{\alpha_i}(s_{i,\nu})^{-1} = \chi_\alpha(s_{i,\nu}^{\alpha(h_{\alpha_i})} \sigma_\alpha)$ by (13.10) of \cite{LG1}. Recall that $m_\nu \ge 0$ for all $\nu$ and $\mathrm{ord}_\nu(s_{i,\nu})=0$ for almost all $\nu$.  Because of these conditions on $m_\nu$ and $\mathrm{ord}_\nu(s_{i,\nu})$, it is straightforward to construct the set $U'$ of the claim.

\end{proof}

  Having established this important proposition, the following theorem is a simple consequence of the dominated convergence theorem and the fact that $E_{\mathrm{Re}(\chi)}$ dominates $E_{\chi}$ for any complex character $\chi$.

\begin{Thm} \label{thm-main-conv}
For a complex-valued $\chi\in\hat{\frak h}^*$ such that $\mathrm{Re}(\chi)(h_{\alpha_i})<-2$ for $i=1,\dots, l+1$, and for a tuple  $m=(m_\nu)_{\nu\in\mathcal V}$ such that $m_\nu \in \mathbb Z_{\ge 0}$ and $0<\sum_\nu m_\nu <\infty$, the infinite series
$$E_{\chi}(h\eta^{mD} u)=\sum_{\gamma \in \hat{\Gamma}_F/\hat{\Gamma}_F\cap\hat{B}_F} \ \Phi_{\chi}(h\eta^{mD} u \gamma)$$
absolutely converges for all $(h,u)\in\hat H_{\mathbb A}\times\hat{U}_{\mathbb A}/(\hat{U}_{\mathbb A}\cap\hat{\Gamma}_F)$.
\end{Thm}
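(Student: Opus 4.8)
The plan is to derive the complex-valued case directly from the real-valued case established in the Proposition immediately above, via a pointwise domination of the individual terms of the series.

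The first step is the elementary modulus identity
\[ |\Phi_{\chi}(g)| = \Phi_{\mathrm{Re}(\chi)}(g) \qquad \text{for all } g \in \hat G_{\mathbb A}, \]
which follows at once from the definition of $\Phi_{\chi}$ through the Iwasawa decomposition: writing $g = khu$ with $h = \prod_{i=1}^{l+1} h_{\alpha_i}(s_i)$, one has $\Phi_{\chi}(g) = h^{\chi} = \prod_{i=1}^{l+1} |s_i|^{\chi(h_{\alpha_i})}$, and since each $|s_i|$ is a positive real number, passing to absolute values retains only the real part of the exponents, yielding $\prod_{i=1}^{l+1} |s_i|^{\mathrm{Re}(\chi)(h_{\alpha_i})} = \Phi_{\mathrm{Re}(\chi)}(g)$. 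Here one uses that $\Phi_{\chi}$ is well-defined for a complex linear functional $\chi$ by exactly the argument of Proposition \ref{prop-well}, and that $\mathrm{Re}(\chi) \in \hat{\frak h}^*$ still satisfies $\mathrm{Re}(\chi)(h_{\alpha_i}) < -2$, so the hypotheses of the preceding Proposition are met. The same identity holds with $g$ replaced by $h\eta^{mD}u\gamma$ for any coset representative $\gamma$, since $\Phi_{\chi}$ is right $(\hat{\Gamma}_F \cap \hat{B}_F)$-invariant by Lemma \ref{lem-123}, so the sum over $\hat{\Gamma}_F/(\hat{\Gamma}_F \cap \hat{B}_F)$ is unambiguous.

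The second step is the comparison. Summing the modulus identity over a set of representatives $\gamma$ for $\hat{\Gamma}_F/(\hat{\Gamma}_F \cap \hat{B}_F)$ gives
\[ \sum_{\gamma} |\Phi_{\chi}(h\eta^{mD}u\gamma)| = \sum_{\gamma} \Phi_{\mathrm{Re}(\chi)}(h\eta^{mD}u\gamma) = E_{\mathrm{Re}(\chi)}(h\eta^{mD}u), \]
and by the Proposition established above the right-hand side is a finite positive real number for every $(h,u) \in \hat H_{\mathbb A} \times \hat{U}_{\mathbb A}/(\hat{U}_{\mathbb A} \cap \hat{\Gamma}_F)$. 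Hence the series defining $E_{\chi}(h\eta^{mD}u)$ converges absolutely for all such $(h,u)$; equivalently, one may invoke the dominated convergence theorem for the series regarded as an integral against the counting measure on the coset space, with dominating function $\gamma \mapsto \Phi_{\mathrm{Re}(\chi)}(h\eta^{mD}u\gamma)$, which also guarantees that $E_{\chi}(h\eta^{mD}u)$ is a well-defined complex number and that the rearrangements used earlier (the grouping by $w \in \hat{W}$ and the interchange with the integral over $\hat{U}_{\mathbb A}/(\hat{U}_{\mathbb A}\cap\hat{\Gamma}_F)$) are legitimate.

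I do not expect a genuine obstacle at this stage: the analytic heart of the matter --- the constant-term computation of Theorem \ref{thm-const}, the theta-series bound of Theorem \ref{thm-conv}, and the everywhere (not merely almost-everywhere) convergence of the real series in the preceding Proposition --- has already been carried out, and the present theorem is a formal corollary. The only points requiring any care are verifying the modulus identity in full and confirming well-definedness of $\Phi_{\chi}$ for complex $\chi$, both of which are immediate from the lemmas already in hand.
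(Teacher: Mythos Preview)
Your proposal is correct and follows essentially the same approach as the paper: the paper's own proof is the single sentence that the theorem ``is a simple consequence of the dominated convergence theorem and the fact that $E_{\mathrm{Re}(\chi)}$ dominates $E_{\chi}$ for any complex character $\chi$,'' and you have merely unpacked this by writing out the modulus identity $|\Phi_{\chi}(g)| = \Phi_{\mathrm{Re}(\chi)}(g)$ and the term-by-term comparison explicitly.
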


\vskip 1 cm

\section{Functional Equations for the Constant Term}

  In this section, we will establish meromorphic continuation of the constant term of the Eisenstein series and prove their functional equations. We will begin by stating some results for the zeta function of the global function field $F$.

\subsection{Background on the Zeta Function of a Function Field}

  We refer the reader to \cite{Ros} for specifics on the definition of the zeta function associated to a global function field $F$. The following result from \cite{Ros} describes the functional equation for $\zeta_F(s)$.

\begin{Thm} Let $F$ be a global function field in one variable over a finite constant field $\mathbb{F}_q$. Suppose $F$ is of genus $g$. Then there exists a polynomial $L_F(u)\in \mathbb Z[u]$ of degree $2g$ such that
\begin{equation} \label{eqn-zeta} \zeta_F(s)= \frac{L_F(q^{-s})}{(1-q^{-s})(1-q^{1-s})},\end{equation}
for $\mathrm{Re}(s)>1$.  Moreover, (\ref{eqn-zeta}) provides an analytic continuation of $\zeta_F$ to the complex plane.  If we set $\xi_F(s)=q^{(g-1)s}\zeta_F(s)$, then we have the functional equation
\begin{equation} \xi_F(s)=\xi_F(1-s).\end{equation}
\end{Thm}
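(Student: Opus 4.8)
The last statement is the functional equation for $\zeta_F$, which is a classical and standard result (indeed it is attributed to \cite{Ros}). My plan is therefore to reproduce the standard argument via the Weil zeta function of the curve associated to $F$, since the proof is not original to this paper and the authors are merely quoting it. First I would recall that $\zeta_F(s)$ can be written as the Euler product $\prod_\nu (1 - q_\nu^{-s})^{-1}$ over the places $\nu$ of $F$, and that grouping places by the degree $\deg \nu = [\mathcal O_\nu/\mathcal P_\nu : \mathbb F_q]$ gives the generating-function form $Z_F(u) = \prod_\nu (1-u^{\deg \nu})^{-1}$ with $u = q^{-s}$, so that $\zeta_F(s) = Z_F(q^{-s})$. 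The key input is that, counting effective divisors of each degree $n$, one has $Z_F(u) = \sum_{n\ge 0} b_n u^n$ where $b_n = \#\{\text{effective divisors of degree } n\}$.

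The heart of the matter is the Riemann–Roch theorem for the function field $F$ of genus $g$: for a divisor $\mathfrak d$ of degree $n$, $\ell(\mathfrak d) - \ell(\mathfrak K - \mathfrak d) = n - g + 1$, where $\mathfrak K$ is a canonical divisor of degree $2g-2$. The next step is to split the sum $\sum_n b_n u^n$ at $n = 2g-1$: for $n \geq 2g-1$ Riemann–Roch gives $\ell(\mathfrak d) = n-g+1$ exactly, so the number of effective divisors in a fixed divisor class of degree $n$ is $(q^{n-g+1}-1)/(q-1)$, and summing over the $h$ divisor classes of degree $n$ (where $h$ is the class number) yields a closed form; the tail $\sum_{n\ge 2g-1} b_n u^n$ then sums to a rational function with denominator $(1-u)(1-qu)$. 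The finitely many terms with $0 \le n \le 2g-2$ contribute a polynomial. Combining, one obtains
$$Z_F(u) = \frac{L_F(u)}{(1-u)(1-qu)}$$
with $L_F(u) \in \mathbb Z[u]$ of degree $2g$ and $L_F(0)=1$; translating back via $u = q^{-s}$ gives \eqref{eqn-zeta} and, since the right-hand side is a rational function of $q^{-s}$, the meromorphic continuation to all of $\mathbb C$.

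For the functional equation, the plan is to exploit the symmetry $\mathfrak d \mapsto \mathfrak K - \mathfrak d$ in the divisor-counting sum together with the Riemann–Roch formula $\ell(\mathfrak d) - \ell(\mathfrak K - \mathfrak d) = \deg\mathfrak d - g + 1$. Carrying this symmetry through the partial-fraction computation above shows that $L_F$ satisfies $L_F(u) = q^g u^{2g} L_F(1/(qu))$, equivalently $u^{2g-1}(1-u)(1-qu)$ times the appropriate substitution is preserved. Setting $\xi_F(s) = q^{(g-1)s}\zeta_F(s)$ and substituting $\eqref{eqn-zeta}$, a direct check turns $L_F(u) = q^g u^{2g} L_F(1/(qu))$ into $\xi_F(s) = \xi_F(1-s)$; the denominator $(1-q^{-s})(1-q^{1-s})$ is manifestly invariant under $s \mapsto 1-s$ up to the factor absorbed by $q^{(g-1)s}$, so the whole verification is a short algebraic manipulation once the functional equation for $L_F$ is in hand. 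The main obstacle — really the only nontrivial point — is establishing the functional equation for $L_F$, i.e.\ deriving $L_F(u) = q^g u^{2g} L_F(1/(qu))$ from Riemann–Roch; everything else is bookkeeping with geometric series. Since this is entirely classical, in the paper I would simply cite \cite{Ros} for the full details rather than reproduce the computation.
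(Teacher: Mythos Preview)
Your proposal is correct and matches the paper exactly: the paper does not prove this theorem at all but simply states it as a quoted result from \cite{Ros}, just as you anticipated in your final sentence. Your sketch of the classical Riemann--Roch argument is the standard one found in Rosen and is sound, but the paper itself provides no proof whatsoever.
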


  Using the functional equation for $\xi_F(s)$, we prove the following lemma:

\begin{Lem}  \label{lem-product-zeta} Let $\zeta_F(s)$ be the zeta function associated to $F$, and $\xi_F(s)=q^{(g-1)s}\zeta_F(s)$ its completed form. Then the following identities hold:
\[ \frac{\zeta_F(s)}{\zeta_F(1-s)}=q^{(2s-1)(1-g)}, \quad \frac{\zeta_F(-s)}{\zeta_F(1+s)}=q^{(-2s-1)(1-g)}, \quad \text{and} \quad \frac{\zeta_F(s)}{\zeta_F(1-s)} \frac{\zeta_F(-s)}{\zeta_F(1+s)}=q^{-2(1-g)}. \]
\end{Lem}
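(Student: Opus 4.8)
The plan is to derive all three identities directly from the functional equation $\xi_F(s)=\xi_F(1-s)$, where $\xi_F(s)=q^{(g-1)s}\zeta_F(s)$. For the first identity, I would write out both sides of the functional equation in terms of $\zeta_F$: namely $q^{(g-1)s}\zeta_F(s)=q^{(g-1)(1-s)}\zeta_F(1-s)$, and then solve for the ratio $\zeta_F(s)/\zeta_F(1-s)$, obtaining $q^{(g-1)(1-s)-(g-1)s}=q^{(g-1)(1-2s)}=q^{(2s-1)(1-g)}$. This is the whole content; it is a one-line manipulation of exponents.

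For the second identity, I would apply the first identity with $s$ replaced by $-s$: this gives $\zeta_F(-s)/\zeta_F(1-(-s))=\zeta_F(-s)/\zeta_F(1+s)=q^{(2(-s)-1)(1-g)}=q^{(-2s-1)(1-g)}$, exactly as claimed. The only thing to check is that the substitution is legitimate, which is immediate since the analytic continuation of $\zeta_F$ (hence the functional equation) is valid on the whole complex plane away from its poles.

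For the third identity, I would simply multiply the first two identities together: $\frac{\zeta_F(s)}{\zeta_F(1-s)}\cdot\frac{\zeta_F(-s)}{\zeta_F(1+s)}=q^{(2s-1)(1-g)}\cdot q^{(-2s-1)(1-g)}=q^{((2s-1)+(-2s-1))(1-g)}=q^{-2(1-g)}$, where the $2s$ terms cancel in the exponent.

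There is really no main obstacle here — each step is an elementary manipulation of the exponential functional equation, and the only subtlety worth a remark is that one must work with the meromorphically continued $\zeta_F$ so that the quotients make sense as identities of meromorphic functions (the zeros and poles of numerator and denominator are accounted for automatically by the functional equation). I would present the argument in the three short steps above without belaboring the exponent arithmetic.
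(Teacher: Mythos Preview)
Your proposal is correct and follows essentially the same approach as the paper: both derive the ratios directly from the functional equation $\xi_F(s)=\xi_F(1-s)$ by substituting $\zeta_F(s)=q^{s(1-g)}\xi_F(s)$ and simplifying exponents, then obtain the third identity as the product of the first two. The only cosmetic difference is that the paper works out the second identity as its displayed example and says the first is analogous, whereas you do the first identity and then specialize $s\mapsto -s$; the content is identical.
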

\begin{proof}
These computations follow by replacing $\zeta_F(s)$ with $q^{s(1-g)}\xi_F(s)$ and using the functional equation of $\xi_F(s)$. For example, we consider calculation for the second identity:
\begin{eqnarray*}
\frac{\zeta_F(-s)}{\zeta_F(1+s)} &=& \frac{\zeta_F(-s)}{\zeta_F(1-(-s))}= \frac{q^{-s(1-g)} \ \xi_F(-s)}{q^{(1+s)(1-g)} \ \xi_F(1-(-s))}\\
&=& \frac{q^{-s(1-g)}}{q^{(1+s)(1-g)}} = q^{(-2s-1)(1-g)}.
\end{eqnarray*}
The first identity follows in the same manner, and third one is simply the product of the other two identities.
\end{proof}

\subsection{Meromorphic Continuation}
We first note that the function $c(\chi, w)$ is well-defined as meromorphic function for any $\chi \in (\hat {\mathfrak h}^{e}_{\mathbb C})^*$. We assume that
\begin{equation} \label{eqn-cond-1} \mathrm{Re} (\chi +\rho)(h_\delta)  <0.\end{equation} Since $\rho(h_\delta) = h^\vee$, the dual Coxeter number, this assumption is equivalent to \begin{equation} \label{eqn-cond-2}  \mathrm{Re}\  \chi (h_\delta)< - h^\vee. \end{equation}
We fix $\varepsilon >0$, and set
\[ C_\varepsilon = \{ \chi \in (\hat {\mathfrak h}^{e}_{\mathbb C})^* \,|\,  \mathrm{Re} (\chi +\rho)(h_\delta)  < - \varepsilon \}. \]
Recall that if $a \in \hat \Delta_{W,+}$, we have $h_a = h_\alpha + m h_\delta$ for some $\alpha \in \Delta$ and $m \in \mathbb Z_{\ge 0}$.
Thus there exists a finite set $\Xi_{\varepsilon} \subset \hat \Delta_{W,+}$ such that $\mathrm{Re} \, (\chi+\rho) (h_a) < -1$ for all $a \in \hat \Delta_{W,+} \setminus \Xi_{\varepsilon} $ and for any $\chi \in C_\varepsilon$.

Let $\mathcal F \subset C_\varepsilon$ be the set of all $\chi \in C_\varepsilon$ such that the function $\frac {\zeta_F(- (\chi+ \rho)(h_a))}{\zeta_F(- (\chi+ \rho)(h_a) +1)}$ has a pole  for some $a \in \Xi_\varepsilon$. Then $\mathcal F$ is contained in the union of a countable, locally finite family of hyperplanes.

\begin{Lem} \label{lem-var}
Suppose that $\mathbf B$ is a bounded open subset of $(\hat {\mathfrak h}^{e}_{\mathbb C})^*$ whose closure is contained in the set $C_\varepsilon$. Assume that  the function $\frac {\zeta_F(- (\chi+ \rho)(h_a))}{\zeta_F(- (\chi+ \rho)(h_a) +1)}$ has no pole for any $\chi \in \mathbf B$ and for any $a \in \Xi_\varepsilon$. Then there are positive constants $M'_\varepsilon$ and $M_\varepsilon$ such that  for each $w\in\hat{W}$ we have $$|c(\chi,w)|  < M'_\varepsilon M_{\varepsilon}^{\ell(w)}.$$
\end{Lem}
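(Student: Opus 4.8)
The plan is to mimic the strategy already used in Corollary \ref{cor-pos-const}, replacing the uniform bound $M_\varepsilon$ with two separate bounds: one handling the "tail" roots $a \in \hat\Delta_{W,+} \setminus \Xi_\varepsilon$ exactly as before, and one absorbing the finitely many "bad" factors coming from $a \in \Xi_\varepsilon$ into a single constant $M'_\varepsilon$. First, recall from the definition
\[
c(\chi,w) = q^{\ell(w)(1-g)} \prod_{a \in \hat\Delta_{W,+} \cap w^{-1}\hat\Delta_{W,-}} \frac{\zeta_F(-(\chi+\rho)(h_a))}{\zeta_F(-(\chi+\rho)(h_a)+1)},
\]
and split the index set of the product into $A_1(w) = (\hat\Delta_{W,+} \cap w^{-1}\hat\Delta_{W,-}) \cap \Xi_\varepsilon$ and $A_2(w) = (\hat\Delta_{W,+} \cap w^{-1}\hat\Delta_{W,-}) \setminus \Xi_\varepsilon$. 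Distribute the factor $q^{\ell(w)(1-g)}$ as $\prod_{a \in A_1(w) \cup A_2(w)} q^{(1-g)}$, so that $|c(\chi,w)|$ becomes a product over $a$ of the local terms $\bigl| q^{(1-g)} \zeta_F(-(\chi+\rho)(h_a)) / \zeta_F(-(\chi+\rho)(h_a)+1)\bigr|$.

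For $a \in A_2(w)$, by the defining property of $\Xi_\varepsilon$ we have $\mathrm{Re}\,(\chi+\rho)(h_a) < -1$, i.e. $\mathrm{Re}\bigl(-(\chi+\rho)(h_a)\bigr) > 1$; moreover since $\mathbf B$ has compact closure in $C_\varepsilon$, one gets $\mathrm{Re}\bigl(-(\chi+\rho)(h_a)\bigr) \geq 1 + \varepsilon'$ uniformly for some $\varepsilon' > 0$ depending only on $\mathbf B$ and $\varepsilon$ (here the key point is that $h_a = h_\alpha + m h_\delta$ with $m \geq 0$ forces $\mathrm{Re}\,(\chi+\rho)(h_a)$ to be bounded away from $-1$ once $\mathrm{Re}\,(\chi+\rho)(h_\delta) \leq -\varepsilon$). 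Then Lemma \ref{lem-zeta} applies and gives a uniform bound $M_\varepsilon$ (depending only on $\varepsilon'$, hence on $\mathbf B$) for each such local term. Since $\#A_2(w) \leq \#(\hat\Delta_{W,+} \cap w^{-1}\hat\Delta_{W,-}) = \ell(w)$, the contribution of all $a \in A_2(w)$ is at most $M_\varepsilon^{\ell(w)}$.

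For $a \in A_1(w) \subseteq \Xi_\varepsilon$, the hypothesis of the lemma is precisely that $\zeta_F(-(\chi+\rho)(h_a))/\zeta_F(-(\chi+\rho)(h_a)+1)$ has no pole for $\chi \in \mathbf B$, and since $\mathbf B$ is bounded with closure in $C_\varepsilon$, the closure $\overline{\mathbf B}$ is compact; a meromorphic function with no poles on a compact set is bounded there, so each of these $\#\Xi_\varepsilon$ many factors is bounded by some constant, and their product over $a \in A_1(w) \subseteq \Xi_\varepsilon$ (a fixed finite set, independent of $w$) is bounded by a single constant $M'_\varepsilon$ depending only on $\varepsilon$ and $\mathbf B$. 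Multiplying the two estimates yields $|c(\chi,w)| < M'_\varepsilon M_\varepsilon^{\ell(w)}$ for all $w \in \hat W$ and all $\chi \in \mathbf B$, as desired. The main technical point — really the only one requiring care — is verifying that $\mathrm{Re}\bigl(-(\chi+\rho)(h_a)\bigr)$ is bounded below by $1 + \varepsilon'$ \emph{uniformly} over $a \in \hat\Delta_{W,+} \setminus \Xi_\varepsilon$ and over $\chi \in \mathbf B$; this follows from writing $h_a = h_\alpha + m h_\delta$, noting $\mathrm{Re}\,(\chi+\rho)(h_\delta) < -\varepsilon$ on $C_\varepsilon$ and $m \geq 0$, together with the finiteness of the set of $\alpha \in \Delta$ and the boundedness of $(\chi+\rho)(h_\alpha)$ on $\overline{\mathbf B}$, so that the contribution $m\,\mathrm{Re}\,(\chi+\rho)(h_\delta)$ dominates once $m$ is large, and only finitely many pairs $(\alpha,m)$ — precisely those in $\Xi_\varepsilon$ — are excluded.
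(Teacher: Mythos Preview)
Your argument is correct and is precisely the ``slight modification of the proof of Corollary \ref{cor-pos-const}'' that the paper invokes: split the product over $\hat\Delta_{W,+}\cap w^{-1}\hat\Delta_{W,-}$ into the part outside $\Xi_\varepsilon$ (handled by Lemma \ref{lem-zeta} exactly as in Corollary \ref{cor-pos-const}) and the finite, $w$-independent part inside $\Xi_\varepsilon$ (bounded by compactness once poles are excluded). The only cosmetic point is that your boundedness step for $a\in\Xi_\varepsilon$ uses compactness of $\overline{\mathbf B}$, so strictly speaking you want the no-pole hypothesis on $\overline{\mathbf B}$ rather than $\mathbf B$; this is harmless, since in the application (Theorem \ref{thm-mero-const}) one chooses $\overline{\mathbf B}\subset C_\varepsilon\setminus\mathcal F$.
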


\begin{proof}
Since $\Xi_\varepsilon$ is a finite set and independent of $w$, this lemma is proved by a slight modification of the proof of Corollary \ref{cor-pos-const}.
\end{proof}

\begin{Thm} \label{thm-mero-const}
The constant term $E_{\chi}^{\#}(g\eta^{mD})=\sum_{w\in\hat{W}} \ (h\eta^{mD})^{w(\chi+\rho)-\rho} \ c(\chi, w)$ is a meromorphic function for $\chi \in (\hat {\mathfrak h}^{e}_{\mathbb C})^*$ satisfying
 $\mathrm{Re} (\chi +\rho)(h_\delta)  <0$, or equivalently $\mathrm{Re}\,  \chi (h_\delta)< - h^\vee$.
 \end{Thm}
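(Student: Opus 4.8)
The plan is to show that, after multiplying by a single polynomial that clears finitely many ``zeta denominators,'' the series $\sum_{w\in\hat W}(h\eta^{mD})^{w(\chi+\rho)-\rho}c(\chi,w)$ converges locally uniformly to a holomorphic function, whence $E^{\#}_\chi$ is meromorphic on the region $\mathrm{Re}(\chi+\rho)(h_\delta)<0=\bigcup_{\varepsilon>0}C_\varepsilon$. Since meromorphy is a local property and $\varepsilon>0$ is arbitrary, it suffices to fix $\varepsilon>0$, a point $\chi_0\in C_\varepsilon$, and a compact set $\mathbf C\subset\hat H_{\mathbb A}$ (as in Lemma~\ref{lem-compact}), and to exhibit a neighborhood of $\chi_0$ on which the series is a convergent holomorphic series divided by a polynomial. (Throughout I keep the hypotheses under which Section~5 was carried out, in particular $\sum_\nu(\log q_\nu)m_\nu>0$.) The first step is to upgrade Proposition~\ref{prop-h-bound} to complex $\chi$: since $|t^z|=t^{\mathrm{Re}\,z}$ for $t\in\mathbb R_{>0}$, one has $|(h\eta^{mD})^{w(\chi+\rho)-\rho}|=(h\eta^{mD})^{w(\mathrm{Re}(\chi)+\rho)-\rho}$, and the computations of subsections~\ref{subsec-5.2}--\ref{subsec-5.5} apply verbatim with $\bar\chi$ replaced by $\mathrm{Re}(\chi)+\rho$. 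The only place the condition $\chi(h_{\alpha_i})<-2$ was genuinely used was to obtain $\bar\chi(h_\delta)<0$ in Lemma~\ref{lem-N2}, which is exactly the defining inequality of $C_\varepsilon$; letting $\mathrm{Re}(\chi)$ vary over a set with closure in $C_\varepsilon$ and $h$ over $\mathbf C$, the constants $\mathcal M,\sigma_1$ of Proposition~\ref{prop-h-bound} may be taken uniform while $\sigma_2$ stays bounded below by a positive constant. Thus, writing $w^{-1}=w_1T_H$, we get $|(h\eta^{mD})^{w(\chi+\rho)-\rho}|<\mathcal M'\,e^{\sigma_1\|H\|-\sigma_2\|H\|^2}$ with $\sigma_2>0$, uniformly in $\chi$ and $h$.

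Second, I would clear the finitely many ``bad'' factors of $c(\chi,w)$. Choose a bounded open box $\mathbf B\ni\chi_0$ with $\overline{\mathbf B}\subset C_\varepsilon$; by local finiteness, only finitely many hyperplanes $\mathcal H_1,\dots,\mathcal H_N$ of the polar locus $\mathcal F$ meet $\overline{\mathbf B}$, and I fix affine-linear forms $\ell_j$ vanishing on $\mathcal H_j$, normalized so that $|\ell_j|\le 1$ on $\overline{\mathbf B}$. The key observation is a bound on pole orders that is uniform in $w$: a given $\mathcal H_j$ is a polar hyperplane of $\chi\mapsto\zeta_F(-(\chi+\rho)(h_a))/\zeta_F(-(\chi+\rho)(h_a)+1)$ for at most $\#\Xi_\varepsilon$ values of $a\in\Xi_\varepsilon$ (those whose coroot direction matches that of $\ell_j$) and with order at most $2g$ (a pole of $\zeta_F$ is simple, a zero of $\zeta_F$ has multiplicity $\le 2g$ by \eqref{eqn-zeta}), so $c(\chi,w)$ has a pole of order at most $P_0:=(\#\Xi_\varepsilon)\max(1,2g)$ along each $\mathcal H_j$, independently of $w$. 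Hence $\bigl(\prod_{j=1}^N\ell_j^{P_0}\bigr)c(\chi,w)$ is holomorphic on $\mathbf B$ for every $w$, and a slight modification of Lemma~\ref{lem-var}/Corollary~\ref{cor-pos-const} — the factors with $a\in\Xi_\varepsilon$ become continuous, hence bounded, on $\overline{\mathbf B}$ after clearing, while those with $a\notin\Xi_\varepsilon$ are estimated exactly as there because $\mathrm{Re}(-(\chi+\rho)(h_a))>1$ on $C_\varepsilon$ — yields constants $M'_\varepsilon,M_\varepsilon>0$ with $\bigl|\bigl(\prod_{j}\ell_j^{P_0}\bigr)c(\chi,w)\bigr|<M'_\varepsilon M_\varepsilon^{\ell(w)}$ for all $\chi\in\mathbf B$ and $w\in\hat W$.

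Finally, combining the two estimates and using $\ell(w)=\ell(w^{-1})\le\overline{\ell}+\sigma_3\|H\|$ (with $\overline{\ell}$ the length of the longest element of $W$ and $\sigma_3$ as in the proof of Proposition~\ref{prop-c-bound}), the series $\sum_{w\in\hat W}\bigl(\prod_{j=1}^N\ell_j^{P_0}\bigr)(h\eta^{mD})^{w(\chi+\rho)-\rho}c(\chi,w)$ is dominated, uniformly for $\chi\in\mathbf B$ and $h\in\mathbf C$, by a convergent theta series $C\sum_{H\in\frak h_{\mathbb Z}}e^{\sigma\|H\|-\sigma_2\|H\|^2}$ with $\sigma_2>0$ (cf.\ subsection~\ref{subsec-5.7}). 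Each summand is holomorphic on $\mathbf B$, since $(h\eta^{mD})^{w(\chi+\rho)-\rho}$ is entire in $\chi$ (its exponents are affine-linear in $\chi$), so the sum is a holomorphic function $G$ on $\mathbf B$ by the Weierstrass theorem; and on $\mathbf B\setminus\mathcal F$, where the series $\sum_w(h\eta^{mD})^{w(\chi+\rho)-\rho}c(\chi,w)$ converges absolutely (by the same estimates) and $\prod_j\ell_j^{P_0}$ does not vanish, we obtain $E^{\#}_\chi=G/\prod_{j=1}^N\ell_j^{P_0}$, which is meromorphic on $\mathbf B$. Covering $C_\varepsilon$ by such boxes and letting $\varepsilon\downarrow 0$ establishes meromorphy on $\{\chi\in(\hat{\mathfrak h}^{e}_{\mathbb C})^*:\mathrm{Re}(\chi+\rho)(h_\delta)<0\}$. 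The main obstacle is precisely this meromorphic (rather than merely holomorphic) continuation across $\mathcal F$: infinitely many $w$ produce a pole along one and the same hyperplane, and one must rule out an accumulation of pole orders — this is exactly what the $w$-independent bound $P_0$ secures, so that the single clearing polynomial $\prod_j\ell_j^{P_0}$ works for the entire series at once.
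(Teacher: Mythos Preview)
Your proof is correct and follows the same overall strategy as the paper: replace the real estimates of Section~5 by their complex analogues (taking $\mathrm{Re}(\chi+\rho)$ in place of $\chi+\rho$), invoke Lemma~\ref{lem-var} in place of Corollary~\ref{cor-pos-const}, and bound the series by the same theta series as in subsection~\ref{subsec-5.7}. The paper's proof stops at this point, establishing locally uniform convergence on compact subsets of $C_\varepsilon\setminus\mathcal F$ and simply declaring ``the theorem follows.'' You go one step further and make the passage from \emph{holomorphy off $\mathcal F$} to \emph{meromorphy on $C_\varepsilon$} explicit: because only the factors with $a\in\Xi_\varepsilon$ can be singular and $\Xi_\varepsilon$ is a finite set independent of $w$, the pole order of $c(\chi,w)$ along any hyperplane is bounded by a constant $P_0$ uniform in $w$, so a single clearing polynomial $\prod_j\ell_j^{P_0}$ works for the entire series at once. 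This is precisely the missing justification in the paper's argument (holomorphy outside a locally finite union of hyperplanes does not by itself give meromorphy), so your version is in fact more complete. One minor quibble: you say the condition $\chi(h_{\alpha_i})<-2$ was used only in Lemma~\ref{lem-N2}, but Lemma~\ref{lem-third} also uses $\bar\chi(h_\delta)<0$; this does not affect your argument, since that too is guaranteed on $C_\varepsilon$.
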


\begin{proof}
Assume that $\mathbf B$ is a bounded open subset of $(\hat {\mathfrak h}^{e}_{\mathbb C})^*$ whose closure is contained in the set $C_\varepsilon \setminus \mathcal F$, where the set $\mathcal F$ is defined above. Consider a
compact subset $\mathbf C$ of $\hat H_{\mathbb A}$. If we replace Corollary \ref{cor-pos-const} with Lemma \ref{lem-var}, all the other arguments in the proof of Theorem \ref{thm-conv} remain valid to prove that
the infinite sum $\sum_{w\in\hat{W}} \ (h\eta^{mD})^{w(\chi+\rho)-\rho} \ c(\chi, w)$ converges uniformly and absolutely as $\chi$ varies over $\mathbf B$ and $h$ varies over $\mathbf C$. Since $\varepsilon$ was arbitrary, the theorem follows.
  \end{proof}

\subsection{Proving Functional Equations}

  We wish to prove functional equations for the constant term of the Eisenstein series $E_{\chi}$ on $\hat G_{\mathbb A}$. The following property of the function $c(\chi, w)$ is essential:
\begin{Prop} \label{prop-cocycle}
 With $c(\chi,w)$ defined as above and $w, w'\in\hat{W}$, we have \begin{equation} c(\chi, w w')=  c(w' \circ \chi,w) \, c(\chi, w'),\end{equation} where $w \circ \chi $ is the usual shifted action of $\hat W$ on $\hat{\frak h}^*$, i.e. $w \circ \chi = w(\chi +\rho)-\rho$.
\end{Prop}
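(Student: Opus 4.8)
The plan is to prove the identity by induction on $\ell(w')$, reducing at each stage to the case of a single simple reflection, with Lemma~\ref{lem-product-zeta} (the functional equation of $\zeta_F$) entering exactly once, in that reflection case. Since each $c(\chi,w)$ is, via $\zeta_F(s)=L_F(q^{-s})/((1-q^{-s})(1-q^{1-s}))$, a rational expression in $q$ and in the quantities $q^{\,(\chi+\rho)(h_a)}$, every step below is an identity of meromorphic functions and needs no convergence assumption on $\chi$. First I would settle the \emph{length-additive case}: if $\ell(ww')=\ell(w)+\ell(w')$, the identity holds. Write $\Lambda(v)=\hat\Delta_{W,+}\cap v^{-1}\hat\Delta_{W,-}$, so that $\#\Lambda(v)=\ell(v)$ (\cite{Kumar}) and $c(\chi,v)=q^{\ell(v)(1-g)}\prod_{a\in\Lambda(v)}\zeta_F(-(\chi+\rho)(h_a))/\zeta_F(-(\chi+\rho)(h_a)+1)$. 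Under the length hypothesis one has the disjoint decomposition $\Lambda(ww')=\Lambda(w')\sqcup (w')^{-1}\Lambda(w)\subseteq\hat\Delta_{W,+}$. Splitting the product defining $c(\chi,ww')$ along this decomposition, the factors indexed by $\Lambda(w')$, together with $q^{\ell(w')(1-g)}$, reconstitute $c(\chi,w')$; for $a=(w')^{-1}b$ with $b\in\Lambda(w)$, the coroot transformation $h_{(w')^{-1}b}=(w')^{-1}h_b$ and the identity $(\chi+\rho)((w')^{-1}h_b)=(w'(\chi+\rho))(h_b)=((w'\circ\chi)+\rho)(h_b)$ show that these factors, together with $q^{\ell(w)(1-g)}$, reconstitute $c(w'\circ\chi,w)$; and $q^{\ell(ww')(1-g)}=q^{\ell(w)(1-g)}q^{\ell(w')(1-g)}$. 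This gives the length-additive case formally.

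Next I would treat the step of removing a single simple reflection: for every $w$, every $i$, and every $\chi$, $c(\chi,ww_i)=c(w_i\circ\chi,w)\,c(\chi,w_i)$. If $\ell(ww_i)=\ell(w)+1$ this is the length-additive case with $w'=w_i$. If $\ell(ww_i)=\ell(w)-1$, set $v=ww_i$; then $\ell(vw_i)=\ell(v)+1$, so the length-additive case gives $c(\chi',w)=c(w_i\circ\chi',ww_i)\,c(\chi',w_i)$ for all $\chi'$, and substituting $\chi'=w_i\circ\chi$ (an involution, since $\circ$ is a left action) yields $c(\chi,ww_i)=c(w_i\circ\chi,w)\,c(w_i\circ\chi,w_i)^{-1}$. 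Thus this whole case rests on the ``unitarity'' relation $c(\chi,w_i)\,c(w_i\circ\chi,w_i)=1$. Putting $s=-(\chi+\rho)(h_{\alpha_i})$ and using $w_ih_{\alpha_i}=-h_{\alpha_i}$, one computes $c(\chi,w_i)=q^{1-g}\,\zeta_F(s)/\zeta_F(s+1)$ and $c(w_i\circ\chi,w_i)=q^{1-g}\,\zeta_F(-s)/\zeta_F(1-s)$, so their product is $q^{2(1-g)}\,\tfrac{\zeta_F(s)}{\zeta_F(1-s)}\,\tfrac{\zeta_F(-s)}{\zeta_F(s+1)}=q^{2(1-g)}\cdot q^{-2(1-g)}=1$ by the third identity of Lemma~\ref{lem-product-zeta}. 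This is the heart of the argument and the only place where the arithmetic of function fields is genuinely used.

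Finally I would assemble the general statement by induction on $\ell(w')$, the case $w'=e$ being trivial. For $\ell(w')\ge 1$, fix a reduced expression and write $w'=v'w_i$ with $\ell(v')=\ell(w')-1$, so $ww'=(wv')w_i$. The reflection step applied to $(wv',w_i)$ gives $c(\chi,ww')=c(w_i\circ\chi,wv')\,c(\chi,w_i)$; the inductive hypothesis applied to $(w,v')$ at the character $w_i\circ\chi$ gives $c(w_i\circ\chi,wv')=c\big(v'\circ(w_i\circ\chi),w\big)\,c(w_i\circ\chi,v')=c(w'\circ\chi,w)\,c(w_i\circ\chi,v')$, using that $\circ$ is a left action; and the reflection step applied to $(v',w_i)$ identifies $c(w_i\circ\chi,v')\,c(\chi,w_i)$ with $c(\chi,v'w_i)=c(\chi,w')$. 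Multiplying the three relations yields $c(\chi,ww')=c(w'\circ\chi,w)\,c(\chi,w')$. The main obstacle is really just the unitarity relation of the reflection step; everything else is bookkeeping with inversion sets, the transformation $h_{wa}=w\,h_a$, the composition law for the shifted action, and the standard identity $\#(\hat\Delta_{W,+}\cap w^{-1}\hat\Delta_{W,-})=\ell(w)$.
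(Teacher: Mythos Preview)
Your proof is correct and follows essentially the same strategy as the paper's: reduce to the simple-reflection step by induction on $\ell(w')$, handle the length-additive sub-case by the inversion-set decomposition, and reduce the length-decreasing sub-case to the unitarity relation $c(\chi,w_i)\,c(w_i\circ\chi,w_i)=1$, which is dispatched via Lemma~\ref{lem-product-zeta}. The only organizational difference is that you state and prove the length-additive case for arbitrary $w'$ up front (using $\Lambda(ww')=\Lambda(w')\sqcup(w')^{-1}\Lambda(w)$), whereas the paper writes it out only for $w'=w_i$; since the induction only ever invokes the reflection step, this extra generality is not actually used, and the two arguments are the same in substance.
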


  We will prove this proposition at the end of this section. First, we show that this proposition leads to the following functional equation for the constant term of the Eisenstein series.

\begin{Thm}
Assume that $\chi \in (\hat {\mathfrak h}^{e}_{\mathbb C})^*$ satisfies the condition $\mathrm{Re}\,  \chi (h_\delta)< - h^\vee$. Then we have $\mathrm{Re}\,  (w \circ \chi) (h_\delta)< - h^\vee$ and
\begin{equation} E_{\chi}^{\#}(g\eta^{mD})=c(\chi, w) \, E^{\#}_{w \circ\chi}(g\eta^{mD}),
\end{equation}for any $w\in\hat{W}$.
\end{Thm}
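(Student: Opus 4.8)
The plan is to deduce the functional equation from the explicit formula $E_\chi^\#(g\eta^{mD}) = \sum_{w'\in\hat W}(h\eta^{mD})^{w'(\chi+\rho)-\rho}\,c(\chi,w')$ of Theorem~\ref{thm-const} (valid on the region of Theorem~\ref{thm-mero-const} as an identity of meromorphic functions), the cocycle identity of Proposition~\ref{prop-cocycle}, and meromorphic continuation, by the familiar Langlands-style reindexing of the Weyl sum. So the statement is essentially formal once those three inputs are granted.

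First I would record the two elementary facts that make the relevant region stable under the shifted action. Since $\hat W = W\ltimes T$ and, as observed in Subsection~\ref{subsec-5.2}, both the classical Weyl group and every translation $T_H$ fix the imaginary coroot $h_\delta$, each $w\in\hat W$ satisfies $w^{-1}h_\delta = h_\delta$. Together with $\rho(h_\delta)=h^\vee$ this gives
\[ (w\circ\chi)(h_\delta) = (\chi+\rho)(w^{-1}h_\delta) - \rho(h_\delta) = (\chi+\rho)(h_\delta) - h^\vee = \chi(h_\delta), \]
so $\mathrm{Re}\,(w\circ\chi)(h_\delta) = \mathrm{Re}\,\chi(h_\delta) < -h^\vee$, which is the first assertion, and the half-space $D = \{\chi\in(\hat {\mathfrak h}^{e}_{\mathbb C})^* : \mathrm{Re}\,\chi(h_\delta) < -h^\vee\}$ satisfies $w\circ D = D$. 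By Theorem~\ref{thm-mero-const} the map $\chi\mapsto E_\chi^\#(g\eta^{mD})$ is meromorphic on $D$; composing with the biholomorphism $\chi\mapsto w\circ\chi$ of $D$ and multiplying by the everywhere-meromorphic $c(\chi,w)$, the map $\chi\mapsto c(\chi,w)\,E_{w\circ\chi}^\#(g\eta^{mD})$ is meromorphic on $D$ as well. Since $D$ is connected (indeed convex), it suffices to verify the asserted identity on a non-empty open subset of $D$.

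Next I would fix a generic $\chi_0\in D$ lying outside the countable, locally finite union of hyperplanes on which $c(\cdot,w)$, the functions $c(w\circ\cdot,w'')$, or the Weyl sums defining $E_\chi^\#$ and $E_{w\circ\chi}^\#$ fail to be holomorphic; by Theorem~\ref{thm-mero-const} all these sums converge absolutely and uniformly on a neighborhood of $\chi_0$. Writing $g=khu$ in Iwasawa form and using $w\circ\chi+\rho = w(\chi+\rho)$, I would compute
\begin{align*}
c(\chi,w)\,E_{w\circ\chi}^\#(g\eta^{mD})
&= c(\chi,w)\sum_{w''\in\hat W}(h\eta^{mD})^{w''(w\circ\chi+\rho)-\rho}\,c(w\circ\chi,w'')\\
&= \sum_{w''\in\hat W}(h\eta^{mD})^{(w''w)(\chi+\rho)-\rho}\,c(w\circ\chi,w'')\,c(\chi,w)\\
&= \sum_{w''\in\hat W}(h\eta^{mD})^{(w''w)(\chi+\rho)-\rho}\,c(\chi,w''w),
\end{align*}
where absolute convergence justifies moving the constant $c(\chi,w)$ into the sum, and the last equality is Proposition~\ref{prop-cocycle} with the pair $(w'',w)$ substituted for $(w,w')$. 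Reindexing by the bijection $w'=w''w$ of $\hat W$ (again legitimate by absolute convergence) turns the right-hand side into $\sum_{w'\in\hat W}(h\eta^{mD})^{w'(\chi+\rho)-\rho}\,c(\chi,w') = E_\chi^\#(g\eta^{mD})$. This proves the identity for $\chi$ near $\chi_0$, hence on all of $D$ by meromorphic continuation, and for all $g$.

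The main obstacle is not this bookkeeping but having enough room to run it: $w\circ\chi$ generally leaves the cone $\{\mathrm{Re}\,\chi(h_{\alpha_i})<-2\}$ in which the series was originally summed, so one genuinely needs the continuation of $E_\chi^\#$ to the larger $\hat W$-stable region $D$ together with absolute convergence of \emph{both} Weyl sums near a generic point of $D$. That is exactly what Theorem~\ref{thm-mero-const} supplies, its proof resting on the bound of Lemma~\ref{lem-var} for $c(\chi,w)$ and on the fact---already apparent in Theorem~\ref{thm-conv}---that convergence of the Weyl sum is governed solely by the coordinate $(\chi+\rho)(h_\delta)$, which is precisely the $\hat W$-shift-invariant one. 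The remaining input, Proposition~\ref{prop-cocycle}, is established separately (at the end of the section); its proof uses the functional equation of $\zeta_F$ recorded in Lemma~\ref{lem-product-zeta} to absorb cancellations, together with the decomposition of $\hat\Delta_{W,+}\cap(ww')^{-1}\hat\Delta_{W,-}$ induced by a factorization of $ww'$ and the tracking of the prefactor $q^{\ell(w)(1-g)}$.
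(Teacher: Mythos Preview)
Your argument is correct and follows essentially the same route as the paper: both proofs reindex the Weyl sum via the substitution $\bar w = w'' w$ and apply the cocycle identity of Proposition~\ref{prop-cocycle} to factor out $c(\chi,w)$. The only minor differences are that you prove the $\hat W$-invariance of $\chi(h_\delta)$ in one stroke via $w^{-1}h_\delta=h_\delta$, whereas the paper checks it for simple reflections and inducts implicitly, and that you are more explicit than the paper about invoking absolute convergence and meromorphic continuation to justify the formal manipulations.
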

\begin{proof} For the first assertion, we only need to consider the simple reflections $w_i \in \hat W$. We have \[ (w_i \circ \chi)(h_\delta) = ( w_i(\chi +\rho)-\rho)(h_\delta) = (\chi - (\chi + \rho)(h_i) \alpha_i)(h_\delta) = \chi (h_\delta) .\] Since we assumed $\mathrm{Re}\, \chi(h_\delta)<-h^\vee$, the the first assertion follows.

Now we fix an arbitrary element $w$ of the affine Weyl group. Then for any $\bar w\in\hat{W}$ we set $w'=\bar w w^{-1}$ so that $\bar w=w' w$. Now by Proposition \ref{prop-cocycle}, we have
\begin{eqnarray*}
E_{\chi}^{\#}(g\eta^{mD}) &=& \sum_{\bar w\in\hat{W}} (h\eta^{mD})^{\bar w \circ {\chi}} \, c(\chi,\bar w) = \sum_{w'\in\hat{W}} (h\eta^{mD})^{(w' w) \circ {\chi}} \ c(\chi,w' w)\\
&=&  \sum_{w'\in\hat{W}} (h\eta^{mD})^{w' \circ (w\circ{\chi})} c(w \circ \chi,w') \ c(\chi, w) \\
&=&  c(\chi, w)\,  \sum_{w'\in\hat{W}} (h\eta^{mD})^{w' \circ (w\circ{\chi})}  c(w \circ \chi,w') =  c(\chi, w) \ E^{\sharp}_{w\circ \chi}(g\eta^{mD}).
\end{eqnarray*}
\end{proof}

   In order to establish this functional equation for $E_{\chi}^{\#}$, it suffices to prove Proposition \ref{prop-cocycle}.
\begin{proof}[Proof of Proposition \ref{prop-cocycle}]

  Recall that we want to show that for any $w,w'\in\hat{W}$, we have $$ c(\chi, w w')= c(w' \circ \chi,w) \, c(\chi, w').$$
 We proceed by an induction argument, and first consider the following cases.

\underline{Case 1:} Suppose that $w_i$ is a simple reflection, $w\in\hat{W}$ satisfies $$\ell(w w_i)=1 +\ell(w),$$ and $w^{-1}=w_{i_r}\dots w_{i_1}$ is a reduced expression. Then since $\hat{\Delta}_{W,+}\cap w^{-1} \hat{\Delta}_{W,-}=\hat{\Delta}_{w^{-1}}$ the discussion in Section \ref{sub-cal-local} shows $$\hat{\Delta}_{W,+}\cap w^{-1} \hat{\Delta}_{W,-}=\{\beta_1, \dots \beta_r\},$$ where $\beta_j = w_{i_r}\dots w_{i_j+1}\alpha_{i_j}$.  If we let $\bar{w}=w w_i$, then we can see $$\hat{\Delta}_{W,+}\cap \bar{w}^{-1}\hat{\Delta}_{W,-}= \{w_i\beta_1,\, \dots, \, w_i\beta_r\} \cup \{ \alpha_i\}.$$
Now we have
\begin{eqnarray*}
c(\chi,\bar{w}) &=& q^{(1+l(w))(1-g)} \ \prod_{\alpha\in\hat{\Delta}_{W,+}\cap \bar{w}^{-1} \hat{\Delta}_{W,-}} \frac{\zeta_F(-(\chi +\rho)(h_{\alpha})}{\zeta_F(-(\chi +\rho)(h_{\alpha}) +1)} \\
&=& q^{(1-g)} \  \frac{\zeta_F(-(\chi +\rho)(h_{\alpha_i})}{\zeta_F(-(\chi +\rho)(h_{\alpha_i}) +1)} \ \ q^{l(w)(1-g)}  \prod_{\alpha\in\hat{\Delta}_{W,+}\cap w^{-1} \hat{\Delta}_{W,-}} \frac{\zeta_F(-(\chi +\rho)(h_{w_i \alpha})}{\zeta_F(-(\chi +\rho)(h_{w_i\alpha}) +1)} \\
&=&   c(\chi, w_i)\, c(w_i \circ \chi, w),
\end{eqnarray*}
since $(\chi + \rho)(h_{w_i \alpha}) = w_i (\chi +\rho)(h_{\alpha}) = (w_i \circ \chi + \rho)(h_\alpha)$.
This is our desired result, so we move on to our second case.

\underline{Case 2:} Suppose $w_i$ is a simple reflection and $w\in\hat{W}$ such that $\ell(w w_i)=\ell(w)-1$. If this is the case, then $w$ has a reduced expression $w=w_{i_1}\dots w_{i_r}$ where $w_{i_r}=w_i$. If we set $w'=w_{i_1}\dots w_{i_{r-1}}$, then $w=w'w_i$ and $w'=ww_i$, and $\ell(w'w_i)=1+\ell(w')$. As such, we can apply the result of Case 1 to this situation with $\tilde{\chi}=w_i \circ \chi$ and we see
$$c(\tilde \chi, w)=c(\tilde \chi, w'w_i)= c(w_i \circ \tilde  \chi, w' ) \, c(\tilde \chi, w_i) =  c( \chi, w w_i ) \, c( \tilde \chi, w_i).$$
So by solving for the factor $ c(\chi, w w_i )$, we see that $$c({\chi}, w w_i )= c(w_i \circ  {\chi},w)\, c(w_i \circ {\chi}, w_i)^{-1}.$$ In order to prove our result for this case, it suffices to show that   \begin{equation} c({\chi}, w_i) \, c(w_i \circ{\chi},w_i) =1.\end{equation} Observe that
\begin{eqnarray*}
c({\chi}, w_i) \, c(w_i \circ{\chi},w_i) &=& q^{2(1-g)}  \, \frac{\zeta_F(- (\chi + \rho)(h_{\alpha_i}))}{\zeta_F(-(\chi + \rho)(h_{\alpha_i})+1)} \ \frac{\zeta_F(-(w_i \circ \chi + \rho)(h_{\alpha_i}))}{\zeta_F(-(w_i \circ \chi + \rho)(h_{\alpha_i})+1)}  \\
&=& \ q^{2(1-g)}  \frac{\zeta_F(-(\chi + \rho)(h_{\alpha_i}))}{\zeta_F(-(\chi + \rho)(h_{\alpha_i})+1)}   \, \frac{\zeta_F((\chi + \rho)(h_{\alpha_i}))}{\zeta_F((\chi + \rho)(h_{\alpha_i})+1)} .
\end{eqnarray*}
In Lemma \ref{lem-product-zeta}, we calculated the value of this product of zeta functions, so we conclude that
$$ c({\chi}, w_i) c(w_i \circ {\chi},w_i) =  q^{2(1-g)} \ q^{-2(1-g)}=1.$$
As a result, we see $c({\chi},w w_i)=c(w_i\circ {\chi}, w) c({\chi},w_i)$, the desired result. We now consider the general case.

\underline{General Case:}  The proof is by induction on the length of $w'$.  If $\ell(w')=1$, then the proofs of Case 1 and Case 2 secure our result. Now suppose the result holds for $\ell(w') \le k$. We consider $w'' \in \hat W$ such that $\ell(w'')=k+1$, and write $w''=w'w_i$ for some simple reflection $w_i$ and with $\ell(w'')=\ell(w')+1$.  By Cases 1 and 2 and the induction hypothesis, we obtain
\begin{eqnarray*} c(\chi,w w'')&=& c(\chi, w w' w_i)=  c(w_i \circ \chi, w w')\, c(\chi, w_i)= c( (w' w_i) \circ \chi, w) \, c(w_i \circ \chi,w') \, c(\chi,w_i) \\ &=&  c( (w'w_i) \circ \chi, w) \,  c(\chi, w'w_i) =  c(w'' \circ \chi,w) \, c(\chi,w''),\end{eqnarray*}
since $ c(w_i \circ \chi,w') \, c(\chi,w_i)  = c(\chi, w'w_i)$.
This completes our proof of Proposition \ref{prop-cocycle}.
\end{proof}

\vskip 1cm

\appendix

\section{Measures}

In this appendix, we will describe various measures that are important for our calculation of the constant term of an Eisenstein series. The first subsection addresses the technique of constructing a measure by means of a projective limit of a family of measures.

\subsection{The projective limit construction of a measure}

We begin by stating a result of \cite{Ash}.  Let $\{ U^{(s)}, \pi^{s}_{s'} \}_{s\in\mathbb Z_{>0}}$ be a projective family of compact spaces, and equip each $U^{(s)}$ with a regular, Borel, probability measure $du^{(s)}$. If $s>s'$, then by our assumption we have the map $$\pi^s_{s'}: U^{(s)} \twoheadrightarrow U^{(s')}.$$
We say $\{du^{(s)}\}_{s\in\mathbb Z_{>0}}$ is a consistent family of measures with respect to the projections $\pi^s_{s'}$ if for any measurable set $X'\subset U^{(s')}$ we have
$$ du^{(s)}((\pi^s_{s'})^{-1}(X'))=du^{(s')}(X' ),$$ for any $s>s'$.

\begin{Thm}[\cite{Ash}] \label{thm-Ash}
Suppose $\{ U^{(s)}, \pi^{s}_{s'} \}_{s\in\mathbb Z_{>0}}$ is a projective family of compact spaces where $du^{(s)}$ is a regular, Borel, probability measure on $U^{(s)}$. If $\{du^{(s)}\}_{s\in\mathbb Z_{>0}}$ is a consistent family of measures with respect to the projections $\pi^s_{s'}$, then
\begin{enumerate}
\item there is a unique regular, Borel, probability measure $du$ on the projective limit $\underset{s}\varprojlim \ U^{(s)}$,
\item if $\pi^{(s')}:\underset{s}\varprojlim U^{(s)} \twoheadrightarrow U^{(s')}$ is the canonical projection, then for any measurable set $X\subset U^{(s')}$ we have $$du((\pi^{(s')})^{-1}(X))=du^{(s')}(X).$$
\end{enumerate}
\end{Thm}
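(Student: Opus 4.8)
The plan is to realize $U := \varprojlim_s U^{(s)}$ concretely, build the measure first on cylinder sets using the consistency hypothesis, and then extend by Carath\'{e}odory, with compactness supplying the crucial countable-additivity step. First I would identify $U$ with the closed subset of $\prod_s U^{(s)}$ consisting of the compatible sequences $(u^{(s)})_s$ with $\pi^s_{s'}(u^{(s)}) = u^{(s')}$ for $s>s'$; by Tychonoff's theorem $\prod_s U^{(s)}$ is compact Hausdorff, so $U$ is compact Hausdorff and the canonical maps $\pi^{(s)} : U \twoheadrightarrow U^{(s)}$ are continuous (and, using surjectivity of the $\pi^s_{s'}$, surjective). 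Let $\mathcal A$ be the collection of cylinder sets $(\pi^{(s)})^{-1}(B)$, with $s\in\mathbb Z_{>0}$ and $B\subseteq U^{(s)}$ Borel. Lifting two cylinders to a common stage via the bonding maps shows $\mathcal A$ is an algebra, and the consistency hypothesis is precisely what is needed to see that
\[ \mu_0\bigl((\pi^{(s)})^{-1}(B)\bigr) := du^{(s)}(B) \]
is independent of the representation of the cylinder and finitely additive; thus $\mu_0$ is a finitely additive $[0,1]$-valued set function on $\mathcal A$ with $\mu_0(U)=1$.

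The heart of the argument is to upgrade $\mu_0$ to a premeasure, i.e.\ to prove continuity at $\emptyset$: if $C_1 \supseteq C_2 \supseteq \cdots$ lie in $\mathcal A$ with $\mu_0(C_n)\ge \varepsilon>0$ for all $n$, then $\bigcap_n C_n \ne \emptyset$. This is where compactness is used. Writing $C_n=(\pi^{(s_n)})^{-1}(B_n)$ with $s_1<s_2<\cdots$, I would use inner regularity of $du^{(s_n)}$ to choose a compact $L_n \subseteq B_n$ with $du^{(s_n)}(B_n\setminus L_n) < \varepsilon 2^{-(n+1)}$, set $K_n = (\pi^{(s_n)})^{-1}(L_n)$, a closed hence compact subset of $U$, and pass to $\widetilde K_n := K_1 \cap \cdots \cap K_n$. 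Then $\widetilde K_n \subseteq C_n$, and since $C_n \setminus \widetilde K_n = \bigcup_{j\le n}(C_n\setminus K_j) \subseteq \bigcup_{j\le n}(C_j\setminus K_j)$ we get $\mu_0(C_n\setminus\widetilde K_n) < \varepsilon/2$, so $\mu_0(\widetilde K_n) > \varepsilon/2$ and $\widetilde K_n \ne \emptyset$. The sets $\widetilde K_n$ form a decreasing chain of nonempty compact subsets of $U$, so by the finite intersection property $\emptyset \ne \bigcap_n \widetilde K_n \subseteq \bigcap_n C_n$. I expect this step to be the main obstacle, both because it is where the topological hypotheses genuinely enter and because the choice of the $L_n$ must be organized so the approximation errors sum to something small (the passage to $\widetilde K_n$ takes care of compatibility, but the bookkeeping must be done carefully).

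Finally I would invoke Carath\'{e}odory's extension theorem: $\mu_0$ extends uniquely to a countably additive probability measure $du$ on $\sigma(\mathcal A)$. Because the index set is countable, $\sigma(\mathcal A)$ coincides with the Borel $\sigma$-algebra of $U$ (the cylinders over the various stages generate a base for the topology), so $du$ is a Borel probability measure; regularity is automatic on the compact space $U$, either because the construction reduces to the second-countable (metrizable) case, or by instead routing the construction through the Riesz representation theorem applied to the positive linear functional $f \mapsto \int_{U^{(s)}} f_s\, du^{(s)}$ defined on the subalgebra of $C(U)$ of functions $f = f_s\circ\pi^{(s)}$ factoring through a finite stage, which is dense by the Stone--Weierstrass theorem. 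Uniqueness in part (1) is the uniqueness clause of Carath\'{e}odory's theorem, $\mathcal A$ being a generating algebra on which any candidate measure is pinned down by the $du^{(s)}$. Part (2) is then immediate, since $du$ restricted to $\mathcal A$ equals $\mu_0$, whence $du\bigl((\pi^{(s')})^{-1}(X)\bigr) = \mu_0\bigl((\pi^{(s')})^{-1}(X)\bigr) = du^{(s')}(X)$ for every Borel $X\subseteq U^{(s')}$.
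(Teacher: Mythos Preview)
The paper does not give a proof of this theorem; it is quoted as a result from the reference \cite{Ash} (Ashtekar--Lewandowski) and used as a black box in the construction of the measure on $\hat U_{\mathbb A}/(\hat U_{\mathbb A}\cap\hat\Gamma_F)$. Your proposal is a correct and essentially standard argument---the Kolmogorov--Bochner extension procedure specialized to compact spaces, with inner regularity supplying the compact approximants needed for continuity at $\emptyset$---and would be an entirely acceptable way to supply the missing details. The only point where you should be a touch more careful is the identification of $\sigma(\mathcal A)$ with the Borel $\sigma$-algebra of $U$ and the automatic regularity of $du$: these are unproblematic if the $U^{(s)}$ are second countable (as they are in the application in the paper), but in the generality stated one should, as you yourself suggest, route the construction through Stone--Weierstrass and the Riesz representation theorem to obtain a genuine Radon measure without extra hypotheses.
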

\begin{Cor}
Under the conditions of Theorem \ref{thm-Ash}, if we further assume that each measure $du^{(s)}$ is translation invariant, then $du$ is also translation invariant.
\end{Cor}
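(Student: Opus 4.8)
The plan is to deduce the translation invariance of $du$ directly from the uniqueness clause of Theorem~\ref{thm-Ash}, using the fact that left (or right) translation commutes with the canonical projections. In our situation each $U^{(s)}$ is a compact topological group, the transition maps $\pi^s_{s'}$ — and hence the canonical projections $\pi^{(s)}\colon U:=\varprojlim_s U^{(s)}\to U^{(s)}$ — are continuous group homomorphisms, so $U$ is itself a compact group and each $\pi^{(s)}$ is a homomorphism. The hypothesis that $du^{(s)}$ is translation invariant means it is the normalized Haar measure of the compact group $U^{(s)}$, which is therefore two-sided invariant.

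First I would fix $g\in U$ and consider left translation $L_g\colon U\to U$, which is a homeomorphism; hence the pushforward $\mu_g:=(L_g)_*du$, i.e. $\mu_g(E)=du(g^{-1}E)$, is again a regular Borel probability measure on $U$. The key step is to verify that $\mu_g$ satisfies the same compatibility with the projections as $du$ does in part (2) of Theorem~\ref{thm-Ash}. Writing $g_s=\pi^{(s)}(g)$, the homomorphism property of $\pi^{(s)}$ gives $g^{-1}\cdot(\pi^{(s)})^{-1}(X)=(\pi^{(s)})^{-1}(g_s^{-1}X)$ for any measurable $X\subset U^{(s)}$, so
\[
\mu_g\big((\pi^{(s)})^{-1}(X)\big)=du\big((\pi^{(s)})^{-1}(g_s^{-1}X)\big)=du^{(s)}(g_s^{-1}X)=du^{(s)}(X),
\]
the last equality by translation invariance of $du^{(s)}$. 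Thus $\mu_g$ is a regular Borel probability measure on $U$ with the defining property of part (2), and the uniqueness assertion of part (1) of Theorem~\ref{thm-Ash} forces $\mu_g=du$, i.e. $du(g^{-1}E)=du(E)$ for all measurable $E$. Since $g\in U$ was arbitrary, $du$ is left translation invariant; running the same argument with right translations $R_g$ in place of $L_g$ yields right invariance as well.

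This argument has no real obstacle: the only points requiring care are that the projective limit of compact groups along homomorphisms is again a compact group whose coordinate projections are homomorphisms (so that $L_g$ and $R_g$ are homeomorphisms commuting with the $\pi^{(s)}$ in the displayed fashion), and that ``translation invariant'' at each finite level is the same as being normalized Haar measure, which on a compact group is automatically two-sided invariant. Granting these standard facts, everything reduces to the uniqueness part of Theorem~\ref{thm-Ash}.
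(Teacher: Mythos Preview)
Your argument is correct and, in essence, uses the same computation as the paper's proof: for a basic set $(\pi^{(s)})^{-1}(X)$ one has
\[
du\big(g^{-1}(\pi^{(s)})^{-1}(X)\big)=du\big((\pi^{(s)})^{-1}(g_s^{-1}X)\big)=du^{(s)}(g_s^{-1}X)=du^{(s)}(X)=du\big((\pi^{(s)})^{-1}(X)\big).
\]
The difference is only in how this computation is packaged. The paper observes that the cylinder sets $(\pi^{(s)})^{-1}(X)$ with $X\subset U^{(s)}$ open form a basis for the projective-limit topology and argues that checking invariance on this basis suffices for a regular Borel measure. You instead push $du$ forward by $L_g$, verify that $(L_g)_*du$ satisfies the same compatibility condition in part~(2) of Theorem~\ref{thm-Ash}, and conclude $(L_g)_*du=du$ from the uniqueness clause in part~(1). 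Your route is arguably cleaner, since it sidesteps the (routine but not entirely trivial) step of extending equality from a $\pi$-system of basis sets to all Borel sets; that work is already encapsulated in the uniqueness assertion you invoke.

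One small point of care: you frame the $U^{(s)}$ as compact \emph{groups} with the $\pi^s_{s'}$ as group homomorphisms, but in the paper's application the spaces $U^{(s)}=\hat{U}_{\mathbb A}^{(s)}/\hat{\Gamma}_{\hat U}^{(s)}$ are homogeneous spaces for the groups $\hat{U}_{\mathbb A}^{(s)}$, not groups themselves (the discrete subgroup need not be normal). This does not affect your argument in any essential way---one simply replaces ``$U$ is a compact group and $L_g$ is left multiplication'' by ``a group $G=\varprojlim G^{(s)}$ acts on $U=\varprojlim U^{(s)}$ compatibly with the projections, and $L_g$ is the action of $g\in G$''. The identity $g^{-1}\cdot(\pi^{(s)})^{-1}(X)=(\pi^{(s)})^{-1}(g_s^{-1}X)$ and the rest of your proof go through verbatim.
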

\begin{proof}
Our measure $du$ is a Borel measure, so it is enough to show that this property holds for any open set $Y\subset \underset{s}\varprojlim U^{(s)}$.  However, the projective limit of topological spaces inherits the coarsest topology such that the canonical projections $\pi^{(s)}$ are all continuous.  It is a standard result that a basis for this topology consists of the sets $(\pi^{(s)})^{-1}(X^{(s)})$ for an open set $X^{(s)}\subset U^{(s)}$.  Hence, the translation invariance of the measure $du$ is a result of part (2) of Theorem \ref{thm-Ash}, and the invariance of the measure $du^{(s)}$.
\end{proof}

\medskip

\subsection{Measure on the Arithmetic Quotient}

   As mentioned in Section 4, we need to define a measure on the quotient space $\hat{U}_{\mathbb A}/(\hat{U}_{\mathbb A}\cap \hat{\Gamma}_F)$.  The main result of \cite{Ash} allows us to define this measure $du$ by expressing $\hat{U}_{\mathbb A}/(\hat{U}_{\mathbb A}\cap\hat{\Gamma}_F)$ as a projective limit of compact spaces $\hat{U}_{\mathbb A}^{(s)}/\hat{\Gamma}_{\hat U}^{(s)}$ equipped with  a consistent family of measures $du^{(s)}$. For a similar construction over $\mathbb Q$, see \cite{R}.

  Recall that we fixed a coherently ordered basis $\mathcal B=\{v_{\lambda}, v_1,  v_2,  \dots\}$ which we use as a basis for the vector space $V_{F_{\nu}}^{\lambda}$. For $s\in\mathbb Z_{>0}$, set $V^{\lambda}_{F_{\nu},s}$ to be the $F_{\nu}\text{-span of the vectors }\{v_{\lambda},  v_1, \dots,  v_s\}$, and $\hat{U}_{\nu,s}=\{u\in\hat{U}_{F_{\nu}} \ | \ u\big|_{V^{\lambda}_{F_{\nu},s}} \equiv \mathrm{id} \}$.  We define the restricted direct product
$$\hat{U}_{\mathbb A,s}:={\prod}' \ \hat{U}_{\nu,s}\text{ with respect to the subgroups }\hat{U}_{\nu,s}\cap\hat{K}_{\nu},$$ and let $$\hat{U}_{\mathbb A}^{(s)}:=\hat{U}_{\mathbb A}/\hat{U}_{\mathbb A,s}.$$

  We first note that any element $u\in\hat{U}_{\mathbb A}^{(s)}$ is an infinite tuple $(u_{\nu})_{\nu\in\mathcal{V}}$. In \cite{LG1} we see that each $u_{\nu}$ is in the subgroup of $(s+1)\times (s+1)$, strictly upper triangular block matrices with entries from $F_{\nu}$, where the blocks are determined by the weight spaces of $V^{\lambda}_{\mathbb Z}$. Moreover, we know that for all but a finite number of $\nu$, our entries are from $\mathcal{O}_{\nu}$. As such, we may identify $\hat{U}_{\mathbb A}^{(s)}$ with $n$ copies of $\mathbb A$, for some $n$. We give $\hat{U}_{\mathbb A}^{(s)}$ the topology that makes this identification a homeomorphism and denote this homeomorphism by $\phi^{(s)}$.

\begin{Lem} \label{lem-loc-cmpt} We have \begin{enumerate}
\item $\hat{U}_{\mathbb A}^{(s)}$ is a locally compact group for every $s\in\mathbb Z_{>0}$,
\item $\hat{U}_{\mathbb A}\cong\underset{s}\varprojlim \hat{U}_{\mathbb A}^{(s)}$.
\end{enumerate}
\end{Lem}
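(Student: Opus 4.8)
The plan is to prove the two assertions of Lemma~\ref{lem-loc-cmpt} separately, with the first being essentially immediate from the construction and the second requiring a careful identification of a projective system.

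For part (1), I would proceed as follows. We have fixed a homeomorphism $\phi^{(s)}: \hat{U}_{\mathbb A}^{(s)} \xrightarrow{\sim} \mathbb A^n$ for a suitable $n$ depending on $s$ (the total number of matrix entries in the strictly upper triangular block region below row/column $s$). Since $\mathbb A$ is a locally compact topological ring, the finite product $\mathbb A^n$ is locally compact, so $\hat U_{\mathbb A}^{(s)}$ is locally compact as a topological space. It remains to check that $\hat U_{\mathbb A}^{(s)} = \hat U_{\mathbb A}/\hat U_{\mathbb A,s}$ is a \emph{group}, i.e.\ that $\hat U_{\mathbb A,s}$ is normal in $\hat U_{\mathbb A}$; this follows because $\hat U_{\mathbb A,s}$ is precisely the kernel of the natural homomorphism from $\hat U_{\mathbb A}$ into the group of automorphisms of $V^\lambda_{\mathbb A}/V^\lambda_{\mathbb A,s}$ induced by restriction (the block-upper-triangular structure, compatible with the coherent ordering of $\mathcal B$, guarantees this quotient action is well-defined), and kernels are normal. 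I would also note that multiplication in $\hat U_{\mathbb A}^{(s)}$, transported via $\phi^{(s)}$ to $\mathbb A^n$, is given by polynomial (in fact, for unipotent block matrices, polynomial with integer coefficients) formulas in the coordinates, hence continuous, and similarly for inversion; so $\hat U_{\mathbb A}^{(s)}$ is a topological group.

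For part (2), the key observation is that for $s > s'$ the inclusion $\hat U_{\mathbb A,s} \subset \hat U_{\mathbb A,s'}$ holds (acting as the identity on $V^\lambda_{F_\nu,s}$ implies acting as the identity on the smaller space $V^\lambda_{F_\nu,s'}$), which yields a natural surjective homomorphism $\pi^s_{s'}: \hat U_{\mathbb A}^{(s)} \twoheadrightarrow \hat U_{\mathbb A}^{(s')}$, and these are manifestly compatible, giving a projective system $\{\hat U_{\mathbb A}^{(s)}, \pi^s_{s'}\}$. The universal property of the quotient maps $\hat U_{\mathbb A} \twoheadrightarrow \hat U_{\mathbb A}^{(s)}$ then produces a canonical continuous homomorphism $\Theta: \hat U_{\mathbb A} \to \varprojlim_s \hat U_{\mathbb A}^{(s)}$. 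I would show $\Theta$ is bijective: injectivity amounts to $\bigcap_s \hat U_{\mathbb A,s} = \{1\}$, which holds because any $u \in \hat U_{\mathbb A}$ acting as the identity on every $V^\lambda_{F_\nu,s}$ acts as the identity on all of $V^\lambda_{F_\nu} = \bigcup_s V^\lambda_{F_\nu,s}$ at each place $\nu$, hence is the identity; surjectivity follows because a compatible sequence $(u^{(s)})$ determines, at each place, a compatible sequence of finite truncations of a unipotent matrix, which assembles into a genuine element $u_\nu \in \hat U_{\nu}$ (here one uses that $\hat U_\nu$ is exactly the group of such block-upper-triangular automorphisms, with no convergence obstruction since each column stabilizes), and the restricted-product condition on $(u_\nu)_\nu$ is inherited from the restricted-product conditions defining each $\hat U_{\mathbb A}^{(s)}$. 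Finally, $\Theta$ is an open map (equivalently, $\Theta^{-1}$ is continuous) because a basic open set of $\varprojlim$ is $(\pi^{(s)})^{-1}(X)$ for $X \subset \hat U_{\mathbb A}^{(s)}$ open, whose preimage under $\Theta$ is the preimage of $X$ under $\hat U_{\mathbb A} \twoheadrightarrow \hat U_{\mathbb A}^{(s)}$, an open set by definition of the quotient topology.

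The main obstacle I anticipate is the surjectivity in part (2): one must verify that a coherent family of truncated unipotent matrices over the adeles genuinely glues to a single element of $\hat U_{\mathbb A}$ \emph{inside the restricted product}, i.e.\ that the resulting $u_\nu$ lies in $\hat K_\nu \cap \hat U_\nu$ for almost all $\nu$. This is a bookkeeping argument: for each fixed $s$ the family $(u^{(s)}_\nu)_\nu$ already satisfies $u^{(s)}_\nu \in \hat U_{\nu,s}^{\perp}$-type integrality for all but finitely many $\nu$, but one needs this uniformly as $s \to \infty$ for a \emph{fixed} limiting element --- which works precisely because, at a fixed place $\nu$, the matrix of any element of $\hat U_\nu$ has each column eventually stabilizing, so only finitely many entries are nonzero in each column and the "almost all $\nu$ are integral" condition is not disturbed by passing to the limit. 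I would record this carefully but expect no real difficulty beyond careful notation; the block structure from the coherently ordered basis $\mathcal B$ does all the heavy lifting.
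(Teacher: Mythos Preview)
Your approach matches the paper's: part~(1) via the homeomorphism $\phi^{(s)}$ with $\mathbb A^n$, and part~(2) by building the canonical map to the projective limit and checking injectivity through $\bigcap_s \hat U_{\mathbb A,s}=\{\mathrm{id}\}$. The paper is considerably terser than your proposal --- it simply asserts surjectivity of $\Phi$ without argument and omits the normality, continuity, and topological checks you include --- so your version is a more careful rendition of the same proof.
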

\begin{proof}
The first part follows from the local compactness of $\mathbb A$ and the homeomorphism $\phi^{(s)}$.
In order to prove the second part, we first check that $\{\hat{U}_{\mathbb A}^{(s)}, \pi^s_{s'}\}$ forms a projective system. Suppose we have $s> s'$. Then $V^{\lambda}_{F_{\nu},s'}\subset V^{\lambda}_{F_{\nu},s}$ for all $\nu\in\mathcal{V}$, as a result we have that $\hat{U}_{\mathbb A,s}\subset\hat{U}_{\mathbb A,s'}$ and  obtain a unique surjective map $\pi^s_{s'}:\hat{U}_{\mathbb A}^{(s)}\twoheadrightarrow\hat{U}_{\mathbb A}^{(s')}$ such that
$\pi^s_{s'} \circ p^{(s)}= p^{(s')}$, where $p^{(s)}: \hat{U}_{\mathbb A} \rightarrow \hat{U}_{\mathbb A}/\hat{U}_{\mathbb A,s}=\hat{U}_{\mathbb A}^{(s)}$ is the canonical projection. Now it is clear that $\{\hat{U}_{\mathbb A}^{(s)}, \pi^s_{s'}\}$ is a projective system and we have a surjective map
$$\Phi: \hat{U}_{\mathbb A} \twoheadrightarrow \underset{s}\varprojlim\hat{U}_{\mathbb A}^{(s)}\, ,$$ such that $\pi^{(s)}\circ\Phi=p^{(s)}$ for every $s\in\mathbb Z_{>0}$, where $\pi^{(s)}$ denote the canonical projections from $\underset{s}\varprojlim \hat{U}_{\mathbb A}^{(s)}$ to $\hat{U}_{\mathbb A}^{(s)}$.   In order to prove the isomorphism, we only need to show that $\Phi$ is an injective map.

  Suppose that $x\in\ker(\Phi)$. Then $x\in\ker(\pi^{(s)}\circ\Phi)$ for every $s\in\mathbb Z_{>0}$.  By the commutativity of our maps, we have that $x\in\ker(p^{(s)})$ for every $s\in\mathbb Z_{>0}$, and so $x\in\displaystyle\bigcap_{s\in\mathbb Z_{>0}}\hat{U}_{\mathbb A,s}=\{\mathrm{id}\}$. Thus, $\Phi$ is injective as well as surjective and we obtain the desired isomorphism between $\hat{U}_{\mathbb A}$ and $\underset{s}\varprojlim \hat{U}_{\mathbb A}^{(s)}$.
\end{proof}

Under the homeomorphism $\phi^{(s)}$, the product measure $\bar{\mu}^{(s)}= \mu \times\dots \times\mu$ on $\mathbb A^{n}$ becomes a Haar measure on $\hat{U}_{\mathbb A}^{(s)}$.  To see this, note that the effect of left multiplication in $\hat{U}_{\mathbb A}^{(s)}$ through this homeomorphism is affine transformation of $\mathbb A^{n}$.  In other words, for $x$ and $y$ in $\mathbb A^{n}$ we have $$xy = A_x(y) + b_x,$$ where $A_x$ is a linear transformation of $\mathbb A^{n}$ and $b_x\in\mathbb A^{n}$.  The invariance of $\bar{\mu}^{(s)}$ is hence a direct result of the change of variables theorem and the fact that $|\det(A_x)|=1$ for all $x$.

  We wish to define a measure on $\hat{U}_{\mathbb A}/(\hat{\Gamma}_F\cap\hat{U}_{\mathbb A})$. The next lemma establishes this space as a projective limit of compact spaces.

\begin{Lem}
Let $\hat{\Gamma}_{\hat U}^{(s)}=p^{(s)}(\hat{\Gamma}_F\cap\hat{U}_{\mathbb A})$, where  $p^{(s)}:\hat{U}_{\mathbb A} \twoheadrightarrow \hat{U}_{\mathbb A}/\hat{U}_{\mathbb A,s}=\hat{U}_{\mathbb A}^{(s)}$ is the canonical projection.  Then
\begin{enumerate}
\item we have $\hat{U}_{\mathbb A}/(\hat{\Gamma}_F\cap\hat{U}_{\mathbb A})\cong\underset{s}\varprojlim \ \hat{U}_{\mathbb A}^{(s)}/\hat{\Gamma}_{\hat U}^{(s)}, \text{ and}$
\item for each $s\in\mathbb Z_{>0}$ the space $ \hat{U}_{\mathbb A}^{(s)}/\hat{\Gamma}_{\hat U}^{(s)}$ is compact and has an invariant probability measure $du^{(s)}$ induced from the measure $\bar{\mu}^{(s)}$ on $\hat{U}_{\mathbb A}^{(s)}$.
\end{enumerate}
\end{Lem}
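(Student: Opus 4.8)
The plan is to reduce both assertions to the classical fact that $F$ is discrete and cocompact in $\mathbb A$, using the finite-dimensional identifications already set up in Lemma \ref{lem-loc-cmpt}.

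\emph{I would treat part (2) first, since the proof of part (1) invokes the resulting compactness.} Under the homeomorphism $\phi^{(s)}:\hat{U}_{\mathbb A}^{(s)}\to\mathbb A^{n}$ introduced above, the product measure $\bar{\mu}^{(s)}$ is an $n$-fold product of the Haar measure on $\mathbb A$ and, as already noted, is a Haar measure on $\hat{U}_{\mathbb A}^{(s)}$. The point to establish is that $\phi^{(s)}$ carries $\hat{\Gamma}_{\hat U}^{(s)}=p^{(s)}(\hat{\Gamma}_F\cap\hat{U}_{\mathbb A})=p^{(s)}(\hat{\Gamma}_F\cap\hat{U}_F)$ onto the diagonally embedded copy of $F^{n}$ in $\mathbb A^{n}$: every element of $\hat{U}_F$ is diagonal, so $\hat{\Gamma}_{\hat U}^{(s)}$ lands inside $F^{n}$; conversely, any point of $F^{n}$ is the image of a finite product $\prod_{\beta}\chi_{\beta}(s_\beta)$ with $s_\beta\in F$, and such a product lies in $\hat{\Gamma}_F$ because finitely many coefficients from $F$ are each integral at all but finitely many places, so its diagonal image lies in $\hat{K}_\nu$ for almost all $\nu$. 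Hence $\hat{U}_{\mathbb A}^{(s)}/\hat{\Gamma}_{\hat U}^{(s)}\cong(\mathbb A/F)^{n}$, which is compact. Being invariant under translation by the discrete cocompact subgroup $\hat{\Gamma}_{\hat U}^{(s)}$, the measure $\bar{\mu}^{(s)}$ descends to a translation-invariant Borel measure of finite total mass on this compact quotient; renormalizing to total mass $1$ yields the invariant probability measure $du^{(s)}$.

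\emph{Part (1).} Because $\pi^s_{s'}\circ p^{(s)}=p^{(s')}$ we have $\pi^s_{s'}(\hat{\Gamma}_{\hat U}^{(s)})=\hat{\Gamma}_{\hat U}^{(s')}$, so the transition maps descend to surjections $\bar{\pi}^s_{s'}:\hat{U}_{\mathbb A}^{(s)}/\hat{\Gamma}_{\hat U}^{(s)}\twoheadrightarrow\hat{U}_{\mathbb A}^{(s')}/\hat{\Gamma}_{\hat U}^{(s')}$, making $\{\hat{U}_{\mathbb A}^{(s)}/\hat{\Gamma}_{\hat U}^{(s)},\bar{\pi}^s_{s'}\}$ a projective system of compact Hausdorff spaces by part (2). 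The projections $p^{(s)}$ induce compatible maps out of $\hat{U}_{\mathbb A}/(\hat{\Gamma}_F\cap\hat{U}_{\mathbb A})$ and hence a continuous map $\bar{\Phi}$ into the limit. Using the isomorphism $\hat{U}_{\mathbb A}\cong\varprojlim_s\hat{U}_{\mathbb A}^{(s)}$ of Lemma \ref{lem-loc-cmpt} together with the surjectivity of transition maps in a projective system of compact Hausdorff spaces, $\bar{\Phi}$ is surjective with kernel $\Gamma'=\{x\in\hat{U}_{\mathbb A}:p^{(s)}(x)\in\hat{\Gamma}_{\hat U}^{(s)}\text{ for all }s\}$; thus (1) reduces to the identity $\Gamma'=\hat{\Gamma}_F\cap\hat{U}_{\mathbb A}$. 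The inclusion $\supseteq$ is immediate. For $\subseteq$, given $x\in\Gamma'$ I would build a single element $\gamma\in\hat{\Gamma}_F\cap\hat{U}_F$ with $p^{(s)}(\gamma)=p^{(s)}(x)$ for every $s$ (whence $\gamma=x$, again since $\hat{U}_{\mathbb A}\cong\varprojlim_s\hat{U}_{\mathbb A}^{(s)}$), by choosing coherently in $s$ the "minimal" finite lifts $\prod_{\beta}\chi_{\beta}(s_\beta)$ with $s_\beta\in F$ furnished by the canonical normal form for elements of $\hat{U}_F$ from \cite{LG1}; as in part (2), each such finite lift automatically lies in $\hat{\Gamma}_F$.

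\textbf{Main obstacle.} The delicate step is this last coherent-lifting argument: one must arrange the finite "rational" lifts at successive levels $s$ to be mutually compatible, and I expect that to require the uniqueness in the product decomposition $u=\prod_{a}\chi_{a}(s_a)$ for elements of $\hat{U}_F$ (and $\hat{U}_{\mathbb A}$) recorded in \cite{LG1}, so that "rational with bounded behavior at every finite level" genuinely forces membership in $\hat{\Gamma}_F$. Everything else is routine bookkeeping with projective limits and the standard compactness of $(\mathbb A/F)^n$, which also makes the family $\{du^{(s)}\}$ consistent in the sense required by Theorem \ref{thm-Ash}.
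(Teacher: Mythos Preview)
Your approach is essentially the same as the paper's, and in fact you are more careful in part (1): the paper simply writes ``considering the kernel of the map, we obtain'' the isomorphism, without verifying that the kernel of $\hat{U}_{\mathbb A}\to\varprojlim_s\hat{U}_{\mathbb A}^{(s)}/\hat{\Gamma}_{\hat U}^{(s)}$ is exactly $\hat{\Gamma}_F\cap\hat{U}_{\mathbb A}$. What you flag as the ``main obstacle'' is therefore a point the paper leaves implicit; your proposed resolution via the normal-form uniqueness from \cite{LG1} is the natural one, and the fact that the candidate $\gamma\in\hat{U}_F$ you build actually lies in $\hat{\Gamma}_F$ follows automatically once you know $\gamma=x$ as automorphisms, since $x\in\hat{U}_{\mathbb A}$ was assumed from the start.

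One small imprecision in your part (2): the identification $\hat{U}_{\mathbb A}^{(s)}/\hat{\Gamma}_{\hat U}^{(s)}\cong(\mathbb A/F)^{n}$ is only a homeomorphism of spaces, not a group isomorphism, because the group law transported to $\mathbb A^{n}$ by $\phi^{(s)}$ is polynomial rather than additive. Compactness then comes from the standard filtration-by-center argument for unipotent groups (each successive quotient is a copy of $\mathbb A/F$), which is exactly what the paper defers to \cite{Plat} for. Your conclusion is correct; just be careful not to phrase it as a direct product quotient.
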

\begin{proof}
(1)  Since $\pi^s_{s'}(\hat{\Gamma}_{\hat U}^{(s)})=\hat{\Gamma}_{\hat{U}}^{(s')}$, we obtain the induced projection
 $\bar{\pi}^s_{s'} : \hat{U}_{\mathbb A}^{(s)}/\hat{\Gamma}_{\hat{U}}^{(s)} \rightarrow \hat{U}_{\mathbb A}^{(s')}/\hat{\Gamma}_{\hat{U}}^{(s')}$, and the collection $\left\{\hat{U}_{\mathbb A}^{(s)}/\hat{\Gamma}_{\hat U}^{(s)}, \bar{\pi}^s_{s'}\right\}_{s\in\mathbb Z_{>0}}$ is a projective family. Then we have the natural map
 \[ \hat{U}_{\mathbb A} \ \cong \  \underset{s}\varprojlim\ \hat{U}_{\mathbb A}^{(s)} \ \rightarrow \ \underset{s}\varprojlim \ \hat{U}_{\mathbb A}^{(s)}/\hat{\Gamma}_{\hat U}^{(s)} .\]
Considering the kernel of the map, we obtain $$\hat{U}_{\mathbb A}/(\hat{\Gamma}_F\cap\hat{U}_{\mathbb A})\cong\underset{s}\varprojlim \ \hat{U}_{\mathbb A}^{(s)}/\hat{\Gamma}_{\hat U}^{(s)}.$$

(2)  Recall that we can consider the space $\hat{U}_{\mathbb A}^{(s)}$ to be embedded into the group of strictly upper triangular $(s+1)\times (s+1)$ block matrices with entries from $\mathbb A$; similarly, $\hat{\Gamma}_{\hat U}^{(s)}$ can be considered as a discrete subspace of $\hat{U}_{\mathbb A}^{(s)}$, consisting of strictly upper triangular $(s+1)\times (s+1)$ block matrices with entries from $F$ diagonally embedded in $\mathbb A$. As a result, the quotient space $\hat{U}_{\mathbb A}^{(s)}/\hat{\Gamma}_{\hat U}^{(s)}$ is a classical object and it is well known that this is a compact space. For more information see \cite{Plat}.

  Moreover, since $\hat{\Gamma}_{\hat U}^{(s)}$ is a discrete subgroup of the unimodular group $\hat{U}_{\mathbb A}^{(s)}$, we have an invariant measure $du^{(s)}$ on $\hat{U}_{\mathbb A}^{(s)}/\hat{\Gamma}_{\hat U}^{(s)}$ induced from $\bar{\mu}^{(s)}$ on $\hat{U}_{\mathbb A}^{(s)}$ (\cite{Plat}), and we normalize $du^{(s)}$ to have total measure 1.
\end{proof}

 Finally, we must prove that the measures $du^{(s)}$ form a consistent family.  The induced measure $du^{(s)}$ may be considered as the restriction of the measure $\bar{\mu}^{(s)}$ to a fundamental domain for $\hat U_{\mathbb A}^{(s)} /\hat{\Gamma}_{\hat U}^{(s)}$. For each $s \in \mathbb Z_{>0}$, we choose a fundamental domain $\Omega^{(s)}$ for $\hat U_{\mathbb A}^{(s)} /\hat{\Gamma}_{\hat U}^{(s)}$ such that $\pi^s_{s'} (\Omega^{(s)}) =  \Omega^{(s')}$. Then we have,  for a measurable subset $X$ of $\hat U_{\mathbb A}^{(s')}$,
 \begin{equation} \label{eqn-mm} \bar{\mu}^{(s)} \left ( (\pi^s_{s'})^{-1}(X) \cap \Omega^{(s)} \right ) = \bar{\mu}^{(s')}( X \cap \Omega^{(s')}) .\end{equation}

\begin{Lem} \label{lem-consist}
The set $\{du^{(s)} \}_{s\in\mathbb Z_{>0}}$ forms a consistent family of measures.
\end{Lem}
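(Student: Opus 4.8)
The plan is to verify the consistency condition of the projective-limit measure construction directly from the definition, using the identity \eqref{eqn-mm} that was just set up. Recall that the measure $du^{(s)}$ on $\hat{U}_{\mathbb A}^{(s)}/\hat{\Gamma}_{\hat U}^{(s)}$ is, by construction, the pushforward of the restriction of $\bar{\mu}^{(s)}$ to the fundamental domain $\Omega^{(s)}$ under the quotient map $q^{(s)} : \Omega^{(s)} \xrightarrow{\ \sim\ } \hat{U}_{\mathbb A}^{(s)}/\hat{\Gamma}_{\hat U}^{(s)}$ (after normalization to total mass $1$; note that since each $\Omega^{(s)}$ surjects onto $\Omega^{(s')}$ with fibers a fundamental domain for the kernel of $\pi^s_{s'}$, and the product structure of $\bar\mu^{(s)}$ respects this, the normalizing constants are compatible and may be taken equal). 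So the content of the lemma is purely the compatibility of these restricted measures under $\bar\pi^s_{s'}$, which is exactly what \eqref{eqn-mm} encodes at the level of $\bar\mu^{(s)}$.

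First I would fix $s > s'$ and a measurable set $X' \subset \hat{U}_{\mathbb A}^{(s')}/\hat{\Gamma}_{\hat U}^{(s')}$, and I must show $du^{(s)}\bigl((\bar\pi^s_{s'})^{-1}(X')\bigr) = du^{(s')}(X')$. Lift $X'$ through the homeomorphism $q^{(s')}$ to a measurable set $Y' \subset \Omega^{(s')} \subset \hat{U}_{\mathbb A}^{(s')}$, so that $du^{(s')}(X') = \bar\mu^{(s')}(Y')$ (up to the common normalizing constant). Next I would unwind the preimage: because the chosen fundamental domains satisfy $\pi^s_{s'}(\Omega^{(s)}) = \Omega^{(s')}$ and the quotient maps $q^{(s)}, q^{(s')}$ intertwine $\pi^s_{s'}$ with $\bar\pi^s_{s'}$ (this follows from $\pi^s_{s'}(\hat{\Gamma}_{\hat U}^{(s)}) = \hat{\Gamma}_{\hat U}^{(s')}$, established in the previous lemma), the set $(q^{(s)})^{-1}\bigl((\bar\pi^s_{s'})^{-1}(X')\bigr)$ equals $(\pi^s_{s'})^{-1}(Y') \cap \Omega^{(s)}$ as a subset of $\hat{U}_{\mathbb A}^{(s)}$. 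Hence $du^{(s)}\bigl((\bar\pi^s_{s'})^{-1}(X')\bigr) = \bar\mu^{(s)}\bigl((\pi^s_{s'})^{-1}(Y') \cap \Omega^{(s)}\bigr)$, and applying \eqref{eqn-mm} with $X = Y'$ gives $\bar\mu^{(s')}(Y' \cap \Omega^{(s')}) = \bar\mu^{(s')}(Y') = du^{(s')}(X')$, as desired.

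The one point that requires a little care — and which I expect to be the main obstacle — is the compatible choice of fundamental domains $\Omega^{(s)}$ with $\pi^s_{s'}(\Omega^{(s)}) = \Omega^{(s')}$, together with the claim that $\bar\mu^{(s)}$ restricted to $\Omega^{(s)}$ does push forward to $\bar\mu^{(s')}$ restricted to $\Omega^{(s')}$, i.e.\ \eqref{eqn-mm} itself. This hinges on the fact that, under the matrix-coordinate identification $\phi^{(s)}$ of $\hat{U}_{\mathbb A}^{(s)}$ with $\mathbb A^{n}$, the projection $\pi^s_{s'}$ is a coordinate projection $\mathbb A^{n} \to \mathbb A^{n'}$ (forgetting the matrix entries in rows/columns beyond block $s'$), its kernel is itself a group of the same affine-unipotent type with a fundamental domain built from $F$-rational entries diagonally embedded in $\mathbb A$, and $\bar\mu^{(s)}$ is the corresponding product of copies of $\mu$, so it factors as $(\text{measure on base}) \times (\text{measure on fiber})$ with the fiber fundamental domain carrying total mass $1$ under the appropriate normalization. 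Once this product decomposition of $\bar\mu^{(s)}$ relative to $\pi^s_{s'}$ is in hand — and it follows from Fubini together with the explicit strictly-upper-triangular block description of $\hat{U}_{\mathbb A}^{(s)}$ recalled above — the identity \eqref{eqn-mm} and hence the consistency of $\{du^{(s)}\}$ are immediate, and Theorem \ref{thm-Ash} then produces the invariant probability measure $du$ on $\hat{U}_{\mathbb A}/(\hat{U}_{\mathbb A}\cap\hat{\Gamma}_F)$.
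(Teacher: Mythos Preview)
Your proposal is correct and follows essentially the same route as the paper: express $du^{(s')}(X')$ and $du^{(s)}\bigl((\bar\pi^s_{s'})^{-1}(X')\bigr)$ via $\bar\mu^{(s')}$ and $\bar\mu^{(s)}$ on the compatible fundamental domains, use the commutativity $\bar\pi^s_{s'}\circ\bar\pi^{(s)}=\bar\pi^{(s')}\circ\pi^s_{s'}$, and then invoke \eqref{eqn-mm}. Your additional discussion of normalization and of why \eqref{eqn-mm} holds (via the product/Fubini structure of $\bar\mu^{(s)}$ under the coordinate projection $\pi^s_{s'}$) is more explicit than the paper, which simply asserts \eqref{eqn-mm} before the lemma, but the underlying argument is the same.
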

\begin{proof}  Let $\bar{\pi}^{(s)}:\hat{U}_{\mathbb A}^{(s)}\rightarrow\hat{U}_{\mathbb A}^{(s)}/\hat{\Gamma}_{\hat U}^{(s)}$ be the canonical projection for $s\in\mathbb Z_{>0}$. Suppose that $X$ is a measurable set of $\hat{U}_{\mathbb A}^{(s')}/\hat{\Gamma}_{\hat{U}}^{(s')}$. Then by our observation above
$$du^{(s')}(X)=\bar{\mu}^{(s')}\bigg((\bar{\pi}^{(s')})^{-1}(X) \cap {\Omega}^{(s')} \bigg).$$
To establish the consistency of the measures, we need to calculate $du^{(s)}( (\bar{\pi}^{s}_{s'})^{-1}(X) )$. Using (\ref{eqn-mm}), we obtain
\begin{eqnarray*}
du^{(s)}( (\bar{\pi}^{s}_{s'})^{-1}(X) ) &=& \bar{\mu}^{(s)}\bigg( (\bar{\pi}^{s}_{s'} \circ \bar{\pi}^{(s)} )^{-1}(X) \cap {\Omega}^{(s)} \bigg) = \bar{\mu}^{(s)}\bigg( ( \bar{\pi}^{(s')} \circ \pi^{s}_{s'})^{-1}(X) \cap {\Omega}^{(s)} \bigg)
\\ &=& \bar{\mu}^{(s')}\bigg((\bar{\pi}^{(s')})^{-1}(X) \cap {\Omega}^{(s')} \bigg)= \ du^{(s')}(X),
\end{eqnarray*}
and we  conclude these measures form a consistent family.
\end{proof}

Finally, we obtain the main result of this appendix in the following proposition.

\begin{Prop} There exists a unique, $\hat{U}_{\mathbb A}$-left invariant, probability measure $du$ on the arithmetic quotient $\hat{U}_{\mathbb A}/(\hat{U}_{\mathbb A}\cap\hat{\Gamma}_F)$.
\end{Prop}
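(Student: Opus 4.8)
The plan is to assemble the ingredients already developed in this appendix: the realization of the arithmetic quotient as a projective limit of compact spaces, the invariant probability measures at each finite level, their consistency, and the abstract projective-limit theorem of Ash.

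First I would invoke the lemma identifying $\hat{U}_{\mathbb A}/(\hat{U}_{\mathbb A}\cap\hat{\Gamma}_F)$ with $\varprojlim_s \hat{U}_{\mathbb A}^{(s)}/\hat{\Gamma}_{\hat U}^{(s)}$; its second part equips each $\hat{U}_{\mathbb A}^{(s)}/\hat{\Gamma}_{\hat U}^{(s)}$ with a compact topology and an invariant probability measure $du^{(s)}$, which is a regular Borel measure because it is induced from Haar measure on the locally compact group $\hat{U}_{\mathbb A}^{(s)}$ modulo a discrete subgroup. Next I would quote Lemma \ref{lem-consist}, which asserts that $\{du^{(s)}\}_s$ is a consistent family with respect to the induced projections $\bar\pi^s_{s'}$. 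Now the hypotheses of Theorem \ref{thm-Ash} are satisfied, so it yields a unique regular Borel probability measure $du$ on the projective limit --- that is, on $\hat{U}_{\mathbb A}/(\hat{U}_{\mathbb A}\cap\hat{\Gamma}_F)$ --- characterized by $du\big((\pi^{(s)})^{-1}(X)\big)=du^{(s)}(X)$ for every measurable $X\subset \hat{U}_{\mathbb A}^{(s)}/\hat{\Gamma}_{\hat U}^{(s)}$. This already gives existence and uniqueness.

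For $\hat{U}_{\mathbb A}$-left invariance I would use part (2) of Lemma \ref{lem-loc-cmpt}, which gives $\hat{U}_{\mathbb A}\cong\varprojlim_s\hat{U}_{\mathbb A}^{(s)}$, together with the facts that the transition maps $\pi^s_{s'}$ are group homomorphisms, the quotient maps $\bar\pi^{(s)}$ are equivariant, and the $\bar\pi^s_{s'}$ intertwine the $\hat{U}_{\mathbb A}^{(s)}$- and $\hat{U}_{\mathbb A}^{(s')}$-actions on the finite-level quotients. Consequently an element of $\hat{U}_{\mathbb A}$ acts on $\varprojlim_s \hat{U}_{\mathbb A}^{(s)}/\hat{\Gamma}_{\hat U}^{(s)}$ as the inverse limit of the corresponding left-translation actions at each level $s$, each of which preserves $du^{(s)}$. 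The corollary to Theorem \ref{thm-Ash} then transfers this invariance to the limit measure $du$; alternatively one invokes the uniqueness clause of Theorem \ref{thm-Ash} applied to the translate of $du$ under a fixed group element.

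I do not expect a substantive obstacle here; the only point needing care is the bookkeeping in the last paragraph --- confirming that the $\hat{U}_{\mathbb A}$-action on the arithmetic quotient is genuinely the inverse limit of the finite-level translation actions, so that one obtains invariance under the full adelic group and not merely under a dense image. One should also note that the fundamental domains $\Omega^{(s)}$ defining $du^{(s)}$ in \eqref{eqn-mm} were already chosen so that $\pi^s_{s'}(\Omega^{(s)})=\Omega^{(s')}$, which is exactly what makes the equivariance compatible across levels; with that in hand the argument is routine.
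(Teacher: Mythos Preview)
Your proposal is correct and follows essentially the same approach as the paper: the paper's proof is a one-line appeal to Lemma~\ref{lem-consist} and Theorem~\ref{thm-Ash} (together with its corollary for translation invariance), and you have simply spelled out the steps in more detail. The extra care you take in verifying that the $\hat{U}_{\mathbb A}$-action on the projective limit is the inverse limit of the finite-level actions is a reasonable elaboration, but it is not something the paper addresses explicitly beyond the corollary to Theorem~\ref{thm-Ash}.
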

\begin{proof} This follows from Lemma \ref{lem-consist} and Theorem \ref{thm-Ash}.
\end{proof}

\medskip

\subsection{Measures on other spaces}

  Before we discuss measures on other spaces, we briefly recall some constructions from Section 4. With respect to our coherently ordered basis $\mathcal B$, we fix $\hat{U}_{-,F}$ to be the group of strictly lower triangular block matrices and set $\hat{U}_{w, F}$ to be $\hat{U}_F\cap w\hat{U}_{-,F} w^{-1}$.  This definition works for all of our fields $F_{\nu}$, and so we can define $$\hat{U}_{w,\mathbb A}:={\prod_{\nu\in\mathcal{V}}}' \ \hat{U}_{w,\nu} \ \text{  with respect to }\hat{U}_{w,\nu}\cap \hat{K}_{\nu}.$$
By the decomposition (\ref{eqn-decom}), we know
\begin{equation} \label{eqn-o-m} \hat{U}_{\mathbb A} \ = \ \hat{U}_{w,\mathbb A} \, (\hat{U}_{\mathbb A}\cap w\hat{U}_{\mathbb A} w^{-1}),\end{equation} and  we have the covering projection  $$\pi':\hat{U}_{\mathbb A}/(\hat{\Gamma}_F\cap\hat{U}_F\cap w\hat{U}_F w^{-1}) \twoheadrightarrow \hat{U}_{\mathbb A}/(\hat{U}_{\mathbb A}\cap\hat{\Gamma}_F).$$
We define the measure $du'$ on  $\hat{U}_{\mathbb A}/(\hat{\Gamma}_F\cap\hat{U}_F\cap w\hat{U}_F w^{-1})$ to be the one induced from the measure $du$ on $\hat{U}_{\mathbb A}/(\hat{U}_{\mathbb A}\cap\hat{\Gamma}_F)$ through the projection $\pi'$.

  In light of the decompositions (\ref{eqn-o-m}), our measure $du'$ decomposes into two measures:
\begin{enumerate}
\item[($i$)] the measure $du_1$ on $\hat{U}_{w,\mathbb A}$, and
\item[($ii$)] the measure $du_2$ on $(\hat{U}_{\mathbb A}\cap w\hat{U}_{\mathbb A} w^{-1})\big/(\hat{\Gamma}_F\cap\hat{U}_F\cap w\hat{U}_F w^{-1})$.
\end{enumerate}

We construct the measure $du_2$ by expressing this space as a projective limit of compact spaces equipped with a Haar probability measure. We omit the details and note that the construction will be very similar to our proofs regarding the construction of the measure $du$ on $ \hat{U}_{\mathbb A}/(\hat{U}_{\mathbb A}\cap\hat{\Gamma}_F)$. Using Theorem \ref{thm-Ash}, the measure $du_2$ is a Haar probability measure.

  To construct $du_1$, we recall that every element of $u\in\hat{U}_{w,\mathbb A}$ takes the form $$u=\prod_{a\in\hat{\Delta}_{W,+}\cap w^{-1} \hat{\Delta}_{W,-}} \chi_a(s_a)\ ,\text{ for }s_a\in\mathbb A.$$  For more information, see Section 4.3.  The set $\hat{\Delta}_{W,+}\cap w^{-1} \hat{\Delta}_{W,-}$ is finite and of size $\ell(w)$. As in Lemma \ref{lem-loc-cmpt}, we have a homeomorphism from $\hat{U}_{w,\mathbb A}$ to $\mathbb A^{\ell(w)}$. The product measure induced from $\ell(w)$ copies of the Haar measure $\mu$ on $\mathbb A$ becomes a Haar measure on $\hat{U}_{w,\mathbb A}$, where we normalize $\mu$ so that $\mu(\mathbb A/F)=1$. We set $du_1$ to be this measure.

  If we consider the Haar measure $\mu'_\nu$ on $F_\nu$ with the normalization $\mu'_\nu(\mathcal O_\nu)=1$  for each $\nu \in \mathcal V$, we obtain from 2.1.3 of \cite{Weil}  that
  \begin{equation} \label{eqn-me-la} \mu = q^{(1-g)} \prod_\nu \mu'_\nu . \end{equation}
If we let $du'_{1,\nu}$ be the measure on $\hat{U}_{w,F_\nu}$, then we have
  \[
   du_1 = q^{\ell(w)(1-g)} \prod_\nu du'_{1,\nu} .
  \]
A similar relation holds between $du_-$ for $\hat{U}_{-,w, \mathbb A}$ and the product of measures on local components in Section 4.4.
\vskip 1 cm

\end{document}